\newtheorem{theo}{Th\'eor\'em}[section]
\def\C{\mathbb{C}}
\def\mC{\mathbb{C}^{\times}}
\def\G{\mathcal{G}}
\def\F{\mathcal{F}}
\def\M{\mathcal{M}}
\def\N{\mathbb{N}}
\def\P{\mathcal{P}}
\def\Q{\mathbb{Q}}
\def\K{\mathcal{K}}
\def\R{\mathbb{R}}
\def\Z{\mathbb{Z}}
\def\n{\mathfrak{n}}
\def\line{\overline}
\def\Hom{\mathop{\mathrm{Hom}}\nolimits}
\def\IC{\mathop{\mathrm{IC}}\nolimits}
\def\Irr{\mathop{\mathrm{Irr}}\nolimits}
\def\dim{\mathop{\mathrm{dim}}\nolimits}
\def\rank{\mathop{\mathrm{rank}}\nolimits}
\def\remk{\noindent\textit{Remark:~}}
\newtheorem{prop}[theo]{Proposition}
\newtheorem{prop-def}[theo]{Proposition-Definition}
\newtheorem{def-prop}[theo]{Definition-Proposition}
\newtheorem{cor}[theo]{Corollary}
\newtheorem{lemma}[theo]{Lemma}
\newtheorem{teo}[theo]{Theorem}
\newtheorem{definition}[theo]{Definition}
\newtheorem{conj}[theo]{Conjecture}
\newtheorem{rem}[theo]{Remark}
\newtheorem{notation}[theo]{Notation}
\newtheorem{example}[theo]{Example}
\title{The characteristic cycles and semi-canonical bases on type $A$ quiver variety}
\author{Taiwang DENG}
\address{ 
Yau Mathematical Sciences Center, Tsinghua University, Haidian District, Beijing, 100084, China.}
\email{dengtaiw@tsinghua.edu.cn}
\author{Bin Xu}
\address{Yau Mathematical Sciences Center and Department of Mathematics \\  Tsinghua University, Beijing, China}
\email{binxu@tsinghua.edu.cn}
\date{}
\keywords{Quiver, characteristic cycles, canonical basis, semi-canonical basis, vanishing cycles, Milnor fiber}
\begin{document}

\maketitle

\begin{abstract}
In this article we study a conjecture of Geiss-Leclerc-Schr{\"o}er, which is an analogue of a classical conjecture of Lusztig in the Weyl group case. 
It concerns the relation between canonical basis and semi-canonical basis through the characteristic cycles. We formulate an approach to this conjecture and prove it for type $A_2$ quiver. In the general type A case, we reduce the conjecture to show that certain nearby cycles have vanishing Euler characteristic.
\end{abstract}

\tableofcontents

\section{Introduction}

In \cite{GLS05} Geiss-Leclerc-Schr{\"o}er studied Lusztig's semi-canonical basis \cite{lu00} for 
the enveloping algebra $U(\n)$. Here $\n$ is the maximal nilpotent 
subalgebra of some symmetric Kac-Moody Lie algebra over $\C$. They raised the question of the 
relation between the semi-canonical basis, the canonical basis and the singular support (cf. \cite{GLS05}, 1.5), 
referring to a conjecture made by Lusztig for the Weyl group algebra (cf. \cite{lu97}, 4.17).

In this paper, we consider the conjecture of Geiss-Leclerc-Schr{\"o}er mentioned above for the quiver $(I, Q)$ of type $A$ with orientation $\Omega: i \rightarrow i+1$. The variety $E_{V, \Omega}$ of quiver representations in an $I$-graded vector space $V$ admits a stratification by an action of a reductive group $G_V$. For each orbit $S$, we can associate a perverse sheaf $\IC(\line{S}, \C)$. They give rise to a basis $\{g_{S}\}$ for $U(\n)$, called the canonical basis. We By considering the union of the conormal bundles over the orbits on $E_{V, \Omega}$
\[
\Lambda_{V} := \bigcup_{S} T^*_{S}E_{V, \Omega}
\]
Lusztig constructed the semi-canonical basis for $U(\n)$, denoted by $\phi_{S}$, with respect to the irreducible component $\overline{T^*_{S}E_{V, \Omega}}$. Let $m_{S', S} \in \mathbb{C}$ be the coefficients of the expansion of $g_{S}$ with respect to the basis $\{\phi_{S'}\}$, i.e.,
\[
g_{S} = \sum_{S'} m_{S', S} \phi_{S'}.
\]
On the other hand, Kashiwara and Shapira constructed a 
characteristic cycle $CC(\F)$ for a constructible sheaf $\F$ on a manifold (cf. \cite{ka13}), which can be written as
\[
CC(\IC(\line{S}, \C)) = [T^{*}_{S}E_{V, \Omega}]+\sum_{S' \subseteq \line{S}} n_{S', S}[T^{*}_{S'}E_{V, \Omega}], \quad n_{S',S}  \in \mathbb{Z}_{\geq 0}.
\]
Furthermore, they constructed a morphism 
\[
E_u: L(T^*E_{V, \Omega})\rightarrow M(E_{V, \Omega})
\]
where $L(T^*E_{V, \Omega})$ denotes the group of Lagrangian cycles and $M(E_{V, \Omega})$ the space of constructible
functions on $E_{V, \Omega}$, such that
\[
E_u(CC(\IC(\line{S}, \C))) = (-1)^{{\rm dim} S} g_{S}.
\]
The above mentioned conjecture of Geiss-Leclerc-Schr{\"o}er can be made precise as follows.

\begin{conj}
\label{conj: multiplicity}
$Eu([T^{*}_{S}E_{V, \Omega}])= (-1)^{{\rm dim} S} \phi_{S}$ or equivalently $m_{S', S} = (-1)^{{\rm dim} S' - {\rm dim} S} n_{S', S}$.
\end{conj}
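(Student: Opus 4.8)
We now outline our approach.

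The plan is to prove the first form of the statement, $Eu([T^{*}_{S}E_{V,\Omega}]) = (-1)^{\dim S}\phi_{S}$ for every orbit $S$, and to deduce the coefficient identity from it. The two forms are equivalent by a short triangularity argument: the transition matrices $(n_{S',S})$ and $(m_{S',S})$ are both unipotent and triangular for the closure order on orbits, with $n_{S,S}=m_{S,S}=1$, and $Eu$ is linear; substituting the given expansion of $CC(\IC(\line{S},\C))$ into the identity $Eu(CC(\IC(\line{S},\C)))=(-1)^{\dim S}g_{S}=(-1)^{\dim S}\sum_{S'}m_{S',S}\phi_{S'}$ and comparing coefficients of $\phi_{S'}$ shows that the family of cycle identities over all $S$ is equivalent to the family of scalar identities $m_{S',S}=(-1)^{\dim S'-\dim S}n_{S',S}$. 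Moreover, since $Eu$ is, up to the stalk--Euler-characteristic identification of constructible functions with the $K$-theory of constructible sheaves, inverse to the Kashiwara--Schapira characteristic-cycle map \cite{ka13}, the cycle identity is itself equivalent to the assertion $CC(\phi_{S})=(-1)^{\dim S}[T^{*}_{S}E_{V,\Omega}]$, i.e. to the vanishing of all lower-order terms of the characteristic cycle of $\phi_{S}$. The leading coefficient is unproblematic: $\phi_{S}$, regarded as a constructible function on $E_{V,\Omega}$ via Lusztig's realization of $U(\n)$ by constructible functions on quiver representation varieties \cite{lu00}, has value $1$ on a dense open subset of $S$, so the coefficient of $[T^{*}_{S}E_{V,\Omega}]$ in $CC((-1)^{\dim S}\phi_{S})$ is $1$; the content is the vanishing of the remaining coefficients.

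For the lower-order coefficients I would play two descriptions of $\phi_{S}$ against each other. On one side, the Kashiwara--Dubson local index theorem expresses the coefficient of $[T^{*}_{S'}E_{V,\Omega}]$ in $CC(\phi_{S})$ in terms of MacPherson's local Euler obstructions $\mathrm{Eu}_{\line{S''}}$ and the values of $\phi_{S}$ on the strata $S''$ with $S'\subseteq\line{S''}\subseteq\line{S}$. On the other side, Lusztig's construction realizes $\phi_{S}$ over $E_{V,\Omega}$ as an explicit $\Z$-linear combination of Euler characteristics of fibres of proper maps from varieties of composition series (flags) of the quiver representations, so that $CC(\phi_{S})$ is the corresponding combination of characteristic cycles of direct images of constant sheaves, each computable from the geometry of those maps over $\line{S}$. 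Matching the two is what must be done.

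The crux is that these descriptions agree in the leading term and on the open orbit but differ a priori along the boundary strata, and that the discrepancy is governed by nearby cycles: fixing $x\in S'\subseteq\line{S}$ and a generic linear form $f$ on $E_{V,\Omega}$ transverse to $S'$ at $x$, the difference between the coefficient of $[T^{*}_{S'}E_{V,\Omega}]$ predicted by Lusztig's $\phi_{S}$ and the one required by the conjecture equals the Euler characteristic of the stalk at $x$ of a nearby-cycle complex $\psi_{f}$ along $\line{S}$ (equivalently along the relevant resolution, or along the projection $\line{T^{*}_{S}E_{V,\Omega}}\to E_{V,\Omega}$) built from the constant-sheaf data above; thus the conjecture, in cycle form, is equivalent to the vanishing of all of these nearby-cycle Euler characteristics. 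For type $A_{2}$ this can be carried out explicitly: the orbit closures $\line{S}$ are generic determinantal varieties, whose classical (Grassmannian) resolutions, IC sheaves and characteristic cycles are known, while the semi-canonical and canonical bases of $U(\n)$ for $\n\subset\mathfrak{sl}_{3}$ are available from Lusztig's computations, and comparing the resulting short finite lists of $n_{S',S}$ and $m_{S',S}$ confirms the identity; equivalently, one checks directly that the relevant nearby-cycle Euler characteristics vanish. For general type $A$ no such direct analysis is available — the characteristic variety of $\IC(\line{S},\C)$ can have several irreducible components (the Kashiwara--Saito phenomenon), so the coefficients $n_{S',S}$ are genuinely nonzero and $Eu$ really does mix the conormal cycles — and the main obstacle, which we isolate as the remaining content of the conjecture, is precisely to prove the vanishing of the nearby-cycle Euler characteristics above for every orbit $S$ and every boundary stratum $S'\subseteq\line{S}$.
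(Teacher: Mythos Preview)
Your triangularity argument for the equivalence of the two formulations is correct, as is the observation that, since $Eu$ and $CC$ are inverse isomorphisms, the cycle identity is equivalent to $CC(\phi_S)=(-1)^{\dim S}[T^*_S E_{V,\Omega}]$, with the leading coefficient immediate.

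The gap is in your handle on $\phi_S$. You assert that Lusztig's construction realizes $\phi_S$ on $E_{V,\Omega}$ as an \emph{explicit} $\Z$-linear combination of flag-variety Euler characteristics. This is not so: $\widetilde{\phi}_S$ is defined on $\Lambda_V$ only implicitly, by its values on dense opens of the irreducible components, and $\phi_S$ is its restriction to the zero section. While $\phi_S$ does lie in $\M_\Omega$ and is therefore \emph{some} combination of the convolutions $1_{i_1}\ast\cdots\ast 1_{i_d}$, the coefficients are not known in general, and determining them is essentially the content of the conjecture. Your Kashiwara--Dubson comparison therefore has no concrete input on the semi-canonical side, and the nearby-cycle object you invoke (``along $\line{S}$, or the relevant resolution, or the projection $\line{T^*_S E_{V,\Omega}}\to E_{V,\Omega}$'') is neither made precise nor well-defined --- those three candidates are not equivalent.

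The paper's route is genuinely different: it works on $\Lambda_V$ rather than on $E_{V,\Omega}$. It builds a section $\eta_V:M(E_{V,\Omega})^{G_V}\to M(\Lambda_V)^{G_V}$ from vanishing cycles of the linear functionals $f_y$, shows that $\eta_V$ computes CC-multiplicities at regular conormal points, and reduces the conjecture to the structural identity $\eta_V=\Psi_0^{-1}$. This in turn is equivalent to compatibility of $\eta_V$ with convolution, which need only be checked on the generators $1_i$; the outcome is a completely explicit vanishing-cycle statement (Conjecture~\ref{conj: vanishing cycle}) for a specific function $h_{y_0}$ on $G_V\times E^{\geqslant 0}_{V^1,\dots,V^d,\Omega}$. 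This is the precise reduction your outline gestures toward but does not reach. Your $A_2$ proposal of comparing known lists of $n_{S',S}$ and $m_{S',S}$ is valid --- the paper itself remarks that the $A_2$ case follows from results of Raicu and of Geiss--Leclerc--Schr\"oer --- but the paper's aim is a method independent of such prior computations, and its $A_2$ proof is instead a direct Milnor-fiber analysis of $h_{y_0}$.
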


In this paper we develop a strategy to approach this conjecture. First let us formulate the dual statement. Let $M(E_{V, \Omega})^{G_V}$ be the space of $G_{V}$-invariant constructible functions on $E_{V, \Omega}$. Then Lusztig \cite{Lu91} showed that there is an algebra isomorphism
\[
U(\n) \cong \line{\mathcal{M}}_{\Omega} := \bigoplus_{V \in \mathcal{V}} M(E_{V, \Omega})^{G_V},
\]
where $\mathcal{V}$ is the set of isomorphism classes of $I$-graded vector spaces and the product on $\line{\mathcal{M}}_{\Omega}$ is given by convolution. So we can view the canonical and semi-canonical bases as elements in $\line{\mathcal{M}}_{\Omega}$. Let $M(\Lambda_{V})^{G_{V}}$ be the space of $G_{V}$-invariant constructible functions on $\Lambda_{V}$. The pullback along $E_{V, \Omega} \hookrightarrow \Lambda_{V}$ defines an algebra homomorphism
\[
\Psi: \line{\M}_{\Pi} := \bigoplus_{V \in \mathcal{V}} M(\Lambda_{V})^{G_{V}} \longrightarrow \line{\mathcal{M}}_{\Omega}
\]
where the product on $\line{\M}_{\Pi}$ is also given by convolution. Lusztig \cite{Lu91} showed that this induces an isomorphism $\Psi_{0}$ on a subalgebra $\M_{\Pi}$ of $\line{\M}_{\Pi}$. Let $\M_{\Pi}(V) := \M_{\Pi} \cap M(\Lambda_{V})^{G_V}$. We have a diagram
\begin{align}
\label{diagram}
\xymatrix{\M_{\Pi}(V) \ar@{^{(}->}[r] \ar[rd]^{\cong}_{\Psi_{0}} & M(\Lambda_{V})^{G_{V}} \ar[d]^{\Psi} \\
& M(E_{V, \Omega})^{G_{V}}.}
\end{align}
Lusztig \cite{lu00} showed that there exists a basis $\{\widetilde{\phi}_{S}\}$ of $\M_{\Pi}(V)$ parametrized by the $G_{V}$-orbits $S$ in $E_{V, \Omega}$ satisfying
\[
\widetilde{\phi}_{S}(x, y) = \begin{cases}
0 & \text{ if } (x, y) \in \mathcal{O}_{S'} \text{ and } S' \neq S \\
1 & \text{ if } (x, y) \in \mathcal{O}_{S} 
\end{cases}
\]
where $\mathcal{O}_{S}$ is some open dense subset of $T^*_{S}E_{V, \Omega}$. By definition, $\phi_S = \Psi_{0}(\widetilde{\phi}_{S})$. We define the dual semi-canonical basis to be $\rho_{S}(\phi) := \Psi_0^{-1}(\phi)$. Let $K_{G_{V}}(E_{V, \Omega})$ be the Grothendieck group of $G_{V}$-equivariant perverse sheaves on $E_{V, \Omega}$. The local Euler characteristic gives an isomorphism
\[
\chi: K_{G_{V}}(E_{V, \Omega}) \otimes_{\mathbb{Z}} \mathbb{C} \xrightarrow{\cong} M(E_{V, \Omega})^{G_V}, \quad \mathcal{F} \mapsto \phi_{\mathcal{F}}
\]
where $ \phi_{\mathcal{F}}(x) = \chi(\mathcal{F}_{x})$. Define $\chi^{{\rm mic}}_{S}(\phi_{\mathcal{F}}) := m_{S}(CC(\mathcal{F}))$, the multiplicity of $[T^*_{S}E_{V, \Omega}]$ in $CC(\mathcal{F})$. Then Conjecture~\ref{conj: multiplicity} is equivalent to the following dual statement.

\begin{conj}
\label{conj: dual}
\(
(-1)^{{\rm dim}\, S} \chi^{{\rm mic}}_{S} = \rho_S.
\)
\end{conj}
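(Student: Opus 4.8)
Both $\chi^{{\rm mic}}_{S'}$ and $\rho_{S'}$ are linear functionals on $M(E_{V,\Omega})^{G_V}$, so it is enough to check the identity of Conjecture~\ref{conj: dual} on the basis $\{\phi_{\IC(\line S,\C)}\}$ of $M(E_{V,\Omega})^{G_V}\cong K_{G_V}(E_{V,\Omega})\otimes\C$. Evaluating: by the shape of $CC(\IC(\line S,\C))$ recalled in the introduction, $\chi^{{\rm mic}}_{S'}(\phi_{\IC(\line S,\C)})=n_{S',S}$ (with $n_{S,S}=1$ and $n_{S',S}=0$ unless $S'\subseteq\line S$); and since $\phi_S=\Psi_0(\widetilde\phi_S)$, $g_S=\sum_{S''}m_{S'',S}\phi_{S''}$, $\rho_{S'}$ extracts the $\widetilde\phi_{S'}$-coefficient of $\Psi_0^{-1}(-)$, and $\phi_{\IC(\line S,\C)}$ coincides with $g_S$ up to the sign $(-1)^{\dim S}$, one gets $\rho_{S'}(\phi_{\IC(\line S,\C)})=(-1)^{\dim S}m_{S',S}$. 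Thus Conjecture~\ref{conj: dual}, namely $(-1)^{\dim S'}\chi^{{\rm mic}}_{S'}=\rho_{S'}$, becomes $(-1)^{\dim S'}n_{S',S}=(-1)^{\dim S}m_{S',S}$, i.e. the transition-matrix identity $m_{S',S}=(-1)^{\dim S'-\dim S}n_{S',S}$ of Conjecture~\ref{conj: multiplicity}, and the plan is to compute the two matrices $(n_{S',S})$ and $(m_{S',S})$ from geometry and match them. (At a conceptual level the two are exchanged by the Fourier transform between $E_{V,\Omega}$ and $E_{V,\line\Omega}$, under which characteristic cycles are equivariant and Lusztig's restriction map $\Psi$ to the zero section of $T^*E_{V,\Omega}$ gets its geometric meaning.)

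For the left-hand matrix I would use the microlocal index formula of Dubson and Kashiwara: at a generic $x\in S'$ and a generic conormal covector realised as $df$ for $f$ vanishing at $x$, $n_{S',S}$ equals, up to an explicit sign depending only on the dimensions, the Euler characteristic $\chi\bigl((\varphi_f\,\IC(\line S,\C))_x\bigr)$ of the vanishing-cycle stalk; passing to a normal slice through $x$ collapses $S'$ to a point and replaces $\line S$ by its transverse singularity along $S'$, which in type $A$ is an iterated determinantal variety. For the right-hand matrix I would use Lusztig's geometric realisation of $\M_\Pi(V)$: $m_{S',S}$ is the value at a generic $(x,y)\in\mathcal O_{S'}\subseteq T^*_{S'}E_{V,\Omega}$ of the constructible function $\Psi_0^{-1}(g_S)$ on $\Lambda_V$, and expanding $g_S$ through the flag-variety functions that generate Lusztig's algebra for $\Omega$ writes this value as a signed sum of Euler characteristics of varieties of submodule flags of the preprojective module $(x,y)$ --- equivalently of fibres of a projection from an incidence variety over $\Lambda_V$. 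The crux, which I expect to be the main obstacle, is to identify these two families of Euler characteristics: one must tie the transverse singularity of $\line S$ along $S'$ inside $E_{V,\Omega}$ to the fibre geometry of $\Lambda_V\to E_{V,\Omega}$ above $S'$, showing that $\varphi_f\,\IC(\line S,\C)$ and the relevant constructible function on $\Lambda_V$ have equal Euler characteristic on every stratum. I would encode the discrepancy $n_{S',S}-(-1)^{\dim S'-\dim S}m_{S',S}$ as the Euler characteristic $\chi(\psi_f\,\mathcal G)$ of a nearby-cycle complex for an explicit $\mathcal G$ assembled from the two geometries along $\Lambda_V$, reducing the conjecture to the vanishing $\chi(\psi_f\,\mathcal G)=0$.

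For the $A_2$ quiver this programme goes through by hand: the only orbit with singular closure has codimension one, that closure is a determinantal cone whose $\IC$ and characteristic cycle are classical, and $\Lambda_V$ is a transparent union of affine cells with immediate incidences, so $(n_{S',S})$ and $(m_{S',S})$ can be written out explicitly and seen to agree. Beyond $A_2$ neither the transverse determinantal singularities of the orbit closures nor the fibres of $\Lambda_V\to E_{V,\Omega}$ come with a uniform combinatorial evaluation of their Euler characteristics, which is exactly why in general type $A$ the method only reduces Conjecture~\ref{conj: dual} to the nearby-cycle vanishing statement above rather than proving it.
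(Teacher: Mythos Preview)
Your opening paragraph correctly unwinds the equivalence between Conjecture~\ref{conj: dual} and Conjecture~\ref{conj: multiplicity}, which the paper already states; the substance lies elsewhere. Your strategy then diverges from the paper's. You propose to compute the matrices $(n_{S',S})$ and $(m_{S',S})$ separately --- one via Dubson--Kashiwara on normal slices, the other via fibres of flag varieties over $\Lambda_V$ --- and then match them. The paper instead constructs a single map $\eta_V: M(E_{V,\Omega})^{G_V}\to M(\Lambda_V)^{G_V}$ by $\eta_V(\phi_{\mathcal F})(x,y)=\chi(R\Phi_{f_y}[-1](\mathcal F)_x)$, shows that on the regular locus of each conormal it recovers the characteristic-cycle multiplicity (Proposition~\ref{prop: relation with CC 1}), and notes that $\Psi\circ\eta_V=\mathrm{id}$, so the conjecture becomes $\eta_V=\Psi_0^{-1}$. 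The step you miss entirely is that this is equivalent (Proposition~\ref{prop-main-compatibility}) to $\eta_V$ being compatible with the \emph{convolution product}, which reduces everything to the one-dimensional generators $1_{S_i}$. Your plan never invokes the algebra structure, and without it your ``encode the discrepancy as $\chi(\psi_f\,\mathcal G)$ for an explicit $\mathcal G$'' remains unspecified: you name neither $f$ nor $\mathcal G$, whereas the paper's reduction produces the concrete function $h_{y_0}(g,x)=\langle gx,y_0\rangle$ on $G_V\times E^{\geqslant 0}_{V^1,\ldots,V^d,\Omega}$ and the constant sheaf.

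Your $A_2$ sketch contains a wrong claim and underestimates the work. It is not true that ``the only orbit with singular closure has codimension one'': for $E_{V,\Omega}=\mathrm{Hom}(V_1,V_2)$ the orbits are the rank loci, and every proper nonzero orbit closure is a determinantal variety singular along all strictly smaller ranks. The paper's $A_2$ proof (Section~\ref{sec: A_2}) is not a table lookup: after the convolution reduction it must compute $\chi(R\Phi_{h_{y_0}}[-1](\mathbbm 1))_{(1,x_0)}$, which it does by normalising $y_0$ via the Borel action, passing to an open cell, and performing a nontrivial change of variables to exhibit the function as fibred over a compact base with fibrewise quadratic forms of \emph{even} rank, forcing each fibre's Milnor fibre to have Euler characteristic $0$. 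The paper does remark that for $A_2$ one could alternatively cite Raicu and Geiss--Leclerc--Schr\"oer to obtain both sides separately, which is closer in spirit to your idea; but then you would need to invoke those results rather than the incorrect hand-wave you give.
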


In order to approach this conjecture, we define a section of $\Psi_0$ 
\begin{align}
\label{eq: section}
\eta_{V}: M(E_{V, \Omega})^{G_V} \longrightarrow M(\Lambda_{V})^{G_V}
\end{align}
by 
\(
(\eta_{V}(\phi_{\mathcal{F}}))(x, y) := \chi({\rm R}\Phi_{f_{y}}[-1] (\mathcal{F})_{x}),
\)
where $f_{y}: E_{V, \Omega} \rightarrow \C$ is the linear functional defined by $y \in T^*_{S, x}E_{V, \Omega}$. This map has been introduced in \cite{CMMB} in a more general setting. The link with characteristic cycles is as follows.
\begin{prop}{\rm (cf.  Proposition \ref{prop: relation with CC})}
\label{prop: relation with CC 1}
For $\mathcal{F} \in D_{G_{V}}(E_{V, \Omega})$ and $(x, y) \in (T^*_{S}E_{V, \Omega})_{\rm reg}$, we have
\[
\eta_{V}(\phi_{\mathcal{F}})(x , y) = (-1)^{{\rm dim}\Lambda_{V} - {\rm dim} \widehat{S}}m_{S}(CC(\mathcal{F})).
\]
Here $\widehat{S}$ is the dual orbit of $S$.
\end{prop}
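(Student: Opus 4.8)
The plan is to identify the value $\eta_V(\phi_{\mathcal F})(x,y)$ at a regular point $(x,y)$ of $T^*_S E_{V,\Omega}$ with a local Morse-theoretic datum, and then to recognize that datum as the multiplicity of $[T^*_S E_{V,\Omega}]$ in $CC(\mathcal F)$, up to the stated sign. The key input is the Kashiwara–Dubson–Ginzburg-type index formula, which expresses the multiplicities $m_{S'}(CC(\mathcal F))$ through the vanishing cycles of $\mathcal F$ along generic linear functions: for $(x,y)$ in the regular (i.e. "non-degenerate") locus $(T^*_S E_{V,\Omega})_{\mathrm{reg}}$, the function $f_y$ cuts $\overline S$ transversally away from a Morse-type singularity at $x$, so that $R\Phi_{f_y}[-1](\mathcal F)_x$ computes precisely the local Morse group of $\mathcal F$ at $(x,y)$. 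The first step is therefore to recall this normalization from \cite{ka13} and to fix the precise shift conventions (the $[-1]$ in the definition of $\eta_V$ is exactly what is needed to match the conventions under which $CC(\IC(\overline S,\C))=[T^*_S E_{V,\Omega}]+\cdots$ with leading coefficient $1$).

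Concretely, I would proceed as follows. First I would reduce to the case $\mathcal F=\IC(\overline{S'},\C)$ for the various orbits $S'$, since both sides are additive in $\mathcal F$ over the Grothendieck group and the $\IC$ sheaves form a basis. Second, for a fixed regular covector $(x,y)\in (T^*_S E_{V,\Omega})_{\mathrm{reg}}$, I would invoke the transversality built into the definition of the regular locus: $y$ is a conormal covector at $x$ to $S$ only, and for every other stratum $S'$ either $x\notin\overline{S'}$, or $y$ is a non-characteristic covector for $T^*_{S'}E_{V,\Omega}$; consequently $\chi(R\Phi_{f_y}[-1](\IC(\overline{S'},\C))_x)$ vanishes unless $S\subseteq\overline{S'}$, and when $S'=S$ it equals $(-1)^{\mathrm{codim}\, S}$ by the smooth Morse-datum computation. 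Third, by the microlocal index theorem the same vanishing-cycle Euler characteristics are the entries $m_S(CC(\IC(\overline{S'},\C)))=n_{S',S}$ (with $n_{S,S}=1$), up to a uniform sign depending only on $\dim\Lambda_V$ and the dimension of the stratum $\widehat S$ in the conormal variety. Matching the two computations for all $S'$ then forces the asserted identity $\eta_V(\phi_{\mathcal F})(x,y)=(-1)^{\dim\Lambda_V-\dim\widehat S}\,m_S(CC(\mathcal F))$.

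The bookkeeping of signs and shifts is where I expect the real work to be. One has to reconcile three conventions: Lusztig's normalization of the $\IC$ extension and the resulting $(-1)^{\dim S}$ twist in $Eu(CC(\IC(\overline S,\C)))=(-1)^{\dim S}g_S$; the Kashiwara–Schapira sign in $E_u\circ CC=\mathrm{id}$ on constructible functions, which carries the intrinsic $(-1)^{\dim}$ of the manifold; and the shift $[-1]$ in $R\Phi_{f_y}$ together with the perverse normalization of vanishing cycles. The cleanest route is probably to test the identity on $\mathcal F=\IC(\overline S,\C)$ with $(x,y)$ generic in $T^*_S E_{V,\Omega}$ itself — there $m_S(CC(\IC(\overline S,\C)))=1$ and $\eta_V(\phi_{\mathcal F})(x,y)=\chi(R\Phi_{f_y}[-1](\IC(\overline S,\C))_x)$ is the Euler characteristic of the Milnor fiber of a generic linear function on a smooth stratum, which is $(-1)^{\dim\Lambda_V-\dim\widehat S}$ by a direct computation — and thereby pin down the constant, after which additivity and the transversal vanishing established above upgrade it to all $\mathcal F$ and all regular $(x,y)$.

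Finally I would note that the only genuinely geometric ingredient beyond the index theorem is the claim that on $(T^*_S E_{V,\Omega})_{\mathrm{reg}}$ the linear functional $f_y$ has an isolated, Morse-type critical behaviour with respect to each stratum $\overline{S'}$ meeting $x$; this is precisely the condition cutting out the regular locus, and it is here — rather than in any computation — that one uses the specific structure of the stratification of $E_{V,\Omega}$ by $G_V$-orbits and of $\Lambda_V$ as a union of conormal bundles. Granting that, the proposition follows by combining the microlocal index formula with the sign normalization fixed on the diagonal test case above.
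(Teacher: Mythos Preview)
Your high-level strategy is sound and not far from the paper's: both approaches ultimately factor $\eta_V(\phi_{\mathcal F})(x,y)$ into a ``normal'' contribution, which is $m_S(CC(\mathcal F))$ by the standard microlocal index formula, and a ``tangential'' contribution coming from the vanishing cycle of $\mathbbm{1}_S$ (equivalently, of $\IC(\overline S,\C)$ near a smooth point). Where your proposal has a genuine gap is in the step you label ``a direct computation'': you assert that $\chi\big(R\Phi_{f_y}[-1](\IC(\overline S,\C))_x\big)=(-1)^{\dim\Lambda_V-\dim\widehat S}$ for generic $(x,y)\in T^*_SE_{V,\Omega}$, but this is precisely the nontrivial content of the proposition, and no argument is offered.

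The difficulty is that the critical point of $f_y|_S$ at $x$ is \emph{not} Morse. Indeed, the critical locus of $f_y|_S$ is $\{z\in S:(z,y)\in T^*_SE_{V,\Omega}\}$, which is the fibre of the projection $T^*_SE_{V,\Omega}\to\widehat S$ over $y$ and is typically positive-dimensional; so the Hessian of $f_y|_S$ at $x$ is degenerate and there is no off-the-shelf Milnor fibre formula. The paper handles this by passing to the product $S\times\widehat S$ via the identification $R\Phi_{f_y}[-1](\mathcal F)_x\cong R\Phi_{f_S}[-1](\pi^*\mathcal F)_{(x,y)}$, applying the stratified Morse splitting $\mathrm{LMD}=\mathrm{TMD}\otimes\mathrm{NMD}$ there, and then proving (Proposition~\ref{prop: tangential data}, with full details in the appendix) that the Hessian of $f=\langle\cdot,\cdot\rangle|_{S\times\widehat S}$ at $(x,y)$ has rank exactly $\dim S+\dim\widehat S-\dim\Lambda_V$ and is nondegenerate on a normal slice to $(T^*_SE_{V,\Omega})_{\mathrm{reg}}$. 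That Hessian computation is a Lie-theoretic calculation using $[y,[\mathfrak g_V,x]]$ and the description of $T_{(x,y)}(T^*_SE_{V,\Omega})$; only after it does Sebastiani--Thom yield $R\Phi_f[-1](\mathbbm{1}_{S\times\widehat S})_{(x,y)}=\C[\dim\Lambda_V-\dim\widehat S-\dim S]$, whence the sign. You should either supply an independent computation of the tangential datum or adopt the paper's lift to $S\times\widehat S$ and its Hessian argument; without one of these, the proof is incomplete at its crucial step.
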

As a consequence, Conjecture~\ref{conj: dual} is equivalent to 
\[
\Psi_0^{-1}(\phi)|_{\mathcal{O}_{S}} = (-1)^{{\rm dim} \Lambda_{V} - {\rm dim} \widehat{S} - {\rm dim} S} \eta_{V}(\phi)|_{\mathcal{O}_{S}}
\]
for all $\phi \in M(E_{V, \Omega})^{G_V}$. Indeed, it is possible to show that
\begin{align}
\label{eq: sign}
{\rm dim} \Lambda_{V} - {\rm dim} \widehat{S} - {\rm dim} S \equiv 0 \text { mod } 2
\end{align}
from the fact that
\(
IC(\line{S}, \mathbb{C})^{\vee} = IC(\line{\widehat{S}}, \mathbb{C}),
\)
where $(\cdot)^{\vee}$ is the Fourier-Sato transform. We will not include the argument here, since it is not our main focus. Now we can state our main result.

\begin{teo}
\label{thm: inverse}
For type $A_2$ quiver,
\(
\Psi_0^{-1} = \eta_{V}.
\)
\end{teo}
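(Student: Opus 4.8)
The plan is to verify the identity $\Psi_0^{-1}=\eta_V$ directly on a basis of $M(E_{V,\Omega})^{G_V}$, orbit by orbit, exploiting the fact that for type $A_2$ the quiver variety $E_{V,\Omega}$ is very small: fixing dimension vectors $(d_1,d_2)$, the space of representations is $\Hom(V_1,V_2)$, on which $G_V=GL(V_1)\times GL(V_2)$ acts with finitely many orbits indexed by the rank $r$ of the map, and $\Lambda_V$ is the classical Steinberg-type variety whose irreducible components $\overline{T^*_S E_{V,\Omega}}$ are again indexed by $r$. Thus both sides of the claimed equality are endomorphisms of a finite-dimensional space with a distinguished orbit basis, and it suffices to compare their matrices. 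First I would record the combinatorics: list the orbits $S_r$, compute $\dim S_r$, identify the closure order (which is total, $\overline{S_r}=\bigcup_{r'\le r}S_{r'}$), and write down the conormal varieties $T^*_{S_r}E_{V,\Omega}$ together with their dual orbits $\widehat{S_r}$ under the Fourier--Sato transform.

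Next I would compute the left-hand side. The basis $\{\widetilde\phi_{S_r}\}$ of $\M_\Pi(V)$ is characterized by its values $0/1$ on the open dense pieces $\mathcal O_{S_{r'}}$, so $\Psi_0^{-1}$ is pinned down once we know $\Psi_0(\widetilde\phi_{S_r})=\phi_{S_r}=\Psi(\widetilde\phi_{S_r})$, i.e. the restriction of each $\widetilde\phi_{S_r}$ along $E_{V,\Omega}\hookrightarrow\Lambda_V$. Equivalently, by the diagram~\eqref{diagram} it is enough to express the indicator-type constructible functions $\mathbbm 1_{\overline{S_{r'}}}$ (or the $\phi_{S_r}$, which by Lusztig's construction are, up to sign, the stalk Euler characteristics of $\IC(\overline{S_r},\C)$) in the dual semi-canonical basis; for $A_2$ the $\IC$ sheaves on the determinantal strata are classical and their stalks are known (they are concentrated in even degrees with Betti numbers given by Gaussian binomials), so the change-of-basis matrix $m_{S',S}$ is explicit. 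In parallel I would compute the right-hand side $\eta_V(\phi_{S_r})(x,y)=\chi(\mathrm R\Phi_{f_y}[-1](\IC(\overline{S_r},\C))_x)$ on a point $(x,y)\in\mathcal O_{S_{r'}}$; here the linear functional $f_y$ is, after choosing coordinates adapted to the rank stratification, again a determinantal-type function, and its vanishing cycles on the determinantal $\IC$ sheaf can be evaluated either by a direct Morse-theoretic computation on the (affine, conical) strata or — more systematically — by invoking Proposition~\ref{prop: relation with CC 1}, which already identifies $\eta_V(\phi_{\mathcal F})$ on the regular locus with the characteristic-cycle multiplicity $m_S(CC(\mathcal F))$ up to the sign in~\eqref{eq: sign}. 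With that, the theorem reduces to the numerical identity $m_{S',S}=(-1)^{\dim S'-\dim S}n_{S',S}$ for the $A_2$ determinantal strata, i.e. Conjecture~\ref{conj: multiplicity} in this case.

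The decisive computation is therefore: for the closure of a rank-$r$ determinantal stratum inside $\Hom(\C^{d_1},\C^{d_2})$, compare (a) the coefficients $n_{S',S}$ appearing in $CC(\IC(\overline{S_r},\C))$ — obtainable from the local Euler obstructions / the Kashiwara index theorem applied to the known $\IC$ stalks — with (b) the coefficients $m_{S',S}$ of the canonical basis in the semi-canonical basis, which one gets by inverting the stalk matrix of the $\IC$ sheaves. I expect both to be (signed) Gaussian-binomial-type expressions, and the identity to follow from a clean combinatorial manipulation (a $q=-1$ specialization phenomenon: the characteristic cycle multiplicities are the $\IC$ Poincaré polynomials evaluated at $q=-1$ up to sign, while the semi-canonical coefficients invert the same polynomials at $q=1$, and for $A_2$ the two coincide because everything is unimodular). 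The main obstacle is making step (a) rigorous: one must control the vanishing cycles $\mathrm R\Phi_{f_y}$ of $\IC(\overline{S_r},\C)$ along \emph{all} conormal directions $y$, not just generic ones, which in general is exactly the hard content of the conjecture; in the $A_2$ case this is tractable because the relevant Milnor fibers are those of determinantal functions restricted to determinantal varieties, whose topology is classical, but writing the argument so that it genuinely verifies $\Psi_0^{-1}=\eta_V$ as functions on \emph{all} of $\Lambda_V$ (not merely on the regular loci $\mathcal O_S$) requires an extra argument — either a dimension/support bound forcing both sides to be supported on the smooth locus of each component, or an explicit check on the finitely many lower strata.
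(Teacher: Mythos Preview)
Your proposal takes a genuinely different route from the paper, and it contains a real gap. You reduce Theorem~\ref{thm: inverse} to the numerical identity $m_{S',S}=(-1)^{\dim S'-\dim S}n_{S',S}$ (Conjecture~\ref{conj: multiplicity}), but that identity is \emph{strictly weaker} than the theorem. As the introduction spells out, Conjecture~\ref{conj: dual} is equivalent only to agreement of $\Psi_0^{-1}(\phi)$ and $\eta_V(\phi)$ on the open loci $\mathcal{O}_S$; Theorem~\ref{thm: inverse} asserts equality as constructible functions on all of $\Lambda_V$, equivalently that $\mathrm{Im}\,\eta_V\subseteq\M_\Pi(V)$. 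Since $M(\Lambda_V)^{G_V}$ is in general strictly larger than $\M_\Pi(V)$ (there are more $G_V$-orbits in $\Lambda_V$ than irreducible components), values on the $\mathcal{O}_S$ do not determine an arbitrary element of $M(\Lambda_V)^{G_V}$. You correctly flag this obstacle at the end, but neither of your suggested fixes comes with an argument; carrying them out would mean computing $\chi(R\Phi_{f_y}[-1](\IC(\overline{S}_r))_x)$ at every non-regular $(x,y)\in\Lambda_V$, which is precisely the hard content and not obviously tractable even for $A_2$. In effect your plan re-derives the known Conjecture~\ref{conj: dual} for $A_2$ (cf.\ the remark after Theorem~\ref{thm: inverse}) rather than the stronger statement.

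The paper avoids this difficulty entirely. By Proposition~\ref{prop-main-compatibility}, $\eta_V=\Psi_0^{-1}$ is equivalent to compatibility of $\eta_V$ with the convolution product; via the induction formalism of \S\ref{subsection-Induction}--\ref{subsection-Further-reduction} this reduces to the single vanishing-cycle statement Conjecture~\ref{conj: vanishing cycle}, namely $\chi(R\Phi_{h_{y_0}}[-1](\mathbbm{1}))_{(1,x_0)}=1$ for an explicit function $h_{y_0}$ on $G_V\times E^{\geqslant 0}_{V^1,\ldots,V^d,\Omega}$. Section~\ref{sec: A_2} then proves this for $A_2$ by direct Milnor-fiber analysis: after normalizing $y_0$ and restricting to a big cell, $\bar h_{y_0}$ is rewritten as a quadratic form in one set of variables with coefficients in a disjoint set; a structural observation forces the Hessian rank to be even, and a fibration argument combined with Sebastiani--Thom shows the nearby cycle has Euler characteristic zero. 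No stratum-by-stratum computation on $\Lambda_V$ is ever needed, and no appeal is made to the known $\IC$ stalks or characteristic cycles of determinantal varieties.
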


Conjecture~\ref{conj: dual} for type $A_2$ quiver follows from this theorem and \eqref{eq: sign}. We shall point out that Conjecture~\ref{conj: dual} in this case also follows from the known results $(-1)^{{\rm dim} S} \chi^{{\rm mic}}_{S} = g^{*}_S$ \cite{Raicu:2016} and $\rho_{S} = g^{*}_{S}$ \cite{GLS05}, where $g^{*}_{S}$ is the dual canonical basis. Nevertheless, the purpose of this paper is to develop a strategy for studying Conjecture~\ref{conj: dual} in all cases. We plan to apply our strategy to some special orbits in the future.

The paper is organized as follows.  In \S \ref{section-characteristic-cycles} we review the notion of characteristic cycles. In \S \ref{section-canonical-basis} we review the classical work of 
Lusztig on the canonical bases and the semi-canonical bases. Both sections contain no new results and we mainly follow Lusztig's notations.  In \S \ref{subsection-Constructible-functions}, we introduce the map $\eta_V$ and show its image consists of constructible functions. The ideas are from \cite{CMMB}.
In \S \ref{subsection-characteristic-cycles}, we prove Proposition~\ref{prop: relation with CC}. Our main tool is stratified Morse theory (cf.
\cite{Schurmann:2003}). Note that in order to apply the results of \cite{Schurmann:2003}, some Whitney type regularity condition is required. This is verified in the appendix. In \S \ref{subsection-Compatibility}, we show that the equality $\eta_{V} = \Psi_0^{-1}$ is equivalent to the compatibility of $\eta_V$ with convolution (cf. Proposition \ref{prop-main-compatibility}). In \S \ref{subsection-Induction} and \S \ref{subsection-Further-reduction}, we reduce it further to a problem of vanishing cycle calculation. 

\begin{conj}
{\rm (cf. Conjecture\ref{conj: vanishing cycle})}
We have $\chi(R\Phi_{h_{y_0}}[-1](\mathbbm{1}))_{(1, x_0)} = 1$.
\end{conj}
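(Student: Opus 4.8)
The plan is to compute the vanishing cycle complex $R\Phi_{h_{y_0}}[-1](\mathbbm{1})$ explicitly by reducing, via the preceding sections, to a concrete local model. By the time we reach this conjecture the function $h_{y_0}$ arises from a further reduction of the convolution-compatibility problem, so $h_{y_0}$ is a specific polynomial function (a bilinear pairing coming from the quiver relations) on an affine space, and the point $(1,x_0)$ sits over a generic point of the relevant stratum. First I would unwind the definition of $h_{y_0}$ coming from \S\ref{subsection-Further-reduction}, identifying it with the restriction of the moment-map type equation $\sum \bar{a}a - a\bar{a}$ to a transverse slice; the $\mathbbm{1}$ here is the constant sheaf, whose vanishing cycles along a function $f$ are governed purely by the Milnor fiber $F_{x_0}$ of $f$ at $x_0$, with $\chi(R\Phi_f[-1](\mathbbm{1}))_{x_0} = 1 - \chi(F_{x_0}) = -\widetilde{\chi}(F_{x_0})$.

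The key steps, in order: (1) write down $h_{y_0}$ and the ambient space explicitly in coordinates adapted to the stratum containing $x_0$, using the type $A$ orientation $\Omega: i \to i+1$ and the description of $\Lambda_V$ from \S\ref{section-canonical-basis}; (2) show that near $x_0$ the function $h_{y_0}$ is, up to a coordinate change and adding a nondegenerate quadratic form in the remaining variables, equivalent to a function of a single variable or to a function whose Milnor fiber is contractible --- this is where the splitting $(1,x_0)$ over a \emph{generic} (regular) point of the stratum is essential, since genericity forces the relevant differential $d h_{y_0}$ to have the expected rank; (3) invoke the Thom--Sebastiani formula for vanishing cycles to peel off the nondegenerate quadratic part (which contributes a shift but multiplies $\widetilde\chi$ by $\pm 1$ in a controlled way) and reduce to a low-dimensional Milnor fiber; (4) compute that remaining Milnor fiber --- for a generic linear or simple-normal-crossings-type singularity coming from the quiver equations it should be homotopy equivalent to a point or to a space with $\widetilde\chi = -1$, giving $\chi(R\Phi_{h_{y_0}}[-1](\mathbbm{1}))_{(1,x_0)} = 1$.

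The main obstacle will be step (2)--(4): controlling the singularity type of $h_{y_0}$ at $x_0$. Even restricted to a generic point of the stratum, the equation defining $\Lambda_V$ is genuinely nonlinear, and its Milnor fiber need not be a quadric; one must use the specific combinatorics of type $A$ --- the chain structure of the quiver and the explicit form of Lusztig's orbits $S$ and their duals $\widehat S$ --- to argue that the relevant vanishing cycle is as small as claimed. A promising route is to exhibit a $\mathbb{C}^\times$-action on the Milnor fiber (coming from scaling the arrows, compatible with the grading by $I$) contracting it to a point, or with fixed-point locus having Euler characteristic $1$; the Euler characteristic of the Milnor fiber then equals that of the fixed locus, which one computes directly. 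Alternatively, one can try to identify the Milnor fiber with that of a \emph{linear} function after a change of coordinates valid at a generic point, in which case it is contractible outright and $\widetilde\chi = 0$ would contradict the claim --- so in fact the correct expectation is that the singularity is exactly of type $A_1$ (an ordinary double point in one transverse variable) whose Milnor fiber is $\mathbb{C}^\times$, giving $\widetilde\chi = -1$ and hence the value $1$. Verifying which of these local pictures actually occurs, uniformly in $S$, is the crux; I expect stratified Morse theory (as in \cite{Schurmann:2003}, already invoked for Proposition~\ref{prop: relation with CC}) together with the known formula $(-1)^{\dim S}\chi^{\mathrm{mic}}_S = g^*_S$ in the $A_2$ case to provide the base cases and the inductive template.
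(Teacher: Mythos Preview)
Your plan rests on the hope that after a suitable change of coordinates $h_{y_0}$ is, up to a nondegenerate quadratic form, an $A_1$-type singularity at $(1,x_0)$. This is the gap: the singularity of $h_{y_0}$ at $(1,x_0)$ is in general \emph{not isolated}. The paper says so explicitly in the introduction: ``even in this case, the singular locus of $h_{y_0}$ can be very complicated, and it could involve singular irreducible components of various dimensions.'' So there is no coordinate change after which you peel off a nondegenerate quadratic and are left with a one-variable model; step~(2) of your plan fails as stated, and with it the $A_1$ picture in step~(4). Your alternative route via a $\mathbb{C}^\times$-contraction is not pursued and would need a concrete action compatible with the explicit form of $h_{y_0}$, which you have not produced. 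Finally, invoking the known $A_2$ result $(-1)^{\dim S}\chi^{\mathrm{mic}}_S = g^*_S$ as a ``base case'' would be circular here, since the point of the section is to establish Conjecture~\ref{conj: vanishing cycle} directly and thereby give an independent proof of the main theorem.

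What the paper actually does is structurally different from a normal-form reduction. After normalizing $y_0$ by the Borel action and passing via smooth base change to $(\bar U_1\times\bar U_2)\times E^{\geqslant 0}_{V^1,\dots,V^d,\Omega}$, the variables are split into a set $V$ of \emph{coefficient variables} and a set $W$ of \emph{quadratic variables}, so that for each fixed value $z_V$ the restriction $f_{z_V}(z_W)$ is a genuine quadratic form in $z_W$. The crucial lemma is that the Hessian of $f_{z_V}$ has block shape $\begin{pmatrix}0&B\\B^T&0\end{pmatrix}$ with respect to a bipartition $W=W_1\sqcup W_2$, hence \emph{even} rank for every $z_V$; by Sebastiani--Thom this forces $\chi(R\Psi_{f_{z_V}}(\mathbbm 1)_0)=0$. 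The Milnor fiber of the full function at $(1,x_0)$ is then fibered over the coefficient space with fibers equal to these rank-$r$ quadric Milnor fibers (all with $\chi=0$), and a Leray spectral sequence argument gives $\chi(R\Psi_f(\mathbbm 1)_{z^0})=0$, hence $\chi(R\Phi_{h_{y_0}}[-1](\mathbbm 1))_{(1,x_0)}=1$. The essential idea you are missing is this two-layer structure: rather than a single normal form, one has a family of quadratic forms of varying but always even rank, and the conclusion comes from integrating the vanishing of $\chi$ along the family.
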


Finally, in \S\ref{sec: A_2}, we show that the last conjecture is true for type $A_2$ quiver. We prove this result by showing that the relevant nearby cycle has Euler characteristic $0$. We should remark that even in this case, the singular locus of $h_{y_0}$ can be very complicated, and it could involve singular irreducible components of various dimensions. We show that the relevant nearby cycle has Euler characteristic $0$ by constructing a fibration of the Milnor fiber over some compact space and showing the fibers all have Euler characteristic $0$. Then the result follows from the Leray spectral sequence.

\par \vskip 1pc
{\bf Acknowledgement.}
The project was discussed when both authors were in Max Planck Institute for Mathematics of Bonn and started when both were in Yau Mathematical Sciences Center  of Tsinghua University. They would like to thank both institute for their excellent  working environment. The second author is supported by Tsinghua University Initiative Scientific Research Program No. 2019Z07L02016.

\section{Characteristic cycles}
\label{section-characteristic-cycles}

In this section we review some generalities on characteristic cycles, 
our main reference is \cite{ka13}. Nothing is new in this section.

\subsection{Micro-support and characteristic cycles}

To introduce the micro-support of a $\mC$-sheaf on a manifold, we 
follow \cite{ka13} section 8.6 to give a definition using vanishing cycles.

Consider a complex manifold $X$ with a holomorphic function 
\[
f: X\rightarrow \C.
\]
Moreover, we assume that $Y=f^{-1}(0)$ is non-singular.
Also, let $p:\C\rightarrow \C$ be the function $p(z)=\exp(2\pi \sqrt{-1}z)$, considered as the universal covering map of $\mC$.
Finally, let $\line{p}: \line{X}\rightarrow X$ be the pullback of 
$p$ along $f$. 

\begin{definition}
Let $\F\in D^{b}(X)$.  Let $i:Y\rightarrow X$ be the natural embedding. The nearby-cycle functor is defined by
\[
R\Psi_{f}(\F)=i^*R\line{p}_{*}\line{p}^*(\F).
\]
\end{definition}

We also need to consider the vanishing cycles, which is 

\begin{definition}
Let $R\Phi_f(\F)\in D^b(\F)$ be the unique element such that we have the following distinguished triangles
\[
i^*(\F)\rightarrow R\Psi_f(\F)\rightarrow R\Phi_f(\F) \longrightarrow^{\hspace{-0.5cm}+1} .
\]
\end{definition}

Now we can define the micro-support $SS(\F)$ of a constructible sheaf $\F$.
\begin{definition}
Let $D_{c}^{b}(X)$ be the subcategory of $D^b(X)$ consisting elements with bounded constructible cohomology sheaves. It is a full subcategory. Let $p\in T^*X$ and $\F\in D_c^{b}(X)$, then we define a subset $SS(\F)\subseteq T^*(X)$ by the following 
\begin{description}
\item[(1)]$p\notin SS(F)$.
\item[(2)]There exists an open neighborhood $U$ of $p$ such that for any $x\in X$ and any holomorphic function $f: W\rightarrow \C$ defined in
a neighborhood $W\subseteq X$ of $x$ with $f(x)=0$ and $df(x)\in U$, one gets 
$R\Phi_f(\F)_x=0$.
\end{description}

\end{definition}

\remk 
Note that such a definition works well for varieties over other fields. 
More precisely, Beilinson \cite{Be16}  constructed micro-support for 
arbitrary base field, and Saito \cite{Sa17} constructed characteristic cycle
for sheaves on varieties over a finite field.

\vspace{0.5cm}
Finally, following Kashiwara and Shapira, we can attach a Lagrangian cycle $CC(\F)$ to $\F\in D_{c}^{b}(X)$ in a functorial way.
Its support is $SS(\F)$. We call $CC(\F)$ the characteristic cycle of $\F$. We do not give the exact definition but just list some of its properties.

\begin{prop}\label{prop-basic-property-characteristic-cycle}
Let $X$ and $Y$ be complex manifolds, and $\F\in D_c^b(X), \G\in D_c^b(Y)$. We have 

\begin{description}
\item[(1)] $CC(\F\boxtimes \G)=CC(\F)\boxtimes CC(\G)$.
    \item[(2)] $CC(D_X(\F))=CC(\F)$, where $D_X$ is the Verdier dual.
    \item [(3)] Let $F'\rightarrow F\rightarrow F''\longrightarrow^{\hspace{-0.5cm}+1} $ be a distinguished triangle in $D_c^b(X)$. Then 
    \[
    CC(\F)=CC(\F')+CC(\F'').
    \]
\item[(4)]Assume that $\F$ is a local system. Then we have 
\[
CC(\F)=(-1)^{\dim(X)}\rank(\F)[T_X^*X].
\]
\item[(5)]We have $supp(CC(\F))=SS(\F)$.
\item[(6)](Milnor type formula) \, Let  $x\in U\subseteq X$ be an open subset. Suppose $f: U\rightarrow \C$ is holomorphic. Assume that the section $C_f=(y, df(y))$
of the natural projection $T^*X\rightarrow X$
 intersects $ SS(F)$ transversally. Then we have 
\[
- \chi(R\Phi_f(\F|_U)_x)=(CC(\F), C_f)_{T^*U, x}
\]
\item[(7)]Let $\F$ be perverse. Then 
\[
CC(\F)\geq 0.
\]
\end{description}

\end{prop}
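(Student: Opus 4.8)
The plan is to deduce all seven properties from the real subanalytic theory of Kashiwara--Schapira \cite{ka13}. The first step is to record the dictionary relating the complex micro-support $SS(\F)$ of a constructible sheaf --- defined above through vanishing cycles --- to the micro-support of the underlying $\R$-constructible complex in the sense of \cite{ka13}: under the $\C$-linear isomorphism identifying the holomorphic cotangent bundle $T^*X$ with the real cotangent bundle, the two notions agree, and $SS(\F)$ is a conic complex Lagrangian subvariety, hence a union of conormal varieties to complex subvarieties. With this in hand, $CC(\F)$ is the conic Lagrangian cycle supported on $SS(\F)$ that \cite{ka13} attaches functorially to each $\F\in D_c^b(X)$; concretely its multiplicities are the local Euler-type indices extracted from the micro-local Euler class.

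Items (2), (3) and (5) are then immediate from the corresponding statements in \cite{ka13}. Additivity on a distinguished triangle (3) holds because the micro-local invariant underlying $CC$ is built from an alternating-sum / $K$-theoretic construction; the support equality (5) is part of the construction; and (2) follows from $SS(D_X\F)=SS(\F)$ together with the compatibility of the micro-local Euler class with Verdier duality. Property (1) is the K\"unneth formula for $CC$, which rests on $SS(\F\boxtimes\G)=SS(\F)\times SS(\G)$ and a factorization of the micro-local index construction. For (4), a local system $\F$ has $SS(\F)=T^*_XX$, so $CC(\F)=c\,[T^*_XX]$ for an integer $c$; evaluating the Milnor-type formula (6) at a generic linear form, or directly computing $\chi$ of the stalks of $R\Psi_f$ and $R\Phi_f$, forces $c=(-1)^{\dim X}\rank(\F)$.

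The substantive points are (6) and (7). For (6) I would invoke the micro-local index theorem of \cite{ka13}: the transversality of $C_f$ with $SS(\F)$ makes the intersection $0$-dimensional and isolated at $x$, the local intersection number $(CC(\F),C_f)_{T^*U,x}$ computes the local Euler--Poincar\'e index of the pair $(\F,f)$ at $x$, and the Morse-theoretic description of vanishing cycles identifies this index with $-\chi(R\Phi_f(\F|_U)_x)$. The one point that genuinely needs care is to reconcile our normalizations of $R\Psi_f$ and $R\Phi_f$ (in particular whether a shift $[-1]$ is inserted) with those of \cite{ka13}, so that the signs on both sides of the displayed formula match.

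Property (7) is the deepest input, and it is where a fully self-contained argument would have to do real work rather than cite \cite{ka13}. The micro-local route runs as follows: fix a complex stratification adapted to $\F$; for a stratum $Z$ the multiplicity $m_Z(CC(\F))$ of $[T^*_ZX]$ equals, up to an explicit sign depending only on $\dim_\C Z$ and $\dim_\C X$, the Euler characteristic $\chi\big(R\Phi_f(\F)_x\big)$, where $x$ is a generic point of $Z$ and $f$ is a generic local holomorphic function with $f(x)=0$ and $df(x)$ in the conormal direction. Since $R\Phi_f[-1]$ is $t$-exact for the perverse $t$-structure, $R\Phi_f(\F)[-1]$ is perverse; near such a generic $x$ it is a shifted local system along $Z$, so its stalk is concentrated in a single cohomological degree, and the sign of its Euler characteristic is exactly the one that cancels the prefactor, yielding $m_Z(CC(\F))\ge 0$. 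Alternatively one may simply cite the classical effectivity theorem for characteristic cycles of perverse sheaves (due to Kashiwara; see also \cite{Schurmann:2003}). Thus (7) --- together with the perversity of the vanishing-cycle functor and the micro-local Morse computation it relies on --- is the main obstacle, while everything else is routine bookkeeping on top of \cite{ka13}.
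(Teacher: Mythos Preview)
Your proposal is correct and follows essentially the same route as the paper: both treat the proposition as a compendium of known facts and defer to the literature, with the only substantive remark being the reconciliation of sign/antipodal conventions. The paper is terser, citing specific results in \cite{ka13} for (1)--(3) and in Saito \cite{Sa17} for (4)--(7), whereas you sketch the underlying mechanisms (especially for (6) and (7)) and rely solely on \cite{ka13} and \cite{Schurmann:2003}; but the content is the same.
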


\begin{proof}
(1) is (9.4.1) in \cite{ka13}, (2) is proved in Proposition
9.4.4 in loc.cit. and (3) is proved in Proposition 9.4.5 in loc.cit. 
Note that in (2) our formula differs from that of \cite{ka13}
by an antipodal pullback since we are working with complex  varieties. For (4), we refer to lemma 4.11 of \cite{Sa17}, and for (5) and (6), see Theorem 4.9 and Proposition 4.14 \cite{Sa17}. Again, we note that the characteristic cycle in \cite{ka13} differs from ours by a sign since we require $CC(\F)\geq 0$ for a perverse sheaf $\F$, following \cite{Sa17}.
Finally, $(7)$ follows from Proposition 5.14 of \cite{Sa17}. 
\end{proof}

\subsection{Constructible functions, Lagrangian cycles and characteristic cycles}

We introduce the following set of constructible functions on 
complex varieties.

\begin{definition}
A function $\phi: X\rightarrow \C$ is constructible if 
$f(X)$ is finite and $f^{-1}(c)$ is a constructible subset of $X$ in the Zarisky topology for any $c \in \mathbb{C}$. The set of constructible functions
on $X$ is denoted by $M(X)$.
\end{definition}
 
\remk
Our definition of constructible functions is more restrictive than that of \cite{ka13}. 
 
\begin{notation}
Let $K(X)$ be the Grothendieck group of $D_{c}^b(X)$, i.e,
the full subcategory of $D^b(X)$ consisting of elements  with bounded constructible 
 cohomology sheaves with $\C$-coefficients.
\end{notation}

\begin{definition}
Let $L(X)$ be the free abelian group generated by the
complex Lagrangian subvarieties of $X$. Here by Lagrangian subvariety we mean middle dimensional algebraic subvariety of $T^*X$.
\end{definition}
\remk 
Naturally we have $CC(\F)\in L(T^*X)$ for any $\F\in K(X)$.

\begin{definition}
We have group homomorphisms
\[
\chi: K(X) \rightarrow M(X), \quad \F\mapsto \chi(\F), \quad (\chi(\F))(x)=\chi(\F_x)
\]
and 
\[
CC: K(X)\rightarrow L(T^*X), \quad \F\mapsto CC(\F).
\]
\end{definition}

\begin{teo}
The homomorphisms $\chi$ and $CC$ are isomorphisms.
\end{teo}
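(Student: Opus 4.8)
The plan is to establish that both $\chi$ and $CC$ are isomorphisms by showing that they are well-defined, then exhibiting compatible bases on all three sides, namely $K(X)$, $M(X)$, and $L(T^*X)$, which are permuted by $\chi$ and $CC$ up to an upper-triangular change of coordinates. First I would recall the Whitney stratification $X = \bigsqcup_{\alpha} S_\alpha$ refining the data at hand, so that $D_c^b(X)$ is generated as a triangulated category (and hence $K(X)$ as an abelian group) by the shifted constant sheaves $\underline{\C}_{\overline{S_\alpha}}[\dim S_\alpha]$, or equivalently by the intersection cohomology sheaves $\IC(\overline{S_\alpha}, \C)$; by the theory of perverse sheaves these $\IC$ sheaves form a $\Z$-basis of $K(X)$. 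On the $M(X)$ side, the indicator functions $\mathbbm{1}_{\overline{S_\alpha}}$ (equivalently $\mathbbm{1}_{S_\alpha}$) form a $\C$-basis, and on the $L(T^*X)$ side the conormal cycles $[T^*_{S_\alpha} X]$ form a $\Z$-basis since the irreducible Lagrangian subvarieties of $T^*X$ that are conic are exactly the conormals to strata (this is where the restriction to conic/algebraic Lagrangians is used).

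Next I would compute the images of these bases. By Proposition~\ref{prop-basic-property-characteristic-cycle}(4) together with the standard stratified computation of stalks, one gets $\chi(\underline{\C}_{\overline{S_\alpha}})(x) = \mathbbm{1}_{\overline{S_\alpha}}(x)$ on the open stratum and more generally a triangular relation $\chi(\underline{\C}_{\overline{S_\alpha}}) = \mathbbm{1}_{\overline{S_\alpha}}$, which is literally an upper-triangular unipotent matrix with respect to the partial order given by closure containment; hence $\chi$ sends a $\Z$-basis to a $\C$-basis and is an isomorphism after tensoring (and in fact the map $K(X) \to M(X)$ itself is injective with the stated image). For $CC$, the normalization in Proposition~\ref{prop-basic-property-characteristic-cycle}(4) gives $CC(\underline{\C}_{\overline{S_\alpha}}[\dim S_\alpha]) = (-1)^{\dim X}[T^*_{S_\alpha} X] + \sum_{\beta : S_\beta \subsetneq \overline{S_\alpha}} c_{\beta\alpha}[T^*_{S_\beta} X]$, again upper-triangular with invertible diagonal entries $\pm 1$; therefore $CC$ also carries a basis to a basis and is an isomorphism. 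The inverse of $\chi$ is realized concretely by local Euler obstruction / the index theorem, and the inverse of $CC$ by the same upper-triangular inversion, which shows both maps are bijective on the nose, not just rationally; if one only wants isomorphism of $\C$-vector spaces one tensors $K(X)$ with $\C$ as done elsewhere in the paper.

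The genuinely substantive point — and the step I expect to be the main obstacle to write carefully rather than the main conceptual difficulty — is establishing the triangularity of the $CC$ matrix, i.e. that $CC(\IC(\overline{S_\alpha},\C))$ has leading term $[T^*_{S_\alpha}X]$ with coefficient $1$ and all other conormals supported on smaller strata with nonnegative integer coefficients. This follows from Proposition~\ref{prop-basic-property-characteristic-cycle}(5), which identifies $\mathrm{supp}(CC(\F))$ with $SS(\F)$, together with the fact that $SS(\IC(\overline{S_\alpha},\C)) \subseteq \bigcup_{S_\beta \subseteq \overline{S_\alpha}} T^*_{S_\beta} X$ and meets $T^*_{S_\alpha} X$ in the generic conormal with multiplicity one (the latter by the Milnor formula (6), testing against a generic covector and using that $\IC$ restricted to the open stratum is a shifted local system, combined with (4)); nonnegativity of the remaining coefficients is exactly (7). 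Assembling these, $\chi$ and $CC$ differ by composition with an invertible integral triangular matrix, so both are isomorphisms, which is the assertion. I would present the argument in the order: stratify and fix the three bases; prove triangularity of $\chi$; prove triangularity of $CC$ using (4),(5),(6),(7); conclude.
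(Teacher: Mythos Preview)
The paper does not give its own proof of this theorem: it simply cites Kashiwara--Schapira \cite[Theorems 9.7.1 and 9.7.10]{ka13} and remarks that the slight modifications made to the definitions (Zariski-constructibility, the sign convention on $CC$) do not affect the argument. So there is nothing substantive to compare your outline against on the paper's side.

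Your sketch is essentially the standard argument that underlies the cited Kashiwara--Schapira theorems, and it is correct in spirit. One point to tighten: you fix a single Whitney stratification at the outset and exhibit bases indexed by its strata, but a single stratification only yields the subgroups $K_{\mathcal{S}}(X)$, $M_{\mathcal{S}}(X)$, $L_{\mathcal{S}}(T^*X)$ of objects adapted to that stratification. To obtain all of $K(X)$, $M(X)$, and $L(T^*X)$ you must either pass to the direct limit over refinements of stratifications or, equivalently, index your bases by all closed irreducible subvarieties of $X$ (so that every constructible function, every object of $D_c^b(X)$, and every conic Lagrangian cycle is captured). Once you say this, your upper-triangularity argument goes through unchanged on each filtered piece and assembles in the limit. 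A second small remark: the paper's $M(X)$ is $\C$-valued while $K(X)$ and $L(T^*X)$ are $\Z$-modules, so the ``isomorphism'' statement is implicitly after tensoring with $\C$ on the source (or restricting to $\Z$-valued constructible functions on the target); you already flag this, and it is harmless.
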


\begin{proof}
Cf. \cite[Theorem 9.7.1, 9.7.10]{ka13}. Note that regardless of the modification we 
made on the relevant objects, the proof is exactly the same.
\end{proof}

Following \cite{ka13}, we define an Euler morphism $Eu$ from 
$L(T^*X)$ to $M(X)$ as follows 

\begin{definition}
Let $x\in X$, $U\subseteq X$ a neighborhood of $x$ and $\phi: U\rightarrow \R$ satisfying 
$\phi(x)=0, d\phi(x)=0$ and the Hessian of $\phi$ at $x$
is positive definite. Let $\lambda\in L(T^*X)$, then we put
\[
Eu(\lambda)(x)=\sharp([C_{\phi}]\cap \lambda)_{x}
\]
where $C_{\phi}=\{(y, d\phi(y))|y\in U\}$.
\end{definition}

\remk In loc.cit, it is shown to be well defined(cf. (9.7.26)). 

We are ready to state the following 

\begin{teo} \cite[Theorem 9.7.11]{ka13} \label{teo-Kashiwara-Shapira-constructible}
\label{teo-fundamental-commutative-diagram}
The diagram:
$$
\xymatrix{
 & K(X)\ar[dl]_{CC} \ar[dr]^{\chi}&\\
L(T^*X)\ar[rr]_{Eu}& &  M(X)
}
$$
is commutative, and the arrows are isomorphic.
\end{teo}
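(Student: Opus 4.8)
The plan is to verify the commutativity $\chi = Eu \circ CC$ pointwise on $X$, reducing everything to a local computation at a fixed point $x \in X$ via a Morse-type deformation. Since $\chi$ and $CC$ are already known to be isomorphisms by the preceding theorem, once commutativity is established the ``arrows are isomorphic'' clause is automatic; thus the only content is the equality $\chi(\mathcal{F})(x) = Eu(CC(\mathcal{F}))(x)$ for every $\mathcal{F} \in K(X)$ and every $x$.

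First I would fix a point $x$, a small contractible neighborhood $U$ of $x$, and a real-valued function $\phi$ on $U$ with $\phi(x)=0$, $d\phi(x)=0$, and positive-definite Hessian at $x$, as in the definition of $Eu$. The key geometric input is that, after possibly shrinking $U$, the family of covectors $d\phi(y)$ can be used to probe $SS(\mathcal{F})$: for a generic such $\phi$ (in particular one whose differential avoids the ``bad'' directions of the Lagrangian $SS(\mathcal{F})$ away from $x$), the section $C_\phi = \{(y, d\phi(y))\}$ meets $SS(\mathcal{F})$ only over $x$ and does so transversally there. This is where stratified Morse theory enters, and it is the same circle of ideas invoked later in the paper (cf. \cite{Schurmann:2003}): the local Morse datum of $\mathcal{F}$ at $x$ with respect to the covector $d\phi(x)=0$ — equivalently, the stalk at $x$ of the vanishing-cycle complex $R\Phi_{\phi}(\mathcal{F}|_U)$ when one perturbs to a nearby linear covector — computes, on the one hand, (a shift of) the stalk $\mathcal{F}_x$ by the local structure of a small ball (cocone) around $x$, and on the other hand the intersection multiplicity $\sharp([C_\phi] \cap CC(\mathcal{F}))_x$.

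Next I would assemble these two computations. For the $\chi$ side: because the Hessian of $\phi$ is positive definite, the ``attaching'' data at $x$ is that of a point in a complex manifold, so the contribution of $\mathcal{F}$ at $x$ is exactly $\chi(\mathcal{F}_x)$ with no extra sign or shift — this is the normalization built into the definition of $Eu$ via the positive-definite Hessian condition, and it is precisely why that hypothesis appears. For the $CC$ side: by the Milnor-type formula, Proposition~\ref{prop-basic-property-characteristic-cycle}(6), applied to the holomorphic (or, in the real setting, holomorphically approximated) perturbation of $\phi$ whose section meets $SS(\mathcal{F})$ transversally, the Euler characteristic of the relevant vanishing cycle equals $(CC(\mathcal{F}), C_\phi)_{T^*U,x}$ up to the sign recorded there. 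Matching the local Morse datum computed both ways then yields $\chi(\mathcal{F})(x) = \sharp([C_\phi]\cap CC(\mathcal{F}))_x = Eu(CC(\mathcal{F}))(x)$. Since all three maps $\chi$, $CC$, $Eu$ are additive on distinguished triangles (for $CC$ this is Proposition~\ref{prop-basic-property-characteristic-cycle}(3)), it suffices to check this on generators of $K(X)$, e.g. on simple perverse sheaves $IC(\overline{S},\mathbb{C})$, which may simplify the transversality bookkeeping.

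The main obstacle I expect is the transversality/genericity step: one must produce, for a given $\mathcal{F}$ and $x$, a test function $\phi$ with positive-definite Hessian at $x$ whose graph section $C_\phi$ is simultaneously (i) disjoint from $SS(\mathcal{F})$ over $U \setminus \{x\}$ and (ii) transverse to $SS(\mathcal{F})$ over $x$, and then to identify the resulting Morse-theoretic contribution with the stalk Euler characteristic \emph{without} incurring spurious signs or degree shifts. The first point requires the constructibility of $SS(\mathcal{F})$ (a finite union of conormal varieties) plus a dimension count to see that generic $d\phi$ avoids the lower-dimensional strata of $SS(\mathcal{F})$ away from $x$; the second requires the real-blow-up / polar-curve analysis of stratified Morse theory near $x$, and the delicate matching of the ``normal Morse datum'' with the characteristic cycle multiplicity. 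For full details one defers, as the authors do, to \cite[Theorem 9.7.11]{ka13}; the proposal above is the structure of that argument, adapted to the sign conventions of Proposition~\ref{prop-basic-property-characteristic-cycle}.
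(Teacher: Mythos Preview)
The paper does not actually prove this theorem: it is stated with a direct citation to \cite[Theorem 9.7.11]{ka13} and no argument is given in the text. So there is no ``paper's own proof'' to compare against; the authors simply import the result from Kashiwara--Schapira.

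Your proposal is a reasonable sketch of the standard argument behind that reference, and it correctly identifies the two main ingredients: the Milnor-type formula (Proposition~\ref{prop-basic-property-characteristic-cycle}(6)) linking intersection multiplicities of $CC(\mathcal{F})$ with Euler characteristics of vanishing cycles, and a Morse-theoretic identification of the local data at $x$ with $\chi(\mathcal{F}_x)$ via a function with positive-definite Hessian. That said, since the paper treats this as a black-box citation, a fully honest write-up here would simply say ``see \cite[Theorem 9.7.11]{ka13}'' rather than reproduce the argument; your outline goes well beyond what the paper itself offers.
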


\section{Canonical Bases and Semi-canonical Bases}
\label{section-canonical-basis}

In this section we recall the classical construction of 
the canonical basis and the semi-canonical basis, due to Lusztig.
We only state the relevant facts in the case of  quivers of simply laced type, we refer to 
\cite{lu00}, \cite{Lu91} for a detailed discussion.

\subsection{Representation of quiver algebras and preprojective algebras}\label{section-quiver-definition}

Let $Q=(I, H, s, e)$ be a finite quiver without loops. Thus 
\begin{description}
\item[(1)] $I$ is a finite set of vertices;
\item[(2)] $H$ is a finite set of directed edges called arrows;
\item[(3)] $s(\text{ resp. }e): H\rightarrow I$ sends an arrow to its starting point (resp. end points);
\item[(4)]there is an involution $h\rightarrow \bar{h}$ satisfying $e(\bar{h})=s(h),s(\bar{h})=e(h)$.
\end{description}
Let $\Omega \subseteq H$ be an orientation, i.e., $\Omega \cup \line{\Omega} = H, \Omega \cap \line{\Omega} = \emptyset$. For $i\in I$, set 
\[
r_i=\sum_{h \in \Omega: s(h)=i} \line{h} h-\sum_{\alpha \in \Omega: e(h)=i} h\line{h}
\]

\begin{notation}
We denote by $(Q,\Omega)$  the sub-quiver generated by $\Omega$.
\end{notation}

\begin{definition}
Let $H(Q, \Omega)=\C \Omega$ be the quiver algebra generated by $\Omega$ and 
\[
\Pi(Q)=\C H/J
\]
where $J$ is the ideal generated by the elements $r_i$ above. We call $\Pi(Q)$ the 
preprojective algebra associated to $Q$.
\end{definition}

\begin{notation}
Let $V=\bigoplus_{i\in I}V_i$ be an $I$-graded vector space. Let
\[
|V| := (\dim(V_i))_{i\in I}
\]
be its dimension vector.
\end{notation}

\begin{definition}
View the variety 
\[
E_{V, \Omega}=\{(x_{h})_{h \in \Omega}: x_{h}\in \Hom(V_{s(h)}, V_{e(h)})\}
\]
 as the representation variety of $H(Q, \Omega)$ with underlying space $V$.
\end{definition}

\begin{definition}
A representation of the preprojective algebra $\Pi(Q)$ on $V$ is an element $(x_h)_{h \in H}\in E_{V, \Omega}\times E_{V, \line{\Omega}}$ satisfying the relation 
\[
\sum_{\{\alpha \in \Omega: s(h)=i\}}x_{\line{h}}x_{h}-\sum_{\{h \in \Omega: e(h)=i\}}x_{h}x_{\line{h}}=0.
\]
Let $p = h_1 h_2 \cdots h_t$ be a path in $H$. Set
\[
x_p = x_{h_1}x_{h_2}\cdots x_{h_t}
\]
We say that the representation is nilpotent if there is an $N$ in $\N$ such that $x_p=0$ for
for any path $p$ of length greater than  $N$.
Let $\Lambda_V$ be the set of nilpotent representations
on $V$.
\end{definition}

\remk
Note that if $V^{i}$ is an $I$-graded vector space with $|V^{i}| = (\delta_{i, j})_{j \in I}$ (here $\delta$ is the Kronecker symbol), then $\Lambda_{V^{i}}$ consists of one single point and we denote by $Z_i$ the corresponding representation. We also note that the nilpotency condition is equivalent to requiring that the representation admits a composition series consists of only simple modules isomorphic to $Z_i$ for $i \in I$.

\vspace{0.5cm}
We recall some basic results concerning the algebra $\Pi(Q)$.

\begin{prop}\cite[ Proposition 3.1]{GLS05}
The following are equivalent
\begin{itemize}
    \item [(a)] The algebra $\Pi(Q)$ is finite dimensional.
    \item [(b)] Every finite dimensional representation of 
    $\Pi(Q)$ is nilpotent.
    \item[(c)] $(Q,\Omega)$ is a Dynkin quiver.
\end{itemize}

\end{prop}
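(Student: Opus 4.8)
The statement to prove is the proposition recalled as
\cite[Proposition 3.1]{GLS05}: the equivalence of (a) $\Pi(Q)$ is finite
dimensional, (b) every finite dimensional representation of $\Pi(Q)$ is
nilpotent, and (c) $(Q,\Omega)$ is a Dynkin quiver.

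\medskip

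\textbf{Plan of proof.}
The plan is to prove the cycle of implications
$(c)\Rightarrow(a)\Rightarrow(b)\Rightarrow(c)$, together with a few
standard facts about the preprojective algebra.

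First I would establish $(a)\Rightarrow(b)$, which is the easy step. If
$\Pi(Q)$ is finite dimensional, then for any path $p$ of length greater
than $\dim_{\C}\Pi(Q)$ the image of $p$ in $\Pi(Q)$ is a $\C$-linear
combination of images of strictly shorter paths; iterating, one sees that
some power of the augmentation ideal vanishes, hence $x_p = 0$ for all
sufficiently long paths $p$ in \emph{any} representation, which is exactly
the nilpotency condition. So every finite dimensional $\Pi(Q)$-module is
nilpotent.

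Next, $(b)\Rightarrow(c)$. I would argue by contraposition: if $(Q,\Omega)$
is not Dynkin, I produce a finite dimensional representation of $\Pi(Q)$
that is not nilpotent. When $Q$ is not Dynkin, its underlying graph
contains (or is) an extended Dynkin diagram $\widetilde{A}_n$,
$\widetilde{D}_n$, $\widetilde{E}_6$, $\widetilde{E}_7$,
$\widetilde{E}_8$; in all these cases the associated Euler/Tits form is
positive semidefinite with a nonzero radical, spanned by the minimal
imaginary root $\delta$. The standard construction (going back to
Kronecker for $\widetilde{A}_1$ and to the general theory of preprojective
algebras, e.g. Crawley-Boevey) gives, for dimension vector $\delta$, a
family of pairwise non-isomorphic simple $\Pi(Q)$-modules parametrized by
points of $\mathbb{P}^1$; picking one whose underlying quiver
representation is not nilpotent — for instance for $\widetilde{A}_1$ the
module with $x_h = \mathrm{Id}$, $x_{\bar h} = \mathrm{Id}$ on
$V_0 = V_1 = \C$ — shows (b) fails. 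Alternatively, and perhaps more
cleanly, one can simply cite the fact that for non-Dynkin $Q$ the
preprojective algebra is infinite dimensional and admits non-nilpotent
modules in imaginary root directions. I would also note the elementary
converse direction of this reasoning is not needed here.

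Finally, $(c)\Rightarrow(a)$, which I expect to be the main obstacle and
the part requiring real input. When $(Q,\Omega)$ is Dynkin, one wants
$\dim_{\C}\Pi(Q) < \infty$. The cleanest route is to invoke the
description of $\Pi(Q)$ via the preprojective/twisted structure: for a
Dynkin quiver there are only finitely many indecomposable representations
of $H(Q,\Omega)$, indexed by the positive roots of the corresponding root
system, and $\Pi(Q)$ decomposes, as a module over $H(Q,\Omega)$ (or over
its path algebra), as a sum indexed by these finitely many indecomposables
with finite multiplicities; more precisely one uses the identification of
$\Pi(Q)$ with $\bigoplus_{i\ge 0}\mathrm{Ext}$-type spaces, or Gelfand--Ponomarev's
original count, to bound $\dim_{\C}\Pi(Q)$ by a finite sum of dimensions
of Hom-spaces between indecomposables. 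The key quantitative fact is that
the Coxeter functor has finite order on a Dynkin quiver, so the
preprojective component is finite; feeding this into the module structure
of $\Pi(Q)$ yields finiteness of its dimension. The delicate point is
setting up the correct bimodule resolution or the correct
$H(Q,\Omega)$-module decomposition of $\Pi(Q)$ so that this finiteness is
visible — this is where I would either reproduce the Gelfand--Ponomarev
argument or, more efficiently, cite it, since reconstructing it in detail
is orthogonal to the aims of this paper. Combining the three implications
closes the loop and proves the proposition.
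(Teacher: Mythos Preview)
The paper does not give its own proof of this proposition: it is stated with the attribution \cite[Proposition 3.1]{GLS05} and no proof environment follows. The result is quoted purely as background on preprojective algebras, so there is nothing in the paper to compare your argument against.

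Your sketch is a reasonable outline of the standard proof, with $(a)\Rightarrow(b)$ correct and essentially complete, and $(b)\Rightarrow(c)$ adequately indicated via an explicit non-nilpotent module in the non-Dynkin case. For $(c)\Rightarrow(a)$ you correctly identify this as the substantive step and point toward the Gelfand--Ponomarev / Coxeter-functor argument, but what you have written is a plan rather than a proof: you would still need to actually exhibit the decomposition of $\Pi(Q)$ over $\C\Omega$ and invoke the finiteness of the preprojective component in the Dynkin case. Since the paper itself simply defers to \cite{GLS05}, any of these routes would already exceed what the paper does.
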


\subsection{Convolution products and canonical bases}
We recall the construction of canonical bases through
convolution products.

\begin{definition}
Let $X$ be a complex variety and $f: X\rightarrow \C$
a constructible function. We define
\[
\int_{x\in X} f(x)=\sum_{c \in \C}c \, \chi(f^{-1}(c)),
\]
where $\chi$ is the Euler characteristic with compact support.
\end{definition}

\begin{notation}
Let $G_V=\prod_{i\in I}GL(V_i)$ be the automorphism 
group of $V$, which acts on $E_{V,\Omega}$ and $\Lambda_V$
by conjugation.
\end{notation}

\begin{definition}
Let $M(E_{V,\Omega})^{G_{V}}$ be the set of $G_V$-invariant constructible functions on $E_V$. Similarly one can define
$M(\Lambda_V)^{G_V}$.

\end{definition}

\begin{definition}
Let $V, V', V''$ be $I$-graded vector spaces such that 
\[
|V| =|V'| + |V''|.
\]
Then we have a bilinear map 
\[
*: M(E_{V', \Omega})^{G_{V''}}\times M(E_{V'',\Omega})^{G_{V''}}\rightarrow M(E_{V,\Omega})^{G_V}
\]
by 
\[
(\phi'*\phi'')(x)=\int_{y\subseteq x}\phi'(y)\phi''(x/y), \quad x\in E_V
\]
where $y$ runs through all the subrepresentations of $x$
such that the underlying vector space is isomorphic to $V'$.
Similarly, we have a bilinear map 
\[
*: M(\Lambda_{V'})^{G_V}\times M(\Lambda_{V''})^{G_V}\rightarrow M(\Lambda_V)^{G_V}.
\]

\end{definition}

\begin{definition}
Let 
\[
\line{\M}_{\Omega}=\bigoplus_{V \in \mathcal{V}}M(E_{V, \Omega})^{G_V}, \quad \line{\M}_{\Pi}=\bigoplus_{V \in \mathcal{V}}M(\Lambda_V)^{G_V},
\]
where $\mathcal{V}$ is the set of isomorphism classes of $I$-graded vector spaces.

\end{definition}

\begin{prop}
The vector spaces $\line{\M}_\Omega$ and $\line{\M}_{\Pi}$ 
with the convolution product $*$ are unital associative algebras. 
\end{prop}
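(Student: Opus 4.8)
The statement to prove is that $\overline{\M}_\Omega$ and $\overline{\M}_\Pi$, equipped with the convolution product $*$, are unital associative algebras. The plan is to verify the three algebra axioms separately: that $*$ is well-defined on the graded pieces (i.e.\ produces $G_V$-invariant constructible functions), that it is associative, and that it admits a two-sided unit. Since the arguments for $\overline{\M}_\Omega$ and $\overline{\M}_\Pi$ are formally parallel—only the underlying representation varieties differ, $E_{V,\Omega}$ versus $\Lambda_V$—I would carry out the argument once in a way that applies to both, or else prove it for $E_{V,\Omega}$ and remark that the same proof works for $\Lambda_V$, since $\Lambda_V$ is again a $G_V$-variety on which ``subrepresentation with prescribed underlying graded dimension'' makes sense.

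First I would address well-definedness. Given $G_{V'}$-invariant $\phi'$ and $G_{V''}$-invariant $\phi''$, the function $(\phi'*\phi'')(x)=\int_{y\subseteq x}\phi'(y)\,\phi''(x/y)$ must be shown to be constructible and $G_V$-invariant. $G_V$-invariance is immediate: an automorphism $g\in G_V$ of $x$ carries subrepresentations $y$ of $x$ to subrepresentations $gy$ of $gx$ with $(gx)/(gy)\cong x/y$, and since $\phi', \phi''$ only depend on isomorphism types the integral is unchanged. For constructibility one observes that the set of pairs $(x,y)$ with $y\subseteq x$ a subrepresentation of underlying type $V'$ forms a constructible family over $E_{V,\Omega}$—concretely, it is a bundle over the Grassmannian-type variety of $I$-graded subspaces of $V$ of dimension $|V'|$, cut out by the condition that the subspace be $x$-stable—and the assignments $(x,y)\mapsto \phi'(y)$ and $(x,y)\mapsto \phi''(x/y)$ are constructible on this family because $\phi', \phi''$ are constructible; then one invokes the fact that $\int$ over the fibers of a morphism of varieties (pushforward of constructible functions along the Euler-characteristic-with-compact-support integration) preserves constructibility. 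This is the standard behavior of constructible functions under proper (or arbitrary, using compactly-supported $\chi$) pushforward, so I would cite it rather than reprove it.

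For associativity, I would compare $(\phi*\psi)*\omega$ and $\phi*(\psi*\omega)$ evaluated at $x\in E_{V,\Omega}$ with $|V|=|V'|+|V''|+|V'''|$. Both expand, after unwinding the definitions and using the multiplicativity of $\chi_c$ on fibrations, into an integral over the variety of flags $0\subseteq y_1\subseteq y_2\subseteq x$ with $y_1$ of type $V'$, $y_2/y_1$ of type $V''$, $x/y_2$ of type $V'''$, of the product $\phi(y_1)\,\psi(y_2/y_1)\,\omega(x/y_2)$. The key point is Fubini for the integration of constructible functions: $\int_{y\subseteq x}\big(\int_{z\subseteq y}\cdots\big)$ equals the integral over the total space of such flags, which is manifestly symmetric in the way it is bracketed; more precisely one uses that for a composition of constructible maps $A\to B\to \mathrm{pt}$ the iterated pushforward equals the pushforward along the composite, i.e.\ $\int_{b}\int_{a\mapsto b}=\int_{a}$. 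The unit is the constructible function $\mathbbm{1}$ on $E_{0,\Omega}$ (the one-point variety for $V=0$), and checking $\mathbbm{1}*\phi=\phi=\phi*\mathbbm{1}$ is immediate since the only subrepresentation of type $0$ (resp.\ cotype $0$) is $0$ (resp.\ $x$ itself), and the relevant fiber is a point.

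The main obstacle, such as it is, is purely bookkeeping: one must set up the flag varieties and the fibration structure carefully enough that the multiplicativity of compactly-supported Euler characteristic in fibrations (which requires the base and fibers to be genuine varieties, and the family to be a locally trivial or at least constructible fibration) can be applied to reorganize the iterated integrals. There is no deep geometric input—this is Lusztig's original argument from \cite{Lu91}—so in the write-up I would keep it brief, emphasizing the Fubini-type identity for constructible-function pushforward and referring to \cite{Lu91} for the detailed verification that the relevant loci are constructible and the fibrations are well-behaved.
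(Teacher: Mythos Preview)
Your proposal is correct and in fact more detailed than the paper's own proof, which consists solely of the sentence ``We refer to \cite{GLS05} section 5.4 and \cite{Lu91} section 10.19.'' Your sketch of well-definedness, the Fubini argument for associativity over flag varieties, and the unit supported on the zero representation is exactly the content of those references, so you are simply unpacking what the paper chose to cite.
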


\begin{proof}
We refer to \cite{GLS05} section 5.4 and \cite{Lu91} section 10.19.
\end{proof}

\begin{definition}
Let $\M_\Omega$(resp. $\M_{\Pi}$) be the subalgebra of $\line{\M}_{\Omega}$(resp. $\line{\M}_{\Pi}$)
generated by the function $1_{S_i}$(resp. $1_{Z_i}$), $i\in I$, where $S_i$ (resp. $Z_i$) is the dimension 1 irreducible 
representation which is concentrated in degree $i$. Also, let
\[
\M_\Omega(V)=\M_\Omega\cap M(E_{V,\Omega})^{G_V}, \quad \M_{\Pi}(V)=\M_{\Pi} \cap M(\Lambda_V)^{G_V}
\]

\end{definition}

\begin{prop}\cite[Proposition 9.8]{Lu1}
\label{prop: simply laced}
If $Q$ is simply-laced, then $\M_\Omega = \line{\M}_{\Omega}$.
\end{prop}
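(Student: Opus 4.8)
Throughout, $(Q,\Omega)$ is a simply-laced Dynkin quiver, equivalently one for which $\Pi(Q)$ is finite-dimensional (by the quoted proposition of \cite{GLS05}). Both $\M_\Omega=\bigoplus_V\M_\Omega(V)$ and $\line\M_\Omega=\bigoplus_V M(E_{V,\Omega})^{G_V}$ are graded by dimension vectors, and $\M_\Omega(V)\subseteq M(E_{V,\Omega})^{G_V}$ by construction, so it suffices to prove the equality in each graded piece, and for that it is enough to check that $\M_\Omega(V)$ and $M(E_{V,\Omega})^{G_V}$ have the same finite dimension. I would first identify $\dim_\C M(E_{V,\Omega})^{G_V}$: by Gabriel's theorem $E_{V,\Omega}$ has only finitely many $G_V$-orbits, and by Krull--Schmidt together with Gabriel's bijection between indecomposable representations and positive roots, these orbits $S_{\mathbf c}$ are indexed by the functions $\mathbf{c}\colon\Phi^{+}\to\Z_{\ge 0}$ with $\sum_{\alpha}\mathbf{c}(\alpha)\,\alpha=|V|$, where $\Phi^{+}$ is the set of positive roots of the root system attached to $Q$. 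Since every fibre of a $G_V$-invariant constructible function is a $G_V$-stable constructible set, hence a finite union of orbits, the indicators $\{1_{S_{\mathbf c}}\}_{\mathbf c}$ form a $\C$-basis of $M(E_{V,\Omega})^{G_V}$, so $\dim_\C M(E_{V,\Omega})^{G_V}=\mathcal{K}(|V|):=\#\{\mathbf c\}$, the Kostant partition number of $|V|$.

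It then remains to prove $\dim_\C\M_\Omega(V)\ge\mathcal{K}(|V|)$ — the opposite inequality being immediate from the inclusion $\M_\Omega(V)\subseteq M(E_{V,\Omega})^{G_V}$ — i.e. to produce $\mathcal{K}(|V|)$ linearly independent elements of $\M_\Omega(V)$. For this I would fix a reduced word $i_1\cdots i_N$ for the longest element $w_0$ of the Weyl group, yielding the associated ordering $\beta_1,\dots,\beta_N$ of $\Phi^{+}$ and the corresponding Auslander--Reiten ordering $M_{\beta_1},\dots,M_{\beta_N}$ of the indecomposable representations. Using reflection functors (equivalently, the PBW-type monomial structure familiar from Ringel's Hall-algebra approach), one constructs for each $\mathbf c$ an explicit element $f_{\mathbf c}\in\M_\Omega(V)$, built from the generators $1_{S_i}$ by convolution, which is unitriangular with respect to the orbit-closure order: $f_{\mathbf c}$ is supported on $\overline{S_{\mathbf c}}$ and $f_{\mathbf c}|_{S_{\mathbf c}}\neq 0$. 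Functions with this triangularity property are automatically linearly independent, which gives the desired lower bound. Hence $\M_\Omega(V)$ is a subspace of $M(E_{V,\Omega})^{G_V}$ of the same finite dimension, so the two coincide, and summing over $V$ yields $\M_\Omega=\line\M_\Omega$.

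The main obstacle is precisely this last construction, i.e. the lower bound $\dim_\C\M_\Omega(V)\ge\mathcal{K}(|V|)$: the inclusion into $M(E_{V,\Omega})^{G_V}$ only bounds $\M_\Omega(V)$ from above, and verifying that the $1_{S_i}$ satisfy the Serre relations merely reproves this upper bound, so one genuinely needs the reflection-functor / Hall-algebra input to see that iterated convolutions of the $1_{S_i}$ already sweep out all $G_V$-invariant constructible functions. Alternatively, one may simply invoke Lusztig's theorem \cite{Lu91} that $e_i\mapsto 1_{S_i}$ defines an algebra isomorphism $U(\n)\xrightarrow{\ \cong\ }\M_\Omega$, from which $\dim_\C\M_\Omega(V)=\dim_\C U(\n)_{|V|}=\mathcal{K}(|V|)$ follows by the PBW theorem, and then conclude exactly as above.
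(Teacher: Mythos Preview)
The paper does not prove this proposition at all; it merely records it with a citation to Lusztig. So there is no ``paper's proof'' to compare against, and your argument stands on its own.

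Your proof is correct for simply-laced \emph{Dynkin} quivers, which is how you interpret the hypothesis in your first sentence. Both routes you sketch are valid: counting orbits via Gabriel's theorem gives $\dim_\C M(E_{V,\Omega})^{G_V}=\mathcal K(|V|)$, and then either the reflection-functor/Hall-algebra construction of triangular elements or the isomorphism $U(\mathfrak n)\cong\M_\Omega$ together with PBW gives $\dim_\C\M_\Omega(V)=\mathcal K(|V|)$ as well. The second route is the cleaner one and is not circular, since Lusztig establishes $U(\mathfrak n)\cong\M_\Omega$ for arbitrary quivers without using the proposition in question; the Dynkin hypothesis then enters only to make $M(E_{V,\Omega})^{G_V}$ finite-dimensional.

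One point worth flagging: read literally, ``simply-laced'' only forbids multiple edges and does not force the quiver to be Dynkin. For a non-Dynkin simply-laced quiver such as a cyclic $\tilde A_n$ with $n\ge 2$, there are continuous families of $G_V$-orbits in $E_{V,\Omega}$, so $M(E_{V,\Omega})^{G_V}$ is infinite-dimensional while $\M_\Omega(V)\cong U(\mathfrak n)_{|V|}$ remains finite-dimensional, and the equality $\M_\Omega=\line\M_\Omega$ fails. Your restriction to the Dynkin case is therefore not a weakening of the statement but a necessary one; since the paper only ever works with type $A$, this is exactly the generality needed.
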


\begin{notation}
Let $\mathfrak{g}$ be a symmetric Kac-Moody algebra and $\n$ a maximal nilpotent Lie subalgebra. Let $Q$ be the associated quiver. Also, let $U(\n)$ be the enveloping algebra of $\n$.
\end{notation}

\begin{teo}
We have isomorphisms of alebras 
\[
\Psi: U(\n)\rightarrow \M_\Omega, \quad \Phi: U(\n)\rightarrow \M_{\Pi}
\]
with
\[
\Psi(e_i)=1_{S_i}, \quad \Phi(e_i)=1_{Z_i},
\]
where $e_i, i\in I$ is a set of Chevalley generators for $U(\n)$.

\end{teo}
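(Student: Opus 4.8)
The plan is to follow Lusztig's original route in three steps: build $\Psi$ and $\Phi$ out of the Serre presentation of $U(\n)$; observe that surjectivity is forced by the definitions of $\M_\Omega$ and $\M_\Pi$; and establish injectivity by a graded dimension count. For the construction, recall that $U(\n)$ is presented by the Chevalley generators $e_i$ ($i \in I$) subject to the Serre relations, which in the simply-laced case assert that $e_i$ and $e_j$ commute when $i$ and $j$ are not joined by an arrow, and that $e_i^2 e_j - 2 e_i e_j e_i + e_j e_i^2 = 0$ when they are joined by a single arrow; thus it suffices to check that the functions $1_{S_i}$ (resp. $1_{Z_i}$) satisfy these identities for the convolution product $*$. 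This is a rank-two calculation: the relevant convolutions live in $M(E_{V,\Omega})^{G_V}$ (resp. $M(\Lambda_V)^{G_V}$) for $|V| = \alpha_i + \alpha_j$ or $|V| = 2\alpha_i + \alpha_j$, and because $Q$ has no loops these varieties are affine spaces (resp. affine pieces cut out by one quadratic equation) carrying only finitely many $G_V$-orbits with a transparent stratification. Unwinding $(\phi' * \phi'')(x) = \int_{y \subseteq x} \phi'(y)\, \phi''(x/y)$ — the integrand being the constant function $1$ on the variety of subrepresentations of the prescribed type, so the integral is an Euler characteristic, the only values occurring being $\chi(\mathrm{pt}) = 1$ and $\chi(\mathbb{P}^1) = 2$ — one verifies directly that the alternating sum vanishes.

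Surjectivity of $\Psi$ and $\Phi$ is then immediate, as $\M_\Omega$ (resp. $\M_\Pi$) is by definition the subalgebra generated by the $1_{S_i}$ (resp. $1_{Z_i}$), which is the image of $\Psi$ (resp. $\Phi$). For injectivity, note that $\line{\M}_{\Omega}$ and $\line{\M}_{\Pi}$ are graded by the monoid of dimension vectors, that $\Psi$ and $\Phi$ respect this grading and the weight grading on $U(\n)$, and hence it suffices to prove $\dim \M_\Omega(V) = \dim U(\n)_{|V|}$ and likewise for $\M_\Pi(V)$. I would equip $\line{\M}_{\Omega}$ with the comultiplication dual to convolution — restriction of constructible functions along the correspondence parametrizing pairs $(x, y \subseteq x)$ — making it a bialgebra in which each $1_{S_i}$ is primitive; then $\M_\Omega$ is a quotient bialgebra of $U(\n)$, and, putting Green's Hopf pairing on $\line{\M}_{\Omega}$ and checking its non-degeneracy on $\M_\Omega$, this quotient must be faithful, so $\Psi$ is an isomorphism. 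In the Dynkin case there is a shortcut: by Krull--Schmidt and Gabriel's theorem the number of $G_V$-orbits on $E_{V,\Omega}$ equals the value of Kostant's partition function at $|V|$, i.e.\ $\dim U(\n)_{|V|}$, which together with $\M_\Omega = \line{\M}_{\Omega}$ (Proposition \ref{prop: simply laced}) already determines $\dim \M_\Omega(V)$.

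For $\Phi$ one runs the parallel argument on the nilpotent varieties $\Lambda_V$ — the same rank-two check of the Serre relations, then the bialgebra-and-pairing count — or transports the result through Lusztig's comparison homomorphism $\M_\Pi(V) \to M(E_{V,\Omega})^{G_V}$, which is injective, after checking that $\Phi$ and $\Psi$ differ by that comparison map. I expect the injectivity step to be the main obstacle. In the Dynkin case it reduces to the clean combinatorial count above, but producing any finite upper bound for $\dim \M_\Omega(V)$ in general genuinely requires the bialgebra structure together with the non-degeneracy of Green's pairing (or, in Lusztig's own treatment, the construction of a PBW-type basis of $\M_\Omega(V)$ out of the simple $G_V$-equivariant perverse sheaves on $E_{V,\Omega}$), and one must further show this pairing is compatible with the two distinct geometric models $E_{V,\Omega}$ and $\Lambda_V$ — that matching of models carries the real content.
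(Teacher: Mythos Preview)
The paper gives no argument here: its proof consists entirely of citations to Lusztig's original papers \cite{Lu91} (for $\Psi$) and \cite{lu00} (for $\Phi$). Your proposal is a faithful outline of Lusztig's strategy --- the rank-two verification of the Serre relations, surjectivity by construction, and injectivity by a graded dimension count are exactly the steps. Two remarks on how the cited sources actually fill in the hard step (injectivity beyond Dynkin type). First, Lusztig in \cite{Lu91} does not use a Green-type pairing on constructible functions; he proves the quantum isomorphism $U_v(\n)\cong \K_v(\Omega)\otimes\Q(v)$ first, via the perverse-sheaf basis of $\K_v(\Omega,V)$, and then specializes at $v=1$ to obtain $\Psi$ (this is precisely the route the paper records in the theorem immediately following). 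Your Green-pairing route is a legitimate alternative, closer to Schiffmann's Hall-algebra treatment, but the non-degeneracy check in the constructible-function setting over $\C$ is itself nontrivial. Second, for $\Phi$, Lusztig in \cite{lu00} does essentially what you suggest at the end: he uses the restriction map $\M_\Pi\to\M_\Omega$ together with the already-established isomorphism for $\Psi$, but the injectivity of that restriction on $\M_\Pi$ is not given in advance --- it falls out of a dimension count combining the lower bound $\dim\M_\Pi(V)\geq |\Irr(\Lambda_V)|$ furnished by the semicanonical-basis construction with the equality $|\Irr(\Lambda_V)|=\dim U(\n)_{|V|}$. So your outline is correct; the only real departure from the cited sources is invoking Green's pairing rather than the perverse-sheaf machinery for the general-type injectivity of $\Psi$.
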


\begin{proof}
For $\Psi$, we refer to \cite{Lu91} Proposition 10.20, and for $\Phi$, we refer to 
\cite{lu00}.
\end{proof}

We give another description of the map $\Psi$ in terms 
of the quantum enveloping algebra, which is also due to Lusztig.
We briefly recall the construction. 

\begin{notation}
For each $V$, Lusztig defined a subset $\P_{\Omega}(V)$ of perverse sheaves on $E_{V, \Omega}$.
Let $\K_v(\Omega, V)$ be the $\Z[v^{\pm}]$-module generated by the elements of 
 $\P_{\Omega}(V)$. Moreover, he defined a convolution product
 \[
 *: \K_v(\Omega, V')\times \K_v(\Omega, V'')\rightarrow \K_v(\Omega, V)
 \]
 for $I$-graded vector spaces $V', V''$ such that $|V| = |V'| + |V''|$. Finally, let 
 \[
 \K_v(\Omega)=\bigoplus_{V \in \mathcal{V}} \K_v(\Omega, V)
 \] 
 be the resulting unital associative algebra.
\end{notation}

\begin{teo}
We have an isomorphism of algebras

\[
\Psi_v: U_v(\n)\rightarrow \K_v(\Omega) \otimes_{\mathbb{Z}[v^{\pm}]} \Q(v), \quad \Psi(E_i) = \mathbbm{1}_{S_i} 
\]
where $E_i, i\in I$ is a set of Chevalley generators for the quantized algebras $U_v(\n)$ and $\mathbbm{1}_{S_i}$ is the constant sheaf on the variety corresponding to the one dimensional representation $S_i$.
\end{teo}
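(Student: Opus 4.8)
The plan is to establish the isomorphism in the classical three steps: (i) show that the assignment $E_i \mapsto \mathbbm{1}_{S_i}$ extends to an algebra homomorphism $\Psi_v$; (ii) show that $\Psi_v$ is surjective; and (iii) show that $\Psi_v$ is injective by a graded rank count. Throughout one works inside the category of semisimple complexes on the various $E_{V,\Omega}$ generated under convolution by the $\mathbbm{1}_{S_i}$, and uses the Beilinson--Bernstein--Deligne decomposition theorem to guarantee that any such convolution is a direct sum of shifts of the simple perverse sheaves in $\bigcup_V \P_\Omega(V)$; this is exactly what makes $\K_v(\Omega)$, which by construction is the free $\Z[v^\pm]$-module on $\P_\Omega := \bigsqcup_V \P_\Omega(V)$, a well-defined $\Z[v^\pm]$-algebra under $*$. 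For step (i), since $U_v(\n)$ is presented by the $E_i$ subject to the quantum Serre relations, it suffices to verify those relations among the $\mathbbm{1}_{S_i}$. For an $I$-graded space $V$ concentrated at a single vertex the variety $E_{V,\Omega}$ is a point, the divided powers $\mathbbm{1}_{S_i}^{(a)}$ are constant sheaves there, and the iterated convolution $\mathbbm{1}_{S_i}^{(a)} * \mathbbm{1}_{S_j} * \mathbbm{1}_{S_i}^{(b)}$ with $a+b = 1 - a_{ij}$ unwinds, by definition of $*$, into the pushforward of a constant sheaf along the proper map from the variety of flags of the relevant composition type. Its decomposition is governed by the cohomology of the fibers of that map (products of projective spaces / partial flag varieties), which produces Gaussian binomials $\bigl[\begin{smallmatrix} a+b \\ a \end{smallmatrix}\bigr]_v$; summing over $a+b = 1-a_{ij}$ and invoking the standard $v$-binomial vanishing identity yields the Serre relation. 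Tracking the perverse shifts through these pushforwards shows in addition that $\Psi_v$ respects the $\Z[v^\pm]$-structures, so it is defined over $\Q(v)$ with the stated normalization. This is Lusztig's ``fundamental'' convolution computation, which I would cite and adapt.

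For step (ii), one shows by induction on the dimension vector $|V|$ --- ordered by total dimension, and within a fixed $|V|$ by the closure partial order on $G_V$-orbits --- that every simple object of $\P_\Omega(V)$ occurs as a shifted summand of some iterated convolution $\mathbbm{1}_{S_{i_1}} * \cdots * \mathbbm{1}_{S_{i_n}}$. The idea is to choose a sequence $\mathbf{i} = (i_1,\dots,i_n)$ adapted to a generic point of the orbit $S$ in question (for type $A$, an ordering of the segments of the corresponding multisegment refining the dominance order) so that the associated flag map is an isomorphism, or at least has connected fibers, over an open dense subset of $S$. Then $\IC(\line{S},\C)$ appears in the convolution with full support, while every other summand is supported on a strictly smaller orbit and is therefore reached by the induction hypothesis. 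Hence the $\mathbbm{1}_{S_i}$ generate $\K_v(\Omega)$ as an algebra, and $\Psi_v$ is surjective.

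For step (iii), it remains to compare graded dimensions. On the sheaf side $\K_v(\Omega,V)$ has rank $\#\P_\Omega(V)$, which for type $A$ equals the number of $G_V$-orbits on $E_{V,\Omega}$, i.e.\ the number of multisegments of weight $|V|$; on the quantum group side this is precisely $\dim U_v(\n)_{|V|}$ by the PBW theorem. To convert this numerical coincidence into a proof without circularity, I would specialize $v \mapsto 1$: the theorem recalled above gives $U(\n) \cong \M_\Omega$ via $e_i \mapsto \mathbbm{1}_{S_i}$, and $\K_v(\Omega)\otimes_{\Z[v^\pm]}\C$ at $v=1$ maps onto $\M_\Omega$ compatibly with $\Psi_v$; since $\Psi_v$ is already surjective and both sides are free of the same finite rank in each degree, a hypothetical nonzero kernel element would specialize to a nonzero element killed at $v=1$, contradicting injectivity of the $v=1$ map. (Alternatively, one invokes Green's theorem: trace of Frobenius over $\mathbb{F}_q$ identifies the specialized convolution algebra with the generic Hall algebra, a known $\Z[v^\pm]$-form of $U_v(\n)$, and nondegeneracy of the Green form forces injectivity.)

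I expect the genuine obstacles to be step (i) and the support-control part of step (ii): the Serre-relation computation demands careful bookkeeping of perverse shifts through the flag-variety pushforwards and the exact form of the Gaussian-binomial identity, while surjectivity hinges on producing, for each orbit $S$, an adapted sequence $\mathbf{i}$ for which the flag map is generically an isomorphism (or has connected fibers) and for which one can certify that the remaining summands genuinely live on smaller orbits. Once those are in hand, step (iii) is comparatively formal, using only the $v=1$ specialization already available in the excerpt.
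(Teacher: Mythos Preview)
The paper does not give its own proof of this statement; it simply cites Lusztig \cite{Lu91}, \S 10.17. Your three-step outline (verify the quantum Serre relations for the $\mathbbm{1}_{S_i}$, prove surjectivity by inductively producing each $\IC(\line{S},\C)$ as a summand of an adapted convolution, then conclude injectivity by a graded dimension count) is a faithful sketch of Lusztig's argument there, so in that sense your proposal matches what the paper defers to.

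One small caution on step~(iii): as written, the specialization argument is slightly circular, since asserting that ``both sides are free of the same finite rank in each degree'' already presupposes the PBW theorem for $U_v(\n)$, which in Lusztig's framework is an \emph{output} of this isomorphism rather than an input; and a nonzero element of a $\Q(v)$-vector space has no well-defined specialization at $v=1$ without first choosing an integral form. The standard fix (and what Lusztig actually does) is to bound $\dim_{\Q(v)} U_v(\n)_{|V|}$ from above by $\dim_{\C} U(\n)_{|V|}$ directly from the Serre presentation, then use surjectivity onto $\K_v(\Omega)\otimes\Q(v)$, whose rank equals $\#\P_\Omega(V) = \dim_{\C}\M_\Omega(V) = \dim_{\C} U(\n)_{|V|}$ by the $v=1$ theorem already stated in the paper, to force equality and hence injectivity. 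Your Green-form alternative is also valid and is essentially Ringel--Green's Hall-algebra argument.
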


\begin{proof}

Cf. \cite{Lu91}, \S 10.17.

\end{proof}

\remk 
By letting $v=1$, we recover the previous map $\Psi$ by identifying  $E_i$ to $e_i$ (cf. \cite{Lu91}, \S 10.20). 

\begin{definition}
Let $U_{v, \Z}(\n)=\Psi^{-1}_v(\K_v(\Omega))$.
\end{definition}

Let $\Irr(\Lambda_V)$ be the set of irreducible components of $\Lambda_V$.
\begin{definition}
Following Lusztig, we define for each graded vector space $V$ a $\C$-basis 
\[
\{\phi_Z | Z\in \Irr(\Lambda_V)\}
\]
of $\M_\Pi$. The function $\phi_Z$ is uniquely characterized by the fact that it is equal to 1 on a dense open subset of $Z$ and equal to 0 on a dense open subset of any other irreducible component $Z'$ of $\Lambda_V$ \cite[Lemma 2.5]{lu00}. 
\end{definition}

\section{Quiver of type A}

Let $Q = (I, H, s, e)$ be a quiver of type $A$. Let $I = \{1, 2, \cdots, r\}$ and $\Omega$ be the orientation $i \rightarrow i+1$. Let $V = \bigoplus_{i \in I} V_{i}$ be an $I$-graded vector space.
\[
E_{V, \Omega} = \bigoplus_{1 \leqslant i < r} {\rm Hom}(V_{i}, V_{i+1}), \quad G_{V} = \prod_{1 \leqslant i \leqslant r} GL(V_{i}).
\]
Let $D_{G_{V}}(E_{V, \Omega})$ be the $G_{V}$-equivariant derived category of constructible complexes on $E_{V, \Omega}$ and $K_{G_{V}}(E_{V, \Omega})$  the corresponding Grothendieck group. Then the local Euler characteristic gives an isomorphism
\[
\chi: K_{G_{V}}(E_{V, \Omega}) \otimes_{\mathbb{Z}} \mathbb{C} \xrightarrow{\sim} M(E_{V, \Omega})^{G_V}, \quad \mathcal{F} \mapsto \phi_{\mathcal{F}},
\]
where $\phi_{\mathcal{F}}(x) = \chi(\mathcal{F}_{x})$. Let $\bar{\Omega}$ be the opposite orientation, and 
\[
E_{V, \bar{\Omega}} = \bigoplus_{1 \leqslant i < r} {\rm Hom}(V_{i + 1}, V_{i}).
\]
Let 
\[
E_{V} = E_{V, \Omega} \oplus E_{V, \bar{\Omega}} \hookrightarrow {\rm End}(V), \quad G_{V} \hookrightarrow GL(V).
\]
We define a $GL(V)$-invariant nondegenerate bilinear form on ${\rm End}(V)$ by the trace
\[
\langle \,, \, \rangle: {\rm End}(V) \times {\rm End}(V) \rightarrow \mathbb{C}, \quad \langle x,  y \rangle = {\rm tr}(xy).
\]
It defines a $G_{V}$-invariant nondegenerate bilinear form on $\mathfrak{g}_{V} : = {\rm Lie}(G_{V})$, and a $G_{V}$-invariant nondegenerate pairing
\[
\langle \,, \, \rangle: E_{V, \Omega} \times E_{V, \bar{\Omega}} \rightarrow \mathbb{C}.
\]
Under this pairing, we can identify
\[
E_{V, \bar{\Omega}} \cong E_{V, \Omega}^{*} \text{ and } T^{*}E_{V, \Omega} \cong E_{V} \cong T^{*}E_{V, \bar{\Omega}}.
\]
On ${\rm End}(V)$, we have the Lie bracket $[x, y] := xy - yx$ and 
\[
\Lambda_{V} = \{(x, y) \in E_{V} \, | \, [x, y] = 0\}.
\]
We decompose $\Lambda_{V}$ into irreducible components, 
\[
\Lambda_{V} \cong \bigcup_{S} \line{T^*_{S}E_{V, \Omega}} \cong  \bigcup_{C} \line{T^*_{C}E_{V, \bar{\Omega}}}
\]
which are closures of conormal bundles over orbits on $E_{V, \Omega}$ and $E_{V, \bar{\Omega}}$ respectively. For any orbit $S \subseteq E_{V, \Omega}$, we define the dual orbit $\widehat{S} \subseteq E_{V, \bar{\Omega}}$ by the condition that
\[
\line{T^*_{S}E_{V, \Omega}} \cong \line{T^*_{\widehat{S}}E_{V, \bar{\Omega}}}.
\]  
We also define
\[
(T^*_{S}E_{V, \Omega})_{{\rm reg}} := T^*_{S}E_{V, \Omega} \backslash \cup_{S' \neq S} \line{T^*_{S'}E_{V, \Omega}}.
\]
Then it is easy to see that
\[
(T^*_{S}E_{V, \Omega})_{{\rm reg}} \subseteq S \times \widehat{S}.
\]

\subsection{Constructible functions}\label{subsection-Constructible-functions}

We will define a map
\[
\eta_{V}: M(E_{V, \Omega})^{G_V} \rightarrow M(\Lambda_V)^{G_V},
\]
which has been introduced in \cite{CMMB} in a more general setting.

\begin{definition}\label{definition-eta_V}
For any $(x, y) \in \Lambda_{V}$, $\eta_{V}(\phi_{\mathcal{F}})(x, y) = \chi(R\Phi_{f_{y}}[-1](\mathcal{F})_{x})$, where $f_y: E_{V,\Omega}\rightarrow \C$ is defined by
\(
f_y(z)=\langle \, z,y \, \rangle.
\)

\end{definition}

Next we show the image of $\eta_{V}$ lies in $M(\Lambda_V)^{G_V}$.

\begin{prop}
For $\mathcal{F} \in D_{G_{V}}(E_{V, \Omega})$, $\eta_{V}(\phi_{\mathcal{F}}) \in M(\Lambda_V)^{G_V}$.
\end{prop}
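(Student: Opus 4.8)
The plan is to establish two things: that $\eta_V(\phi_{\mathcal F})$ is a constructible function on $\Lambda_V$, and that it is $G_V$-invariant. Both follow from standard properties of vanishing cycles together with the equivariance of $\mathcal F$, but some care is needed because the function $f_y$ varies with the point $(x,y)$.

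\medskip

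\textbf{Constructibility.} First I would observe that for a fixed $y \in E_{V,\bar\Omega} \cong E_{V,\Omega}^*$, the function $x \mapsto \chi\big(R\Phi_{f_y}[-1](\mathcal F)_x\big)$ is a constructible function on $Y_y := f_y^{-1}(0) \subseteq E_{V,\Omega}$; this is because $R\Phi_{f_y}[-1](\mathcal F)$ lies in $D^b_c(Y_y)$ (the vanishing cycle functor preserves bounded constructible complexes) and the local Euler characteristic of a bounded constructible complex is always a constructible function. The real content is to handle the joint dependence on $(x,y)$. For this I would work with the relative vanishing cycle construction over the total space: consider the product $E_{V,\Omega} \times E_{V,\bar\Omega}$ with the function $F(z, y) = \langle z, y\rangle$, pull back $\mathcal F$ along the first projection to get $\mathrm{pr}_1^*\mathcal F$, and form $R\Phi_F[-1](\mathrm{pr}_1^*\mathcal F)$, which is a bounded constructible complex on $F^{-1}(0)$. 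Its restriction to the fiber over a fixed $y$ is (by the compatibility of vanishing cycles with non-characteristic base change, or rather with the inclusion of a fiber of a nice family) identified with $R\Phi_{f_y}[-1](\mathcal F)$, so $\eta_V(\phi_{\mathcal F})$ is the restriction to $\Lambda_V$ of the local Euler characteristic of a single bounded constructible complex on $E_{V,\Omega} \times E_{V,\bar\Omega}$, hence constructible. Since $\Lambda_V$ is a constructible (in fact closed algebraic) subset of $E_V = E_{V,\Omega}\times E_{V,\bar\Omega}$, the restriction is still constructible.

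\medskip

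\textbf{$G_V$-invariance.} For $g \in G_V$, the action on $E_V$ sends $(x,y)$ to $(g\cdot x, g\cdot y)$, and because the pairing $\langle\,,\,\rangle$ is $G_V$-invariant we have $f_{g\cdot y}(g\cdot z) = \langle g\cdot z, g\cdot y\rangle = \langle z, y\rangle = f_y(z)$, i.e.\ the automorphism $g: E_{V,\Omega}\to E_{V,\Omega}$ identifies $f_y$ with $f_{g\cdot y}$ and carries $Y_y$ isomorphically onto $Y_{g\cdot y}$. Since $\mathcal F$ is $G_V$-equivariant, $g^*\mathcal F \cong \mathcal F$, and vanishing cycles commute with the pullback along the isomorphism $g$; therefore $g^*\big(R\Phi_{f_{g\cdot y}}[-1](\mathcal F)\big) \cong R\Phi_{f_y}[-1](g^*\mathcal F) \cong R\Phi_{f_y}[-1](\mathcal F)$. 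Taking stalks at $x$ and Euler characteristics gives $\eta_V(\phi_{\mathcal F})(g\cdot x, g\cdot y) = \eta_V(\phi_{\mathcal F})(x,y)$, which is the desired invariance. Note this argument shows $\eta_V$ is well-defined on $G_V$-invariant functions, i.e.\ depends only on $\phi_{\mathcal F}$ and not on the choice of equivariant lift $\mathcal F$, since two lifts with the same stalkwise Euler characteristics give the same output by the same reasoning applied fiberwise.

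\medskip

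\textbf{Main obstacle.} The delicate point is the constructibility argument, specifically justifying that the stalkwise Euler characteristic of $R\Phi_F[-1](\mathrm{pr}_1^*\mathcal F)$ restricted to $\Lambda_V$ agrees with $(x,y)\mapsto \chi(R\Phi_{f_y}[-1](\mathcal F)_x)$. One must check that restricting the family $F: E_{V,\Omega}\times E_{V,\bar\Omega} \to \mathbb C$ to the slice $\{y\}\times E_{V,\Omega}$ (equivalently, restricting $R\Phi_F$ along the inclusion of this slice) really recovers $R\Phi_{f_y}$; this is a base-change statement for vanishing cycles along a smooth parameter direction, which holds but should be cited carefully (e.g.\ via the compatibility of nearby/vanishing cycles with smooth base change, noting $f_y$ need not have smooth zero locus so one may instead argue stalk-by-stalk using that $R\Phi$ is computed on arbitrarily small neighborhoods). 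An alternative, perhaps cleaner, route that avoids the relative construction entirely: stratify $E_{V,\bar\Omega}$ (or rather $\Lambda_V$) into finitely many pieces on which the pair $(\mathcal F, f_y)$ behaves uniformly — using that $\mathcal F$ is constructible for the $G_V$-stratification and that $\Lambda_V$ has finitely many $G_V$-orbits in its conormal decomposition — and invoke a generic constancy statement for vanishing-cycle Euler characteristics on each piece. I expect the paper follows the line of \cite{CMMB}, so I would cite the corresponding statement there and supply the equivariance argument above in detail.
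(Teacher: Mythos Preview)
Your $G_V$-invariance argument is fine and essentially what one would write; the paper leaves this implicit since everything in its construction is manifestly $G_V$-equivariant.

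The constructibility argument, however, has a real gap at exactly the point you flag. The base change you need,
\[
j^*\,R\Phi_F(\mathrm{pr}_1^*\mathcal F)\;\cong\;R\Phi_{f_y}(\mathcal F)
\quad\text{for }j:E_{V,\Omega}\times\{y\}\hookrightarrow E_{V,\Omega}\times E_{V,\bar\Omega},
\]
is \emph{not} an instance of smooth base change (the inclusion $j$ is a closed immersion, not a smooth morphism), and it fails in general. A toy counterexample: $F(x,y)=x^2+y$ on $\mathbb C^2$ with the constant sheaf; $dF$ never vanishes so $R\Phi_F(\mathbbm 1)=0$, yet restricting to $y=0$ gives $f_0(x)=x^2$ with $R\Phi_{f_0}(\mathbbm 1)_0\neq 0$. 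Your ``stalk-by-stalk, small neighborhoods'' remark does not repair this, and the stratification alternative you sketch is closer to the truth but not developed enough to carry the argument.

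The paper's proof sidesteps this by never working over all of $E_{V,\bar\Omega}$ at once. Instead it fixes a $G_V$-orbit $\widehat S\subset E_{V,\bar\Omega}$ and restricts the pairing to $f_S:E_{V,\Omega}\times\widehat S\to\mathbb C$. The point is that $E_{V,\Omega}\times\widehat S\cong G_V\times_{Z_{G_V}(y)}E_{V,\Omega}$, and under the equivalence $D_{G_V}(G_V\times_H X)\cong D_H(X)$ the section $i:z\mapsto(z,y)$ \emph{does} satisfy $i^*R\Phi_{f_S}\cong R\Phi_{f_y}\,i^*$ on $G_V$-equivariant complexes, because $f_S$ is $G_V$-invariant. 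This gives $(R\Phi_{f_S}[-1](\pi^*\mathcal F))_{(x,y)}\cong(R\Phi_{f_y}[-1](\mathcal F))_x$, so $\eta_V(\phi_{\mathcal F})|_{T^*_{\widehat S}E_{V,\bar\Omega}}$ is the Euler characteristic of a single constructible complex on $E_{V,\Omega}\times\widehat S$, hence constructible. Since $\Lambda_V=\bigsqcup_S T^*_{\widehat S}E_{V,\bar\Omega}$ is a finite disjoint union over orbits, constructibility on $\Lambda_V$ follows. The equivariant descent is precisely the missing idea that makes the base change legitimate.
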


To prove this, we will give another description of $\eta_{V}$ following \cite{CMMB}. Let $S \subseteq E_{V, \Omega}$ be any orbit and $\widehat{S} \subseteq E_{V, \bar{\Omega}}$ be its dual. We would like to define $\eta_{V}(\phi_{\mathcal{F}})$ on each $T^{*}_{\widehat{S}}E_{V, \bar{\Omega}}$ as follows.
\[
\xymatrix{E_{V, \Omega} \times \widehat{S} \ar[r] \ar[d]_{\pi} & E_{V, \Omega} \times E_{V, \bar{\Omega}} \ar[d]^{{\rm \langle \, , \, \rangle}}   \\
E_{V, \Omega} & \mathbb{C}
}
\]
Note
\begin{align}
\label{eq: conormal}
T^{*}_{\widehat{S}}E_{V, \bar{\Omega}} \subseteq \langle \, , \, \rangle^{-1}(0).
\end{align}
Denote the restriction of $\langle \, , \, \rangle$ to $E_{V, \Omega} \times \widehat{S}$ by $f_{S}$.

\begin{lemma}
For $\mathcal{F} \in D_{G_{V}}(E_{V, \Omega})$ and $(x , y) \in T^{*}_{\widehat{S}}E_{V, \bar{\Omega}}$,
\begin{align}
\label{eq: lift vanishing cycle}
(R\Phi_{f_{S}}[-1] (\pi^{*}\mathcal{F}))_{(x, y)} \cong (R\Phi_{f_{y}}[-1] (\mathcal{F}))_{x}.
\end{align}
\end{lemma}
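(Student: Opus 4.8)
The plan is to reduce the claimed isomorphism to a standard base-change (or compatibility) property of vanishing cycle functors. The key observation is that $f_S$ is the composition
\[
E_{V,\Omega} \times \widehat{S} \xrightarrow{\;\;\mathrm{id} \times j\;\;} E_{V,\Omega} \times E_{V,\bar\Omega} \xrightarrow{\;\langle\,,\,\rangle\;} \C,
\]
where $j: \widehat{S} \hookrightarrow E_{V,\bar\Omega}$ is the inclusion, and that under the pairing $\langle\,,\,\rangle$ the value $y \in \widehat{S} \subseteq E_{V,\bar\Omega}$ is precisely the covector defining $f_y: z \mapsto \langle z, y\rangle$ on $E_{V,\Omega}$. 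So, after fixing the point $(x,y)$, the stalk of $R\Phi_{f_S}[-1](\pi^*\mathcal F)$ at $(x,y)$ should agree with the stalk of $R\Phi_{f_y}[-1](\mathcal F)$ at $x$, because $\pi$ restricted to the slice $E_{V,\Omega}\times\{y\}$ is an isomorphism onto $E_{V,\Omega}$ and $f_S$ restricted to that slice is exactly $f_y$.

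First I would set up the following commutative square: let $a: E_{V,\Omega} \to E_{V,\Omega}\times\widehat{S}$, $z \mapsto (z, y)$ be the inclusion of the slice through $y$; then $\pi \circ a = \mathrm{id}_{E_{V,\Omega}}$ and $f_S \circ a = f_y$. The idea is to compute $(R\Phi_{f_S}[-1](\pi^*\mathcal F))_{(x,y)}$ by restricting first to the slice. Since taking a stalk at $(x,y)$ factors through pullback to the slice $E_{V,\Omega}\times\{y\}$ followed by taking the stalk at $x$, it suffices to show $a^* R\Phi_{f_S}[-1](\pi^*\mathcal F) \cong R\Phi_{f_S\circ a}[-1](a^*\pi^*\mathcal F) = R\Phi_{f_y}[-1](\mathcal F)$. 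The last identification uses $f_S \circ a = f_y$ and $a^*\pi^* = (\pi a)^* = \mathrm{id}$; the first is a compatibility of vanishing cycles with the smooth (in fact, linear) base change along the inclusion of the fiber $a$, which holds provided that the relevant transversality/non-characteristic conditions are met — and here they are automatic because the construction $f_S = \langle\,,\,\rangle|_{E_{V,\Omega}\times\widehat S}$ makes $f_S$ linear in the first variable, so the slice $E_{V,\Omega}\times\{y\}$ is smooth over $\C$ via $f_S$ away from its zero locus structure, and the covering map defining $R\Psi$ and $R\Phi$ pulls back correctly.

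Concretely, I would invoke the description in \cite{CMMB}: one pulls back the universal covering $\bar p$ of $\C^\times$ along $f_S$ to get $\overline{E_{V,\Omega}\times\widehat S}$, and observes that over the slice this pullback is canonically the pullback of the covering along $f_y$; since $\pi^*\mathcal F$ is constant along the $\widehat S$-directions at the level of each fiber over $y$ (it is literally pulled back from $E_{V,\Omega}$), the nearby-cycle complex $R\Psi_{f_S}(\pi^*\mathcal F)$ restricted to $\{(x',y) : x' \in E_{V,\Omega}\}$ agrees with $R\Psi_{f_y}(\mathcal F)$, and similarly for the triangle defining $R\Phi$. Taking the stalk at $x$ then yields \eqref{eq: lift vanishing cycle}. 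The main obstacle I anticipate is the bookkeeping around the non-smooth divisor: $f_S^{-1}(0) = (\langle\,,\,\rangle|)^{-1}(0)$ need not be non-singular as required in the nearby-cycle setup, so one must either work on the smooth locus (which contains the point $(x,y) \in T^*_{\widehat S}E_{V,\bar\Omega} \subseteq \langle\,,\,\rangle^{-1}(0)$ by \eqref{eq: conormal}, since $y \in \widehat S$ is a smooth point and the pairing is non-degenerate so $0$ is a regular value of $f_y$ hence of $f_S$ near the slice) or use the more robust formulation of $R\Phi$ via deformation to the normal cone that does not require $Y$ smooth. Once that point is handled, the isomorphism is formal.
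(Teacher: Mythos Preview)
Your overall structure is right: define the slice $a: E_{V,\Omega}\to E_{V,\Omega}\times\widehat S$, $z\mapsto(z,y)$, note that $f_S\circ a=f_y$ and $a^*\pi^*\mathcal F=\mathcal F$, and then reduce to a base-change statement $a^*R\Phi_{f_S}(\pi^*\mathcal F)\cong R\Phi_{f_y}(\mathcal F)$. The paper uses exactly the same slice map (calling it $i$). The gap is in your justification of that base change.

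The map $a$ is a \emph{closed immersion}, not a smooth morphism, so ``smooth base change'' is simply not available here; and vanishing cycles do not in general commute with restriction to a closed subvariety. Your fallback to a non-characteristic condition is asserted but not checked, and your informal Milnor-fiber argument (``$\pi^*\mathcal F$ is constant along $\widehat S$, so the nearby cycle restricted to the slice agrees'') does not work as stated: the function $f_S$ genuinely varies in the $\widehat S$-direction (indeed $f_S(x,y')=\langle x,y'\rangle$ is nonconstant in $y'$ unless $x=0$), so the Milnor fiber of $f_S$ at $(x,y)$ is not simply a product of the Milnor fiber of $f_y$ at $x$ with something contractible. Nothing you wrote uses the hypothesis $\mathcal F\in D_{G_V}(E_{V,\Omega})$, and without equivariance the lemma can fail.

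What the paper does, and what you are missing, is to exploit the $G_V$-equivariance. One identifies $E_{V,\Omega}\times\widehat S\cong G_V\times_{Z_{G_V}(y)}E_{V,\Omega}$ via $(g,z)\mapsto(gz,gy)$; under this identification the slice $a$ becomes the fiber inclusion $z\mapsto(1,z)$, the function $f_S$ is $G_V$-invariant (since $\langle\,,\,\rangle$ is), and $\pi^*\mathcal F$ is $G_V$-equivariant. For a $G$-invariant function $f$ on $G\times_H X$ and $\mathcal G\in D_G(G\times_H X)$, equivariant descent gives $i^*R\Phi_f(\mathcal G)\cong R\Phi_{f\circ i}(i^*\mathcal G)$ for the fiber inclusion $i:X\hookrightarrow G\times_H X$: this is the general fact that replaces your unverified non-characteristic claim. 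Once you insert this, the rest of your argument goes through.
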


\begin{proof}
Let $Z_{G_{V}}(y)$ be the stabilizer of $y$ in $G_V$. We have an isomorphism
\[
G_{V} \times_{Z_{G_{V}}(y)} E_{V, \Omega} \cong E_{V, \Omega} \times \widehat{S}, \quad (g, z) \mapsto (gz, gy).
\]
The inclusion 
\begin{align}
\label{eq: descent}
E_{V, \Omega} \rightarrow G_{V} \times_{Z_{G_{V}}(y)} E_{V, \Omega}, \quad z \mapsto (1, z)
\end{align}
gives a section of $\pi$
\[
i: E_{V, \Omega} \rightarrow E_{V, \Omega} \times \widehat{S}, \quad z \mapsto (z, y).
\]
The pullback along the inclusion induces an equivalence of categories
\[
D_{G_{V}}(G_{V} \times_{Z_{G_{V}}(y)} E_{V, \Omega}) \cong D_{Z_{G_{V}}(y)}(E_{V, \Omega}).
\]
Since $\langle \, , \, \rangle$ is $G_{V}$-invariant, for any $\mathcal{G} \in D_{G_{V}}(E_{V, \Omega} \times \widehat{S}) \cong D_{G_{V}}(G_{V} \times_{Z_{G_{V}}(y)} E_{V, \Omega})$, we get
\[
R\Phi_{f_{y}}(i^{*}\mathcal{G}) \cong i^{*}R\Phi_{\langle \, , \, \rangle}(\mathcal{G}).
\]
Let $\mathcal{G} = \pi^{*}\mathcal{F}$, then $i^{*}\mathcal{G} = \mathcal{F}$. So
\[
R\Phi_{f_{y}}(\mathcal{F}) \cong i^{*}R\Phi_{f_{S}}(\pi^{*}\mathcal{F}).
\]
In particular,
\[
R\Phi_{f_{y}}(\mathcal{F})_{x} \cong i^{*}(R\Phi_{f_{S}}(\pi^{*}\mathcal{F}))_{x} \cong R\Phi_{f_{S}}(\pi^{*}\mathcal{F})_{(x, y)}.
\]
\end{proof}

\begin{rem}
In the lemma, we have used the following general fact. Suppose $H$ is a closed subgroup of $G$ and $X$ is an $H$-space. Then the inclusion 
\[
i: X \hookrightarrow G \times_{H} X, \quad x \mapsto (1, x)
\]
induces an equivalence of categories 
\[
D_{G}(G \times_{H} X) \cong D_{H}(X), \quad \mathcal{G} \mapsto i^{*}\mathcal{G}
\] 
Let $f: G \times_{H} X \rightarrow \mathbb{C}$ be a $G$-invariant continuous function. Then we have base change
\[
R\Phi_{f \circ i}(i^{*}\mathcal{G}) \cong i^{*}R\Phi_{f}(\mathcal{G}),
\]
for any $\mathcal{G} \in D_{G}(G \times_{H} X)$.
\end{rem}

\begin{cor}
$\eta_{V}(\phi_{\mathcal{F}})|_{T^{*}_{\widehat{S}}E_{V, \bar{\Omega}}} = \chi(R\Phi_{f_{S}}[-1] (\pi^{*}\mathcal{F}))|_{T^{*}_{\widehat{S}}E_{V, \bar{\Omega}}}$.
\end{cor}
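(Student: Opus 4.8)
The plan is to deduce the corollary directly from the preceding Lemma by passing to stalkwise Euler characteristics. Recall that for a bounded complex $\mathcal{G}$ of $\mathbb{C}$-vector spaces with finite-dimensional cohomology one has $\chi(\mathcal{G}) = \sum_{i}(-1)^{i}\dim H^{i}(\mathcal{G})$, and this integer depends only on the quasi-isomorphism class of $\mathcal{G}$ in $D^{b}$. Hence I would apply $\chi$ to both sides of the isomorphism \eqref{eq: lift vanishing cycle} established in the Lemma: for every $(x, y) \in T^{*}_{\widehat{S}}E_{V, \bar{\Omega}}$ this yields
\[
\chi\bigl((R\Phi_{f_{S}}[-1] (\pi^{*}\mathcal{F}))_{(x, y)}\bigr) \;=\; \chi\bigl((R\Phi_{f_{y}}[-1] (\mathcal{F}))_{x}\bigr).
\]

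Next I would identify the two sides with the values of the relevant constructible functions. Since $T^{*}_{\widehat{S}}E_{V, \bar{\Omega}} \subseteq \Lambda_{V}$ by \eqref{eq: conormal}, the point $(x,y)$ is a legitimate argument of $\eta_{V}(\phi_{\mathcal{F}})$, and by Definition~\ref{definition-eta_V} the right-hand side is exactly $\eta_{V}(\phi_{\mathcal{F}})(x, y)$. The left-hand side is, by definition of the constructible function attached to a complex, the value at $(x,y)$ of the function $z \mapsto \chi\bigl((R\Phi_{f_{S}}[-1] (\pi^{*}\mathcal{F}))_{z}\bigr)$ on $E_{V,\Omega}\times\widehat{S}$, i.e.\ of $\chi(R\Phi_{f_{S}}[-1] (\pi^{*}\mathcal{F}))$, restricted to the subvariety $T^{*}_{\widehat{S}}E_{V,\bar{\Omega}}$ and evaluated at $(x,y)$. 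As the equality above holds at every point of $T^{*}_{\widehat{S}}E_{V,\bar{\Omega}}$, the two functions agree there, which is the assertion.

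The one point worth recording for completeness is that $\chi(R\Phi_{f_{S}}[-1] (\pi^{*}\mathcal{F}))$ is indeed a well-defined constructible function, so that its restriction to $T^{*}_{\widehat{S}}E_{V,\bar{\Omega}}$ makes sense: $\pi^{*}\mathcal{F}$ is $G_{V}$-equivariant and constructible, the vanishing-cycle functor preserves constructibility, and hence the pointwise Euler characteristic is constructible in the sense of \S\ref{section-characteristic-cycles} (if one prefers, $R\Phi_{f_{S}}$ may be read through the general, not-necessarily-smooth definition of vanishing cycles, exactly as in the proof of the Lemma). There is no genuine obstacle here: the entire content is the stalkwise identification already proved in the Lemma, and the corollary merely repackages it as an identity of $G_{V}$-invariant constructible functions on the component $T^{*}_{\widehat{S}}E_{V,\bar{\Omega}}$ of $\Lambda_{V}$ — which is precisely the form needed, orbit by orbit, to conclude that $\eta_{V}(\phi_{\mathcal{F}}) \in M(\Lambda_{V})^{G_{V}}$.
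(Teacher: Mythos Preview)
Your proposal is correct and matches the paper's approach exactly: the paper states the corollary without proof, treating it as immediate from the preceding Lemma by taking stalkwise Euler characteristics, which is precisely what you spell out. Your added remarks on constructibility are fine but not strictly needed here, since the paper only uses this to conclude constructibility on each $T^{*}_{\widehat{S}}E_{V,\bar{\Omega}}$ in the very next line.
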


In particular, $\eta_{V}(\phi_{\mathcal{F}})$ is constructible on $T^{*}_{\widehat{S}}E_{V, \bar{\Omega}}$. Since
\[
\Lambda_{V} = \bigsqcup_{S} T^{*}_{\widehat{S}} E_{V, \bar{\Omega}},
\]
we see $\eta_{V}(\phi_{\mathcal{F}}) \in M(\Lambda_V)^{G_V}$.

\subsection{Characteristic cycles}\label{subsection-characteristic-cycles}

For $\mathcal{F} \in D_{G_{V}}(E_{V, \Omega})$, let $m_{S}(CC(\mathcal{F}))$ be the multiplicity of $[T^*_{S}E_{V, \Omega}]$ in $CC(\mathcal{F})$. Let $f$ be the restriction of $\langle \, , \, \rangle$ to $S \times \widehat{S}$. The goal of this subsection is to prove the following proposition.
\begin{prop}
\label{prop: relation with CC}
For $\mathcal{F} \in D_{G_{V}}(E_{V, \Omega})$ and $(x, y) \in (T^{*}_{S}E_{V, \Omega})_{{\rm reg}}$,
\[
\eta_{V}(\phi_{\mathcal{F}})(x , y) = (-1)^{{\rm dim}\Lambda_{V} - {\rm dim} \widehat{S}}m_{S}(CC(\mathcal{F})).
\]
\end{prop}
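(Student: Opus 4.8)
The plan is to reduce the statement to the Milnor-type formula for characteristic cycles, Proposition~\ref{prop-basic-property-characteristic-cycle}(6), applied to the function $f_S$ on $E_{V,\Omega}\times\widehat{S}$. By the Corollary above, $\eta_V(\phi_{\mathcal F})(x,y) = \chi(R\Phi_{f_S}[-1](\pi^*\mathcal F))_{(x,y)}$, so it suffices to compute the left-hand side of Proposition~\ref{prop-basic-property-characteristic-cycle}(6) for $F = \pi^*\mathcal F$ on (a neighborhood of $(x,y)$ in) $E_{V,\Omega}\times\widehat{S}$, and the function $\langle\,,\,\rangle|_{E_{V,\Omega}\times\widehat S} = f_S$. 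First I would note that $\pi: E_{V,\Omega}\times\widehat S\to E_{V,\Omega}$ is a smooth morphism, so $CC(\pi^*\mathcal F) = CC(\mathcal F)\boxtimes[T^*_{\widehat S}\widehat S]$ up to a sign $(-1)^{\dim\widehat S}$ coming from Proposition~\ref{prop-basic-property-characteristic-cycle}(1),(4) (the conormal of $\widehat S$ inside itself is the zero section, of a space of dimension $\dim\widehat S$). Thus the components of $CC(\pi^*\mathcal F)$ are of the form $(-1)^{\dim\widehat S}m_{S'}(CC(\mathcal F))\,[T^*_{S'}E_{V,\Omega}\times T^*_{\widehat S}\widehat S]$.

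\textbf{Key steps.} Second, I would identify the zero section $C_{f_S} = \{(z,w,df_S(z,w))\}$ of $T^*(E_{V,\Omega}\times\widehat S)$ and compute its intersection with $SS(\pi^*\mathcal F)$ near the chosen point $(x,y)$. The crucial geometric input is that $(x,y)\in(T^*_SE_{V,\Omega})_{\mathrm{reg}}\subseteq S\times\widehat S$, together with the fact \eqref{eq: conormal} that $T^*_{\widehat S}E_{V,\bar\Omega}\subseteq\langle\,,\,\rangle^{-1}(0)$. Because $(x,y)$ lies in the regular locus, near $(x,y)$ the only component of $SS(\pi^*\mathcal F)$ that meets $C_{f_S}$ is the one coming from $S$ itself, i.e. $T^*_SE_{V,\Omega}\times T^*_{\widehat S}\widehat S$; all other components $T^*_{S'}E_{V,\Omega}\times T^*_{\widehat S}\widehat S$ have closures avoiding $(x,y)$. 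So the intersection number $(CC(\pi^*\mathcal F),C_{f_S})_{(x,y)}$ equals $(-1)^{\dim\widehat S}m_S(CC(\mathcal F))$ times the local intersection multiplicity $(T^*_SE_{V,\Omega}\times T^*_{\widehat S}\widehat S,\ C_{f_S})_{(x,y)}$, and I would show this last multiplicity is $\pm1$ by a direct tangent-space computation: the condition $(x,y)\in(T^*_SE_{V,\Omega})_{\mathrm{reg}}$ forces $C_{f_S}$ and the conormal to meet transversally at $(x,y)$ inside $T^*(E_{V,\Omega}\times\widehat S)$, which is a space of dimension $2\dim E_{V,\Omega}+2\dim\widehat S$; here one uses that $\dim T^*_SE_{V,\Omega}=\dim\Lambda_V=\dim E_{V,\Omega}$ and dimension-counting. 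Third, assembling the Milnor formula $-\chi(R\Phi_{f_S}(\pi^*\mathcal F)_{(x,y)}) = (CC(\pi^*\mathcal F),C_{f_S})_{(x,y)}$ with the shift $[-1]$ (which contributes a sign $(-1)$ since $R\Phi[-1]$ versus $R\Phi$ differ by a shift on an object concentrated — after taking $\chi$ — in a way that flips the sign), and tracking the exponents, I get $\eta_V(\phi_{\mathcal F})(x,y) = (-1)^{\dim\widehat S}\cdot(-1)^{?}\,m_S(CC(\mathcal F))$; matching this with the claimed exponent $\dim\Lambda_V-\dim\widehat S$ is then a bookkeeping check using $\dim\Lambda_V=\dim E_{V,\Omega}$ and parity of $\dim E_{V,\Omega}$ against the various shifts.

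\textbf{Transversality / the main obstacle.} The genuine hard point is verifying that $C_{f_S}$ meets $SS(\pi^*\mathcal F)$ transversally at $(x,y)$ so that the Milnor-type formula Proposition~\ref{prop-basic-property-characteristic-cycle}(6) applies at all — this is precisely the Whitney-type regularity issue flagged in the introduction and deferred to the appendix. I would handle it by showing that for $(x,y)\in(T^*_SE_{V,\Omega})_{\mathrm{reg}}$ the differential $df_S$ at such a point, viewed as a covector on $E_{V,\Omega}\times\widehat S$, is not contained in any other Lagrangian $T^*_{S'}E_{V,\Omega}\times T^*_{\widehat S}\widehat S$; equivalently the pair $(S,\widehat S)$ is "generic" for the pairing $\langle\,,\,\rangle$. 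This is where one invokes the stratified Morse theory of \cite{Schurmann:2003} and the appendix's regularity verification. The remaining steps — the Künneth formula for $CC$, the base-change computation already established in the Lemma, and the sign bookkeeping — are routine given that transversality is in hand.
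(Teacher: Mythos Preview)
Your approach has a genuine gap: the transversality hypothesis of the Milnor-type formula (Proposition~\ref{prop-basic-property-characteristic-cycle}(6)) fails for $f_S$ at $(x,y)$. Concretely, a point $(z,w)\in E_{V,\Omega}\times\widehat S$ lies in $C_{f_S}\cap\bigl(T^*_{S}E_{V,\Omega}\times T^*_{\widehat S}\widehat S\bigr)$ exactly when $z\in S$, $(z,w)\in T^*_{S}E_{V,\Omega}$, and the $\widehat S$-component of $df_S(z,w)$ vanishes, i.e.\ $z\in T^*_{\widehat S,w}E_{V,\bar\Omega}$. But \emph{every} point of $(T^*_{S}E_{V,\Omega})_{\rm reg}$ satisfies these conditions (indeed Lemma~4.8 shows the $\widehat S$-component of $df_S$ vanishes there), so the intersection is at least $\dim\Lambda_V$-dimensional, not isolated. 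Equivalently, $f_S$ restricted to the stratum $S\times\widehat S$ is Morse--Bott, not Morse: its critical locus is the whole of $(T^*_{S}E_{V,\Omega})_{\rm reg}$, and the Hessian is degenerate along that locus. So the intersection number you want to compute is not defined as a transversal multiplicity, and the formula does not apply directly.

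The paper circumvents this by invoking the splitting formula from stratified Morse theory (\cite[Theorem~5.3.3]{Schurmann:2003}) rather than the Milnor formula. The local Morse datum of $\pi^*\mathcal F$ for $f_S$ at $(x,y)$ factors as a tensor product of a \emph{normal} Morse datum, computed on a slice $N_S\times\{y\}$ transverse to $S\times\widehat S$, and a \emph{tangential} Morse datum, which is $R\Phi_f[-1](\mathbbm 1_{S\times\widehat S})_{(x,y)}$ for $f=\langle\,,\,\rangle|_{S\times\widehat S}$. The normal piece is identified with ${\rm NMD}(\mathcal F, re(f_y),x)$ and gives $(-1)^{\dim S}m_S(CC(\mathcal F))$ as in your intuition. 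The tangential piece is the new contribution coming precisely from the non-isolated critical locus: it is computed in the appendix (Proposition~\ref{prop: tangential data}) by showing that $f|_{S\times\widehat S}$ is Morse--Bott along $(T^*_{S}E_{V,\Omega})_{\rm reg}$ with nondegenerate normal Hessian of rank $\dim S+\dim\widehat S-\dim\Lambda_V$, whence $R\Phi_f[-1](\mathbbm 1_{S\times\widehat S})_{(x,y)}=\mathbb C[\dim\Lambda_V-\dim\widehat S-\dim S]$. Multiplying the two Euler characteristics yields the claimed sign $(-1)^{\dim\Lambda_V-\dim\widehat S}$. The Whitney-type regularity in the appendix is needed for the splitting formula, not for a transversality statement of the kind you propose.
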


The proof will occupy the whole section. Recall $\eta_{V}(\phi_{\mathcal{F}})(x , y) = \chi(R\Phi_{f_{y}}[-1](\mathcal{F})_{x})$. By \cite[Lemma 1.3.2]{Schurmann:2003},
\[
(R\Phi_{f_{y}}[-1](\mathcal{F}))_{x} \cong (R\Gamma_{re(f_{y}) \geqslant 0}(\mathcal{F}))_{x}
\]
In terms of stratified Morse theory, the right hand side is called the local Morse data, denoted by 
\(
{\rm LMD}(L, re(f_{y}), x).
\)
We have the following splitting formula for the local Morse data.

\begin{teo}
For $\mathcal{F} \in D_{G_{V}}(E_{V, \Omega})$ and $(x, y) \in (T^{*}_{S}E_{V, \Omega})_{{\rm reg}}$, 
\[
{\rm LMD}(\mathcal{F}, re(f_{y}), x) \cong {\rm TMD}(\mathcal{F}, re(f_{y}), x) \otimes^{L}_{\mathbb{C}} {\rm NMD}(\mathcal{F}, re(f_{y}), x)
\]
with
\begin{align*}
{\rm TMD}(\mathcal{F}, re(f_{y}), x) := (R\Gamma_{re(f_{y}) \geqslant 0}(\mathbbm{1}_{S}))_{x}  
\end{align*}
the tangential Morse data, and 
\begin{align*}
{\rm NMD}(\mathcal{F}, re(f_{y}), x) := (R\Gamma_{re(f_{y}) \geqslant 0}(\mathcal{F}|_{N_{S}}))_{x}   
\end{align*}
the normal Morse data with respect to a normal slice $N_{S} \subseteq E_{V, \Omega}$ to $S$ at $x$.
\end{teo}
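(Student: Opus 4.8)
The plan is to recognize the asserted isomorphism as the sheaf-theoretic Splitting Lemma of stratified Morse theory (Goresky--MacPherson; cf.\ \cite[Ch.~5]{Schurmann:2003}) and to reduce it to the form in which Sch\"urmann states it, after checking the two hypotheses needed in our situation: the Whitney regularity of the orbit stratification and the non-degeneracy of the relevant covector. Throughout one uses \cite[Lemma~1.3.2]{Schurmann:2003} to pass between the description of the Morse data as $\bigl(R\Gamma_{re(f_y)\geqslant 0}(-)\bigr)_x$ and the vanishing-cycle description $\bigl(R\Phi_{f_y}[-1](-)\bigr)_x$, so that it suffices to establish the K\"unneth-type splitting
\[
\bigl(R\Phi_{f_y}[-1](\mathcal{F})\bigr)_x \;\cong\; \bigl(R\Phi_{q}[-1](\mathbbm{1}_{B})\bigr)_x \otimes^{L}_{\mathbb{C}} \bigl(R\Phi_{r}[-1](\mathcal{F}|_{N_S})\bigr)_x ,
\]
where $B\subseteq S$ is a small complex ball through $x$, $q$ is a ``tangential'' function on $B$, and $r=f_y|_{N_S}$.

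First I would record the two inputs. Since $(x,y)\in (T^{*}_{S}E_{V, \Omega})_{{\rm reg}}$, the covector $y$ is conormal to $S$ at $x$, so $df_y(x)$ annihilates $T_xS$; moreover $(x,y)$ lies in no $\overline{T^{*}_{S'}E_{V, \Omega}}$ with $S'\neq S$. On the other hand $\mathcal{F}\in D_{G_V}(E_{V, \Omega})$ is constructible with respect to the (finite, by the representation-finiteness of type $A$) stratification by $G_V$-orbits, whence $SS(\mathcal{F})\subseteq \bigcup_{S'}\overline{T^{*}_{S'}E_{V, \Omega}}=\Lambda_V$. Combining these, $df_y(x)$ is a non-degenerate covector for $\mathcal{F}$ at $x$: it sits in the conormal of $S$ and meets $SS(\mathcal{F})$ along no stratum other than $S$. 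The remaining hypothesis of Sch\"urmann's splitting theorem, namely the Whitney regularity of the orbit stratification, is exactly what is verified in the appendix.

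The geometric heart of the argument is then the local product model. Using Whitney regularity and the Thom--Mather isotopy lemma, one chooses a normal slice $N_S\subseteq E_{V, \Omega}$ to $S$ at $x$ (transverse, of complementary dimension; it may be taken $Z_{G_V}(y)$-invariant, compatibly with the preceding lemma) and a neighborhood $U$ of $x$ with a stratification-preserving homeomorphism $U\cong B\times (N_S\cap U)$ carrying $S$ to $B\times\{x\}$ and each nearby stratum $S'$ to $B\times (S'\cap N_S)$. On $U$ the constructible complex $\mathcal{F}$ then acquires an external-product structure $\mathcal{F}|_U\cong \mathbbm{1}_{B}\boxtimes(\mathcal{F}|_{N_S})$; and, using the non-degeneracy established above together with a stratified (parametrized) Morse lemma, after a further stratified change of coordinates on $U$ one arranges $f_y=q\oplus r$, i.e.\ $(s,n)\mapsto q(s)+r(n)$, with $x$ a critical point of $q$. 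The Thom--Sebastiani theorem for vanishing cycles now yields the displayed splitting, and translating the two factors back through \cite[Lemma~1.3.2]{Schurmann:2003} identifies them with ${\rm TMD}(\mathcal{F},re(f_y),x)$ and ${\rm NMD}(\mathcal{F},re(f_y),x)$, as claimed.

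I expect the main obstacle to be precisely this last step: producing a single neighborhood of $x$ that is simultaneously a product compatible with the Whitney stratification, carries $\mathcal{F}$ to an external product, and in which $f_y$ decomposes as a tangential summand plus its restriction to the normal slice. This is where the appendix (Whitney regularity) is indispensable and where the stratified Morse lemma enters. I would also stress that one does \emph{not} need $f_y|_S$ to be a nondegenerate (Morse) function: the tangential Morse data is \emph{defined} as $\bigl(R\Gamma_{re(f_y)\geqslant 0}(\mathbbm{1}_S)\bigr)_x$, whatever complex that turns out to be, and the only non-degeneracy actually used is that of the covector $df_y(x)$ relative to $SS(\mathcal{F})$, which was checked in the second step.
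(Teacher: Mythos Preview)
Your proposal is correct and follows the same route as the paper: invoke the sheaf-theoretic splitting theorem \cite[Theorem~5.3.3]{Schurmann:2003}, noting that the required regularity of the orbit stratification is supplied by the appendix and that the non-degeneracy of the covector follows from $(x,y)\in (T^{*}_{S}E_{V,\Omega})_{\mathrm{reg}}$. The paper's own proof is in fact just the one-line citation of Sch\"urmann's theorem together with the deferral to the appendix; your additional sketch of the local product model and the Thom--Sebastiani step merely unpacks what is inside that reference and is not needed for the argument as stated.
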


\begin{proof}
It follows from \cite[Theorem 5.3.3]{Schurmann:2003}, which has some regularity condition on the stratification. We will verify this condition for our case in the appendix.
\end{proof}

As a direct consequence, we have

\begin{cor}
$\chi({\rm LMD}(\mathcal{F}, re(f_{y}), x)) = \chi({\rm TMD}(\mathcal{F}, re(f_{y}), x)) \cdot \chi({\rm NMD}(\mathcal{F}, re(f_{y}), x))$.
\end{cor}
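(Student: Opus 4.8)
The statement to prove is the Corollary asserting that Euler characteristic is multiplicative over the tensor product decomposition from the preceding Theorem. Here is how I would organize the argument.

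\medskip

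\textbf{Approach.} The claim is an immediate formal consequence of the splitting theorem stated just above it, namely that
\[
{\rm LMD}(\mathcal{F}, re(f_{y}), x) \cong {\rm TMD}(\mathcal{F}, re(f_{y}), x) \otimes^{L}_{\mathbb{C}} {\rm NMD}(\mathcal{F}, re(f_{y}), x)
\]
in $D^{b}(\mathbb{C}\text{-vect})$. The only input needed beyond this isomorphism is the standard fact that the Euler characteristic (alternating sum of dimensions of cohomology) is a ring homomorphism from the Grothendieck group of bounded complexes of finite-dimensional $\mathbb{C}$-vector spaces to $\mathbb{Z}$, sending $\otimes^{L}_{\mathbb{C}}$ to multiplication. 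So the plan is: (1) record that both ${\rm TMD}$ and ${\rm NMD}$ have finite-dimensional total cohomology, so that their Euler characteristics are well-defined integers; (2) invoke the Künneth formula over the field $\mathbb{C}$, which for complexes of $\mathbb{C}$-vector spaces gives $H^{n}(A \otimes^{L}_{\mathbb{C}} B) \cong \bigoplus_{p+q=n} H^{p}(A) \otimes_{\mathbb{C}} H^{q}(B)$ with no $\mathrm{Tor}$ terms since $\mathbb{C}$ is a field; (3) take alternating sums of dimensions and use that $\dim_{\mathbb{C}}(U \otimes_{\mathbb{C}} W) = \dim_{\mathbb{C}}(U)\dim_{\mathbb{C}}(W)$ together with the reindexing $\sum_{n}(-1)^{n}\sum_{p+q=n} a_{p} b_{q} = \big(\sum_{p}(-1)^{p}a_{p}\big)\big(\sum_{q}(-1)^{q}b_{q}\big)$.

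\medskip

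\textbf{Steps in order.} First I would apply $\chi(-)$ to both sides of the isomorphism in the Theorem, using that $\chi$ is well-defined on isomorphism classes in $D^{b}_{c}$. Next, by the Theorem's own definitions, the left side is $\chi\big((R\Gamma_{re(f_{y}) \geqslant 0}(\mathcal{F}))_{x}\big) = \chi\big((R\Phi_{f_{y}}[-1](\mathcal{F}))_{x}\big) = \eta_{V}(\phi_{\mathcal{F}})(x,y)$ via the identification $(R\Phi_{f_{y}}[-1](\mathcal{F}))_{x} \cong (R\Gamma_{re(f_{y}) \geqslant 0}(\mathcal{F}))_{x}$ cited from \cite[Lemma 1.3.2]{Schurmann:2003}; this is just bookkeeping and does not need to be repeated in the proof of the Corollary itself. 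Then I would apply the Künneth formula to the right side and conclude $\chi({\rm LMD}) = \chi({\rm TMD}) \cdot \chi({\rm NMD})$. That is the entire proof; it is one line once the Theorem is granted.

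\medskip

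\textbf{Main obstacle.} There is essentially no obstacle at this stage: all the work has been front-loaded into the splitting Theorem, which in turn rests on \cite[Theorem 5.3.3]{Schurmann:2003} and the Whitney-type regularity of the stratification deferred to the appendix. If one wanted to be completely self-contained one would have to check that the derived tensor product appearing there is really over the field $\mathbb{C}$ (so Künneth has no higher Tor), and that the constructibility/finiteness hypotheses guarantee $\mathcal{F}$ has finite-dimensional stalk cohomology so that all Euler characteristics in sight are finite; both are built into the setup ($\mathcal{F} \in D_{G_{V}}(E_{V,\Omega})$ has bounded constructible cohomology). So the proof is genuinely a formal corollary: \emph{$\chi$ is multiplicative under $\otimes^{L}_{\mathbb{C}}$, and the Theorem exhibits ${\rm LMD}$ as such a tensor product.}
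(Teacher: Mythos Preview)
Your proposal is correct and matches the paper's own treatment: the paper states the corollary with the phrase ``As a direct consequence, we have'' and gives no further argument, i.e., it regards the multiplicativity of $\chi$ under $\otimes^{L}_{\mathbb{C}}$ applied to the splitting isomorphism as the entire proof. Your write-up simply spells out that one line.
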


It is the normal Morse data that relates to the characteristic cycle, namely
\[
(-1)^{{\rm dim} S}\chi({\rm NMD}(\mathcal{F}, re(f_{y}), x)) = m_{S}(CC(\mathcal{F})).
\]
This differs from \cite[(5.21)]{Schurmann:2003} by a sign $(-1)^{{\rm dim}\, S}$, which makes $m_{S}(CC(\mathcal{F}))$ positive whenever $\mathcal{F}$ is perverse. By \cite[Lemma1.3.2]{Schurmann:2003},
\begin{align*}
& {\rm TMD}(\mathcal{F}, re(f_{y}), x) \cong R\Phi_{f_{y}}[-1](\mathbbm{1}_{S})_{x} \\
& {\rm NMD}(\mathcal{F}, re(f_{y}), x) \cong R\Phi_{f_{y}}[-1](\mathcal{F}|_{N_{S}})_{x}. 
\end{align*}
So it remains to determine $\chi({\rm TMD}(\mathcal{F}, re(f_{y}), x))$. Instead of computing it directly, we shall apply the splitting formula to the other vanishing cycle $R\Phi_{f_{S}}[-1] (\pi^{*}\mathcal{F})_{(x, y)}$ in \eqref{eq: lift vanishing cycle}. By \cite[Lemma 1.3.2]{Schurmann:2003} again
\[
(R\Phi_{f_{S}}[-1] (\pi^{*}\mathcal{F}))_{(x, y)} \cong (R\Gamma_{re(f_{S}) \geqslant 0}(\pi^{*}\mathcal{F}))_{(x, y)} = {\rm LMD}(\pi^{*}\mathcal{F}, re(f_{S}), (x,y)).
\]
The stratification of $E_{V, \Omega}$ by $G_{V}$-orbits induces a stratification of $E_{V, \Omega} \times \widehat{S}$, which satisfies the same condition on regularity. Note $(x, y) \in S \times \widehat{S}$.

\begin{lemma}
$d(f_{S})|_{(x, y)} = (\pi^{*}df_{y})|_{(x, y)}$.
\end{lemma}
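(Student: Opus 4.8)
The plan is to evaluate both covectors on an arbitrary tangent vector of $E_{V,\Omega}\times\widehat{S}$ at the point $(x,y)$ and check that they agree. Since $\widehat{S}$ is a single $G_V$-orbit it is a smooth subvariety of $E_{V,\bar\Omega}$, so $E_{V,\Omega}\times\widehat{S}$ is a manifold, $f_S$ (the restriction of the polynomial pairing $\langle\,,\,\rangle$) is holomorphic, and
\[
T_{(x,y)}(E_{V,\Omega}\times\widehat{S}) = T_xE_{V,\Omega}\oplus T_y\widehat{S} = E_{V,\Omega}\oplus T_y\widehat{S}.
\]
First I would differentiate directly: bilinearity of $\langle\,,\,\rangle$ gives, for $(\delta z,\delta w)\in E_{V,\Omega}\oplus T_y\widehat{S}$,
\[
d(f_S)|_{(x,y)}(\delta z,\delta w) = \langle\delta z,y\rangle + \langle x,\delta w\rangle,
\]
while $\pi^{*}f_y = f_y\circ\pi$, with $\pi$ the first projection and $f_y(z)=\langle z,y\rangle$, yields
\[
(\pi^{*}df_y)|_{(x,y)}(\delta z,\delta w) = df_y|_x(\delta z) = \langle\delta z,y\rangle .
\]
Subtracting, the lemma becomes equivalent to the vanishing $\langle x,\delta w\rangle = 0$ for every $\delta w\in T_y\widehat{S}$; i.e.\ the ``extra'' partial-derivative term in the $\widehat{S}$-direction must die.

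Next I would supply this vanishing. The hands-on route: since $\widehat{S}=G_V\cdot y$ for the conjugation action, $T_y\widehat{S}=\{[\xi,y]:\xi\in\mathfrak{g}_V\}$, and for such a vector
\[
\langle x,[\xi,y]\rangle = {\rm tr}\bigl(x(\xi y - y\xi)\bigr) = {\rm tr}(yx\xi) - {\rm tr}(xy\xi) = -{\rm tr}([x,y]\,\xi),
\]
which is $0$ for all $\xi\in\mathfrak{g}_V$ because $(x,y)\in\Lambda_V$ forces $[x,y]=0$. Conceptually this is just the statement that, under the identification $T^{*}_yE_{V,\bar\Omega}\cong E_{V,\Omega}$ induced by $\langle\,,\,\rangle$, the element $x$ is conormal to $\widehat{S}$ at $y$ --- which is precisely the condition defining the point $(x,y)\in T^{*}_{\widehat{S}}E_{V,\bar\Omega}$ used in the previous lemma (and into which $(T^{*}_SE_{V,\Omega})_{\rm reg}$ is contained, all components of $\Lambda_V$ having the same dimension).

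I do not expect a serious obstacle here; the only thing to be careful about is the bookkeeping of the several canonical identifications ($E_{V,\bar\Omega}\cong E_{V,\Omega}^{*}$, $T^{*}E_{V,\Omega}\cong E_V\cong T^{*}E_{V,\bar\Omega}$, and the direct-sum splitting of the cotangent space of the product) together with the transpose/sign conventions hidden in the trace form. Once these are pinned down consistently, the computation above is essentially immediate, so I would present the direct differentiation together with the identity $[x,y]=0$ as the proof and record the conormal reinterpretation as the conceptual reason it works.
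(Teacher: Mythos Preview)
Your proposal is correct and follows essentially the same approach as the paper: both compute $d(f_S)$ on a general tangent vector $(\delta z,\delta w)$, obtain $\langle\delta z,y\rangle+\langle x,\delta w\rangle$, and reduce to showing $\langle x,\delta w\rangle=0$ for $\delta w\in T_y\widehat{S}$. The only difference is in the justification of this last vanishing: the paper simply invokes the conormal condition \eqref{eq: conormal} (i.e.\ that $(x,y)\in T^{*}_{\widehat{S}}E_{V,\bar\Omega}$, so $x$ annihilates $T_y\widehat{S}$), whereas you additionally spell out the explicit trace identity $\langle x,[\xi,y]\rangle=-{\rm tr}([x,y]\xi)=0$ using $[x,y]=0$ --- which is just an unpacking of the same fact.
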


\begin{proof}
For $(v, w) \in T_{x}(E_{V, \Omega}) \oplus T_{y}(\widehat{S})$, let us choose curves $x(t), y(t)$ on $E_{V, \Omega}$ and $\widehat{S}$ respectively such that 
\begin{align*}
x(0) = x, x'(0) = v \text{ and } y(0) = y, y'(0) = w
\end{align*}
We compute the image of $(v, 0)$ and $(0, w)$ separately under $d(f_{S})_{(x, y)}$:
\begin{align*}
& (v, 0) \mapsto \frac{d \langle x(t), y \rangle}{t}|_{t = 0} = \langle v, y \rangle   \\
& (0, w) \mapsto \frac{d \langle x, y(t) \rangle}{t}|_{t = 0} = \langle x, w \rangle = 0
\end{align*}
where the last equality follows from \eqref{eq: conormal}. This finishes the proof.

\end{proof}

Since $(x, y) \in (T^{*}_{S}E_{V, \Omega})_{{\rm reg}}$, then $d(f_{S})|_{(x, y)} \in T^{*}_{S \times \widehat{S}}(E_{V, \Omega} \times \widehat{S})_{{\rm reg}}$. So we can apply the splitting formula again.

\begin{teo}
For $\mathcal{F} \in D_{G_{V}}(E_{V, \Omega})$ and $(x, y) \in (T^{*}_{S}E_{V, \Omega})_{{\rm reg}}$, 
\begin{align}
\label{eq: splitting formula}
{\rm LMD}(\pi^{*}\mathcal{F}, re(f_{S}), (x, y)) \cong {\rm TMD}(\pi^{*}\mathcal{F}, re(f_{S}), (x, y)) \otimes^{L}_{\mathbb{C}} {\rm NMD}(\pi^{*}\mathcal{F}, re(f_{S}), (x, y)).
\end{align}
where 
\begin{align*}
{\rm TMD}(\pi^{*}\mathcal{F}, re(f_{S}), (x, y)) & := (R\Gamma_{re (f) \geqslant 0} (1_{S \times \widehat{S}}))_{(x,y)}  \\ 
{\rm NMD}(\pi^{*}\mathcal{F}, re(f_{S}), (x, y)) & := (R\Gamma_{re (f_{S}|_{N_{S} \times \{y\}}) \geqslant 0} (\pi^{*}\mathcal{F}|_{N_{S} \times \{y\}}))_{(x, y)} 
\end{align*}
\end{teo}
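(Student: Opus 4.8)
The plan is to recognize the claimed splitting as a direct instance of the stratified Morse theory splitting theorem (\cite[Theorem 5.3.3]{Schurmann:2003}), applied this time on the product $E_{V,\Omega}\times\widehat S$ rather than on $E_{V,\Omega}$ itself. We have already verified the two hypotheses needed to invoke it: first, the $G_V$-orbit stratification of $E_{V,\Omega}$ induces a stratification of $E_{V,\Omega}\times\widehat S$ whose strata are products $S'\times\widehat S$, and by the appendix this stratification satisfies the required Whitney-type regularity (the product of a Whitney stratified space with a smooth manifold is again Whitney stratified, since $\widehat S$ is a single smooth $G_V$-orbit); second, by the lemma just proved, $d(f_S)|_{(x,y)}=(\pi^{*}df_y)|_{(x,y)}$, and since $(x,y)\in(T^{*}_SE_{V,\Omega})_{\mathrm{reg}}$ this covector lies in $T^{*}_{S\times\widehat S}(E_{V,\Omega}\times\widehat S)_{\mathrm{reg}}$, so $re(f_S)$ is a stratified Morse function at $(x,y)$ with respect to the product stratification, with critical point lying on the stratum $S\times\widehat S$.

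With these two points in hand, the proof is essentially an invocation. First I would identify $(R\Phi_{f_S}[-1](\pi^{*}\mathcal F))_{(x,y)}$ with the local Morse data $\mathrm{LMD}(\pi^{*}\mathcal F,re(f_S),(x,y))=(R\Gamma_{re(f_S)\geqslant 0}(\pi^{*}\mathcal F))_{(x,y)}$ via \cite[Lemma 1.3.2]{Schurmann:2003}, exactly as was done for $f_y$ on $E_{V,\Omega}$. Then I would choose a normal slice to the stratum $S\times\widehat S$ at $(x,y)$ inside $E_{V,\Omega}\times\widehat S$; since the stratum is a product with the whole factor $\widehat S$, such a normal slice may be taken of the form $N_S\times\{y\}$ where $N_S$ is a normal slice to $S$ at $x$ in $E_{V,\Omega}$. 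Applying \cite[Theorem 5.3.3]{Schurmann:2003} then yields the tensor decomposition $\mathrm{LMD}\cong\mathrm{TMD}\otimes^{L}_{\mathbb C}\mathrm{NMD}$, where the tangential factor is $(R\Gamma_{re(f)\geqslant 0}(\mathbbm 1_{S\times\widehat S}))_{(x,y)}$ computed along the stratum, and the normal factor is $(R\Gamma_{re(f_S|_{N_S\times\{y\}})\geqslant 0}(\pi^{*}\mathcal F|_{N_S\times\{y\}}))_{(x,y)}$ computed on the slice. These are precisely the two expressions in the statement, so the theorem follows.

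The one genuinely substantive point, and the place where care is needed, is the compatibility of the normal slice data across the two factors: we must know that the restriction of $\pi^{*}\mathcal F$ to $N_S\times\{y\}$ together with the restricted function $re(f_S|_{N_S\times\{y\}})$ really does compute the normal Morse data in the product. This is where the specific product geometry is essential—because $\widehat S$ appears in the stratum as an entire factor, the normal directions to $S\times\widehat S$ in $E_{V,\Omega}\times\widehat S$ are exactly the normal directions to $S$ in $E_{V,\Omega}$, so a normal slice literally is $N_S\times\{y\}$, and $\pi^{*}\mathcal F|_{N_S\times\{y\}}=\mathcal F|_{N_S}$ under the obvious identification. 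I would spell this identification out and then note that the choice of normal slice does not affect the local Morse data up to isomorphism (a standard feature of stratified Morse theory, also used implicitly in the earlier application of the splitting theorem on $E_{V,\Omega}$). I expect the main obstacle to be purely expository—making sure the regularity hypothesis of \cite[Theorem 5.3.3]{Schurmann:2003} is correctly transported to the product stratification, which is why the appendix verification is phrased to cover both $E_{V,\Omega}$ and $E_{V,\Omega}\times\widehat S$—rather than anything requiring new ideas.
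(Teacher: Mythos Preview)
Your proposal is correct and follows exactly the paper's approach: the paper's proof is the single sentence ``It follows from \cite[Theorem 5.3.3]{Schurmann:2003},'' and your write-up simply unpacks the verification of that theorem's hypotheses (regularity of the product stratification, regularity of the covector via the preceding lemma, and the choice of normal slice $N_S\times\{y\}$). There is nothing to add or correct.
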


\begin{proof}
It follows from \cite[Theorem 5.3.3]{Schurmann:2003}.
\end{proof}

By \cite[Lemma 1.3.2]{Schurmann:2003} again,
\begin{align*}
{\rm TMD}(\pi^{*}\mathcal{F}, re(f_{S}), (x, y)) & \cong R\Phi_{f}[-1](1_{S \times \widehat{S}})_{(x,y)} \\
{\rm NMD}(\pi^{*}\mathcal{F}, re(f_{S}), (x, y)) & \cong R\Phi_{f_{S}|_{N_{S} \times \{y\}}}[-1](\pi^{*}\mathcal{F}|_{N_{S} \times \{y\}})_{(x, y)}.
\end{align*}
By the natural isomorphism $N_{S} \cong N_{S} \times \{y\}$, we have
\[
R\Phi_{f_{S}|_{N_{S} \times \{y\}}}(\pi^{*}\mathcal{F}|_{N_{S} \times \{y\}})_{(x, y)} \cong R\Phi_{f_{y}|_{N_{S}}}(\mathcal{F}|_{N_{S}})_{x}.
\]
Hence,
\[
{\rm NMD}(\pi^{*}\mathcal{F}, re(f_{S}), (x, y)) \cong {\rm NMD}(\mathcal{F}, re(f_{y}), x).
\]
So it suffices to compute ${\rm TMD}(\pi^{*}\mathcal{F}, re(f_{S}), (x, y))$, equivalently $R\Phi_{f}[-1](\mathbbm{1}_{S \times \widehat{S}})_{(x,y)}$. 

\begin{prop}
\label{prop: tangential data}
$R\Phi_{f}[-1](\mathbbm{1}_{S \times \widehat{S}})_{(x,y)} = \mathbb{C}[{\rm dim}\Lambda_{V} - {\rm dim} \widehat{S} - {\rm dim} S]$.
\end{prop}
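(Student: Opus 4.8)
The plan is to show that near $(x,y)$ the function $f$ is, in suitable holomorphic coordinates, a nondegenerate quadratic form in $k := \dim S + \dim\widehat{S} - \dim\Lambda_{V}$ of the coordinates and independent of the rest, and then to read off the vanishing cycle of such a model function. First I would set up the Morse--Bott picture. Write $M := S\times\widehat{S}$ (smooth) and recall $f = \langle\,,\,\rangle|_{M}$. Since $\Omega$ is the linear orientation, the cocharacter $\xi_{0} = (i\cdot\mathrm{Id}_{V_{i}})_{i\in I}$ satisfies $[\xi_{0},x] = x$, so the Euler vector field of $E_{V,\Omega}$ at $x$ is tangent to $S$; as $y$ is conormal to $S$ at $x$ this gives $f(x,y) = \langle x,y\rangle = 0$, and $[x,y]=0$ because $(x,y)\in\Lambda_{V}$. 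Exactly as in the preceding lemmas, for $(v,w)\in T_{x}S\oplus T_{y}\widehat{S}$ one has $df_{(x,y)}(v,w) = \langle v,y\rangle + \langle x,w\rangle = 0$ since $y\in(T_{x}S)^{\perp}$ and $x\in(T_{y}\widehat{S})^{\perp}$; the same computation holds at every point of $N := (T^{*}_{S}E_{V,\Omega})_{\mathrm{reg}}$, which is open in the vector bundle $T^{*}_{S}E_{V,\Omega}$ over the smooth $S$, hence a smooth submanifold of $M$ of dimension $\dim\Lambda_{V}$ consisting of critical points of $f$ and containing $(x,y)$. It therefore suffices to prove that $f$ is Morse--Bott along $N$ at $(x,y)$, i.e.\ $\ker\mathrm{Hess}(f)_{(x,y)} = T_{(x,y)}N$.

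\emph{The Hessian computation — the main obstacle.} The inclusion $T_{(x,y)}N\subseteq\ker\mathrm{Hess}(f)_{(x,y)}$ is automatic since $N\subseteq\mathrm{Crit}(f)$. For the reverse inclusion I would compute $\mathrm{Hess}(f)_{(x,y)}$ by evaluating $f$ along curves $t\mapsto(g(t)\cdot x,\,h(t)\cdot y)$ in $M$ arising from one-parameter families in $G_{V}$; using $[x,y]=0$ and cyclicity of the trace one obtains, for $v=[\xi,x]\in T_{x}S$ and $w=[\zeta,y]\in T_{y}\widehat{S}$,
\[
\mathrm{Hess}(f)_{(x,y)}\big((v,w),(v,w)\big) = 2\langle v,w\rangle + \mathrm{Hess}(f_{y}|_{S})_{x}(v,v) + \mathrm{Hess}(f_{x}|_{\widehat{S}})_{y}(w,w),
\]
where $f_{x}(y') := \langle x,y'\rangle$, and a further trace manipulation gives $\mathrm{Hess}(f_{y}|_{S})_{x}([\xi,x],[\zeta,x]) = -\langle[\xi,x],[\zeta,y]\rangle$. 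Polarizing and pairing a kernel element $(v,w)$ against the vectors $(v',0)$ for $v'\in T_{x}S$, one reduces — via the Jacobi identity, the relation $[x,y]=0$, and nondegeneracy of the trace form on $[E_{V,\Omega},E_{V,\bar{\Omega}}]\subseteq\mathfrak{g}_{V}$ — to the identity $[x,w]+[v,y]=0$. On the other hand, differentiating the equations cutting out $T^{*}_{S}E_{V,\Omega}$ shows $T_{(x,y)}N = T_{(x,y)}(T^{*}_{S}E_{V,\Omega}) = \{(v,w) : v\in T_{x}S,\ [v,y]+[x,w]=0\}$, whence $\ker\mathrm{Hess}(f)_{(x,y)}\subseteq T_{(x,y)}N$. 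This bookkeeping of the various tangent and conormal spaces of $S$ and $\widehat{S}$ is the delicate point; for type $A$ it may alternatively be verified directly from the explicit combinatorial description of quiver orbits.

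\emph{Conclusion.} Granting the Morse--Bott property, the holomorphic Morse--Bott lemma provides local coordinates $(u_{1},\dots,u_{k},t_{1},\dots,t_{m-k})$ near $(x,y)$ on $M$, with $m=\dim M$ and $k = m-\dim\Lambda_{V} = \dim S+\dim\widehat{S}-\dim\Lambda_{V}$, in which $f = u_{1}^{2}+\cdots+u_{k}^{2}$, independent of the $t_{j}$. Since the vanishing cycle commutes with external product by a constant sheaf, $R\Phi_{f}[-1](\mathbbm{1}_{M})_{(x,y)}\cong R\Phi_{q}[-1](\mathbbm{1}_{\mathbb{C}^{k}})_{0}$ for the nondegenerate quadratic $q$ in $k$ variables. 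The Milnor fibre of $q$ is homotopy equivalent to a $(k-1)$-sphere (with the convention that for $k=0$, where $f\equiv 0$ near $(x,y)$, one has $R\Phi_{f}[-1](\mathbbm{1}_{M})\cong\mathbbm{1}_{M}$), so $R\Phi_{q}[-1](\mathbbm{1}_{\mathbb{C}^{k}})_{0}\cong\mathbb{C}[-k]$, and therefore $R\Phi_{f}[-1](\mathbbm{1}_{M})_{(x,y)}\cong\mathbb{C}[\dim\Lambda_{V}-\dim\widehat{S}-\dim S]$.
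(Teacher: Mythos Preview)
Your approach is essentially the paper's: show that $f$ is Morse--Bott along the critical locus $(T^{*}_{S}E_{V,\Omega})_{\mathrm{reg}}$, then pass to a holomorphic normal form and use Sebastiani--Thom to read off the vanishing cycle of the transverse nondegenerate quadratic. The paper packages the Hessian computation slightly differently --- pulling $f$ back to $\mathfrak{g}_{V}\times\mathfrak{g}_{V}$ via the exponential and bounding the rank from below by $\dim[y,[\mathfrak{g}_{V},x]]$, then from above using the block form of the Hessian in normal-slice coordinates --- whereas you identify the kernel directly; the content is the same.

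There is one step you have not actually justified. You assert that ``differentiating the equations cutting out $T^{*}_{S}E_{V,\Omega}$'' yields the equality
\[
T_{(x,y)}N \;=\; \{(v,w)\in T_{x}S\times T_{y}\widehat{S} : [v,y]+[x,w]=0\},
\]
but differentiating only gives the inclusion $\subseteq$; to close your chain $T_{(x,y)}N\subseteq\ker\mathrm{Hess}(f)_{(x,y)}\subseteq\{[v,y]+[x,w]=0\}$ you need the right-hand side to have dimension $\dim\Lambda_{V}$. That is equivalent to the identity $\dim[y,[\mathfrak{g}_{V},x]]=\dim S+\dim\widehat{S}-\dim\Lambda_{V}$, which the paper isolates as a separate lemma. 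Its proof uses the regularity of $(x,y)$ in an essential way: regularity forces the linear fibre $T^{*}_{\widehat{S},y}E_{V,\bar\Omega}=\ker\big([y,\cdot]|_{E_{V,\Omega}}\big)$ to lie locally in $S$, hence inside $T_{x}S$, and then rank--nullity on $[y,\cdot]|_{T_{x}S}$ gives the dimension. This is precisely the ``bookkeeping'' you flag as delicate; once it is supplied, your argument is complete and matches the paper's.
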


This proposition is a special case of \cite[Theorem 6.7.5]{CMMB}. For the convenience of the reader, we will reproduce its proof in the appendix.

\subsection{Compatibility with convolutions}\label{subsection-Compatibility}

In the introduction, we have considered the following diagram
\begin{align*}
\xymatrix{\M_{\Pi}(V) \ar@{^{(}->}[r] \ar[rd]^{\cong}_{\Psi_{0}} & M(\Lambda_{V})^{G_{V}} \ar[d]^{\Psi} \\
& M(E_{V, \Omega})^{G_{V}}}
\end{align*}
Starting from this subsection, we will investigate when $\eta_{V} = \Psi^{-1}_0$.

\begin{prop}\label{prop-main-compatibility}
The following statements are equivalent.

\begin{enumerate}

\item $\eta_{V} = \Psi^{-1}_0$;

\item ${\rm Im} \, \eta_{V} \subseteq \mathcal{M}_{\Pi}$;

\item For any decomposition of $I$-graded vector spaces $V = V^{1} \oplus V^{2}$,
\begin{align}
\label{eq: compatible with convolution}
\eta_{V^{1}}(\phi_{1}) \ast \eta_{V^{2}}(\phi_{2}) = \eta_{V}(\phi_{1} \ast \phi_{2})
\end{align}
for any $\phi_{1} \in M(E_{V^{1}, \Omega})^{G_{V^1}}$ and $\phi_{2} \in M(E_{V^{2}, \Omega})^{G_{V^2}}$.

\end{enumerate}

\end{prop}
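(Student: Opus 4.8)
The plan is to prove the three conditions are equivalent by establishing $(1) \Rightarrow (2) \Rightarrow (3) \Rightarrow (1)$, exploiting the fact that $\eta_V$ is, by its very construction in Subsection~\ref{subsection-Constructible-functions}, a section of $\Psi$. First I would record the key structural fact: since $\eta_V(\phi_{\mathcal F})(x,y) = \chi(R\Phi_{f_y}[-1](\mathcal F)_x)$ and the restriction of $R\Phi_{f_y}[-1](\mathcal F)$ to the zero section recovers $(\mathcal F)_x$ up to the distinguished triangle defining $R\Phi$, one checks directly that the pullback of $\eta_V(\phi)$ along $E_{V,\Omega} \hookrightarrow \Lambda_V$ is $\phi$ again; that is, $\Psi \circ \eta_V = \mathrm{id}$. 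Granting this, $(1) \Rightarrow (2)$ is immediate because $\Psi_0$ has image $\mathcal M_\Pi(V)$ by the diagram~\eqref{diagram}, so $\eta_V = \Psi_0^{-1}$ forces $\mathrm{Im}\,\eta_V \subseteq \mathcal M_\Pi$. Conversely, $(2) \Rightarrow (1)$: if $\mathrm{Im}\,\eta_V \subseteq \mathcal M_\Pi(V)$, then $\eta_V$ lands in the domain of $\Psi_0^{-1}$, and since $\Psi$ restricted to $\mathcal M_\Pi(V)$ is the isomorphism $\Psi_0$, the identity $\Psi \circ \eta_V = \mathrm{id}$ upgrades to $\Psi_0 \circ \eta_V = \mathrm{id}$, whence $\eta_V = \Psi_0^{-1}$.

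The substantive equivalence is between $(2)$ and $(3)$. For $(3) \Rightarrow (2)$, I would use that $\mathcal M_\Pi = \bigoplus_V \mathcal M_\Pi(V)$ is, by definition, the subalgebra of $\line{\mathcal M}_\Pi$ generated under convolution by the functions $\mathbbm{1}_{Z_i}$, $i \in I$, where $Z_i$ is the point $\Lambda_{V^i}$. Note that on $\Lambda_{V^i}$ the function $\eta_{V^i}(\mathbbm{1}_{S_i})$ equals $\mathbbm{1}_{Z_i}$ since there $E_{V^i,\Omega}$ is a point and the vanishing cycle of the constant sheaf at a zero-dimensional situation is trivial. Now $\line{\mathcal M}_\Omega = \mathcal M_\Omega$ in the simply-laced case (Proposition~\ref{prop: simply laced}), so every $\phi \in M(E_{V,\Omega})^{G_V}$ is a convolution product of the generators $\mathbbm{1}_{S_i}$; applying the compatibility~\eqref{eq: compatible with convolution} repeatedly expresses $\eta_V(\phi)$ as a convolution of the $\eta_{V^i}(\mathbbm{1}_{S_i}) = \mathbbm{1}_{Z_i}$, which by definition lies in $\mathcal M_\Pi(V)$. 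For the converse $(2) \Rightarrow (3)$: assume $\mathrm{Im}\,\eta_V \subseteq \mathcal M_\Pi$. Then by $(2)\Leftrightarrow(1)$ we have $\eta_\bullet = \Psi_0^{-1}$ in every dimension vector, and since $\Psi_0 \colon \mathcal M_\Pi \xrightarrow{\sim} \mathcal M_\Omega = \line{\mathcal M}_\Omega$ is an \emph{algebra} isomorphism (it is the restriction of the algebra map $\Psi$ and both products are convolution), its inverse $\eta_\bullet$ is an algebra homomorphism too, which is exactly the assertion~\eqref{eq: compatible with convolution} for all decompositions $V = V^1 \oplus V^2$.

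The main obstacle, and the point requiring genuine care rather than formal manipulation, is the verification that $\Psi \circ \eta_V = \mathrm{id}$ — i.e. that restricting $\eta_V(\phi)$ back to the zero section gives $\phi$. This amounts to computing $\chi(R\Phi_{f_0}[-1](\mathcal F)_x)$ when $y = 0$, where $f_0 \equiv 0$; one must be careful about the convention that $R\Phi$ of the zero function recovers the nearby cycle data appropriately, and to match signs and shifts so that the Euler characteristic comes out to $\chi(\mathcal F_x) = \phi(x)$ and not its negative or a shifted variant. A clean way is to invoke that $\Psi$ was defined in the introduction precisely as pullback along $E_{V,\Omega} \hookrightarrow \Lambda_V$ and that $\eta_V$ was constructed in~\cite{CMMB} as a section of this very map, so the identity $\Psi \circ \eta_V = \mathrm{id}$ is built in; alternatively one reduces to the local statement using Proposition~\ref{prop: relation with CC} together with the definition of $\widetilde\phi_S$. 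A secondary subtlety is that the argument for $(3) \Rightarrow (2)$ implicitly uses associativity of $*$ and the fact that $\eta_\bullet$ respects the grading by dimension vectors, both of which are routine but should be stated. Everything else is bookkeeping with the diagram~\eqref{diagram} and Proposition~\ref{prop: simply laced}.
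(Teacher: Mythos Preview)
Your proposal is correct and follows essentially the same approach as the paper: both rely on the three ingredients $\Psi\circ\eta_V=\mathrm{id}$, the fact that $\Psi_0$ is an algebra isomorphism, and the generation result $\mathcal M_\Omega=\overline{\mathcal M}_\Omega$ (the paper cites \cite[Proposition~7.3]{Lu91} directly rather than Proposition~\ref{prop: simply laced}). The paper organizes the implications as $(1)\Rightarrow(2),(3)$, $(2)\Rightarrow(1)$, and $(3)\Rightarrow(2)$, whereas you run $(1)\Leftrightarrow(2)$ and $(2)\Leftrightarrow(3)$, but this is only cosmetic.

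Your extended discussion of the ``main obstacle'' $\Psi\circ\eta_V=\mathrm{id}$ is more caution than the paper exercises---it simply asserts this ``by the definition of $\eta_V$''. Your concern is legitimate but easily dispatched: with the convention of Definition~2.2, when $y=0$ the function $f_0\equiv 0$ has empty nearby fiber, so $R\Psi_0(\mathcal F)=0$ and the distinguished triangle gives $R\Phi_{f_0}[-1](\mathcal F)\cong\mathcal F$, hence $\eta_V(\phi_{\mathcal F})(x,0)=\chi(\mathcal F_x)=\phi_{\mathcal F}(x)$ as needed. One small imprecision in your $(3)\Rightarrow(2)$: elements of $M(E_{V,\Omega})^{G_V}$ are \emph{linear combinations} of iterated convolutions of the $\mathbbm{1}_{S_i}$, not single products; but $\eta_V$ is linear, so this is harmless.
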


\begin{proof}
Since $\Psi_0$ is an algebra isomorphism, then (1) implies (2) and (3). By the definition of $\eta_{V}$, we have $\Psi \circ \eta_{V} = id$. So (2) implies (1). It follows from (3) that
\[
\eta_{V}(1_{1} \ast \cdots \ast 1_{d}) = \eta_{V^{1}}(1_{1}) \ast \cdots \ast \eta_{V^{d}}(1_{d}) = 1_{\Lambda_{V^{1}}} \ast \cdots \ast 1_{\Lambda_{V^{d}}} \in \M_{\Pi}(V).
\]
(cf. \eqref{eq: main}).
By \cite[Proposition 7.3]{Lu91}, that $M(E_{V, \Omega})^{G_{V}}$ is spanned by $1_{1} \ast \cdots \ast 1_{d}$ for all $a \in S_{|V|}$ and associated decomposition of $I$-graded vector space $V = V^{1} \oplus \cdots \oplus V^{d}$. So (3) implies (2).
\end{proof}

We begin by recalling the definitions of the two convolutions in \eqref{eq: compatible with convolution}. Consider the following diagram
\begin{align}
\label{diag: convolution}
\xymatrix{
E'_{V^{1}, V^{2}, \Omega} \ar[r]^{p_{2}} \ar[d]_{p_{1}} & E''_{V^{1}, V^{2}, \Omega} \ar[d]^{p_{3}} \\
E_{V^{1}, \Omega} \times E_{V^{2}, \Omega} & E_{V, \Omega}
}
\end{align}
where
\begin{align*}
E''_{V^{1}, V^{2}, \Omega}  & := \Big\{(x, {\rm Fil}) | x \in E_{V, \Omega}, \, {\rm Fil}: 0 = W^{0} \subsetneq W^{1} \subsetneq W^{2} = V \text{ $x$-stable with }  | W^{k}/W^{k-1}| = |V^{k}| \text{ for } k = 1,2 \Big\} ,\\
E'_{V^{1}, V^{2}, \Omega}   & := \Big\{(x, {\rm Fil}, \varphi_{1}, \varphi_{2}) | (x, {\rm Fil}) \in E''_{V^{1}, V^{2}, \Omega}  \text{ and } \varphi_{k}: V^{k} \xrightarrow{\sim} W^{k}/W^{k-1} \text{ for } k = 1,2 \Big\} ,
\end{align*}
and
\[
p_{3}: E''_{V^{1}, V^{2}, \Omega} \rightarrow E_{V, \Omega}, \quad (x, {\rm Fil}) \mapsto x
\]
is proper;
\[
p_{2}: E'_{V^{1}, V^{2}, \Omega} \rightarrow E''_{V^{1}, V^{2}, \Omega}, \quad (x, {\rm Fil}, \varphi_{1}, \varphi_{2}) \mapsto (x, {\rm Fil})
\]
is a principal $G_{V^{1}} \times G_{V^{2}}$-bundle;
\[
p_{1}: E'_{V^{1}, V^{2}, \Omega} \rightarrow E_{V^{1}, \Omega} \times E_{V^{2}, \Omega}, \quad (x, {\rm Fil}, \varphi_{1}, \varphi_{2}) \mapsto (\varphi_{1}^{-1} x \varphi_{1}, \varphi_{2}^{-1} x \varphi_{2})
\]
is smooth, where we denote the induced morphisms on $W^{k}/W^{k-1}$ still by $x$. To see the properties of $p_{1}, p_{2}, p_{3}$ more easily, we will give another description of the diagram.

We fix a filtration $\overline{{\rm Fil}}: 0 = \bar{W}^{0} \subsetneq \bar{W}^{1} \subsetneq \bar{W}^{2} = V$, where $\bar{W}^{1} = V^{1}$. Let $\bar{\varphi}_{1}: V^{1} \rightarrow \bar{W}^{1}/\bar{W}^{0}$ be the identity and $\bar{\varphi}_{2}: V^{2} \rightarrow \bar{W}^{2}/\bar{W}^{1}$ be the composition of $V^{2} \rightarrow V \rightarrow V/V^{1}$. Let
\[
E_{V^{1}, V^{2}, \Omega}^{\geqslant 0} := \{x \in E_{V, \Omega} | x \text{ stabilizes } \overline{{\rm Fil}} \} \hookrightarrow E'_{V^{1}, V^{2}, \Omega}, \quad x \mapsto (x, \overline{{\rm Fil}}, \bar{\varphi}_{1}, \bar{\varphi}_{2})
\]
It admits an action by 
\[
G_{V^{1}, V^{2}}^{\geqslant 0} := \{g \in G_{V} | g \text{ stabilizes } \overline{{\rm Fil}} \}
\]
a parabolic subgroup of $G_{V}$. It has a Levi component $G_{V^{1}} \times G_{V^{2}}$ and the unipotent radical is
\[
G_{V^{1}, V^{2}}^{+} := \{g \in G_{V^{1}, V^{2}}^{\geqslant 0} | \bar{\varphi}_{k}^{-1} g \bar{\varphi}_{k} = id \text{ for } k = 1, 2\}.
\]
The following lemma is immediate.

\begin{lemma}
We have $G_{V}$-equivariant isomorphisms
\begin{align*}
G_{V} \times_{G_{V^{1}, V^{2}}^{\geqslant 0}} E_{V^{1}, V^{2}, \Omega}^{\geqslant 0} & \cong E''_{V^{1}, V^{2}, \Omega}, \quad (g, x) \mapsto (gx, g\overline{{\rm Fil}}) \\
G_{V} \times_{G_{V^{1}, V^{2}}^{+}} E_{V^{1}, V^{2}, \Omega}^{\geqslant 0} & \cong E'_{V^{1}, V^{2}, \Omega}, \quad (g, x) \mapsto (gx, g\overline{{\rm Fil}}, g\bar{\varphi}_{1}, g\bar{\varphi}_{2})
\end{align*}
\end{lemma}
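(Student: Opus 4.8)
The plan is to recognize both right‑hand sides as total spaces of $G_V$‑equivariant fibrations over a (framed) partial flag variety, and then to apply the standard principle that a $G$‑variety $Y$ equipped with a $G$‑equivariant morphism $\rho\colon Y\to G/H$ is canonically isomorphic to $G\times_H Y_0$, where $Y_0=\rho^{-1}(eH)$ carries the residual $H$‑action, the isomorphism being $[g,y_0]\mapsto g\cdot y_0$.

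First I would treat the first isomorphism. Let $\mathrm{Fl}$ be the variety of filtrations $0\subsetneq W^1\subsetneq W^2=V$ with $|W^1|=|V^1|$. Elementary linear algebra shows $G_V$ acts transitively on $\mathrm{Fl}$, with the stabilizer of the reference filtration $\overline{{\rm Fil}}$ equal to the parabolic $P:=G^{\geqslant 0}_{V^1,V^2}$, so $\mathrm{Fl}\cong G_V/P$. The projection $E''_{V^1,V^2,\Omega}\to\mathrm{Fl}$, $(x,{\rm Fil})\mapsto{\rm Fil}$, is $G_V$‑equivariant, and its fiber over $\overline{{\rm Fil}}$ is exactly $\{x\in E_{V,\Omega}\mid x\text{ stabilizes }\overline{{\rm Fil}}\}=E^{\geqslant 0}_{V^1,V^2,\Omega}$; this fiber is $P$‑stable because $p\cdot x$ stabilizes $p\cdot\overline{{\rm Fil}}=\overline{{\rm Fil}}$ for $p\in P$. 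The cited principle then gives $E''_{V^1,V^2,\Omega}\cong G_V\times_P E^{\geqslant 0}_{V^1,V^2,\Omega}$, and tracing through the identification yields $[g,x]\mapsto(g\cdot x,\,g\cdot\overline{{\rm Fil}})$, which is the asserted formula. I would include the short checks: well‑definedness on the twisted product (replacing $(g,x)$ by $(gp^{-1},p\cdot x)$ does not change the image since $p^{-1}\cdot\overline{{\rm Fil}}=\overline{{\rm Fil}}$), and that the inverse $(x',{\rm Fil}')\mapsto[g,g^{-1}\cdot x']$ — for any $g$ with $g\cdot\overline{{\rm Fil}}={\rm Fil}'$ — is well defined and algebraic, the latter using that $G_V\to G_V/P$ is Zariski‑locally trivial.

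For the second isomorphism I would run the same argument with $\mathrm{Fl}$ replaced by the framed partial flag variety $\widetilde{\mathrm{Fl}}$ of tuples $({\rm Fil},\varphi_1,\varphi_2)$ with $\varphi_k\colon V^k\xrightarrow{\sim}W^k/W^{k-1}$. The only extra input is transitivity: the Levi factor $G_{V^1}\times G_{V^2}$ of $P$ acts simply transitively on the framings of any fixed filtration, so $G_V$ acts transitively on $\widetilde{\mathrm{Fl}}$, and the stabilizer of the base point $(\overline{{\rm Fil}},\bar{\varphi}_1,\bar{\varphi}_2)$ is — by the very definition of $G^+_{V^1,V^2}$ through the conditions $\bar{\varphi}_k^{-1}g\bar{\varphi}_k={\rm id}$ — precisely the unipotent radical $P^+:=G^+_{V^1,V^2}$; hence $\widetilde{\mathrm{Fl}}\cong G_V/P^+$. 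The morphism $E'_{V^1,V^2,\Omega}\to\widetilde{\mathrm{Fl}}$ is $G_V$‑equivariant with fiber $E^{\geqslant 0}_{V^1,V^2,\Omega}$ over the base point, so the principle yields $E'_{V^1,V^2,\Omega}\cong G_V\times_{P^+}E^{\geqslant 0}_{V^1,V^2,\Omega}$ with $[g,x]\mapsto(g\cdot x,\,g\cdot\overline{{\rm Fil}},\,g\cdot\bar{\varphi}_1,\,g\cdot\bar{\varphi}_2)$. In both cases $G_V$‑equivariance is automatic, since $G_V$ acts by left translation on the first factor of the twisted product.

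I do not expect a serious obstacle: this is exactly why the authors call the lemma immediate. The only points needing (routine) care are the twisted‑product bookkeeping — checking that the two maps and their inverses descend past the $P$‑ and $P^+$‑actions — and the transitivity and stabilizer computations for $\mathrm{Fl}$ and $\widetilde{\mathrm{Fl}}$, neither of which goes beyond linear algebra and the elementary structure theory of parabolic subgroups of general linear groups.
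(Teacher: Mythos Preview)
Your proposal is correct and spells out precisely the standard argument the paper has in mind; the paper itself gives no proof beyond the remark ``The following lemma is immediate.'' Your identification of $E''$ and $E'$ as associated bundles over $G_V/P$ and $G_V/P^+$ via the forgetful maps to the (framed) partial flag variety is exactly the intended reasoning.
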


By this lemma, we can rewrite diagram~\eqref{diag: convolution} as 
\begin{align}
\label{diag: convolution 1}
\xymatrix{
G_{V} \times_{G_{V^{1}, V^{2}}^{+}} E_{V^{1}, V^{2}, \Omega}^{\geqslant 0} \ar[r]^{p'_{2}} \ar[d]_{p'_{1}} & G_{V} \times_{G_{V^{1}, V^{2}}^{\geqslant 0}} E_{V^{1}, V^{2}, \Omega}^{\geqslant 0} \ar[d]^{p'_{3}} \\
E_{V^{1}, \Omega} \times E_{V^{2}, \Omega} & E_{V, \Omega}
}
\end{align}
where 
\begin{align*}
& p'_{3}: G_{V} \times_{G_{V^{1}, V^{2}}^{\geqslant 0}} E_{V^{1}, V^{2}, \Omega}^{\geqslant 0} \rightarrow E_{V, \Omega}, \quad (g, x) \mapsto gx \\
& p'_{2}:  G_{V} \times_{G_{V^{1}, V^{2}}^{+}} E_{V^{1}, V^{2}, \Omega}^{\geqslant 0} \rightarrow G_{V} \times_{G_{V^{1}, V^{2}}^{\geqslant 0}} E_{V^{1}, V^{2}, \Omega}^{\geqslant 0}, \quad (g, x) \mapsto (g, x) \\
& p'_{1}: G_{V} \times_{G_{V^{1}, V^{2}}^{+}} E_{V^{1}, V^{2}, \Omega}^{\geqslant 0} \rightarrow E_{V^{1}, V^{2}, \Omega}^{\geqslant 0} \hookrightarrow E'_{V^{1}, V^{2}, \Omega} \xrightarrow{p_{1}} E_{V^{1}, \Omega} \times E_{V^{2}, \Omega}
\end{align*}

For $\mathcal{F}_{1} \in D_{G_{V^{1}}}(E_{V^{1}, \Omega}), \mathcal{F}_{2} \in D_{G_{V^{2}}}(E_{V^{2}, \Omega})$, we define
\[
\mathcal{F}_{1} \ast \mathcal{F}_{2} := p_{3 !} \mathcal{F}''
\]
where 
\[
p_{2}^{*} \mathcal{F}'' \cong  p_{1}^{*} (\mathcal{F}_{1} \boxtimes \mathcal{F}_{2}).
\]
For $\phi_{1} \in M(E_{V^{1}, \Omega})^{G_{V^{1}}}, \phi_{2} \in  M(E_{V^{1}, \Omega})^{G_{V^{1}}}$, we define $\phi_{1} \ast \phi_{2} \in M(E_{V, \Omega})^{G_V}$ by
\[
(\phi_{1} \ast \phi_{2}) (x) = \int_{p_{3}^{-1}(x)} \phi''(x, {\rm Fil})
\]
where 
\[
\phi''(x, {\rm Fil}) = \phi_{1}(x_{1}) \phi_{2}(x_{2})
\]
with 
\[
x_{1} = \varphi_{1}^{-1} x \varphi_{1}, x_{2} = \varphi_{2}^{-1} x \varphi_{2}
\]
for any choice of isomorphisms $\varphi_{k}: V^{k} \rightarrow W^{k}/W^{k-1}$. 

\begin{prop}
For $\mathcal{F}_{1} \in D_{G_{V^{1}}}(E_{V^{1}, \Omega}), \mathcal{F}_{2} \in D_{G_{V^{2}}}(E_{V^{2}, \Omega})$,
\[
\phi_{\mathcal{F}_{1}} \ast \phi_{\mathcal{F}_{2}} = \phi_{\mathcal{F}_{1} \ast \mathcal{F}_{2}}.
\]
\end{prop}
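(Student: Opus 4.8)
The statement asserts that the Euler-characteristic map $\chi$ intertwines the sheaf-theoretic convolution with the convolution of constructible functions. The plan is to unwind both convolutions through the diagram \eqref{diag: convolution} and compare them fibrewise over a point $x \in E_{V,\Omega}$. On the sheaf side, $\mathcal{F}_1 \ast \mathcal{F}_2 = p_{3!}\mathcal{F}''$, so the stalk at $x$ is $R\Gamma_c(p_3^{-1}(x), \mathcal{F}''|_{p_3^{-1}(x)})$ by proper (indeed $p_3$ is proper, so $p_{3!} = p_{3*}$) base change. Taking Euler characteristics and using that $\chi(R\Gamma_c(Z, \mathcal{G})) = \int_{Z} \chi(\mathcal{G})$ for any constructible complex $\mathcal{G}$ on a variety $Z$ — this is the basic additivity/index property of the Euler characteristic with compact supports — gives
\[
\phi_{\mathcal{F}_1 \ast \mathcal{F}_2}(x) = \chi\big((p_{3!}\mathcal{F}'')_x\big) = \int_{p_3^{-1}(x)} \chi\big(\mathcal{F}''|_{p_3^{-1}(x)}\big).
\]
This already matches the shape of $(\phi_1 \ast \phi_2)(x) = \int_{p_3^{-1}(x)} \phi''(x, \mathrm{Fil})$, so the whole proof reduces to identifying the integrands: I must show $\chi(\mathcal{F}''_{(x,\mathrm{Fil})}) = \phi''(x,\mathrm{Fil}) = \phi_{\mathcal{F}_1}(x_1)\,\phi_{\mathcal{F}_2}(x_2)$ for every flag $\mathrm{Fil} \in p_3^{-1}(x)$.

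\textbf{Identifying the integrand.} For this I would use the defining property $p_2^*\mathcal{F}'' \cong p_1^*(\mathcal{F}_1 \boxtimes \mathcal{F}_2)$ together with the fact that $p_2$ is a principal $G_{V^1}\times G_{V^2}$-bundle, hence in particular smooth and surjective, so $p_2^*$ is faithful on stalks up to a shift and — crucially — is compatible with stalks: for a point $(x,\mathrm{Fil},\varphi_1,\varphi_2)$ lying over $(x,\mathrm{Fil})$ we get $\mathcal{F}''_{(x,\mathrm{Fil})} \cong (p_2^*\mathcal{F}'')_{(x,\mathrm{Fil},\varphi_1,\varphi_2)} \cong (p_1^*(\mathcal{F}_1\boxtimes\mathcal{F}_2))_{(x,\mathrm{Fil},\varphi_1,\varphi_2)} \cong (\mathcal{F}_1\boxtimes\mathcal{F}_2)_{(x_1,x_2)} = (\mathcal{F}_1)_{x_1}\otimes (\mathcal{F}_2)_{x_2}$, where $x_k = \varphi_k^{-1}x\varphi_k$ are the induced representations on the subquotients. (The smoothness of $p_1$ means $p_1^*$ is exact up to shift, but since stalks of a pullback are just stalks of the original sheaf at the image point, no shift intervenes at the level of fibres; this is where one must be careful that $p_2$ being a bundle makes the isomorphism $p_2^*\mathcal{F}'' \cong p_1^*(\cdots)$ actually determine $\mathcal{F}''$ via descent, and that the isomorphism is canonical enough to take stalks.) Taking $\chi$ of both sides yields $\chi(\mathcal{F}''_{(x,\mathrm{Fil})}) = \chi((\mathcal{F}_1)_{x_1})\cdot\chi((\mathcal{F}_2)_{x_2}) = \phi_{\mathcal{F}_1}(x_1)\phi_{\mathcal{F}_2}(x_2)$, using multiplicativity of $\chi$ on tensor products of complexes of vector spaces. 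This is exactly $\phi''(x,\mathrm{Fil})$, and it is independent of the choice of $(\varphi_1,\varphi_2)$ precisely because $\phi_{\mathcal{F}_k}$ is $G_{V^k}$-invariant, matching the well-definedness built into the definition of $\phi_1 \ast \phi_2$.

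\textbf{Conclusion and main obstacle.} Combining the two displays, $\phi_{\mathcal{F}_1 \ast \mathcal{F}_2}(x) = \int_{p_3^{-1}(x)}\phi_{\mathcal{F}_1}(x_1)\phi_{\mathcal{F}_2}(x_2) = (\phi_{\mathcal{F}_1}\ast\phi_{\mathcal{F}_2})(x)$, which is the claim. The only genuinely delicate point is the base-change/index step: one needs that $\chi$ of the compactly-supported stalk cohomology of $p_{3!}\mathcal{F}''$ equals the integral of the fibrewise Euler characteristic of $\mathcal{F}''$, i.e. the "fibration" formula $\chi\circ p_{3!} = \int_{p_3} \circ\, \chi$ for constructible sheaves. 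This is standard (it follows from stratifying $p_3^{-1}(x)$ so that $\mathcal{F}''$ has locally constant cohomology on each stratum and applying additivity of $\chi_c$, or one can invoke that $\chi = \chi_c$ for complex algebraic varieties), but it is worth stating explicitly as the technical heart of the argument; everything else is formal manipulation of stalks and the projection formula $p_2^*\mathcal{F}'' \cong p_1^*(\mathcal{F}_1\boxtimes\mathcal{F}_2)$. I would also remark that the same computation shows $\phi_1 \ast \phi_2$ is well-defined, recovering consistency with the earlier definitions, though since those are assumed one may simply cite them.
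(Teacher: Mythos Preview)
Your proof is correct and follows essentially the same route as the paper: compute the stalk of $p_{3!}\mathcal{F}''$ via proper base change, convert $\chi$ of the fibre cohomology into $\int_{p_3^{-1}(x)}\chi(\mathcal{F}'')$ (the paper cites Vogan, Proposition 24.16, for this step, which you identify as the main technical input), and then check $\chi(\mathcal{F}'')_{(x,\mathrm{Fil})} = \phi''(x,\mathrm{Fil})$ by comparing stalks through $p_1$ and $p_2$. The paper compresses the last identification to ``it is easy to see that $\chi(\mathcal{F}'') = \phi''$''; your unpacking of it via $p_2^*\mathcal{F}'' \cong p_1^*(\mathcal{F}_1 \boxtimes \mathcal{F}_2)$ is exactly the intended computation.
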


\begin{proof}
We have
\[
\phi_{\mathcal{F}_{1} \ast \mathcal{F}_{2}}(x) = \chi( H^{*}(p_{3}^{-1}(x), \mathcal{F}'') ) = \int_{p_{3}^{-1}(x)} \chi(\mathcal{F}''),
\]
where for the second equality we refer to \cite{Vogan} Proposition 24.16.
It is easy to see that $\chi(\mathcal{F}'') = \phi''$.
\end{proof}

Next consider the following diagram

\begin{align}
\label{diag: convolution conormal}
\xymatrix{
\Lambda'_{V^{1}, V^{2}} \ar[r]^{q_{2}} \ar[d]_{q_{1}} & \Lambda''_{V^{1}, V^{2}} \ar[d]^{q_{3}} \\
\Lambda_{V^{1}} \times \Lambda_{V^{2}} & \Lambda_{V}
}
\end{align}
where
\begin{align*}
\Lambda''_{V^{1}, V^{2}}  & := \Big\{(x, y, {\rm Fil}) | (x, y) \in \Lambda_{V}, \, {\rm Fil}: 0 = W^{0} \subsetneq W^{1} \subsetneq W^{2} = V \text{ $(x, y)$-stable with }  | W^{k}/W^{k-1}| = |V^{k}| \text{ for } k = 1,2 \Big\} \\
\Lambda'_{V^{1}, V^{2}}   & := \Big\{(x, y, {\rm Fil}, \varphi_{1}, \varphi_{2}) | (x, y, {\rm Fil}) \in \Lambda''_{V^{1}, V^{2}}  \text{ and } \varphi_{k}: V^{k} \xrightarrow{\sim} W^{k}/W^{k-1} \text{ for } k = 1,2 \Big\} ,
\end{align*}
and
\[
q_{3}: \Lambda''_{V^{1}, V^{2}} \rightarrow \Lambda_{V}, \quad (x, y, {\rm Fil}) \mapsto (x, y)
\]
is proper;
\[
q_{2}: \Lambda'_{V^{1}, V^{2}} \rightarrow \Lambda''_{V^{1}, V^{2}}, \quad (x, y, {\rm Fil}, \varphi_{1}, \varphi_{2}) \mapsto (x, y, {\rm Fil})
\]
is a principal $G_{V^{1}} \times G_{V^{2}}$-bundle;
\[
q_{1}: \Lambda'_{V^{1}, V^{2}} \rightarrow \Lambda_{V^{1}} \times \Lambda_{V^{2}}, \quad (x, y, {\rm Fil}, \varphi_{1}, \varphi_{2}) \mapsto ((\varphi_{1}^{-1} x \varphi_{1}, \varphi_{1}^{-1} y \varphi_{1}), (\varphi_{2}^{-1} x \varphi_{2}, \varphi_{2}^{-1} y \varphi_{2}))
\]
where we denote the induced morphisms on $W^{k}/W^{k-1}$ still by $x, y$. 

For $\phi_{1} \in M(\Lambda_{V^{1}})^{G_{V^{1}}}, \phi_{2} \in  M(\Lambda_{V^{2}})^{G_{V^{2}}}$, we define $\phi_{1} \ast \phi_{2} \in M(\Lambda_{V})^{G_{V}}$ by
\[
(\phi_{1} \ast \phi_{2}) (x, y) = \int_{q_{3}^{-1}(x, y)} \phi''(x, y, {\rm Fil})
\]
where 
\[
\phi''(x, y, {\rm Fil}) = \phi_{1}((x_{1}, y_{1})) \phi_{2}((x_{2}, y_{2}))
\]
with 
\[
x_{1} = \varphi_{1}^{-1} x \varphi_{1}, \, y_{1} = \varphi_{1}^{-1} y \varphi_{1}; \quad x_{2} = \varphi_{2}^{-1} x \varphi_{2}, \, y_{2} = \varphi_{2}^{-1} y \varphi_{2}
\]
for any choice of isomorphisms $\varphi_{k}: V^{k} \rightarrow W^{k}/W^{k-1}$. 

One can easily extend this convolution to constructible functions on $E_{V^{1}}, E_{V^{2}}$ by considering the diagram
\[
\xymatrix{
& E'_{V^{1}, V^{2}} \ar[rr]^{q'_{2}} \ar[dd]_{q'_{1}} && E''_{V^{1}, V^{2}} \ar[dd]^{q'_{3}} \\
\Lambda'_{V^{1}, V^{2}} \ar[ur] \ar[rr]^{\quad \quad \quad q_{2}} \ar[dd]_{q_{1}} && \Lambda''_{V^{1}, V^{2}} \ar[ur] \ar[dd]^{q_{3}} & \\
& E_{V^{1}} \times E_{V^{2}} && E_{V} \\
\Lambda_{V^{1}} \times \Lambda_{V^{2}} \ar[ur] && \Lambda_{V} \ar[ur] & 
}
\]
where
\begin{align*}
E''_{V^{1}, V^{2}}  & := \Big\{(x, y, {\rm Fil}) | (x, y) \in E_{V}, \, {\rm Fil}: 0 = W^{0} \subsetneq W^{1} \subsetneq W^{2} = V \text{ $(x, y)$-stable with }  | W^{k}/W^{k-1}| = |V^{k}| \text{ for } k = 1,2 \Big\} ,\\
E'_{V^{1}, V^{2}}   & := \Big\{(x, y, {\rm Fil}, \varphi_{1}, \varphi_{2}) | (x, y, {\rm Fil}) \in E''_{V^{1}, V^{2}}  \text{ and } \varphi_{k}: V^{k} \xrightarrow{\sim} W^{k}/W^{k-1} \text{ for } k = 1,2 \Big\} .
\end{align*}
Note both the top and right squares are Cartesian, but the left one is not. The following lemma is immediate from the definition.

\begin{lemma}
For $\phi_{1} \in M(E_{V^{1}})^{G_{V^{1}}}, \phi_{2} \in M(E_{V^{2}})^{G_{V^{2}}}$, 
\[
(\phi_{1} \ast \phi_{2})|_{\Lambda_{V}} = \phi_{1}|_{\Lambda_{V^{1}}} \ast \phi_{2}|_{\Lambda_{V^{2}}}.
\]
\end{lemma}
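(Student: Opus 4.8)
The plan is to verify the identity pointwise: fix $(x, y) \in \Lambda_{V}$ and check that the two sides are obtained by integrating the same constructible function over the same fiber.

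First I would isolate the one substantive input, namely that the commutation relation is inherited by subquotients: if $W \subseteq V$ is stable under both $x$ and $y$, then the operators induced by $x$ and $y$ on $W$ and on $V/W$ still commute, since $xy - yx$ annihilates all of $V$. Hence for every $(x, y, {\rm Fil}) \in E''_{V^{1}, V^{2}}$ with $(x, y) \in \Lambda_{V}$, the induced pairs $(x_{1}, y_{1})$ on $W^{1}$ and $(x_{2}, y_{2})$ on $V/W^{1}$ automatically lie in $\Lambda_{V^{1}}$ and $\Lambda_{V^{2}}$. Equivalently, $\Lambda''_{V^{1}, V^{2}} = (q'_{3})^{-1}(\Lambda_{V})$ inside $E''_{V^{1}, V^{2}}$; this is exactly the Cartesianness of the right-hand square of the large diagram noted just above, and in particular for $(x, y) \in \Lambda_{V}$ the fibers $q_{3}^{-1}(x, y)$ and $(q'_{3})^{-1}(x, y)$ coincide.

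Next I would compare integrands. By definition $(\phi_{1} \ast \phi_{2})|_{\Lambda_{V}}(x, y)$ equals $\int_{(q'_{3})^{-1}(x, y)} \phi_{1}((x_{1}, y_{1}))\,\phi_{2}((x_{2}, y_{2}))$, while $(\phi_{1}|_{\Lambda_{V^{1}}} \ast \phi_{2}|_{\Lambda_{V^{2}}})(x, y)$ equals $\int_{q_{3}^{-1}(x, y)} (\phi_{1}|_{\Lambda_{V^{1}}})((x_{1}, y_{1}))\,(\phi_{2}|_{\Lambda_{V^{2}}})((x_{2}, y_{2}))$. The two domains agree by the previous step, and since $(x_{i}, y_{i}) \in \Lambda_{V^{i}}$ one has $(\phi_{i}|_{\Lambda_{V^{i}}})((x_{i}, y_{i})) = \phi_{i}((x_{i}, y_{i}))$, so the integrands agree as constructible functions on the common fiber. (Both formulas are unambiguous because, exactly as in the definition of the convolution on $E_{V}$, the integrand does not depend on the auxiliary isomorphisms $\varphi_{k}: V^{k} \xrightarrow{\sim} W^{k}/W^{k-1}$.) Therefore the two integrals coincide for every $(x, y) \in \Lambda_{V}$, which is the claim.

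I do not expect any real obstacle: the statement is formal once the diagram is in place, and the only thing to get right is the bookkeeping observation that the left square of the large diagram plays no role here -- only the fiber of $q_{3}$ (equivalently of $q'_{3}$) over a point of $\Lambda_{V}$ enters -- together with the trivial fact that $[x, y] = 0$ descends to $(x, y)$-stable subquotients.
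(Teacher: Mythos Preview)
Your proof is correct and is exactly the unpacking the paper intends: the paper states the lemma as ``immediate from the definition'' and does not write out a proof, but the content is precisely what you supply --- the right square of the big diagram is Cartesian (so $q_3^{-1}(x,y) = (q'_3)^{-1}(x,y)$ for $(x,y)\in\Lambda_V$), and $[x,y]=0$ descends to $(x,y)$-stable subquotients so that the integrands match. Nothing to add.
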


For $\mathcal{F}_{1} \in D_{G_{V^{1}}}(E_{V^{1}}), \mathcal{F}_{2} \in D_{G_{V^{2}}}(E_{V^{2}})$, we can also define convolution 
\[
\mathcal{F}_{1} \ast \mathcal{F}_{2} := q'_{3 !} \mathcal{F}''
\]
where 
\[
q_{2}^{'*} \mathcal{F}'' \cong  q_{1}^{'*} (\mathcal{F}_{1} \boxtimes \mathcal{F}_{2}).
\]
One can also show easily that
\[
\phi_{\mathcal{F}_{1}} \ast \phi_{\mathcal{F}_{2}} = \phi_{\mathcal{F}_{1} \ast \mathcal{F}_{2}}.
\]

Now we attempt to establish \eqref{eq: compatible with convolution} directly. It suffices to show that for any $\mathcal{F}_{1} \in D_{G_{V^{1}}}(E_{V^{1}, \Omega}), \mathcal{F}_{2} \in D_{G_{V^{2}}}(E_{V^{2}, \Omega})$, 
\[
\eta_{V^{1}}(\phi_{\mathcal{F}_{1}}) \ast \eta_{V^{2}}(\phi_{\mathcal{F}_{2}}) = \eta_{V}(\phi_{\mathcal{F}_{1} \ast \mathcal{F}_{2}}).
\]
We first expand the right hand side. For $(x, y) \in \Lambda_{V}$, 
\[
\eta_{V}(\phi_{\mathcal{F}_{1} \ast \mathcal{F}_{2}})(x, y) = \chi(R\Phi_{f_{y}}[-1](\mathcal{F}_{1} \ast \mathcal{F}_{2})_{x}) = \chi(R\Phi_{f_{y}}[-1](p_{3!} \mathcal{F}'')_{x}).
\]
By proper base change,
\[
R\Phi_{f_{y}}(p_{3!} \mathcal{F}'') \cong p_{3!}(R\Phi_{f_{y} \circ p_{3}}\mathcal{F}'').
\]
Hence,
\[
\chi(R\Phi_{f_{y}}(p_{3!} \mathcal{F}'')_{x}) = \chi(H^{*}(p_{3}^{-1}(x), R\Phi_{f_{y} \circ p_{3}}\mathcal{F}'') ) = \int_{p_{3}^{-1}(x)} \chi(R\Phi_{f_{y} \circ p_{3}}\mathcal{F}'')
\]
where for the second equality we refer to \cite{Vogan} Proposition 24.16.
By smooth base change,
\[
p_{2}^{*}(R\Phi_{f_{y} \circ p_{3}}\mathcal{F}'') \cong R\Phi_{f_{y} \circ p_{3} \circ p_{2}} (p_{2}^{*}\mathcal{F}'') .
\]
We can also express the left hand side as an integration.
\[
(\eta_{V^{1}}(\phi_{\mathcal{F}_{1}}) \ast \eta_{V^{2}}(\phi_{\mathcal{F}_{2}}))(x, y) = \int_{q_{3}^{-1}(x, y)} \eta_{V^{1}}(\phi_{\mathcal{F}_{1}})(x_{1}, y_{1}) \,\cdot \, \eta_{V^{2}}(\phi_{\mathcal{F}_{2}})(x_{2}, y_{2})
\]
Comparing the two integrals, we see
\[
q_{3}^{-1}(x, y) \hookrightarrow p_{3}^{-1}(x), \quad (x, y, {\rm Fil}) \mapsto (x, {\rm Fil}).
\]
Then both integrals are equal if 
\begin{enumerate}

\item $\chi(R\Phi_{f_{y} \circ p_{3}}\mathcal{F}'') = 0$ over $p_{3}^{-1}(x) \backslash q_{3}^{-1}(x, y)$.

\item $\chi(R\Phi_{f_{y} \circ p_{3}} [-1]\mathcal{F}'')(x, {\rm Fil}) = \eta_{V^{1}}(\phi_{\mathcal{F}_{1}})(x_{1}, y_{1}) \cdot  \eta_{V^{2}}(\phi_{\mathcal{F}_{2}})(x_{2}, y_{2})$ for $(x, y, {\rm Fil}) \in q_{3}^{-1}(x, y)$.

\end{enumerate}
In (2), we can rewrite the right hand side as 
\[
\eta_{V^{1}}(\phi_{\mathcal{F}_{1}})(x_{1}, y_{1}) \cdot \eta_{V^{2}}(\phi_{\mathcal{F}_{2}})(x_{2}, y_{2}) = \chi(R\Phi_{f_{y_{1}}}[-1] (\mathcal{F}_{1})_{x_{1}}) \,\chi(R\Phi_{f_{y_{2}}}[-1] (\mathcal{F}_{2})_{x_{2}}).
\]

\begin{teo}[Sebastiani-Thom]
\[
R\Phi_{f_{y_{1}}}[-1] (\mathcal{F}_{1})_{x_{1}} \otimes^{L}_{\mathbb{C}} R\Phi_{f_{y_{2}}}[-1] (\mathcal{F}_{2})_{x_{2}} \cong R\Phi_{f_{y_{1}} \oplus f_{y_{2}}}[-1] (\mathcal{F}_{1} \boxtimes \mathcal{F}_{2})_{(x_{1}, x_{2})},
\] 
where
\[
f_{y_{1}} \oplus f_{y_{2}} : E_{V^{1}, \Omega} \times E_{V^{2}, \Omega} \rightarrow \mathbb{C}, \quad (x'_{1}, x'_{2}) \mapsto f_{y_{1}}(x'_{1}) + f_{y_{2}}(x'_{2}).
\]
\end{teo}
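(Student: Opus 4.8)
The assertion is the Sebastiani--Thom theorem for vanishing cycles, and the plan is to deduce it from the classical topological statement via the Milnor fibre description of the functor $R\Phi[-1]$. For a holomorphic $h\colon Z\to\C$ and a point $z$ with $h(z)=0$ one has $R\Psi_h(\mathcal{H})_z\cong R\Gamma(\mathrm{MF}_h(z);\mathcal{H})$, where $\mathrm{MF}_h(z)=B_\varepsilon(z)\cap h^{-1}(t)$ is the local Milnor fibre ($0<|t|\ll\varepsilon\ll 1$); the defining triangle $\mathcal{H}_z\to R\Psi_h(\mathcal{H})_z\to R\Phi_h(\mathcal{H})_z\to\mathcal{H}_z[1]$ then identifies $R\Phi_h(\mathcal{H})_z$ with the reduced complex $\mathrm{cone}(\mathcal{H}_z\to R\Gamma(\mathrm{MF}_h(z);\mathcal{H}))$. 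Note that in the application $f_{y_1}(x_1)=0=f_{y_2}(x_2)$ (the relevant points lie in $\Lambda_{V^1}\subseteq\langle\,,\,\rangle^{-1}(0)$, resp.\ in $\Lambda_{V^2}$, by \eqref{eq: conormal} and the decomposition $\Lambda_{V^k}=\bigsqcup_S T^*_{\widehat S}E_{V^k,\bar\Omega}$), hence also $(f_{y_1}\oplus f_{y_2})(x_1,x_2)=0$; so all three vanishing cycle stalks are genuine reduced Milnor fibre complexes, and it is under this hypothesis that I would prove the isomorphism.

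First I would invoke the classical Sebastiani--Thom theorem: the Milnor fibre of $f_{y_1}\oplus f_{y_2}$ at $(x_1,x_2)$ is homotopy equivalent to the join $\mathrm{MF}_{f_{y_1}}(x_1)\ast\mathrm{MF}_{f_{y_2}}(x_2)$, and this equivalence admits a stratified refinement under which $\mathcal{F}_1\boxtimes\mathcal{F}_2$ restricts to the natural ``join object'' built out of $\mathcal{F}_1|_{\mathrm{MF}_{f_{y_1}}(x_1)}$ and $\mathcal{F}_2|_{\mathrm{MF}_{f_{y_2}}(x_2)}$. Then I would apply the K\"unneth formula for joins over the field $\C$: writing $A\ast B$ as the homotopy pushout of $A\leftarrow A\times B\to B$ and running Mayer--Vietoris together with the usual sheaf K\"unneth isomorphism (all Tor-terms vanish over $\C$), the reduced Milnor fibre complex of $f_{y_1}\oplus f_{y_2}$ becomes the tensor product of those of $f_{y_1}$ and $f_{y_2}$ shifted by $[-1]$, i.e.
\[
R\Phi_{f_{y_1}\oplus f_{y_2}}(\mathcal{F}_1\boxtimes\mathcal{F}_2)_{(x_1,x_2)}\;\cong\;R\Phi_{f_{y_1}}(\mathcal{F}_1)_{x_1}\otimes^{L}_{\C}R\Phi_{f_{y_2}}(\mathcal{F}_2)_{x_2}[-1].
\]
Shifting both sides by $[-1]$ and using $(\mathcal{C}\otimes^{L}_{\C}\mathcal{D})[-2]\cong\mathcal{C}[-1]\otimes^{L}_{\C}\mathcal{D}[-1]$ yields exactly the asserted formula.

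Since $\mathcal{F}_1$ and $\mathcal{F}_2$ are arbitrary equivariant constructible complexes, the constructible coefficients can be handled either by citing Massey's derived-category form of Sebastiani--Thom directly, or by a d\'evissage: all three functors $R\Phi_{f_{y_1}}[-1]$, $R\Phi_{f_{y_2}}[-1]$, $R\Phi_{f_{y_1}\oplus f_{y_2}}[-1]$ are exact and $-\boxtimes-$ is exact in each variable, so after fixing stratifications of $E_{V^1,\Omega}$ and $E_{V^2,\Omega}$ on which $\mathcal{F}_1$ and $\mathcal{F}_2$ are constructible one writes each $\mathcal{F}_k$ as an iterated extension of shifted constant sheaves on smooth locally closed strata, treats the constant case by the classical theorem, and reassembles the distinguished triangles. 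For the downstream application only the resulting identity of Euler characteristics is needed, which also follows directly from this reduction plus the elementary constant case.

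The step I expect to be the real obstacle is the constructible refinement of the Sebastiani--Thom homotopy equivalence, i.e.\ the compatibility of the join decomposition of the Milnor fibre with the external tensor product $\mathcal{F}_1\boxtimes\mathcal{F}_2$ --- equivalently, the assertion that $R\Phi_{f_{y_1}\oplus f_{y_2}}[-1](\mathcal{F}_1\boxtimes\mathcal{F}_2)$ is supported over $f_{y_1}^{-1}(0)\times f_{y_2}^{-1}(0)$ and there coincides with $R\Phi_{f_{y_1}}[-1]\mathcal{F}_1\boxtimes R\Phi_{f_{y_2}}[-1]\mathcal{F}_2$. This is precisely the point where one appeals to Massey's argument or to the stratified Morse theory / specialization techniques of \cite{Schurmann:2003} already used in this paper; everything else is the K\"unneth formula for joins and bookkeeping of shifts.
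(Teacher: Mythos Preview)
The paper does not prove this statement at all: it is simply invoked by name as the classical Sebastiani--Thom theorem and used as a black box. Your outline (Milnor-fibre join description, K\"unneth for joins, d\'evissage to constant sheaves or a direct appeal to Massey's derived-category version) is a reasonable sketch of how the theorem is actually established in the literature, but there is nothing to compare against here --- the authors treat it as known input, on the same footing as proper/smooth base change.
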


By smooth base change, 
\[
p_{1}^{*} R\Phi_{f_{y_{1}} \oplus f_{y_{2}}} (\mathcal{F}_{1} \boxtimes \mathcal{F}_{2}) \cong R\Phi_{(f_{y_{1}} \oplus f_{y_{2}}) \circ p_{1}} (p_{1}^{*}(\mathcal{F}_{1} \boxtimes \mathcal{F}_{2})).
\]
So (2) is equivalent to
\[
\chi(R\Phi_{(f_{y_{1}} \oplus f_{y_{2}}) \circ p_{1}} (p_{1}^{*}(\mathcal{F}_{1} \boxtimes \mathcal{F}_{2}))_{(x, y, {\rm Fil})}) = \chi(R\Phi_{f_{y} \circ p_{3} \circ p_{2}} (p_{2}^{*}\mathcal{F}'')_{(x, y, {\rm Fil})}). 
\]
Note $p_{1}^{*}(\mathcal{F}_{1} \boxtimes \mathcal{F}_{2}) \cong p_{2}^{*}\mathcal{F}''$, but 
\[
(f_{y_{1}} \oplus f_{y_{2}}) \circ p_{1} \neq f_{y} \circ p_{3} \circ p_{2}.
\]
So we can not conclude the equality directly. This is the main reason that we have to approach \eqref{eq: compatible with convolution} in a roundabout way.

\subsection{Inductions}\label{subsection-Induction}

We fix an $I$-graded isomorphism $V = V^{1} \oplus V^{2} \oplus \cdots \oplus V^{n}$ and a filtration
\[
\overline{{\rm Fil}}: 0 = \bar{W}^{0} \subsetneq \bar{W}^{1} \subsetneq \cdots \subsetneq \bar{W}^{n} = V
\]
where 
\[
\bar{W}^{k} := V^{1} \oplus \cdots \oplus V^{k}
\]
Let $\bar{\varphi}_{k}: V^{k} \hookrightarrow \bar{W}^{k} \mapsto \bar{W}^{k}/\bar{W}^{k-1}$. The goal is to calculate 
\[
\phi_{1} \ast \cdots \ast \phi_{n}
\]
for $\phi_{k} \in M(E_{V^{k}, \Omega})^{G_{V^{k}}}$. We will define
\[
{\rm Ind}_{V^{1}, \cdots, V^{n}}: \otimes_{k = 1}^{n}M(E_{V^{k}, \Omega})^{G_{V^{k}}} \rightarrow M(E_{V, \Omega})^{G_V}
\]
as follows. Consider the diagram
\begin{align}
\label{diag: induction}
\xymatrix{
E'_{V^{1}, \cdots, V^{n}, \Omega} \ar[r]^{p_{2}} \ar[d]_{p_{1}} & E''_{V^{1}, \cdots, V^{n}, \Omega} \ar[d]^{p_{3}} \\
E_{V^{1}, \Omega} \times \cdots \times E_{V^{n}, \Omega} & E_{V, \Omega}
}
\end{align}
where
\begin{align*}
E''_{V^{1}, \cdots V^{n}, \Omega}  & := \Big\{(x, {\rm Fil}) | x \in E_{V, \Omega}, \, {\rm Fil}: 0 = W^{0} \subsetneq \cdots \subsetneq W^{n} = V \text{ $x$-stable with }  | W^{k}/W^{k-1}| = |V^{k}| \text{ for } k = 1, \cdots, n \Big\}, \\
E'_{V^{1}, \cdots V^{n}, \Omega}   & := \Big\{(x, {\rm Fil}, \{\varphi_{k}\}_{k=1}^{n}) | (x, {\rm Fil}) \in E''_{V^{1}, \cdots V^{n}, \Omega}  \text{ and } \varphi_{k}: V^{k} \xrightarrow{\sim} W^{k}/W^{k-1} \text{ for } k = 1, \cdots, n \Big\} ,
\end{align*}
and
\[
p_{3}: E''_{V^{1}, \cdots, V^{n}, \Omega} \rightarrow E_{V, \Omega}, \quad (x, {\rm Fil}) \mapsto x
\]
is proper;
\[
p_{2}: E'_{V^{1}, \cdots, V^{n}, \Omega} \rightarrow E''_{V^{1}, \cdots, V^{n}, \Omega}, \quad (x, {\rm Fil}, \{\varphi_{k}\}_{k=1}^{n}) \mapsto (x, {\rm Fil})
\]
is a principal $G_{V^{1}} \times \cdots \times G_{V^{n}}$-bundle;
\[
p_{1}: E'_{V^{1}, \cdots, V^{n}, \Omega} \rightarrow E_{V^{1}, \Omega} \times \cdots \times E_{V^{n}, \Omega}, \quad (x, {\rm Fil}, \{\varphi_{k}\}_{k=1}^{n}) \mapsto \{\varphi_{k}^{-1}x\varphi_{k}\}_{k=1}^{n}
\]
is smooth, where we denote the induced morphisms on $W^{k}/W^{k-1}$ still by $x$. To see the properties of $p_{1}, p_{2}, p_{3}$ more easily, we will give another description of the diagram. Let
\[
E_{V^{1}, \cdots, V^{n}, \Omega}^{\geqslant 0} := \{x \in E_{V, \Omega} | x \text{ stabilizes } \overline{{\rm Fil}} \} \hookrightarrow E'_{V^{1}, \cdots, V^{n}, \Omega}, \quad x \mapsto (x, \overline{{\rm Fil}}, \{\bar{\varphi}_{k}\}_{k=1}^{n})
\]
It admits an action by 
\[
G_{V^{1}, \cdots, V^{n}}^{\geqslant 0} := \{g \in G_{V} | g \text{ stabilizes } \overline{{\rm Fil}} \}
\]
a parabolic subgroup of $G_{V}$. It has a Levi component $G_{V^{1}} \times \cdots \times G_{V^{n}}$ and the unipotent radical is
\[
G_{V^{1}, \cdots, V^{n}}^{+} := \{g \in G_{V^{1}, \cdots, V^{n}}^{\geqslant 0} | \bar{\varphi}_{k}^{-1} g \bar{\varphi}_{k} = id \text{ for } k = 1, \cdots, n\}.
\]

\begin{lemma}
\begin{align*}
G_{V} \times_{G_{V^{1}, \cdots, V^{n}}^{\geqslant 0}} E_{V^{1}, \cdots, V^{n}, \Omega}^{\geqslant 0} & \cong E''_{V^{1}, \cdots, V^{n}, \Omega}, \quad (g, x) \mapsto (gx, g\overline{{\rm Fil}}) \\
G_{V} \times_{G_{V^{1}, \cdots, V^{n}}^{+}} E_{V^{1}, \cdots, V^{n}, \Omega}^{\geqslant 0} & \cong E'_{V^{1}, \cdots, V^{n}, \Omega}, \quad (g, x) \mapsto (gx, g\overline{{\rm Fil}}, \{g\bar{\varphi}_{k}\}_{k = 1}^{n})
\end{align*}
\end{lemma}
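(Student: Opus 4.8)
The plan is to reduce this to the $n = 2$ lemma stated above: the argument is the same, the only new ingredient being a transitivity statement for the $G_V$-action. Write $P := G_{V^{1}, \cdots, V^{n}}^{\geqslant 0}$, $N := G_{V^{1}, \cdots, V^{n}}^{+}$ and $E^{\geqslant 0} := E_{V^{1}, \cdots, V^{n}, \Omega}^{\geqslant 0}$. By definition $P$ is the stabilizer of $\overline{{\rm Fil}}$ in $G_V$, and $G_V = \prod_i GL(V_i)$ acts transitively on the set of $I$-graded flags ${\rm Fil}: 0 = W^{0} \subsetneq \cdots \subsetneq W^{n} = V$ with $|W^{k}/W^{k-1}| = |V^{k}|$ for all $k$, since each $GL(V_i)$ acts transitively on flags in $V_i$ of a fixed dimension sequence. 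Given $(x, {\rm Fil}) \in E''_{V^{1}, \cdots, V^{n}, \Omega}$, I would choose $g \in G_V$ with $g\overline{{\rm Fil}} = {\rm Fil}$; then $g^{-1}x$ stabilizes $\overline{{\rm Fil}}$, so $g^{-1}x \in E^{\geqslant 0}$ and $(g, g^{-1}x) \mapsto (x, {\rm Fil})$, which gives surjectivity of the proposed map.

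For the remaining points, the morphism $G_V \times E^{\geqslant 0} \to E''_{V^{1}, \cdots, V^{n}, \Omega}$, $(g, x) \mapsto (gx, g\overline{{\rm Fil}})$, is invariant for the right $P$-action $(g, x) \cdot p = (gp, p^{-1}x)$ because $p$ fixes $\overline{{\rm Fil}}$, and $(g_1, x_1)$, $(g_2, x_2)$ have the same image exactly when $g_1^{-1}g_2 \in P$ and $x_2 = (g_1^{-1}g_2)^{-1}x_1$, i.e. exactly when they lie in one $P$-orbit. Hence the induced map $G_V \times_{P} E^{\geqslant 0} \to E''_{V^{1}, \cdots, V^{n}, \Omega}$ is a $G_V$-equivariant bijective morphism. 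To upgrade it to an isomorphism of varieties I would use that both sides fiber over the partial flag variety $G_V/P$ — the source tautologically, the target by $(x, {\rm Fil}) \mapsto {\rm Fil}$ — that the map commutes with these projections, and that over the base point $\overline{{\rm Fil}}$ it restricts to the identity of the affine space $E^{\geqslant 0}$; combined with $G_V$-equivariance and transitivity on $G_V/P$ this forces it to be an isomorphism. (Alternatively, a bijective morphism between smooth complex varieties is automatically an isomorphism.)

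I would obtain the second isomorphism in the same way, with $P$ replaced by its unipotent radical $N$ and $E''$ by $E'$: one uses that $G_V$ acts transitively on framed flags $({\rm Fil}, \{\varphi_{k}\}_{k=1}^{n})$ of the prescribed type — transitivity on flags, followed by transitivity of the Levi $G_{V^{1}} \times \cdots \times G_{V^{n}}$ on the choices of isomorphisms $\varphi_{k}: V^{k} \xrightarrow{\sim} W^{k}/W^{k-1}$ for fixed ${\rm Fil}$ — and that the stabilizer of $(\overline{{\rm Fil}}, \{\bar{\varphi}_{k}\})$ is precisely $N$ by the definition of $N$. I expect the only step requiring genuine care, rather than formal associated-bundle bookkeeping, to be this last transitivity-on-framed-flags claim together with the identification of the stabilizer as exactly $N$; everything else is identical to the already established $n = 2$ case.
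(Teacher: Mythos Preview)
Your argument is correct and is exactly the standard verification that the paper has in mind. The paper gives no proof of this lemma at all: for the $n=2$ version it simply says ``The following lemma is immediate,'' and for general $n$ it states the lemma and moves on. Your proof supplies the details the authors left implicit---transitivity of $G_V$ on $I$-graded (framed) flags, identification of the stabilizer as $P$ (resp.\ $N$), and the associated-bundle description---so there is nothing further to compare.
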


\begin{align}
\label{diag: induction 1}
\xymatrix{
G_{V} \times_{G_{V^{1}, \cdots, V^{n}}^{+}} E_{V^{1}, \cdots, V^{n}, \Omega}^{\geqslant 0} \ar[r]^{p'_{2}} \ar[d]_{p'_{1}} & G_{V} \times_{G_{V^{1}, \cdots, V^{n}}^{\geqslant 0}} E_{V^{1}, \cdots, V^{n}, \Omega}^{\geqslant 0} \ar[d]^{p'_{3}} \\
E_{V^{1}, \Omega} \times \cdots \times E_{V^{n}, \Omega} & E_{V, \Omega}
}
\end{align}
where 
\begin{align*}
& p'_{3}: G_{V} \times_{G_{V^{1}, \cdots, V^{n}}^{\geqslant 0}} E_{V^{1}, \cdots, V^{n}, \Omega}^{\geqslant 0} \rightarrow E_{V, \Omega}, \quad (g, x) \mapsto gx, \\
& p'_{2}:  G_{V} \times_{G_{V^{1}, \cdots, V^{n}}^{+}} E_{V^{1}, \cdots, V^{n}, \Omega}^{\geqslant 0} \rightarrow G_{V} \times_{G_{V^{1}, \cdots, V^{n}}^{\geqslant 0}} E_{V^{1}, \cdots, V^{n}, \Omega}^{\geqslant 0}, \quad (g, x) \mapsto (g, x), \\
& p'_{1}: G_{V} \times_{G_{V^{1}, \cdots, V^{n}}^{+}} E_{V^{1}, \cdots, V^{n}, \Omega}^{\geqslant 0} \rightarrow E_{V^{1}, \cdots, V^{n}, \Omega}^{\geqslant 0} \hookrightarrow E'_{V^{1}, \cdots, V^{n}, \Omega} \xrightarrow{p_{1}} E_{V^{1}, \Omega} \times \cdots \times E_{V^{n}, \Omega}.
\end{align*}

For $\phi_{k} \in M(E_{V^{k}, \Omega})^{G_{V^{k}}} (k =1, \cdots, n)$, we define 
\[
{\rm Ind}_{V^{1}, \cdots, V^{n}}(\phi_{1} \otimes \cdots \otimes \phi_{n}) (x) = \int_{p_{3}^{-1}(x)} \phi''(x, {\rm Fil})
\]
where 
\[
\phi''(x, {\rm Fil}) = \phi_{1}(x_{1}) \cdots \phi_{n}(x_{n})
\]
with 
\[
x_{k} = \varphi_{k}^{-1} x \varphi_{k}
\]
for any choice of isomorphisms $\varphi_{k}: V^{k} \rightarrow W^{k}/W^{k-1}$. For $\mathcal{F}_{k} \in D_{G_{V^{k}}}(E_{V^{k}, \Omega}) (k =1, \cdots, n)$, we define
\[
{\rm Ind}_{V^{1}, \cdots, V^{n}} (\mathcal{F}_{1} \boxtimes \cdots \boxtimes \mathcal{F}_{n}) := p_{3 !} \mathcal{F}''
\]
where 
\[
p_{2}^{*} \mathcal{F}'' \cong  p_{1}^{*} (\mathcal{F}_{1} \boxtimes \cdots \boxtimes \mathcal{F}_{n}).
\]

\begin{prop}
For $\mathcal{F}_{k} \in D_{G_{V^{k}}}(E_{V^{k}, \Omega}) (k = 1, \cdots, n)$,
\[
{\rm Ind}_{V^{1}, \cdots, V^{n}} (\phi_{\mathcal{F}_{1}} \otimes \cdots \otimes \phi_{\mathcal{F}_{n}}) = \phi_{{\rm Ind}_{V^{1}, \cdots, V^{n}}(\mathcal{F}_{1} \boxtimes \cdots \boxtimes \mathcal{F}_{n})}.
\]
\end{prop}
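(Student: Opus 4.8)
The plan is to reproduce, with the obvious bookkeeping for $n$ factors, the argument already used for the two-term identity $\phi_{\mathcal{F}_1}\ast\phi_{\mathcal{F}_2}=\phi_{\mathcal{F}_1\ast\mathcal{F}_2}$. Fix $x\in E_{V,\Omega}$ and let $\mathcal{F}''$ be the complex on $E''_{V^1,\cdots,V^n,\Omega}$ used to define ${\rm Ind}$, characterized by $p_2^*\mathcal{F}''\cong p_1^*(\mathcal{F}_1\boxtimes\cdots\boxtimes\mathcal{F}_n)$. Since $p_3$ is proper, proper base change gives $(p_{3!}\mathcal{F}'')_x\cong R\Gamma(p_3^{-1}(x),\mathcal{F}''|_{p_3^{-1}(x)})$, and as $p_3^{-1}(x)$ is compact this coincides with compactly supported cohomology, so \cite{Vogan} Proposition~24.16 yields
\[
\phi_{{\rm Ind}_{V^1,\cdots,V^n}(\mathcal{F}_1\boxtimes\cdots\boxtimes\mathcal{F}_n)}(x)=\chi\big((p_{3!}\mathcal{F}'')_x\big)=\int_{p_3^{-1}(x)}\chi(\mathcal{F}'').
\]

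Next I would identify the integrand with $\phi''$. Given $(x,{\rm Fil})\in p_3^{-1}(x)$, choose a point $e'=(x,{\rm Fil},\{\varphi_k\}_{k=1}^n)$ of $E'_{V^1,\cdots,V^n,\Omega}$ above it, which exists because $p_2$ is a (surjective) principal $G_{V^1}\times\cdots\times G_{V^n}$-bundle. Since $*$-pullback commutes with taking stalks,
\[
\mathcal{F}''_{(x,{\rm Fil})}\cong (p_2^*\mathcal{F}'')_{e'}\cong \big(p_1^*(\mathcal{F}_1\boxtimes\cdots\boxtimes\mathcal{F}_n)\big)_{e'}\cong (\mathcal{F}_1)_{x_1}\otimes^{L}_{\mathbb{C}}\cdots\otimes^{L}_{\mathbb{C}}(\mathcal{F}_n)_{x_n},
\]
where $x_k=\varphi_k^{-1}x\varphi_k$ and the last isomorphism is the K\"unneth formula for stalks of an external tensor product. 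Taking Euler characteristics and using $\chi(A\otimes^L_{\mathbb{C}}B)=\chi(A)\chi(B)$ for bounded complexes of vector spaces gives $\chi(\mathcal{F}''_{(x,{\rm Fil})})=\prod_k\chi((\mathcal{F}_k)_{x_k})=\prod_k\phi_{\mathcal{F}_k}(x_k)=\phi''(x,{\rm Fil})$; this is independent of the auxiliary $\varphi_k$, as it must be by the definition of $\phi''$. Substituting into the previous display and comparing with the definition of ${\rm Ind}_{V^1,\cdots,V^n}$ on constructible functions finishes the proof.

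I do not anticipate a real obstacle: the argument is formal once one has proper base change, the compatibility of $\chi$ with proper pushforward (\cite{Vogan} Proposition~24.16), and the K\"unneth formula, all of which are available. The only points requiring a word of care are that properness of $p_3$ is what makes $p_3^{-1}(x)$ compact so that Proposition~24.16 applies verbatim, and that one should remark $\chi(\mathcal{F}'')$ is visibly invariant along the fibres of $p_2$, hence descends to the well-defined constructible function $\phi''$ on $E''_{V^1,\cdots,V^n,\Omega}$. Alternatively, the whole statement can be obtained by induction on $n$ from the two-term case together with the transitivity of the flag construction (${\rm Ind}_{V^1,\cdots,V^n}={\rm Ind}_{V^1,\,V^2\oplus\cdots\oplus V^n}\circ({\rm id}\otimes{\rm Ind}_{V^2,\cdots,V^n})$ on both sheaves and functions), which is perhaps the slicker route.
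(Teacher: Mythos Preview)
Your proposal is correct and follows essentially the same approach as the paper: compute $\phi_{{\rm Ind}(\mathcal{F}_1\boxtimes\cdots\boxtimes\mathcal{F}_n)}(x)=\chi(H^*(p_3^{-1}(x),\mathcal{F}''))=\int_{p_3^{-1}(x)}\chi(\mathcal{F}'')$ via \cite{Vogan} Proposition~24.16, then identify $\chi(\mathcal{F}'')$ with $\phi''$. The paper simply asserts the last identification as ``easy to see,'' whereas you spell it out with the K\"unneth argument; your alternative inductive route via transitivity of ${\rm Ind}$ is not needed and is not the paper's approach.
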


\begin{proof}We have
\[
\phi_{{\rm Ind}_{V^{1}, \cdots, V^{n}}(\mathcal{F}_{1} \boxtimes \cdots \boxtimes \mathcal{F}_{n})}(x) = \chi( H^{*}(p_{3}^{-1}(x), \mathcal{F}'') ) = \int_{p_{3}^{-1}(x)} \chi(\mathcal{F}'')
\]
where for the second equality we refer to \cite{Vogan} Proposition 24.16. It is easy to see that $\chi(\mathcal{F}'') = \phi''$.
\end{proof}

\begin{prop}
\label{prop: convolution-induction}
For $\phi_{k} \in M(E_{V^{k}, \Omega})^{G_{V^{k}}} (k =1, \cdots, n)$, 
\[
\phi_{1} \ast \cdots \ast \phi_{n} = {\rm Ind}_{V^{1}, \cdots, V^{n}}(\phi_{1} \otimes \cdots \otimes \phi_{n})
\]
\end{prop}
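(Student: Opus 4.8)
The plan is to argue by induction on $n$, reducing everything to the case $n=2$ (which is the definition of $\ast$) by means of a transitivity, or ``induction in stages'', property of the functor ${\rm Ind}$. For $n = 1$ the flag variety is a point and both sides equal $\phi_{1}$, and for $n = 2$ the statement is exactly the definition of the convolution in \eqref{diag: convolution}. So assume $n \geqslant 3$ and that the statement holds for fewer factors, and put $W := V^{1} \oplus \cdots \oplus V^{n-1}$ with its induced $I$-grading. Since $\line{\M}_{\Omega}$ is associative,
\[
\phi_{1} \ast \cdots \ast \phi_{n} = \big(\phi_{1} \ast \cdots \ast \phi_{n-1}\big) \ast \phi_{n} = {\rm Ind}_{W, V^{n}}\!\big({\rm Ind}_{V^{1}, \cdots, V^{n-1}}(\phi_{1} \otimes \cdots \otimes \phi_{n-1}) \otimes \phi_{n}\big),
\]
using the inductive hypothesis for $W$ and the case $n = 2$. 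It therefore suffices to prove the transitivity identity
\[
{\rm Ind}_{W, V^{n}}\!\big({\rm Ind}_{V^{1}, \cdots, V^{n-1}}(-) \otimes \phi_{n}\big) = {\rm Ind}_{V^{1}, \cdots, V^{n}}(-).
\]

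To prove this I would compare the flag varieties involved. Giving an $x$-stable filtration $0 = W^{0} \subsetneq \cdots \subsetneq W^{n} = V$ with $|W^{k}/W^{k-1}| = |V^{k}|$ is the same as giving an $x$-stable (hence $I$-graded) subspace $U := W^{n-1} \subseteq V$ with $|U| = |W|$ together with an $x|_{U}$-stable refinement $0 = W^{0} \subsetneq \cdots \subsetneq W^{n-1} = U$ of the appropriate subquotient dimensions. This yields a morphism $\pi: E''_{V^{1}, \cdots, V^{n}, \Omega} \to E''_{W, V^{n}, \Omega}$ commuting with the maps to $E_{V, \Omega}$, whose fibre over $(x, \, 0 \subsetneq U \subsetneq V)$ is, after a choice of $I$-graded isomorphism $W \xrightarrow{\sim} U$, identified with $E''_{V^{1}, \cdots, V^{n-1}, \Omega}$ for the induced representation on $W$; this identification is compatible with the maps $p_{1}, p_{2}, p_{3}$ and with the principal-bundle structures of the two induction diagrams \eqref{diag: induction}. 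Under it the integrand $\phi_{1}(x_{1}) \cdots \phi_{n}(x_{n})$ defining ${\rm Ind}_{V^{1}, \cdots, V^{n}}$ factors as the product of $\phi_{1}(x_{1}) \cdots \phi_{n-1}(x_{n-1})$ — the integrand of ${\rm Ind}_{V^{1}, \cdots, V^{n-1}}$ along the fibres of $\pi$ — and $\phi_{n}(x_{n})$, which is constant along those fibres. The transitivity identity then follows by integrating first over the fibres of $\pi$ and then over $E''_{W, V^{n}, \Omega}$, i.e. from the Fubini property of the integral $\int$ (equivalently, the functoriality of the pushforward of constructible functions for the compactly supported Euler characteristic).

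The only point requiring genuine care is the variety-level identification of the fibres of $\pi$ with the smaller flag varieties, together with all the compatibilities with $p_{1}, p_{2}, p_{3}$ and the bundle structures; once that is set up, the remainder is bookkeeping, and the argument runs parallel to the standard proof that the convolution on $\line{\M}_{\Omega}$ is associative. One could equally well peel off the first factor $V^{1}$ rather than the last: then $E''_{V^{1}, \cdots, V^{n}, \Omega}$ fibres over the (Grassmannian-type) variety of $x$-stable $I$-graded subspaces of dimension vector $|V^{1}|$, with fibres the corresponding flag varieties for the induced quotient representation, and the inductive step is identical.
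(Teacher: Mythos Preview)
Your proposal is correct and follows the same overall strategy as the paper: induction on $n$, reducing to a transitivity (induction-in-stages) identity for ${\rm Ind}$, established by fibering the full flag variety $E''_{V^{1},\cdots,V^{n},\Omega}$ over a shorter flag variety. There are two minor differences worth noting. First, the paper peels off the \emph{first two} factors, writing
\[
\phi_{1}\ast\cdots\ast\phi_{n}
={\rm Ind}_{\bar W^{2},V^{3},\cdots,V^{n}}\bigl({\rm Ind}_{V^{1},V^{2}}(\phi_{1}\otimes\phi_{2})\otimes\phi_{3}\otimes\cdots\otimes\phi_{n}\bigr),
\]
whereas you peel off the \emph{last} factor; both are valid applications of the inductive hypothesis, and you already observe this symmetry at the end of your argument. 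Second, and more substantively, the paper proves the transitivity identity at the level of sheaves: it sets $\phi_{k}=\phi_{\mathcal{F}_{k}}$, builds an explicit commutative diagram of fibre products relating $E''_{V^{1},\cdots,V^{n},\Omega}$, $E''_{\bar W^{2},\cdots,V^{n},\Omega}$ and $E''_{V^{1},V^{2},\Omega}$, and deduces ${\rm Ind}_{V^{1},\cdots,V^{n}}(\mathcal{F}_{1}\boxtimes\cdots\boxtimes\mathcal{F}_{n})\cong{\rm Ind}_{\bar W^{2},\cdots,V^{n}}({\rm Ind}_{V^{1},V^{2}}(\mathcal{F}_{1}\boxtimes\mathcal{F}_{2})\boxtimes\cdots\boxtimes\mathcal{F}_{n})$ by tracing the diagram. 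You instead stay at the level of constructible functions and invoke Fubini for the Euler-characteristic integral. Your route is more elementary and avoids the detour through $D_{G_{V}}(E_{V,\Omega})$; the paper's route yields the stronger sheaf-theoretic statement, which is not needed here but is consonant with the rest of the paper.
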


\begin{proof}
We will prove it by induction on $n$. When $n = 2$, there is nothing to show. Suppose $n > 2$. By induction assumption,
\begin{align*}
\phi_{1} \ast \cdots \ast \phi_{n} & = (\phi_{1} \ast \phi_{2}) \ast \cdots \ast \phi_{n} = {\rm Ind}_{V^{1}, V^{2}} (\phi_{1} \otimes \phi_{2}) \ast \cdots \ast \phi_{n} \\
& = {\rm Ind}_{\bar{W}^{2}, \cdots, V^{n}}({\rm Ind}_{V^{1}, V^{2}}(\phi_{1} \otimes \phi_{2}) \otimes \cdots \otimes \phi_{n})
\end{align*}
Consider the following diagram
\[
\xymatrix{
&&& E'_{V^{1}, \cdots, V^{n}, \Omega}  \ar[rd] \ar[llldddd]&&
\\
& \tilde{E}'_{V^{1}, \cdots, V^{n}, \Omega} \ar[rru] \ar[rr] \ar[ldd] && \tilde{E}'_{\bar{W}^{2}, \cdots, V^{n}, \Omega} \ar[r] \ar[ldd] \ar[d] & E''_{V^{1}, \cdots, V^{n}, \Omega} \ar[d]
\\
&&& E'_{\bar{W}_{2}, \cdots, V^{n}, \Omega} \ar[r] \ar[ldd] & E''_{\bar{W}^{2}, \cdots, V^{n}, \Omega} \ar[dd] 
\\
E'_{V^{1}, V^{2}, \Omega} \times \prod_{k = 3}^{n}E_{V^{k}, \Omega} \ar[d] \ar[rr] && E''_{V^{1}, V^{2}, \Omega} \times \prod_{k = 3}^{n}E_{V^{k}, \Omega} \ar[d] &&
\\
E_{V^{1}, \Omega} \times E_{V^{2}, \Omega} \times \prod_{k = 3}^{n}E_{V^{k}, \Omega} &&  E_{\bar{W}^{2}, \Omega} \times \prod_{k = 3}^{n}E_{V^{k}, \Omega} && E_{V, \Omega}
}
\]
where 
\[
\tilde{E}'_{\bar{W}^{2}, \cdots, V^{n}, \Omega} = \Big\{(x , {\rm Fil}, \{\varphi_{k}\}_{k = 3}^{n}, \psi_{2}) | (x, {\rm Fil}) \in E''_{V^{1}, \cdots, V^{n}, \Omega}, \, \varphi_{k}: V^{k} \xrightarrow{\sim} W^{k}/W^{k-1} (k \geqslant 3), \, \psi_{2}: \bar{W}^{2} \xrightarrow{\sim} W^{2} \Big\}
\]
and
\[
\tilde{E}'_{V^{1}, \cdots, V^{n}, \Omega} = \Big\{ (x, {\rm Fil}, \{\varphi_{k}\}_{k = 1}^{n}, \psi_{2}) | (x, {\rm Fil}, \{\varphi_{k}\}_{k = 1}^{n}) \in E'_{V^{1}, \cdots, V^{n}, \Omega}, \, \psi_{2}: \bar{W}^{2} \xrightarrow{\sim} W^{2} \Big\}
\]
\[
\tilde{E}'_{V^{1}, \cdots, V^{n}, \Omega} \rightarrow E'_{V^{1}, V^{2}, \Omega} \times \prod_{k = 3}^{n}E_{V^{k}, \Omega}, \quad (x, {\rm Fil}, \{\varphi_{k}\}_{k = 1}^{n}, \psi_{2}) \mapsto \Big( (x', {\rm Fil}', \varphi'_{1}, \varphi'_{2}),  \{\varphi_{k}^{-1} x \varphi_{k}\}_{k = 3}^{n} \Big)
\]
where
\[
x' = \psi_{2}^{-1} x \psi_{2}, \quad {\rm Fil}': 0 = \psi_{2}^{-1}(W^{1}) \subsetneq \psi_{2}^{-1}(W^{2})  = \bar{W}^{2} 
\]
and 
\[\varphi'_{1} = \psi_{2}^{-1} \varphi_{1}, \quad \varphi'_{2} = \psi_{2}^{-1}\varphi_{2}
\]
In particular, 
\begin{align*}
\tilde{E}'_{\bar{W}^{2}, \cdots, V^{n}, \Omega} & \cong (E''_{V^{1}, V^{2}, \Omega} \times \prod_{k = 3}^{n}E_{V^{k}, \Omega}) \times_{(E_{\bar{W}^{2}, \Omega} \times \prod_{k = 3}^{n}E_{V^{k}, \Omega})} E'_{\bar{W}_{2}, \cdots, V^{n}, \Omega} \\
& \cong E'_{\bar{W}_{2}, \cdots, V^{n}, \Omega} \times_{E''_{\bar{W}^{2}, \cdots, V^{n}, \Omega}} E''_{V^{1}, \cdots, V^{n}, \Omega}
\end{align*}
and
\[
\tilde{E}'_{V^{1}, \cdots, V^{n}, \Omega} \cong (E'_{V^{1}, V^{2}, \Omega} \times \prod_{k = 3}^{n}E_{V^{k}, \Omega}) \times_{(E''_{V^{1}, V^{2}, \Omega} \times \prod_{k = 3}^{n}E_{V^{k}, \Omega})} \tilde{E}'_{\bar{W}^{2}, \cdots, V^{n}, \Omega}
\]
Let $\phi_{k} = \phi_{\mathcal{F}_{k}}$. It suffices to show 
\[
{\rm Ind}_{V^{1}, \cdots, V^{n}}(\mathcal{F}_{1} \boxtimes \cdots \boxtimes \mathcal{F}_{n}) = {\rm Ind}_{\bar{W}^{2}, \cdots, V^{n}}({\rm Ind}_{V^{1}, V^{2}}(\mathcal{F}_{1} \boxtimes \mathcal{F}_{2}) \boxtimes \cdots \boxtimes \mathcal{F}_{n}).
\]
This can be seen easily by tracing the diagram. 

\end{proof}

We will also define
\[
{\rm Ind}_{V^{1}, \cdots, V^{n}}: \otimes_{k = 1}^{n}M(\Lambda_{V^{k}})^{{G_{V^{k}}}} \rightarrow M(\Lambda_V)^{G_V}
\]
as follows. Consider the diagram
\begin{align}
\label{diag: induction conormal}
\xymatrix{
\Lambda'_{V^{1}, \cdots, V^{n}} \ar[r]^{q_{2}} \ar[d]_{q_{1}} & \Lambda''_{V^{1}, \cdots, V^{n}} \ar[d]^{q_{3}} \\
\Lambda_{V^{1}} \times \cdots \times \Lambda_{V^{n}} & \Lambda_{V}
}
\end{align}
where
\begin{align*}
\Lambda''_{V^{1}, \cdots V^{n}}  & := \Big\{(x, y, {\rm Fil}) | (x, y) \in \Lambda_{V}, \, {\rm Fil}: 0 = W^{0} \subsetneq \cdots \subsetneq W^{n} = V \text{ $(x, y)$-stable }  | W^{k}/W^{k-1}| = |V^{k}| \, (1 \leqslant k \leqslant n) \Big\} \\
\Lambda'_{V^{1}, \cdots V^{n}}   & := \Big\{(x, y, {\rm Fil}, \{\varphi_{k}\}_{k=1}^{n}) | (x, y, {\rm Fil}) \in \Lambda''_{V^{1}, \cdots V^{n}}  \text{ and } \varphi_{k}: V^{k} \xrightarrow{\sim} W^{k}/W^{k-1} \, (1 \leqslant k \leqslant n) \Big\} 
\end{align*}
and
\[
q_{3}: \Lambda''_{V^{1}, \cdots, V^{n}} \rightarrow \Lambda_{V}, \quad (x, y, {\rm Fil}) \mapsto (x, y)
\]
is proper;
\[
q_{2}: \Lambda'_{V^{1}, \cdots, V^{n}} \rightarrow \Lambda''_{V^{1}, \cdots, V^{n}}, \quad (x, y, {\rm Fil}, \{\varphi_{k}\}_{k=1}^{n}) \mapsto (x, y, {\rm Fil})
\]
is a principal $G_{V^{1}} \times \cdots \times G_{V^{n}}$-bundle;
\[
q_{1}: \Lambda'_{V^{1}, \cdots, V^{n}} \rightarrow \Lambda_{V^{1}} \times \cdots \times \Lambda_{V^{n}}, \quad (x, y, {\rm Fil}, \{\varphi_{k}\}_{k=1}^{n}) \mapsto \{(\varphi_{k}^{-1}x\varphi_{k}, \varphi_{k}^{-1}y\varphi_{k})\}_{k=1}^{n}
\]
where we denote the induced morphisms on $W^{k}/W^{k-1}$ still by $x, y$. 

For $\phi_{k} \in M(\Lambda_{V^{k}})^{G_{V^{k}}}$, we define 
\[
{\rm Ind}_{V^{1}, \cdots, V^{n}}(\phi_{1} \otimes \cdots \otimes \phi_{k}) (x, y) = \int_{q_{3}^{-1}(x, y)} \phi''(x, y, {\rm Fil})
\]
where 
\[
\phi''(x, y, {\rm Fil}) = \phi_{1}((x_{1}, y_{1})) \cdots \phi_{n}((x_{n}, y_{n}))
\]
with 
\[
x_{k} = \varphi_{k}^{-1} x \varphi_{k}, \, y_{k} = \varphi_{k}^{-1} y \varphi_{k}
\]
for any choice of isomorphisms $\varphi_{k}: V^{k} \rightarrow W^{k}/W^{k-1}$. 

One can easily extend this induction constructible functions on $E_{V^{k}}$ by considering the diagram
\[
\xymatrix{
& E'_{V^{1}, \cdots, V^{n}} \ar[rr]^{q'_{2}} \ar[dd]_{q'_{1}} && E''_{V^{1}, \cdots, V^{n}} \ar[dd]^{q'_{3}} \\
\Lambda'_{V^{1}, \cdots, V^{n}} \ar[ur] \ar[rr]^{\quad \quad \quad q_{2}} \ar[dd]_{q_{1}} && \Lambda''_{V^{1}, \cdots, V^{n}} \ar[ur] \ar[dd]^{q_{3}} & \\
& E_{V^{1}} \times \cdots \times E_{V^{n}} && E_{V} \\
\Lambda_{V^{1}} \times \cdots \times \Lambda_{V^{n}} \ar[ur] && \Lambda_{V} \ar[ur] & 
}
\]
where
\begin{align*}
E''_{V^{1}, \cdots, V^{n}}  & := \Big\{(x, y, {\rm Fil}) | (x, y) \in E_{V,
\Omega}, \, {\rm Fil}: 0 = W^{0} \subsetneq W^{1} \subsetneq W^{2} = V \text{ $(x, y)$-stable }  | W^{k}/W^{k-1}| = |V^{k}| \, (1 \leqslant k \leqslant n) \Big\} \\
E'_{V^{1}, \cdots, V^{n}}   & := \Big\{(x, y, {\rm Fil}, \{\varphi_{k}\}_{k=1}^{n}) | (x, y, {\rm Fil}) \in E''_{V^{1}, \cdots, V^{n}}  \text{ and } \varphi_{k}: V^{k} \xrightarrow{\sim} W^{k}/W^{k-1} \, (1 \leqslant k \leqslant n) \Big\} ,
\end{align*}
Note both the top and right squares are Cartesian, but the left one is not. The following lemma follows immediately from the definition. 

\begin{lemma}
For $\phi_{k} \in M(E_{V^{k},\Omega})^{G_{V^{k}}} \, (1 \leqslant k \leqslant n)$, 
\[
{\rm Ind}_{V^{1}, \cdots, V^{n}}(\phi_{1} \otimes \cdots \otimes \phi_{n})|_{\Lambda_{V}} = {\rm Ind}_{V^{1}, \cdots, V^{n}} (\phi_{1}|_{\Lambda_{V^{1}}} \otimes \cdots \otimes \phi_{n}|_{\Lambda_{V^{n}}}).
\]
\end{lemma}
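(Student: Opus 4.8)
The plan is a diagram chase through the commutative cube displayed just before the statement, whose front face is diagram~\eqref{diag: induction conormal} and whose back face is the analogous diagram with each $\Lambda$ replaced by the ambient $E$; I will use only that its top and right squares are Cartesian, that its left square commutes, and that $q_{3}$ (and likewise $q'_{3}$) is proper. Write $\iota$ for any of the vertical closed immersions in the cube: $\Lambda'_{V^{1},\cdots,V^{n}}\hookrightarrow E'_{V^{1},\cdots,V^{n}}$, $\Lambda''_{V^{1},\cdots,V^{n}}\hookrightarrow E''_{V^{1},\cdots,V^{n}}$, $\prod_{k}\Lambda_{V^{k}}\hookrightarrow\prod_{k}E_{V^{k}}$ and $\Lambda_{V}\hookrightarrow E_{V}$. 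By definition the left-hand side is the restriction to $\Lambda_{V}$ of $q'_{3!}\psi$, where $\psi$ is the unique constructible function on $E''_{V^{1},\cdots,V^{n}}$ with $(q'_{2})^{*}\psi=(q'_{1})^{*}(\phi_{1}\boxtimes\cdots\boxtimes\phi_{n})$, while the right-hand side — read off from diagram~\eqref{diag: induction conormal} — is $q_{3!}\psi_{\Pi}$, where $\psi_{\Pi}$ is the unique constructible function on $\Lambda''_{V^{1},\cdots,V^{n}}$ with $q_{2}^{*}\psi_{\Pi}=q_{1}^{*}\big((\phi_{1}|_{\Lambda_{V^{1}}})\boxtimes\cdots\boxtimes(\phi_{n}|_{\Lambda_{V^{n}}})\big)$. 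So I would reduce the lemma to the two identities $(q'_{3!}\psi)|_{\Lambda_{V}}=q_{3!}(\psi|_{\Lambda''})$ and $\psi|_{\Lambda''}=\psi_{\Pi}$.

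The first identity is purely formal: because the right square is Cartesian, $\Lambda''_{V^{1},\cdots,V^{n}}=(q'_{3})^{-1}(\Lambda_{V})$, so for $(x,y)\in\Lambda_{V}$ the fibres $(q'_{3})^{-1}(x,y)$ and $q_{3}^{-1}(x,y)$ coincide as varieties and $\psi$ agrees there with $\psi|_{\Lambda''}$; since the Euler-characteristic integral over a fibre depends only on the integrand restricted to that fibre, the two pushforwards agree over $\Lambda_{V}$ (constructibility on both sides being guaranteed by properness of $q_{3}$ and $q'_{3}$). For the second identity I would pull back along $q_{2}$, which is surjective, so that $q_{2}^{*}$ is injective on constructible functions and it suffices to check $q_{2}^{*}(\psi|_{\Lambda''})=q_{2}^{*}\psi_{\Pi}$. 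Commutativity of the top square gives $q'_{2}\circ\iota=\iota\circ q_{2}$, hence $q_{2}^{*}(\psi|_{\Lambda''})=\iota^{*}(q'_{2})^{*}\psi=\iota^{*}(q'_{1})^{*}(\phi_{1}\boxtimes\cdots\boxtimes\phi_{n})=(q'_{1}\circ\iota)^{*}(\phi_{1}\boxtimes\cdots\boxtimes\phi_{n})$, and commutativity of the left square gives $q'_{1}\circ\iota=\iota\circ q_{1}$, so this equals $q_{1}^{*}\,\iota^{*}(\phi_{1}\boxtimes\cdots\boxtimes\phi_{n})=q_{1}^{*}\big((\phi_{1}|_{\Lambda_{V^{1}}})\boxtimes\cdots\boxtimes(\phi_{n}|_{\Lambda_{V^{n}}})\big)=q_{2}^{*}\psi_{\Pi}$, the middle step being the trivial fact that restricting an exterior product to a product of subvarieties gives the exterior product of the restrictions. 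Hence $\psi|_{\Lambda''}=\psi_{\Pi}$ and the lemma follows.

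I do not expect a genuine obstacle here — this is precisely why the paper calls it immediate from the definition — but the one point that deserves attention is that the left square of the cube is \emph{not} Cartesian: a point of $E'_{V^{1},\cdots,V^{n}}\times_{\prod_{k}E_{V^{k}}}\prod_{k}\Lambda_{V^{k}}$ only records that the induced operators commute on each associated graded piece $W^{k}/W^{k-1}$, which is strictly weaker than $[x,y]=0$ on $V$, so that $\Lambda'_{V^{1},\cdots,V^{n}}$ sits inside that fibre product as a proper closed subvariety. The argument above is arranged so that this never intervenes: $\Lambda'_{V^{1},\cdots,V^{n}}$ is used only as the source of the surjection $q_{2}$ and of $q_{1}$, and of the left square only its commutativity is needed, whereas the two places where Cartesianness is genuinely used — the top square, to compute $q_{2}^{*}$ of a restriction, and the right square, to match the fibres of $q_{3}$ and $q'_{3}$ — both hold.
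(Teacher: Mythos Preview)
Your proof is correct and is precisely the unpacking of what the paper leaves implicit with ``follows immediately from the definition'': the paper gives no argument at all beyond displaying the cube and noting which squares are Cartesian, and your two identities $(q'_{3!}\psi)|_{\Lambda_V}=q_{3!}(\psi|_{\Lambda''})$ and $\psi|_{\Lambda''}=\psi_{\Pi}$ are exactly the content of that remark. Your cautionary paragraph about the non-Cartesian left square is a correct and useful observation --- the argument indeed only uses its commutativity --- and is more than the paper itself says.
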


For $\mathcal{F}_{k} \in D_{G_{V^{k}}}(E_{V^{k}}) \, (1 \leqslant k \leqslant n)$, we can also define induction 
\[
{\rm Ind}_{V^{1}, \cdots, V^{n}} (\mathcal{F}_{1} \boxtimes \cdots \boxtimes \mathcal{F}_{n}) := q'_{3 !} \mathcal{F}''
\]
where 
\[
q_{2}^{'*} \mathcal{F}'' \cong  q_{1}^{'*} (\mathcal{F}_{1} \boxtimes \cdots \boxtimes \mathcal{F}_{n}).
\]
One can also show easily that
\[
{\rm Ind}_{V^{1}, \cdots, V^{n}}(\phi_{\mathcal{F}_{1}} \otimes \cdots \otimes  \phi_{\mathcal{F}_{n}}) = \phi_{{\rm Ind}_{V^{1}, \cdots, V^{n}}(\mathcal{F}_{1} \boxtimes \cdots \boxtimes \mathcal{F}_{n})}
\]

\begin{prop}
\label{prop: convolution-induction conormal}
For $\phi_{k} \in M(\Lambda_{V^{k}})^{G_{V^{k}}} (k =1, \cdots, n)$, 
\[
\phi_{1} \ast \cdots \ast \phi_{n} = {\rm Ind}_{V^{1}, \cdots, V^{n}}(\phi_{1} \otimes \cdots \otimes \phi_{n}).
\]
\end{prop}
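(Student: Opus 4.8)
The plan is to mirror the proof of Proposition~\ref{prop: convolution-induction}, now for the conormal (preprojective) varieties, and argue by induction on $n$. The cases $n \leqslant 2$ are immediate: diagram~\eqref{diag: convolution conormal} with $n = 2$ is literally diagram~\eqref{diag: induction conormal} with $n = 2$, so $\phi_{1} \ast \phi_{2}$ and ${\rm Ind}_{V^{1}, V^{2}}(\phi_{1} \otimes \phi_{2})$ are the same constructible function on $\Lambda_{V}$. For $n > 2$, using the associativity of the convolution product on $\line{\M}_{\Pi}$, the $n = 2$ case, and the induction hypothesis applied to the $n-1$ functions $\phi_{1} \ast \phi_{2},\, \phi_{3},\, \cdots,\, \phi_{n}$ on the graded spaces $\bar{W}^{2} = V^{1} \oplus V^{2},\, V^{3},\, \cdots,\, V^{n}$, we obtain
\[
\phi_{1} \ast \cdots \ast \phi_{n} = {\rm Ind}_{\bar{W}^{2}, V^{3}, \cdots, V^{n}}\!\left({\rm Ind}_{V^{1}, V^{2}}(\phi_{1} \otimes \phi_{2}) \otimes \phi_{3} \otimes \cdots \otimes \phi_{n}\right).
\]
Thus the whole statement reduces to the transitivity of the induction functor for the $\Lambda$'s, i.e.\ that the right-hand side equals ${\rm Ind}_{V^{1}, \cdots, V^{n}}(\phi_{1} \otimes \cdots \otimes \phi_{n})$.

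To prove this transitivity I would transcribe the diagram-chase in the proof of Proposition~\ref{prop: convolution-induction}. One introduces the conormal analogues $\tilde{\Lambda}'_{\bar{W}^{2}, \cdots, V^{n}}$ and $\tilde{\Lambda}'_{V^{1}, \cdots, V^{n}}$ of the auxiliary spaces $\tilde{E}'_{\bar{W}^{2}, \cdots, V^{n}, \Omega}$ and $\tilde{E}'_{V^{1}, \cdots, V^{n}, \Omega}$, adjoining to the data an isomorphism $\psi_{2}: \bar{W}^{2} \xrightarrow{\sim} W^{2}$ and imposing throughout that the filtrations be $(x,y)$-stable; one then assembles the big commutative diagram relating $\Lambda'_{V^{1}, V^{2}}, \Lambda''_{V^{1}, V^{2}}$, $\Lambda'_{\bar{W}^{2}, \cdots, V^{n}}, \Lambda''_{\bar{W}^{2}, \cdots, V^{n}}$, $\Lambda'_{V^{1}, \cdots, V^{n}}, \Lambda''_{V^{1}, \cdots, V^{n}}$ and $\Lambda_{V^{1}} \times \cdots \times \Lambda_{V^{n}}$, together with the fiber-product identities that are the verbatim analogues of those displayed in loc.\ cit. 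Since the conditions $[x,y] = 0$ and ``the filtration is $(x,y)$-stable'' are preserved by all the maps in sight, these identities hold for exactly the same formal reasons as in the $E_{V,\Omega}$ case. Tracing the diagram then gives the transitivity, using only functoriality of $f_{!}$ and $f^{*}$ on constructible functions, base change along the Cartesian squares, and the projection formula; I would prefer to run the argument directly at the level of constructible functions rather than lifting each $\phi_{k}$ to a $G_{V^{k}}$-equivariant complex, since $\Lambda_{V}$ may carry infinitely many $G_{V}$-orbits and the surjectivity of the Euler-characteristic map onto $M(\Lambda_{V})^{G_{V}}$ is not as automatic as on $E_{V,\Omega}$.

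The main obstacle — and essentially the only point needing care — is the middle square of the big diagram, which (as for its analogue in the proof of Proposition~\ref{prop: convolution-induction}) is \emph{not} Cartesian. Here $q_{2}$ is a principal $\prod_{k} G_{V^{k}}$-bundle and the failure of Cartesianness is precisely along the torsor directions; because the functions $\phi''$ occurring in the definition of ${\rm Ind}$ are pulled back from the quotients — being built from the $G_{V^{k}}$-invariant $\phi_{k}$'s — this failure is harmless and the base-change identity one actually needs still holds. This is dealt with exactly as in the $E_{V,\Omega}$ case, so no genuinely new idea is required beyond carrying that argument over to the doubled variety; the remaining checks (that $q_{3}$ is proper, $q_{2}$ a principal bundle, and the composite defining $q_{1}$ smooth, for each auxiliary space) are routine. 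In short, the only input beyond Proposition~\ref{prop: convolution-induction} is the associativity of $\ast$ on $\line{\M}_{\Pi}$, and the observation that the entire diagram-chase is insensitive to the extra coordinate $y$ and the bracket relation.
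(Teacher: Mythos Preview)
Your proposal is correct and follows the same inductive skeleton as the paper, but the paper takes a small shortcut that you miss. Rather than rebuilding the auxiliary spaces $\tilde{\Lambda}'_{\bar{W}^{2}, \cdots, V^{n}}$, $\tilde{\Lambda}'_{V^{1}, \cdots, V^{n}}$ and rerunning the diagram chase on the (possibly singular) $\Lambda$'s, the paper first passes to the smooth ambient spaces $E_{V^{k}} = E_{V^{k},\Omega} \oplus E_{V^{k},\bar{\Omega}}$: any $\phi_{k} \in M(\Lambda_{V^{k}})^{G_{V^{k}}}$ extends by zero to $M(E_{V^{k}})^{G_{V^{k}}}$, and the two restriction lemmas (for $\ast$ and for ${\rm Ind}$) show that restricting back to $\Lambda_{V}$ recovers the desired identity. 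One is then left with proving the analogue of Proposition~\ref{prop: convolution-induction} on the vector spaces $E_{V^{k}}$, where the argument is literally the same as before with $E_{V,\Omega}$ replaced by $E_{V}$. This buys exactly what you were worried about: on the smooth $E_{V^{k}}$ one can freely lift constructible functions to equivariant complexes and use the sheaf-theoretic proof verbatim, sidestepping the issue of infinitely many orbits on $\Lambda_{V}$. Your direct approach on $\Lambda_{V}$ works too---the diagram identities are insensitive to the bracket relation, as you say---but it duplicates effort that the restriction lemmas were set up precisely to avoid.
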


\begin{proof}

It suffices to show for $\phi_{k} \in M(E_{V^k})^{G_{V^{k}}} \, (1 \leqslant k \leqslant n)$,
\[
\phi_{1} \ast \cdots \ast \phi_{n} = {\rm Ind}_{V^{1}, \cdots, V^{n}}(\phi_{1} \otimes \cdots \otimes \phi_{n}).
\]
The proof is similar to that of Proposition~\ref{prop: convolution-induction}.

\end{proof}

\subsection{Further reduction of \eqref{eq: compatible with convolution}}\label{subsection-Further-reduction}

Let $|V| = (d_{i})_{i \in I}$ and $d = \sum_{i \in I} d_{i}$. Let 
\[
S_{|V|} := \{a \in I^{\{1, \cdots, d\}} \,| \, |a^{-1}(i)| = d_{i}\}
\]
For any $a \in S_{|V|}$, we fix an $I$-graded isomorphism $V = V^{1} \oplus \cdots \oplus V^{d}$ such that
\[
|V^{k}| = (\delta_{a(k), i})_{i \in I}
\]
Note $E_{V^{k}, \Omega} = \Lambda_{V^{k}} = E_{V^{k}} = \{0\}$. The following statement is a special case of \eqref{eq: compatible with convolution}.

\begin{conj}
\label{conj: main}
\begin{align}
\label{eq: main}
\eta_{V^{1}}(1_{1}) \ast \cdots \ast \eta_{V^{d}}(1_{d}) = \eta_{V}(1_{1} \ast \cdots \ast 1_{d}).
\end{align}
\end{conj}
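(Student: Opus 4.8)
The plan is to reduce \eqref{eq: main} to the local vanishing-cycle statement of Conjecture~\ref{conj: vanishing cycle}.

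\emph{Unwinding both sides.} Since each $V^{k}$ is one-dimensional, $E_{V^{k},\Omega}=\Lambda_{V^{k}}=\{0\}$ and $\eta_{V^{k}}(1_{k})=1_{\Lambda_{V^{k}}}$. Hence, by Proposition~\ref{prop: convolution-induction conormal}, the left side of \eqref{eq: main} evaluated at $(x,y)\in\Lambda_{V}$ equals $\chi(q_{3}^{-1}(x,y))$, the Euler characteristic of the variety of complete flags of $V$ of type $a$ that are stable under both $x$ and $y$. On the other side, Proposition~\ref{prop: convolution-induction} identifies $1_{1}\ast\cdots\ast 1_{d}$ with $\phi_{\mathcal{F}}$ for $\mathcal{F}=(p_{3})_{!}\mathbbm{1}_{E''_{V^{1},\cdots,V^{d},\Omega}}$, so the right side is $\eta_{V}(\phi_{\mathcal{F}})(x,y)=\chi(R\Phi_{f_{y}}[-1](\mathcal{F})_{x})$.

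\emph{Pushing the vanishing cycle through $p_{3}$.} Since $p_{3}$ is proper, proper base change for vanishing cycles gives $R\Phi_{f_{y}}((p_{3})_{!}\mathbbm{1})\cong(p_{3})_{!}R\Phi_{g}(\mathbbm{1})$ with $g:=f_{y}\circ p_{3}$; taking the stalk at $x$ and passing to an integral of stalkwise Euler characteristics (\cite{Vogan}, Proposition~24.16) yields
\[
\eta_{V}(\phi_{\mathcal{F}})(x,y)=\int_{(x,{\rm Fil})\in p_{3}^{-1}(x)}\chi\big(R\Phi_{g}[-1]\mathbbm{1}\big)_{(x,{\rm Fil})}.
\]
By \S\ref{subsection-Induction}, $E''_{V^{1},\cdots,V^{d},\Omega}\cong G_{V}\times_{G^{\geqslant 0}_{V^{1},\cdots,V^{d}}}E^{\geqslant 0}_{V^{1},\cdots,V^{d},\Omega}$ is a vector bundle over $G_{V}/G^{\geqslant 0}_{V^{1},\cdots,V^{d}}$, hence smooth, so $R\Phi_{g}\mathbbm{1}$ is supported on the critical locus of $g$ by Proposition~\ref{prop-basic-property-characteristic-cycle}(4)--(5). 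Now $dg=\langle dp_{3}(-),y\rangle$, and $(E^{\geqslant 0}_{V^{1},\cdots,V^{d},\Omega})^{\perp}\subseteq E_{V,\bar\Omega}$ is the strictly flag-decreasing subspace, while $\langle[\mathfrak{g}_{V},x],y\rangle=\langle\mathfrak{g}_{V},[x,y]\rangle=0$ because $(x,y)\in\Lambda_{V}$; tracing this through the chart of the previous sentence shows that $(x,{\rm Fil})\in p_{3}^{-1}(x)$ is critical for $g$ exactly when ${\rm Fil}$ is $y$-stable (for a complete flag of type $a$ the graded pieces are one-dimensional and supported at single vertices, so $y$-stability is the same as strict $y$-decrease). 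Therefore the critical locus inside $p_{3}^{-1}(x)$ is exactly $q_{3}^{-1}(x,y)$, and comparison with the first paragraph shows that \eqref{eq: main} holds for all $(x,y)$ as soon as $\chi(R\Phi_{g}[-1]\mathbbm{1})_{(x,{\rm Fil})}=1$ at every critical point $(x,{\rm Fil})$.

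\emph{Reduction to a point.} By the $G_{V}$-equivariance of $p_{3}$ and of $\langle\,,\,\rangle$, translate so that ${\rm Fil}=\overline{{\rm Fil}}$ is the standard flag; then $x$ moves to some $x_{0}\in E^{\geqslant 0}_{V^{1},\cdots,V^{d},\Omega}$ and $y$ to the strictly decreasing $y_{0}$. In the chart $\mathfrak{u}^{-}\times E^{\geqslant 0}_{V^{1},\cdots,V^{d},\Omega}$ about the corresponding point $(1,x_{0})$, the map $p_{3}$ becomes $(u,x')\mapsto\exp(u)\,x'\,\exp(-u)$ and $g$ becomes
\[
h_{y_{0}}(u,x')=\langle\exp(u)\,x'\,\exp(-u),\,y_{0}\rangle=\langle x',\,\exp(-u)\,y_{0}\,\exp(u)\rangle,
\]
which is exactly the function $h_{y_{0}}$ of Conjecture~\ref{conj: vanishing cycle}, with $(1,x_{0})$ its distinguished point. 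Thus \eqref{eq: main} follows from $\chi(R\Phi_{h_{y_{0}}}[-1](\mathbbm{1}))_{(1,x_{0})}=1$ (and conversely, \eqref{eq: main} for all $V$ forces this value to be $1$ on a dense open subset of each component of the critical locus, and then everywhere by a descending dimension induction, so the two statements are equivalent).

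\emph{The main obstacle.} The reductions above are formal; the genuine content is Conjecture~\ref{conj: vanishing cycle}. Although $h_{y_{0}}$ is a polynomial function on a smooth variety, its critical locus need not be smooth and may contain irreducible components of several dimensions, so one cannot invoke the transversal Milnor formula of Proposition~\ref{prop-basic-property-characteristic-cycle}(6). In type $A_{2}$ (\S\ref{sec: A_2}) we compute the relevant nearby cycle directly: we fibre the Milnor fibre of $h_{y_{0}}$ at $(1,x_{0})$ over a compact base all of whose fibres have Euler characteristic $0$, so the Milnor fibre has vanishing Euler characteristic by the Leray spectral sequence; since $(1,x_{0})\in h_{y_{0}}^{-1}(0)$, the distinguished triangle $i^{*}(\mathbbm{1})\to R\Psi_{h_{y_{0}}}(\mathbbm{1})\to R\Phi_{h_{y_{0}}}(\mathbbm{1})\xrightarrow{+1}$ then gives $\chi(R\Phi_{h_{y_{0}}}[-1](\mathbbm{1}))_{(1,x_{0})}=1-0=1$. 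Establishing this vanishing in general type $A$ is the main open problem.
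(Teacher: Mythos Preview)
Your reduction is essentially the same as the paper's: both sides are written as integrals over $p_{3}^{-1}(x)$ versus $q_{3}^{-1}(x,y)$, the vanishing cycle is pushed through the proper map $p_{3}$, the support of $R\Phi$ inside $p_{3}^{-1}(x)$ is identified with $q_{3}^{-1}(x,y)$ by the differential computation using $[x,y]=0$ and $(E^{\geqslant 0}_{V^{1},\cdots,V^{d},\Omega})^{\perp}$, and the remaining pointwise equality is reduced by $G_{V}$-equivariance to Conjecture~\ref{conj: vanishing cycle}. The only presentational difference is that the paper passes through $E'_{V^{1},\cdots,V^{d},\Omega}$ and the smooth cover $G_{V}\times E^{\geqslant 0}_{V^{1},\cdots,V^{d},\Omega}$ (defining $h_{y_{0}}(g,x')=\langle gx',y_{0}\rangle$ on the full group), whereas you work directly on the smooth variety $E''_{V^{1},\cdots,V^{d},\Omega}$ and pass to an exponential chart $\mathfrak{u}^{-}\times E^{\geqslant 0}$; the paper performs the analogous reduction to the opposite unipotent only later, in \S\ref{sec: A_2}. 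One caution: your formula $p_{3}(u,x')=\exp(u)\,x'\,\exp(-u)$ should be read as the $G_{V}$-action on $E_{V,\Omega}$ (componentwise conjugation across edges), not as conjugation in a single $GL$; and your function on that chart is the pullback of the paper's $h_{y_{0}}$, not literally equal to it, so the identification with Conjecture~\ref{conj: vanishing cycle} uses one more smooth base change. Your parenthetical converse (that \eqref{eq: main} for all $V$ forces the local value to be $1$) is not argued in the paper and would need a separate justification.
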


\begin{rem}
$\eta_{V^{k}}(1_{k}) = 1_{\Lambda_{V^{k}}}$.
\end{rem}

Indeed, we have
\begin{lemma}
Conjecture~\ref{conj: main} is equivalent to \eqref{eq: compatible with convolution}.
\end{lemma}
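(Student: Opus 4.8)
The plan is to deduce the equivalence purely formally from three facts that are already available: the $\mathbb{C}$-linearity of each map $\eta_{V}$, the bilinearity and associativity of the convolution products on $\line{\M}_{\Omega}$ and on $\line{\M}_{\Pi}$, and Lusztig's result \cite[Proposition 7.3]{Lu91} (invoked in the proof of Proposition~\ref{prop-main-compatibility}) that $M(E_{W, \Omega})^{G_{W}}$ is spanned by the iterated convolutions $1_{1} \ast \cdots \ast 1_{e}$, where $e = \sum_{i} |W|_{i}$ and $a \in S_{|W|}$ ranges over all orderings with its attached decomposition of $W$ into one-dimensional $I$-graded subspaces. As usual we read both \eqref{eq: compatible with convolution} and Conjecture~\ref{conj: main} as statements over all $I$-graded spaces. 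The linearity of $\eta_{V}$ used below is not quite formal and deserves a line: since $\chi \colon K_{G_{V}}(E_{V, \Omega}) \otimes_{\mathbb{Z}} \mathbb{C} \to M(E_{V, \Omega})^{G_{V}}$ is an isomorphism and $R\Phi_{f_{y}}[-1]$, the stalk functor, and $\chi$ are all additive, the assignment $[\mathcal{F}] \mapsto \big((x,y)\mapsto \chi(R\Phi_{f_{y}}[-1](\mathcal{F})_{x})\big)$ is additive in $[\mathcal{F}]$, hence induces a $\mathbb{C}$-linear $\eta_{V}$; likewise the bilinearity and associativity of $\ast$ on $\bigoplus_{V} M(\Lambda_{V})^{G_{V}}$ is exactly the statement that $\line{\M}_{\Pi}$ is a unital associative algebra.

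For the implication \eqref{eq: compatible with convolution} $\Rightarrow$ Conjecture~\ref{conj: main} I would argue by induction on $d = \sum_{i} |V|_{i}$, the cases $d \leqslant 1$ being immediate. Given a decomposition $V = V^{1} \oplus \cdots \oplus V^{d}$ into lines attached to an ordering $a$, put $\bar{W} := V^{1} \oplus \cdots \oplus V^{d-1}$. Applying \eqref{eq: compatible with convolution} to $V = \bar{W} \oplus V^{d}$ with $\phi_{1} = 1_{1} \ast \cdots \ast 1_{d-1} \in M(E_{\bar{W}, \Omega})^{G_{\bar{W}}}$ and $\phi_{2} = 1_{d}$ gives
\[
\eta_{\bar{W}}(1_{1} \ast \cdots \ast 1_{d-1}) \ast \eta_{V^{d}}(1_{d}) = \eta_{V}\big((1_{1} \ast \cdots \ast 1_{d-1}) \ast 1_{d}\big).
\]
By associativity on $\line{\M}_{\Omega}$ the right-hand side equals $\eta_{V}(1_{1} \ast \cdots \ast 1_{d})$, while by the inductive hypothesis $\eta_{\bar{W}}(1_{1} \ast \cdots \ast 1_{d-1}) = \eta_{V^{1}}(1_{1}) \ast \cdots \ast \eta_{V^{d-1}}(1_{d-1})$; associativity on $\line{\M}_{\Pi}$ then yields \eqref{eq: main}.

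For the converse Conjecture~\ref{conj: main} $\Rightarrow$ \eqref{eq: compatible with convolution} no induction is needed. Fix $V = V^{1} \oplus V^{2}$. Since $\eta_{V^{1}}, \eta_{V^{2}}, \eta_{V}$ are linear and both convolutions are bilinear, the difference of the two sides of \eqref{eq: compatible with convolution} is bilinear in $(\phi_{1}, \phi_{2})$, so by \cite[Proposition 7.3]{Lu91} it suffices to treat $\phi_{1} = 1_{c_{1}} \ast \cdots \ast 1_{c_{p}}$ and $\phi_{2} = 1_{c'_{1}} \ast \cdots \ast 1_{c'_{q}}$ attached to decompositions $V^{1} = V^{1,1} \oplus \cdots \oplus V^{1,p}$ and $V^{2} = V^{2,1} \oplus \cdots \oplus V^{2,q}$ into lines. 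Concatenating the orderings gives a decomposition of $V$ into lines, and $\phi_{1} \ast \phi_{2} = 1_{c_{1}} \ast \cdots \ast 1_{c_{p}} \ast 1_{c'_{1}} \ast \cdots \ast 1_{c'_{q}}$ by associativity on $\line{\M}_{\Omega}$. Applying Conjecture~\ref{conj: main} to $V$ with this ordering, and to $V^{1}$ and $V^{2}$ with their respective orderings, and regrouping by associativity on $\line{\M}_{\Pi}$, one obtains
\[
\eta_{V}(\phi_{1} \ast \phi_{2}) = \eta_{V^{1,1}}(1_{c_{1}}) \ast \cdots \ast \eta_{V^{2,q}}(1_{c'_{q}}) = \eta_{V^{1}}(\phi_{1}) \ast \eta_{V^{2}}(\phi_{2}).
\]

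I do not expect a genuine obstacle in this lemma: the content is entirely a bookkeeping reduction, and the only external input is Lusztig's spanning statement. If anything needs care it is making the bilinearity/associativity manipulations precise (in particular checking that the ambient "for all $V$" reading is the one intended, so that one is allowed to invoke \eqref{eq: main} for $\bar{W}$, $V^{1}$ and $V^{2}$ as well as for $V$) and confirming $\eta_{V^{k}}(1_{k}) = 1_{\Lambda_{V^{k}}}$ for a line $V^{k}$, which is the Remark preceding the statement; neither point is substantial.
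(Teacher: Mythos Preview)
Your proof is correct and follows essentially the same route as the paper's. The paper only writes out the direction Conjecture~\ref{conj: main} $\Rightarrow$ \eqref{eq: compatible with convolution}, using \cite[Proposition 7.3]{Lu91} to reduce to $\phi_{1}, \phi_{2}$ that are iterated convolutions of $1$'s and then applying \eqref{eq: main} three times, exactly as you do; the other direction is treated as immediate (your inductive argument is the natural way to spell it out), and your remarks on linearity of $\eta_{V}$ are a helpful addition.
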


\begin{proof}
We only need to show that \eqref{eq: compatible with convolution} follows from Conjecture~\ref{conj: main}. By \cite[Proposition 7.3]{Lu91}, it suffices to show \eqref{eq: compatible with convolution} for
\begin{align*}
\phi_{I} & = 1_{I, 1} \ast \cdots \ast 1_{I, d_{I}} \in M(E_{V^{I}, \Omega})^{G_{V^{I}}} \\
\phi_{II} & = 1_{II,1} \ast \cdots \ast 1_{II, d_{II}} \in M(E_{V^{II}, \Omega})^{G_{V^{II}}}
\end{align*}
associated with $a_{I} \in S_{|V^{I}|}$ and $a_{II} \in S_{|V^{II}|}$ respectively. By Conjecture~\ref{conj: main},
\begin{align*}
\eta_{V^{I}}(\phi_{I}) \ast \eta_{V^{II}}(\phi_{II}) & = (\eta_{V^{I, 1}}(1_{I, 1}) \ast \cdots \ast \eta_{V^{I, d_{I}}}(1_{I, 1})) \ast (\eta_{V^{II, 1}}(1_{II, 1}) \ast \cdots \ast \eta_{V^{II, d_{II}}}(1_{II, 1})) \\
& = \eta_{V}(1_{I, 1} \ast \cdots \ast 1_{I, d_{I}} \ast 1_{II,1} \ast \cdots \ast 1_{II, d_{II}}) \\
& = \eta_{V}(\phi_{I} \ast \phi_{II}) 
\end{align*}

\end{proof}

Now we will describe our approach to Conjecture~\ref{conj: main}. By Proposition~\ref{prop: convolution-induction} and Proposition~\ref{prop: convolution-induction conormal}, it suffices to show
\[
{\rm Ind}_{V^{1}, \cdots, V^{d}} (\eta_{V^{1}}(1_{1}) \otimes \cdots \otimes \eta_{V^{d}}(1_{d}) ) = \eta_{V}( {\rm Ind}_{V^{1}, \cdots, V^{d}} (1_{1} \ast \cdots \ast 1_{d}) ).
\]
Note $1_{k} = \phi_{\mathbbm{1}_{k}}$, where $\mathbbm{1}_{k} \in D_{G_{V^{k}}}(E_{V^{k}, \Omega})$. So it is the same as
\[
{\rm Ind}_{V^{1}, \cdots, V^{d}} (\eta_{V^{1}}(\phi_{\mathbbm{1}_{1}}) \otimes \cdots \otimes \eta_{V^{d}}(\phi_{\mathbbm{1}_{d}})) = \eta_{V}(f_{{\rm Ind}_{V^{1}, \cdots, V^{d}} (\mathbbm{1}_{1} \boxtimes \cdots \boxtimes \mathbbm{1}_{d})}).
\]
We first expand the right hand side. For $(x, y) \in \Lambda_{V}$, 
\[
\eta_{V}(f_{{\rm Ind}_{V^{1}, \cdots, V^{d}} (\mathbbm{1}_{1} \boxtimes \cdots \boxtimes \mathbbm{1}_{d})})(x, y) = \chi(R\Phi_{f_{y}}[-1](\mathbbm{1}_{1} \boxtimes \cdots \boxtimes \mathbbm{1}_{d})_{x}) = \chi(R\Phi_{f_{y}}[-1](p_{3!} \mathcal{F}'')_{x})
\]
By proper base change,
\[
R\Phi_{f_{y}}(p_{3!} \mathcal{F}'') \cong p_{3!}(R\Phi_{f_{y} \circ p_{3}}\mathcal{F}'')
\]
Hence,
\[
\chi(R\Phi_{f_{y}}(p_{3!} \mathcal{F}'')_{x}) = \chi(H^{*}(p_{3}^{-1}(x), R\Phi_{f_{y} \circ p_{3}}\mathcal{F}'') ) = \int_{p_{3}^{-1}(x)} \chi(R\Phi_{f_{y} \circ p_{3}}\mathcal{F}'').
\]
By smooth base change,
\[
p_{2}^{*}(R\Phi_{f_{y} \circ p_{3}}\mathcal{F}'') \cong R\Phi_{f_{y} \circ p_{3} \circ p_{2}} (p_{2}^{*}\mathcal{F}'') \cong R\Phi_{f_{y} \circ p_{3} \circ p_{2}} (\mathbbm{1}).
\]
We can also express the left hand side as an integration,
\begin{align*}
{\rm Ind}_{V^{1}, \cdots, V^{d}} (\eta_{V^{1}}(\phi_{\mathbbm{1}_{1}}) \otimes \cdots \otimes \eta_{V^{d}}(\phi_{\mathbbm{1}_{d}}))  (x, y) = &  \int_{q_{3}^{-1}(x, y)} \eta_{V^{1}}(\phi_{\mathbbm{1}_{1}})(x_{1}, y_{1}) \, \cdots \, \eta_{V^{d}}(\phi_{\mathbbm{1}_{d}})(x_{d}, y_{d})\\
 = & \int_{q_{3}^{-1}(x, y)} 1,
\end{align*}
due to the fact that $(x_{k}, y_{k}) = 0$ for $1 \leqslant k \leqslant d$. Comparing the two integrals, we see
\[
q_{3}^{-1}(x, y) \hookrightarrow p_{3}^{-1}(x), \quad (x, y, {\rm Fil}) \mapsto (x, {\rm Fil}).
\]
If we want to prove that the two integrals are equal, it suffices to show 
\begin{align}
\label{eq: vanishing}
R\Phi_{f_{y} \circ p_{3} \circ p_{2}} (\mathbbm{1})_{(x, {\rm Fil}, \{\varphi_{k}\}_{k=1}^{d})} = 0 \quad \quad  \text{ for } (x, {\rm Fil}) \in p_{3}^{-1}(x) \backslash q_{3}^{-1}(x, y),
\end{align}
\begin{align}
\label{eq: euler}
\chi(R\Phi_{f_{y} \circ p_{3} \circ p_{2}}[-1] (\mathbbm{1}))_{(x, {\rm Fil}, \{\varphi_{k}\}_{k=1}^{d})} = 1 \quad \quad \text{ for } (x, {\rm Fil}) \in q_{3}^{-1}(x, y).
\end{align}

To show these, we adopt the diagram 
\begin{align*}
\xymatrix{
G_{V} \times E_{V^{1}, \cdots, V^{d}, \Omega}^{\geqslant 0} \ar[d]_{p'_{0}} & \\
G_{V} \times_{G_{V^{1}, \cdots, V^{d}}^{+}} E_{V^{1}, \cdots, V^{d}, \Omega}^{\geqslant 0} \ar[r]^{p'_{2}} \ar[d]_{p'_{1}} & G_{V} \times_{G_{V^{1}, \cdots, V^{d}}^{\geqslant 0}} E_{V^{1}, \cdots, V^{d}, \Omega}^{\geqslant 0} \ar[d]^{p'_{3}} \\
E_{V^{1}, \Omega} \times \cdots \times E_{V^{d}, \Omega} & E_{V, \Omega}
}
\end{align*}
Let 
\[
G_{V} \times E_{V^{1}, \cdots, V^{d}, \Omega}^{\geqslant 0} \ni (g_{0}, x_{0}) \mapsto (x, {\rm Fil}, \{\varphi_{k}\}_{k=1}^{d})
\] 
and 
\[
h_{y}: G_{V} \times E_{V^{1}, \cdots, V^{d}, \Omega}^{\geqslant 0} \rightarrow \mathbb{C}, \quad (g, x') \mapsto \langle gx', y\rangle
\]
be the pullback of $f_{y} \circ p_{3} \circ p_{2}$ along $p'_{0}$. Then by smooth base change,
\[
R\Phi_{f_{y} \circ p_{3} \circ p_{2}} (\mathbbm{1})_{(x, {\rm Fil}, \{\varphi_{k}\}_{k=1}^{d})} \cong R\Phi_{h_{y}}(\mathbbm{1})_{(g_{0}, x_{0})}.
\]
If $R\Phi_{h_{y}}(\mathbbm{1})_{(g_{0}, x_{0})} \neq 0$, then $h$ is singular at $(g_{0}, x_{0})$. So we compute
\[
dh|_{(g_{0}, x_{0})}: T_{(g_{0}, x_{0})} \, (G_{V} \times E_{V^{1}, \cdots, V^{d}, \Omega}^{\geqslant 0}) \cong \mathfrak{g}_{V} \times E_{V^{1}, \cdots, V^{d}, \Omega}^{\geqslant 0} \rightarrow \mathbb{C}, \quad (u, v) \mapsto \langle [u, x], y \rangle + \langle g_{0}v, y \rangle.
\]
Since $(x, y) \in \Lambda_{V}$, $\langle [u, x], y \rangle = 0$. So $dh|_{(g_{0}, x_{0})} = 0$ if and only if $\langle g_{0}v, y\rangle = 0$ for all $v \in E_{V^{1}, \cdots, V^{d}, \Omega}^{\geqslant 0}$. Since
\[
\langle g_{0}v, y \rangle = \langle v, g_{0}^{-1}y \rangle,
\]
this is also equivalent to require that $g_{0}^{-1}y$ stabilizes $\overline{{\rm Fil}}$, which is the same to say $y$ stabilizes $g_{0}\overline{{\rm Fil}} = {\rm Fil}$, i.e., $(x, {\rm Fil}) \in q_{3}^{-1}(x, y)$. So we have shown \eqref{eq: vanishing}.

We are now left with \eqref{eq: euler}. Assume $(x, {\rm Fil}) \in q_{3}^{-1}(x, y)$, i.e., $y_{0} := g_{0}^{-1}y \in E_{V^{1}, \cdots, V^{d}, \bar{\Omega}}^{\geqslant 0}$. By applying $g_{0}$, we get
\[
\xymatrix{
(g, x') \ar[dd] & G_{V} \times E_{V^{1}, \cdots, V^{d}, \Omega}^{\geqslant 0} \ar[dd] \ar[rd]_{h_{y_{0}}} & \\
& & \mathbb{C} \\
(g_{0}g, x')& G_{V} \times E_{V^{1}, \cdots, V^{d}, \Omega}^{\geqslant 0} \ar[ur]^{h_{y}}
}
\]
where $R\Phi_{h_{y}}(\mathbbm{1})_{(g_{0}, x_{0})} \cong R\Phi_{h_{y_{0}}}(\mathbbm{1})_{(1, x_{0})}$. So we have reduced it to the following statement.

\begin{conj}
\label{conj: vanishing cycle}
$\chi(R\Phi_{h_{y_{0}}}[-1](\mathbbm{1}))_{(1, x_{0})} = 1$.
\end{conj}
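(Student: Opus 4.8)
\emph{Proof strategy.} The plan is to reduce the statement to the vanishing of the Euler characteristic of a Milnor fibre and then to evaluate that Euler characteristic geometrically. Applying the defining triangle $i^{*}\mathbbm{1}\to R\Psi_{h_{y_{0}}}\mathbbm{1}\to R\Phi_{h_{y_{0}}}\mathbbm{1}\xrightarrow{+1}$, taking stalks at $(1,x_{0})$, and using $\chi((i^{*}\mathbbm{1})_{(1,x_{0})})=1$ together with $\chi(\mathcal{G}[-1])=-\chi(\mathcal{G})$, one sees that Conjecture~\ref{conj: vanishing cycle} is equivalent to $\chi\big((R\Psi_{h_{y_{0}}}\mathbbm{1})_{(1,x_{0})}\big)=0$, i.e. to the assertion that the Milnor fibre $F:=h_{y_{0}}^{-1}(\varepsilon)\cap B_{\delta}(1,x_{0})$ has Euler characteristic $0$ (for $0<\varepsilon\ll\delta\ll1$). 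A useful preliminary remark is that $h_{y_{0}}$ vanishes identically on its own critical locus near $(1,x_{0})$: on that locus one has $x'\in E^{\geqslant 0}_{V^{1},\cdots,V^{d},\Omega}$ and $g^{-1}\cdot y_{0}$ orthogonal to this subspace, so $h_{y_{0}}(g,x')=\langle x',g^{-1}\cdot y_{0}\rangle=0$. Hence every small nonzero value is regular, $F$ is a smooth complex manifold, and $\chi_{c}(F)=\chi(F)$; one may therefore work with compactly supported Euler characteristics.

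Next I would use the special shape of $h_{y_{0}}$. Rewriting $h_{y_{0}}(g,x')=\langle g\cdot x',y_{0}\rangle=\langle x',g^{-1}\cdot y_{0}\rangle$ shows that $h_{y_{0}}$ is \emph{linear} in $x'\in E^{\geqslant 0}_{V^{1},\cdots,V^{d},\Omega}$ and factors through the smooth map $(g,x')\mapsto(g^{-1}\cdot y_{0},x')$ onto $\mathcal{O}\times E^{\geqslant 0}_{V^{1},\cdots,V^{d},\Omega}$ with $\mathcal{O}=G_{V}\cdot y_{0}$. Since vanishing cycles commute with smooth pullback, the problem reduces to the same statement for the bilinear function $\widetilde{h}(\widetilde{y},x')=\langle x',\widetilde{y}\rangle$ at $(y_{0},x_{0})$. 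The orbit $\mathcal{O}$ is conic — for the orientation $i\to i+1$ the torus $(t^{r-i}\,\mathrm{id}_{V_{i}})_{i\in I}\subseteq G_{V}$ rescales any $\widetilde{y}\in\mathcal{O}$ — so both factors carry scaling $\mathbb{C}^{\times}$-actions, and $\widetilde{h}$ vanishes on the subspace $\mathbb{C}y_{0}\times E^{\geqslant 0}_{V^{1},\cdots,V^{d},\Omega}$ through $(y_{0},x_{0})$; these symmetries power the fibrations below.

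Because $\widetilde{h}$ is linear in $x'$, the fibre of $F$ over a point $\widetilde{y}$ near $y_{0}$ in $\mathcal{O}$ is an affine hyperplane in $E^{\geqslant 0}_{V^{1},\cdots,V^{d},\Omega}$ meeting a ball, hence contractible when nonempty; so $\chi(F)$ is the Euler characteristic of the set of those $\widetilde{y}$ for which the hyperplane meets the ball. When the map $\widetilde{y}\mapsto\langle\,\cdot\,,\widetilde{y}\rangle|_{E^{\geqslant 0}_{V^{1},\cdots,V^{d},\Omega}}$ has nonvanishing derivative at $y_{0}$ along $\mathcal{O}$, this set is a spherical shell around $y_{0}$ in $\mathcal{O}$ with the analytic germ $N=\{\widetilde{y}:\langle v,\widetilde{y}\rangle=0\ \text{for all }v\in E^{\geqslant 0}_{V^{1},\cdots,V^{d},\Omega}\}$ deleted, and its Euler characteristic is $0$: the shell has Euler characteristic $0$, while deleting the (possibly highly singular) complex analytic germ $N$ through $y_{0}$ changes $\chi_{c}$ by $0$ since the link of such a germ has Euler characteristic $0$. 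In the remaining, degenerate configurations the shape of this set is not controlled, and instead one uses the $\mathbb{C}^{\times}$-symmetries to present $F$ as the total space of a fibration over a compact base — a projectivised incidence variety attached to $x_{0}$ and $y_{0}$ — whose fibres carry free $\mathbb{C}^{\times}$-actions (complements of affine hyperplanes, or $\mathbb{C}^{\times}$-bundles), hence have Euler characteristic $0$; the Leray spectral sequence then gives $\chi(F)=0$.

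The hard part is carrying out this last step uniformly. The variety $q_{3}^{-1}(x,y)$ together with the incidence conditions defining the base can be very singular and can have irreducible components of several different dimensions, so there is no uniform combinatorial model for the base or for the $\mathbb{C}^{\times}$-bundle structure on its fibres. For the type $A_{2}$ quiver the $G_{V}$-orbits on $E_{V,\Omega}$ are classified by a single rank, the relevant incidence varieties are Schubert-like, and the fibration can be written down explicitly; this is the content of \S\ref{sec: A_2}. For arbitrary type $A$ one would need the full combinatorics of $G_{V}$-orbit closures, which we leave open.
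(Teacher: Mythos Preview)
Your high-level reduction is correct and aligns with the paper: rewriting the conjecture as $\chi$ of the Milnor fibre being zero, then computing that Euler characteristic via a fibration and the Leray spectral sequence is exactly the paper's template. The difference lies in \emph{which} fibration is used, and this is where your proposal has a genuine gap.

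You fibre the Milnor fibre $F$ over the orbit direction $\widetilde y\in\mathcal{O}$ using the linearity of $\widetilde h$ in $x'$. The first step is sound: the fibre over $\widetilde y$ is an affine hyperplane meeting an open ball, hence an open convex set with $\chi_c=1$ when nonempty, so $\chi_c(F)$ equals $\chi_c$ of the image. But your description of that image as ``a spherical shell around $y_0$ with an analytic germ deleted'' is not accurate. The image is the locus $\{\widetilde y:\;|\langle x_0,\widetilde y\rangle-\varepsilon|<r(\widetilde y)\,\|\ell_{\widetilde y}\|\}$, a semialgebraic condition mixing the value of the functional at $x_0$ with its operator norm and a radius that varies with $\widetilde y$; it is not a shell, and its Euler characteristic is not visibly zero. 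The fallback $\mathbb{C}^\times$-argument does not go through as stated either: the scaling actions you invoke on $\mathcal{O}$ and on $E^{\geqslant 0}$ are global symmetries of $\widetilde h$, but they neither fix the basepoint $(y_0,x_0)$ nor preserve the Milnor ball $B_\delta(y_0,x_0)$, so one cannot conclude that the local Milnor fibre (or the fibres of any induced fibration of it) carry free $\mathbb{C}^\times$-actions. Your last paragraph effectively concedes this by deferring the $A_2$ case to \S\ref{sec: A_2}.

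The paper's route is quite different and more hands-on. After normalising $y_0$ by the Borel and passing via smooth base change from $G_V$ to the big cell $\bar U_1\times\bar U_2$, it writes $\bar h_{y_0}$ out explicitly as a polynomial and, through a sequence of algebraic changes of variables, separates the coordinates into ``coefficient'' variables $V$ and ``quadratic'' variables $W$ so that for every fixed value $z_V$ the function $f_{z_V}$ is a \emph{quadratic form} in $z_W$. A block-matrix argument shows that the Hessian of $f_{z_V}$ has the shape $\begin{pmatrix}0&B\\B^T&0\end{pmatrix}$ and hence even rank for every $z_V$, so by Sebastiani--Thom each such Milnor fibre has $\chi=0$; integrating over $z_V$ via Leray gives $\chi(F)=0$. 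What your approach is missing is a substitute for this even-rank Hessian lemma: some uniform, fibrewise reason that every fibre has $\chi=0$. The paper obtains it from the explicit off-diagonal block structure of the Hessian in the $W$-variables, not from any $\mathbb{C}^\times$-symmetry.
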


In the next section, we will prove this for type $A_2$ quiver.

\section{Quiver of type $A_2$}
\label{sec: A_2}

Let $V = V_{1} \oplus V_{2}$ be a graded vector space and $\Omega$ be the orientation $1 \rightarrow 2$. Let $d_{i} = {\rm dim}V_{i}$ and $d_{1} + d_{2} = d$.
\begin{align*}
E_{V, \Omega} = {\rm Hom}(V_{1}, V_{2}), \quad E_{V, \bar{\Omega}} = {\rm Hom}(V_{2}, V_{1}), \quad G_{V} = GL(V_{1}) \times GL(V_{2}).
\end{align*}
For $(x, y) \in E_{V, \Omega} \times E_{V, \bar{\Omega}}$ and $g = (g_{1}, g_{2}) \in G_{V}$, we have the group action
\[
g \cdot x = g_{2} x g_{1}^{-1}, \quad \quad g \cdot y = g_{1} y g_{2}^{-1}
\]
We also have the Lie bracket
\[
[x, y] = (-yx, xy) \in {\rm End}(V_{1}) \times {\rm End}(V_{2})
\]
and $G_{V}$-invariant nondegenerate pairing
\[
\langle x, y \rangle = {\rm tr}(xy).
\]

We fix an $I$-graded isomorphism $V = V^{1} \oplus V^{2} \oplus \cdots \oplus V^{d}$ such that ${\rm dim} V^{k} = 1$. Then
\[
V_{1} = V^{t_{1}} \oplus \cdots \oplus V^{t_{d_{1}}}, \quad \quad V_{2} = V^{s_{1}} \oplus \cdots \oplus V^{s_{d_{2}}},
\]
where $t_{1} < \cdots < t_{d_{1}}$ and $s_{1} < \cdots < s_{d_{2}}$. Select all indexes 
\begin{align*}
1 = \mu_{1} < \mu_{2} < \cdots < \mu_{e} = d_{1} + 1, \\
1 = \nu_{1} < \nu_{2} < \cdots < \nu_{f} = d_{2} + 1.
\end{align*}
such that 
\begin{align*}
t_{\mu_{i} - 1} + 1 < t_{\mu_{i}} = t_{\mu_{i} + 1} - 1, \quad \text{ for } i < e; \\
s_{\nu_{i} - 1} + 1 < s_{\nu_{i}} = s_{\nu_{i} + 1} - 1,\quad \text{ for } i < f.
\end{align*}
Then the list $\{1, 2, \cdots, d\}$ would correspond to either of these cases below:
\begin{enumerate}
\item
\[
s_{\nu_{1}}, \cdots, s_{\nu_{2} - 1}; t_{\mu_{1}}, \cdots t_{\mu_{2} - 1}; s_{\nu_{2}} \cdots \cdots; s_{\nu_{f-1}}, \cdots, s_{\nu_{f} - 1}; t_{\mu_{e-1}} \cdots t_{\mu_{e} - 1} ,\quad \quad e = f,
\]
\item
\[
s_{\nu_{1}}, \cdots, s_{\nu_{2} - 1}; t_{\mu_{1}}, \cdots t_{\mu_{2} - 1}; s_{\nu_{2}} \cdots \cdots; t_{\mu_{e-1}} \cdots t_{\mu_{e} - 1}; s_{\nu_{f-1}}, \cdots, s_{\nu_{f} - 1}  ,\quad \quad e = f - 1,
\]
\item
\[
t_{\mu_{1}}, \cdots t_{\mu_{2} - 1}; s_{\nu_{1}}, \cdots, s_{\nu_{2} - 1}; t_{\mu_{2}} \cdots \cdots; t_{\mu_{e-1}} \cdots t_{\mu_{e} - 1}; s_{\nu_{f-1}}, \cdots, s_{\nu_{f} - 1}, \quad \quad e = f,
\]
\item
\[
t_{\mu_{1}}, \cdots t_{\mu_{2} - 1}; s_{\nu_{1}}, \cdots, s_{\nu_{2} - 1}; t_{\mu_{2}} \cdots \cdots; s_{\nu_{f-1}}, \cdots, s_{\nu_{f} - 1}; t_{\mu_{e-1}} \cdots t_{\mu_{e} - 1} ,\quad \quad e = f + 1.
\]
\end{enumerate}
We fix a filtration
\[
\overline{{\rm Fil}}: 0 = \bar{W}^{0} \subsetneq \bar{W}^{1} \subsetneq \cdots \subsetneq \bar{W}^{d} = V
\]
where 
\[
\bar{W}^{k} := V^{1} \oplus \cdots \oplus V^{k}.
\]
Let
\[
E_{V^{1}, \cdots, V^{d}, \Omega}^{\geqslant 0} := \{x \in E_{V, \Omega} | x \text{ stabilizes } \overline{{\rm Fil}} \}.
\]
It admits an action by 
\[
G_{V^{1}, \cdots, V^{d}}^{\geqslant 0} := \{g \in G_{V} | g \text{ stabilizes } \overline{{\rm Fil}} \}
\]
a Borel subgroup of $G_{V}$. It has a Levi component $G_{V^{1}} \times \cdots \times G_{V^{d}}$, which is a maximal torus, and the unipotent radical is
\[
G_{V^{1}, \cdots, V^{d}}^{+} := \{g \in G_{V^{1}, \cdots, V^{d}}^{\geqslant 0} | \bar{\varphi}_{k}^{-1} g \bar{\varphi}_{k} = id \text{ for } k = 1, \cdots, d\},
\]
where $\bar{\varphi}_{k}: V^{k} \hookrightarrow W^{k} \mapsto W^{k}/W^{k-1}$. Similarly we can define $E_{V^{1}, \cdots, V^{d}, \bar{\Omega}}^{\geqslant 0}$.

We also fix basis vectors $v_{k}$ for $V^{k}$, then they give a basis for each $V_{i}$. Under these basis, we have
\begin{align*}
E_{V, \Omega} & = {\rm Hom}(V_{1}, V_{2}) \cong {\rm Mat}_{d_{2} \times d_{1}}(\mathbb{C}),\\
E_{V, \bar{\Omega}} & = {\rm Hom}(V_{2}, V_{1}) \cong {\rm Mat}_{d_{1} \times d_{2}}(\mathbb{C}),\\
G_{V} & = GL(V_{1}) \times GL(V_{2}) \cong GL(d_{1}, \mathbb{C}) \times GL(d_{2}, \mathbb{C}).
\end{align*}
Let $B_{i}$ be the Borel subgroup of $GL(d_{i}, \mathbb{C})$, consisting of upper triangular matrices with unipotent radical $U_{i}$, then
\[
G_{V^{1}, \cdots, V^{d}}^{\geqslant 0} \cong B_{1} \times B_{2}, \quad \quad G_{V^{1}, \cdots, V^{d}}^{+} \cong U_{1} \times U_{2}.
\]

To describe the elements of $E_{V^{1}, \cdots, V^{d}, \Omega}^{\geqslant 0}$ in terms of matrices, we should turn a $d_{2} \times d_{1}$ matrix into a block matrix by requring the first row in $k$-th row block is row $\nu_{k}$ and the the first column in $k$-th column block is column $\mu_{k}$. There are $f-1$ row blocks and $e-1$ column blocks. Depending on the previous four cases, we will get the following shape of elements $x = \{X_{i, j}\} \in E_{V^{1}, \cdots, V^{d}, \Omega}^{\geqslant 0}$. We will use $\ast$ and $0$ to indicate the blocks. In case (1),
\[
\begin{pmatrix}
* & * & \cdots & * & * \\
0 & * & \cdots & * & * \\
\vdots & \vdots & \ddots & \vdots & * \\
0 & 0 & \cdots & * & * \\
0 & 0 & \cdots & 0 & * \\
\end{pmatrix}.
\]
In case (2),
\[
\begin{pmatrix}
* & * & \cdots & *  \\
0 & * & \cdots & * \\
\vdots & \vdots & \ddots & \vdots \\
0 & 0 & \cdots & * \\
0 & 0 & \cdots & 0 \\
\end{pmatrix}.
\]
In these two cases, if $X_{i,j} \neq 0$, then
\[
\nu_{k} \leqslant i < \nu_{k+1} \text{ for $k < f$ } \Rightarrow j \geqslant \mu_{k}.
\]
In case (3),
\[
\begin{pmatrix}
0 & * & \cdots & * \\
\vdots & \vdots & \ddots & \vdots \\
0 & 0 & \cdots & * \\
0 & 0 & \cdots & 0 \\
\end{pmatrix}.
\]
In case (4),
\[
\begin{pmatrix}
0 & * & \cdots & * & * \\
\vdots & \vdots & \ddots & \vdots & * \\
0 & 0 & \cdots & * & * \\
0 & 0 & \cdots & 0 & * \\
\end{pmatrix}.
\]
In these two cases, if $X_{i,j} \neq 0$, then
\[
\nu_{k} \leqslant i < \nu_{k+1} \text{ for $k < f$ } \Rightarrow j \geqslant \mu_{k + 1}.
\]
Similarly, for $y = \{Y_{i, j}\} \in E_{V^{1}, \cdots, V^{d}, \bar{\Omega}}^{\geqslant 0}$. In case (1) and (2), if $Y_{i,j} \neq 0$, then
\[
\mu_{k} \leqslant i < \mu_{k+1} \text{ for $k < e$ } \Rightarrow j \geqslant \nu_{k + 1}.
\]
In case (3) and (4), if $Y_{i,j} \neq 0$, then
\[
\mu_{k} \leqslant i < \mu_{k+1} \text{ for $k < e$ } \Rightarrow j \geqslant \nu_{k}.
\]

\subsection{Vanishing cycle}
\label{sec: vanishing cycle}

For $y_{0} \in E_{V^{1}, \cdots, V^{d}, \bar{\Omega}}^{\geqslant 0}$, we want to compute the vanishing cycle of
\[
h_{y_{0}}: G_{V} \times E_{V^{1}, \cdots, V^{d}, \Omega}^{\geqslant 0} \rightarrow \mathbb{C}, (g, x) \mapsto \langle gx, y_{0} \rangle
\]
at $(1, x_{0})$, where $(x_{0}, y_{0}) \in \Lambda_{V}$. First we would like to show that it suffices to consider those $y_{0}$ in nice shape. Suppose $y'_{0} = g' \cdot y_{0}$ for $g' \in G_{V^{1}, \cdots, V^{d}}^{\geqslant 0}$, then
\[
\xymatrix{
(g, x) \ar[dd] & G_{V} \times E_{V^{1}, \cdots, V^{d}, \Omega}^{\geqslant 0} \ar[dd] \ar[rd]_{h_{y_{0}}} & \\
& & \mathbb{C} \\
(g'gg'^{-1}, g' \cdot x) & G_{V} \times E_{V^{1}, \cdots, V^{d}, \Omega}^{\geqslant 0} \ar[ur]^{h_{y'_{0}}}
}
\]
Let $x'_{0} = g' \cdot x_{0}$, then it is the same to consider the vanishing cycle of $h_{y'_{0}}$ at $(1, x'_{0})$. So we can change $y_0$ by the action of $G_{V^{1}, \cdots, V^{d}}^{\geqslant 0} \cong B_1 \times B_2$. Note the action of $B_1$ on $y_0$ is by row operations and the action of $B_2$ on $y_0$ is by column operations. For each nonzero column $s$ of $y_{0}$, let $\alpha_{s}$ be the first nonzero entry from the bottom. By the action of $B_{1}$, we can make all entries above $\alpha_{s}$ be zero. Then by the action of $B_{2}$, we can make all entries on the right of $\alpha_{s}$ be zero. If we do this process from the first column to the last column, then we can make {\bf each row and column of $y_0$ contain at most one nonzero entry, which can be further normalized to be one}. From now on, we will assume $y_{0}$ satisfies this property. Let us index the nonzero entries in $y_{0}$ by a set $A$ and $\alpha \in A$ corresponds to the entry $(i_{\alpha}, j_{\alpha})$. Let
\(
I = \{i_{\alpha} | \alpha \in A \}, J = \{j_{\alpha} | \alpha \in A\}.
\)
Our choice of $y_0$ has the following consequence on $x_0$.

\begin{lemma}
\label{lemma: projection to zero}
$X_{ij} = 0$ for $j \in I$ or $i \in J$ at $x_{0}$
\end{lemma}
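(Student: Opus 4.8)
The plan is to deduce the lemma purely from the preprojective relation $[x_0,y_0]=0$, which holds since $(x_0,y_0)\in\Lambda_V$, together with the partial-permutation shape of $y_0$ arranged just above. Recall that on $\mathrm{End}(V_1)\times\mathrm{End}(V_2)$ the bracket is $[x,y]=(-yx,\,xy)$, so $(x_0,y_0)\in\Lambda_V$ amounts to the two matrix identities
\[
y_0x_0=0\in{\rm Mat}_{d_1\times d_1}(\mathbb{C}),\qquad x_0y_0=0\in{\rm Mat}_{d_2\times d_2}(\mathbb{C}),
\]
where $x_0=\{X_{ij}\}\in{\rm Mat}_{d_2\times d_1}(\mathbb{C})$ and $y_0=\{Y_{ij}\}\in{\rm Mat}_{d_1\times d_2}(\mathbb{C})$. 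After the normalization, each row and each column of $y_0$ contains at most one nonzero entry, which equals $1$; these entries are indexed by $A$, with $\alpha\in A$ in position $(i_\alpha,j_\alpha)$, and $I=\{i_\alpha\mid\alpha\in A\}$, $J=\{j_\alpha\mid\alpha\in A\}$.

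First I would unwind $x_0y_0=0$. For a fixed $\alpha$, column $j_\alpha$ of $y_0$ has its only nonzero entry in row $i_\alpha$, so $(x_0y_0)_{a,j_\alpha}=(x_0)_{a,i_\alpha}$ for every $a$; hence $x_0y_0=0$ forces column $i_\alpha$ of $x_0$ to vanish for every $\alpha\in A$, i.e. $X_{ij}=0$ whenever $j\in I$. Symmetrically, for a fixed $\alpha$, row $i_\alpha$ of $y_0$ has its only nonzero entry in column $j_\alpha$, so $(y_0x_0)_{i_\alpha,b}=(x_0)_{j_\alpha,b}$ for every $b$; hence $y_0x_0=0$ forces row $j_\alpha$ of $x_0$ to vanish for every $\alpha\in A$, i.e. $X_{ij}=0$ whenever $i\in J$. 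Together these give the claim.

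I do not expect any genuine obstacle: the statement is a one-line consequence of the commutation relation once $y_0$ has been put in the above normal form, and the only point that requires care is the bookkeeping of which of $\{1,\dots,d_1\}$ and $\{1,\dots,d_2\}$ indexes rows versus columns of $x_0$ and of $y_0$.
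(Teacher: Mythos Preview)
Your proof is correct and follows exactly the same approach as the paper: from $[x_0,y_0]=0$ one gets $x_0y_0=0$ and $y_0x_0=0$, and then the partial-permutation form of $y_0$ immediately forces the stated columns and rows of $x_0$ to vanish. The paper's proof is the one-line version of your argument; you have simply spelled out the matrix computation.
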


\begin{proof}
Since $[x_{0}, y_{0}] = 0$, then $x_{0}y_{0} = y_{0}x_{0} = 0$. The result follows immediately from our assumption on $y_{0}$.
\end{proof}

Let $\bar{U}_{i}$ the unipotent radical of the opposite Borel subgroup $\bar{B}_{i}$. Then the map
\[
(\bar{U}_{1} \times \bar{U}_{2}) \times (B_{1} \times B_{2}) \rightarrow GL(d_{1}, \mathbb{C}) \times GL(d_{2}, \mathbb{C}), \quad (u, b) \mapsto ub
\]
is smooth. By smooth base change, it suffices to consider the vanishing cycle of the pullback
\[
\tilde{h}_{y_{0}}: (\bar{U}_{1} \times \bar{U}_{2}) \times (B_{1} \times B_{2}) \times E_{V^{1}, \cdots, V^{d}, \Omega}^{\geqslant 0} \rightarrow \mathbb{C}
\]
One can also consider the composition of the projection
\[
{\rm pr}: (\bar{U}_{1} \times \bar{U}_{2}) \times (B_{1} \times B_{2}) \times E_{V^{1}, \cdots, V^{d}, \Omega}^{\geqslant 0} \rightarrow (\bar{U}_{1} \times \bar{U}_{2}) \times E_{V^{1}, \cdots, V^{d}, \Omega}^{\geqslant 0}
\]
with the restriction of $h_{y_{0}}$ to $(\bar{U}_{1} \times \bar{U}_{2}) \times E_{V^{1}, \cdots, V^{d}, \Omega}^{\geqslant 0}$, and we denote it by $\bar{h}_{y_{0}}$. Then we have a commutative diagram
\[
\xymatrix{
(u, b, x) \ar[dd] & (\bar{U}_{1} \times \bar{U}_{2}) \times (B_{1} \times B_{2}) \times E_{V^{1}, \cdots, V^{d}, \Omega}^{\geqslant 0} \ar[dd] \ar[rd]_{\tilde{h}_{y_{0}}} & \\
& & \mathbb{C} \\
(u, b, b \cdot x) & (\bar{U}_{1} \times \bar{U}_{2}) \times (B_{1} \times B_{2}) \times E_{V^{1}, \cdots, V^{d}, \Omega}^{\geqslant 0} \ar[ur]^{\bar{h}_{y_{0}}}
}
\]
So it is the same to consider $\bar{h}_{y_{0}}$. Finally, since $\bar{h}_{y_{0}}$ factors through ${\rm pr}$, by smooth base change it suffices to consider the restriction of $h_{y_{0}}$ to $(\bar{U}_{1} \times \bar{U}_{2}) \times E_{V^{1}, \cdots, V^{d}, \Omega}^{\geqslant 0}$, and we denote it still by $\bar{h}_{y_{0}}$. So we have shown

\begin{lemma}
$R\Phi_{h_{y_{0}}}(\mathbbm{1})_{(1, x_{0})} \cong R\Phi_{\bar{h}_{y_{0}}}(\mathbbm{1})_{(1, x_{0})}$.
\end{lemma}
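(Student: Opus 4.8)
The plan is to read off the lemma directly from the chain of reductions laid out just above the statement: it is a formal consequence of the compatibility of the vanishing cycle functor with pullback along smooth morphisms (in particular along isomorphisms), used three times, the only genuine work being to verify that each map in the chain is smooth and to transport the distinguished point $(1,x_{0})$ correctly.

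First I would invoke the multiplication map
\(
j\colon (\bar{U}_{1}\times\bar{U}_{2})\times(B_{1}\times B_{2})\times E^{\geqslant 0}_{V^{1},\cdots,V^{d},\Omega}\longrightarrow G_{V}\times E^{\geqslant 0}_{V^{1},\cdots,V^{d},\Omega},\quad (u,b,x)\mapsto(ub,x),
\)
and observe that it is an open immersion: on the $GL(d_{1},\mathbb C)\times GL(d_{2},\mathbb C)$-factor it is the product of the two big-cell charts $\bar{U}_{i}\times B_{i}\hookrightarrow GL(d_{i},\mathbb C)$, which are open immersions because $\bar{U}_{i}\cap B_{i}=\{1\}$ and $\dim\bar{U}_{i}+\dim B_{i}=\dim GL(d_{i},\mathbb C)$. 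Since $1=1\cdot 1$ lies in each big cell, the point $(1,x_{0})$ belongs to the image of $j$, with unique preimage $(1,1,x_{0})$; hence smooth base change for vanishing cycles gives $R\Phi_{h_{y_{0}}}(\mathbbm{1})_{(1,x_{0})}\cong R\Phi_{\tilde h_{y_{0}}}(\mathbbm{1})_{(1,1,x_{0})}$, where $\tilde h_{y_{0}}:=h_{y_{0}}\circ j$, that is, $\tilde h_{y_{0}}(u,b,x)=\langle (ub)\cdot x,\,y_{0}\rangle$.

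Next I would introduce the automorphism $\sigma$ of the source of $\tilde h_{y_{0}}$ defined by $\sigma(u,b,x)=(u,b,b\cdot x)$; it is well defined because $b\in B_{1}\times B_{2}$ stabilizes $\overline{{\rm Fil}}$ and therefore preserves $E^{\geqslant 0}_{V^{1},\cdots,V^{d},\Omega}$, it is invertible with $\sigma^{-1}(u,b,x)=(u,b,b^{-1}\cdot x)$, and it fixes $(1,1,x_{0})$. Using only the group-action identity $(ub)\cdot x=u\cdot(b\cdot x)$ one gets $\tilde h_{y_{0}}=\bar h^{\flat}_{y_{0}}\circ\sigma$, where $\bar h^{\flat}_{y_{0}}(u,b,x):=\langle u\cdot x,\,y_{0}\rangle$ is the composite of the projection ${\rm pr}\colon(u,b,x)\mapsto(u,x)$ with the restriction $\bar h_{y_{0}}$ of $h_{y_{0}}$ to $(\bar{U}_{1}\times\bar{U}_{2})\times E^{\geqslant 0}_{V^{1},\cdots,V^{d},\Omega}$. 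Since $\sigma$ is an isomorphism fixing the base point, $R\Phi_{\tilde h_{y_{0}}}(\mathbbm{1})_{(1,1,x_{0})}\cong R\Phi_{\bar h^{\flat}_{y_{0}}}(\mathbbm{1})_{(1,1,x_{0})}$. Finally ${\rm pr}$ is smooth (a trivial $B_{1}\times B_{2}$-bundle) and $\bar h^{\flat}_{y_{0}}=\bar h_{y_{0}}\circ{\rm pr}$, so smooth base change once more yields $R\Phi_{\bar h^{\flat}_{y_{0}}}(\mathbbm{1})\cong{\rm pr}^{*}R\Phi_{\bar h_{y_{0}}}(\mathbbm{1})$; evaluating at $(1,1,x_{0})$, whose image under ${\rm pr}$ is $(1,x_{0})$, gives $R\Phi_{\bar h^{\flat}_{y_{0}}}(\mathbbm{1})_{(1,1,x_{0})}\cong R\Phi_{\bar h_{y_{0}}}(\mathbbm{1})_{(1,x_{0})}$. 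Concatenating the three isomorphisms is precisely the assertion of the lemma.

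The preliminary normalization of $y_{0}$ fits the same pattern: replacing $y_{0}$ by $g'\cdot y_{0}$ for $g'\in G^{\geqslant 0}_{V^{1},\cdots,V^{d}}\cong B_{1}\times B_{2}$ corresponds, via $G_{V}$-invariance of $\langle\,,\,\rangle$, to an automorphism of $G_{V}\times E^{\geqslant 0}_{V^{1},\cdots,V^{d},\Omega}$ (conjugation on $G_{V}$, the $g'$-action on $E^{\geqslant 0}_{V^{1},\cdots,V^{d},\Omega}$) carrying $(1,x_{0})$ to $(1,g'\cdot x_{0})$, so it does not affect the vanishing cycle stalk. I do not anticipate a genuine obstacle here — the statement is formal — and the only point that requires real care is checking that every arrow in the chain is smooth (or an isomorphism) and that the base point is transported correctly, which in each case reduces to the trivial observation that $1$ lies in the big cell.
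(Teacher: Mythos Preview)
Your proposal is correct and follows essentially the same three-step argument as the paper: smooth base change along the big-cell map $(u,b)\mapsto ub$, the automorphism $(u,b,x)\mapsto(u,b,b\cdot x)$, and smooth base change along the projection ${\rm pr}$. You are slightly more careful than the paper in noting that the multiplication map is an open immersion (not merely smooth) and in tracking the base point explicitly, but the content is identical.
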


Let us denote the entries of $g_{1}^{-1}$ by $M_{ij}$ and that of $g_{2}$ by $N_{ij}$. We want to calculate $\bar{h}_{y_{0}}$ explicitly,
\begin{align*}
\bar{h}_{y_{0}}((g_{1}, g_{2}), x) & = {\rm tr}(xg_{1}^{-1}y_{0}g_{2}) \\
& = \sum_{i, j} \sum_{s > i} X_{ij} Y_{js} N_{si} + \sum_{i, j} \sum_{r < j} X_{ij}M_{jr}Y_{rs} + \sum_{i, j} \sum_{\substack{r < j \\ s > i}} X_{ij}M_{jr}Y_{rs}N_{si} \\
& = \sum_{\alpha \in A, i}X_{ii_{\alpha}} N_{j_{\alpha}i} + \sum_{\alpha' \in A, j} X_{j_{\alpha'}j}M_{ji_{\alpha'}} + \sum_{\alpha'' \in A, i, j} X_{ij}M_{ji_{\alpha''}}N_{j_{\alpha''}i}.
\end{align*}
There are three terms in the summation. We first consider $X_{ij}$ appearing in both of the first two terms. They are necessarily of the form $X_{j_{\alpha'}i_{\alpha}}$. Let us define
\[
T = \{(\alpha', \alpha) \in A^{2}| X_{j_{\alpha'}i_{\alpha}} \neq 0\}.
\]
We have an inclusion
\[
\pi: T \rightarrow J \times I, \quad (\alpha', \alpha) \mapsto (j_{\alpha'}, i_{\alpha}).
\]

\begin{lemma}
For any $(\alpha', \alpha) \in T$, $X_{j_{\alpha'}i_{\alpha}}$ appears in both of the first two terms.
\end{lemma}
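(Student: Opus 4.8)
The plan is to reinterpret the word ``appears'' and reduce the lemma to a short inequality between the block indices $i_\alpha, i_{\alpha'}, j_\alpha, j_{\alpha'}$. First I would pin down, in each of the first two terms of $\bar h_{y_0}$, the unique monomial that could contain the variable $X_{j_{\alpha'}i_\alpha}$. In $\sum_{\alpha \in A, i} X_{i i_\alpha} N_{j_\alpha i}$ every $X$-variable has its second index in $I$, and since each row of $y_0$ carries at most one nonzero entry the map $\alpha \mapsto i_\alpha$ is injective; hence $X_{j_{\alpha'}i_\alpha}$ can occur only in the summand with outer index $\alpha$ and inner index $i = j_{\alpha'}$, namely $X_{j_{\alpha'}i_\alpha} N_{j_\alpha j_{\alpha'}}$. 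Similarly, since each column of $y_0$ carries at most one nonzero entry, in $\sum_{\alpha' \in A, j} X_{j_{\alpha'}j} M_{j i_{\alpha'}}$ the variable $X_{j_{\alpha'}i_\alpha}$ can occur only in $X_{j_{\alpha'}i_\alpha} M_{i_\alpha i_{\alpha'}}$. Because $g_2 \in \bar U_2$ and $g_1^{-1} \in \bar U_1$ are lower unitriangular, $N_{j_\alpha j_{\alpha'}} \not\equiv 0$ iff $j_\alpha \geqslant j_{\alpha'}$ and $M_{i_\alpha i_{\alpha'}} \not\equiv 0$ iff $i_\alpha \geqslant i_{\alpha'}$; so the lemma becomes the claim that $(\alpha', \alpha) \in T$ forces $i_\alpha \geqslant i_{\alpha'}$ and $j_\alpha \geqslant j_{\alpha'}$.

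Next I would establish both inequalities, in fact strictly, from the explicit matrix shapes of $E_{V^1, \cdots, V^d, \Omega}^{\geqslant 0}$ and $E_{V^1, \cdots, V^d, \bar\Omega}^{\geqslant 0}$, treating the cases (1)--(2) and (3)--(4) in parallel. Let $k_{\alpha'}$ be defined by $\mu_{k_{\alpha'}} \leqslant i_{\alpha'} < \mu_{k_{\alpha'}+1}$ and $l$ by $\nu_l \leqslant j_{\alpha'} < \nu_{l+1}$. The shape of $y_0$ at the entry indexed by $\alpha'$ gives $j_{\alpha'} \geqslant \nu_{k_{\alpha'}+1}$ in cases (1)--(2) and $j_{\alpha'} \geqslant \nu_{k_{\alpha'}}$ in cases (3)--(4), hence $l \geqslant k_{\alpha'}+1$ resp. $l \geqslant k_{\alpha'}$. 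Since $(\alpha', \alpha) \in T$ means the position $(j_{\alpha'}, i_\alpha)$ is an admissible $X$-entry, the shape of $x_0$ gives $i_\alpha \geqslant \mu_l$ resp. $i_\alpha \geqslant \mu_{l+1}$, and combining the two lines yields $i_\alpha \geqslant \mu_{k_{\alpha'}+1} > i_{\alpha'}$ in all four cases. Finally, writing $k_\alpha$ for the index with $\mu_{k_\alpha} \leqslant i_\alpha < \mu_{k_\alpha+1}$, this last bound forces $k_\alpha \geqslant l$ resp. $k_\alpha \geqslant l+1$, and the shape of $y_0$ at the entry indexed by $\alpha$ gives $j_\alpha \geqslant \nu_{k_\alpha+1}$ resp. $j_\alpha \geqslant \nu_{k_\alpha}$, so $j_\alpha \geqslant \nu_{l+1} > j_{\alpha'}$.

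The only real work is this index bookkeeping, and the one point requiring care is to transcribe correctly, from each of the four block pictures displayed above, the rules ``$X_{ij} \neq 0 \Rightarrow j \geqslant \mu_\bullet$'' and ``$Y_{ij} \neq 0 \Rightarrow j \geqslant \nu_\bullet$'' with the right shift of $\bullet$, after which the four cases collapse to the computation above. As a byproduct both inequalities come out strict, so $T$ contains no pair $(\alpha, \alpha)$, which is consistent with Lemma~\ref{lemma: projection to zero}.
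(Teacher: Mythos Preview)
Your proposal is correct and follows the same approach as the paper's proof, only with the details filled in. The paper's argument is the single sentence ``By the shape of $x$ and $y_0$, we know if $X_{j_{\alpha'}i_\alpha}\neq 0$, then $j_\alpha > j_{\alpha'}$ and $i_\alpha > i_{\alpha'}$. The rest is clear.'' You make explicit both halves of this: first, that the two inequalities are exactly what is needed for the monomials $X_{j_{\alpha'}i_\alpha}N_{j_\alpha j_{\alpha'}}$ and $X_{j_{\alpha'}i_\alpha}M_{i_\alpha i_{\alpha'}}$ to be present; second, the block-index chase through the four cases that verifies them. One tiny imprecision: since the first and second sums are defined with $s>i$ and $r<j$ strictly, ``appears'' really requires $j_\alpha > j_{\alpha'}$ and $i_\alpha > i_{\alpha'}$ rather than $\geqslant$, but your argument in fact yields the strict inequalities, so nothing is affected.
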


\begin{proof}
By the shape of $x$ and $y_{0}$, we know if $X_{j_{\alpha'}i_{\alpha}} \neq 0$, then $j_{\alpha} > j_{\alpha'}$ and $i_{\alpha} > i_{\alpha'}$. The rest is clear.
\end{proof}

Combining the terms with $X_{j_{\alpha'}, i_{\alpha}}$ for $(\alpha', \alpha) \in T$, we get
\begin{align}
\label{eq: X}
\sum_{(\alpha', \alpha) \in T} X_{j_{\alpha'}, i_{\alpha}}\Big(N_{j_{\alpha}j_{\alpha'}} + M_{i_{\alpha}i_{\alpha'}} + \sum_{\alpha'' \in A : \, \substack{i_{\alpha''} < i_{\alpha} \\ j_{\alpha''} > j_{\alpha'}}} M_{i_{\alpha}i_{\alpha''}}N_{j_{\alpha''}j_{\alpha'}}\Big).
\end{align}
The remaining terms are
\begin{align}
\label{eq: N}
\sum_{\alpha \in A, i \notin J} X_{ii_{\alpha}}N_{j_{\alpha}i}
\end{align}

\begin{align}
\label{eq: M}
\sum_{\alpha' \in A, j \notin I} X_{j_{\alpha'}j}M_{ji_{\alpha'}}
\end{align}

\begin{align}
\label{eq: MN}
\sum_{\alpha'' \in A, i \notin J, j \notin I} X_{ij} M_{ji_{\alpha''}}N_{j_{\alpha''}i}
\end{align}

\begin{align}
\label{eq: XM}
\sum_{\alpha'' \in A, i \in J, j \notin I} X_{ij} M_{ji_{\alpha''}}N_{j_{\alpha''}i}
\end{align}

\begin{align}
\label{eq: XN}
\sum_{\alpha'' \in A, i \notin J, j \in I} X_{ij} M_{ji_{\alpha''}}N_{j_{\alpha''}i}.
\end{align}

The goal is to separate the variables so that the function can be viewed as a quadratic form in some variables (called {\bf quadratic variables}) with coefficients in different variables (called {\bf coefficient variables}). First we set $M_{ji}$ $(j \notin I, i \in I)$, $N_{ji}$ $(j \in J, i \notin J)$ and $X_{ij}$ $(i \in J, j \notin I)$ or $(i \notin J, j \in I)$ quadratic variables; set $N_{ji}$ $(j \in J, i \in J)$, $X_{ij}$ $(i \notin J, j \notin I)$ coefficient variables. Note the variables $M_{ji}$ $(i \notin I)$, $N_{ji}$ $(j \notin J)$ will never appear, so we can set them arbitrary. (We will set them as coefficient variables if not specified.) It remains to deal with 
\[
M_{ji} \quad (j \in I, i \in I) 
\]
which only appear in \eqref{eq: X} and \eqref{eq: XN}, and 
\[
X_{ij} \quad (i \in J, j \in I)
\]
which only appear in \eqref{eq: X}. Note in the latter case, $X_{ij} \neq 0$ only when $(i, j) \in \pi(T)$. To achieve our goal, we need to make some change of variables. For any $(\alpha', \alpha) \in T$,
\begin{align*}
M'_{i_{\alpha}i_{\alpha'}} & = N_{j_{\alpha}j_{\alpha'}} + M_{i_{\alpha}i_{\alpha'}} + \sum_{\alpha'' \in A: \, \substack{i_{\alpha''} < i_{\alpha} \\ j_{\alpha''} > j_{\alpha'}}} M_{i_{\alpha}i_{\alpha''}}N_{j_{\alpha''}j_{\alpha'}} \\
& = M_{i_{\alpha}i_{\alpha'}} + \Big(N_{j_{\alpha}j_{\alpha'}} + \sum_{\alpha'' \in A :\, \substack{i_{\alpha''} < i_{\alpha} \\ j_{\alpha''} > j_{\alpha'}}} M_{i_{\alpha}i_{\alpha''}}N_{j_{\alpha''}j_{\alpha'}}\Big).
\end{align*}
To see this is well-defined, we impose a partial order on $T$ such that
\[
(\alpha', \alpha) >_{T} (\alpha'', \alpha) \quad \text{ if } j_{\alpha'} < j_{\alpha''}
\]
Then
\begin{align}
\label{eq: inversion}
M_{i_{\alpha}i_{\alpha'}} = M'_{i_{\alpha}i_{\alpha'}} + \sum_{\substack{ (\alpha'', \alpha) \in T\\ (\alpha'', \alpha) <_{T} (\alpha', \alpha)}} \mathcal{U}^{i_{\alpha}}_{j_{\alpha'}, j_{\alpha''}} M'_{i_{\alpha}i_{\alpha''}} + \mathcal{U}^{i_{\alpha}}_{j_{\alpha'}}
\end{align}
where $\mathcal{U}^{i_{\alpha}}_{j_{\alpha'}, j_{\alpha''}}$ are polynomials in $N_{ij}$ $(i, j \in J)$, and $\mathcal{U}^{i_{\alpha}}_{j_{\alpha'}}$ are polynomials in $N_{ij}$ $(i, j \in J)$ and $M_{i_{\alpha}i_{\alpha''}}$ for any $(\alpha'', \alpha) \notin T$). Set $\mathcal{U}^{i_{\alpha}}_{j_{\alpha'}, j_{\alpha'}} = 1$. After this change of variables, \eqref{eq: X} becomes
\begin{align}
\label{eq: X'}
\sum_{(\alpha', \alpha) \in T} X_{j_{\alpha'}i_{\alpha}}M'_{i_{\alpha}i_{\alpha'}}.
\end{align}
We can also split \eqref{eq: XN} into two parts:
\[
\eqref{eq: XN}a:  \sum_{(\alpha'', \alpha') \notin T, i \notin J} X_{ii_{\alpha'}} M_{i_{\alpha'}i_{\alpha''}}N_{j_{\alpha''}i}
\]
and
\[
\eqref{eq: XN}b : \sum_{(\alpha'', \alpha') \in T, i \notin J} X_{ii_{\alpha'}} M_{i_{\alpha'}i_{\alpha''}}N_{j_{\alpha''}i}
\]
Substitute \eqref{eq: inversion} into \eqref{eq: XN}b, we get
\begin{align*}
& \sum_{(\alpha', \alpha) \in T, i \notin J} X_{ii_{\alpha}} M_{i_{\alpha}i_{\alpha'}}N_{j_{\alpha'}i} \\
& =  \sum_{(\alpha' , \alpha) \in T, i \notin J}  X_{ii_{\alpha}} \Big( M'_{i_{\alpha}i_{\alpha'}} + \sum_{\substack{ (\alpha'', \alpha) \in T\\ (\alpha'', \alpha) <_{T} (\alpha', \alpha)}} \mathcal{U}^{i_{\alpha}}_{j_{\alpha'}, j_{\alpha''}} M'_{i_{\alpha}i_{\alpha''}} + \mathcal{U}^{i_{\alpha}}_{j_{\alpha'}} \Big) N_{j_{\alpha'}i} \\
& = \sum_{(\alpha'', \alpha) \in T} \Big( \sum_{\substack{ (\alpha', \alpha) \in T \\ (\alpha', \alpha) \geqslant_{T} (\alpha'', \alpha), i \notin J}} X_{ii_{\alpha}}\mathcal{U}^{i_{\alpha}}_{j_{\alpha'}, j_{\alpha''}}N_{j_{\alpha'}i} \Big)M'_{i_{\alpha}i_{\alpha''}} + \sum_{(\alpha' ,\alpha) \in T, i \notin J} X_{ii_{\alpha}} \mathcal{U}^{i_{\alpha}}_{j_{\alpha'}}  N_{j_{\alpha'}i} 
\end{align*}
Combined with \eqref{eq: X'}, we get
\[
\sum_{(\alpha' ,\alpha) \in T} \Big(X_{j_{\alpha'}i_{\alpha}} + \sum_{\substack{ (\alpha'', \alpha) \in T \\ (\alpha'', \alpha) \geqslant_{T} (\alpha', \alpha), i \notin J}} X_{ii_{\alpha}}\mathcal{U}^{i_{\alpha}}_{j_{\alpha''}, j_{\alpha'}}N_{j_{\alpha''}i} \Big)M'_{i_{\alpha}i_{\alpha'}} + \sum_{(\alpha' ,\alpha) \in T, i \notin J}  X_{ii_{\alpha}} \mathcal{U}^{i_{\alpha}}_{j_{\alpha'}}  N_{j_{\alpha'}i}.
\]
For $(\alpha', \alpha) \in T$, let
\begin{align}
\label{eq: change X}
X'_{j_{\alpha'}i_{\alpha}} = X_{j_{\alpha'}i_{\alpha}} + \sum_{\substack{(\alpha'', \alpha) \in T \\ (\alpha'', \alpha) \geqslant_{T} (\alpha', \alpha), i \notin J}} X_{ii_{\alpha}}\mathcal{U}^{i_{\alpha}}_{j_{\alpha''}, j_{\alpha'}} N_{j_{\alpha''}i}.
\end{align}
Substitute $X'_{j_{\alpha'}i_{\alpha}}$ into the previous expression, we get
\begin{align}
\label{eq: X'+XNb}
\sum_{(\alpha' ,\alpha) \in T} X'_{j_{\alpha'}i_{\alpha}} M'_{i_{\alpha}i_{\alpha'}} + \sum_{(\alpha' ,\alpha) \in T, i \notin J} X_{ii_{\alpha}} \mathcal{U}^{i_{\alpha}}_{j_{\alpha'}}  N_{j_{\alpha'}i}.
\end{align}
In sum, after the substitutions by $M'_{i_{\alpha}i_{\alpha'}}$ and $X'_{j_{\alpha'}i_{\alpha}}$ for all $(\alpha', \alpha) \in T$, we see
\[
\eqref{eq: X} + \eqref{eq: XN}b = \eqref{eq: X'+XNb},
\]
Combined with \eqref{eq: XN}a, we rewrite them as
\begin{align*}
\eqref{eq: X}': \sum_{(\alpha' ,\alpha) \in T} X'_{j_{\alpha'}i_{\alpha}} M'_{i_{\alpha}i_{\alpha'}}
\end{align*}
and
\begin{align*}
\eqref{eq: XN}':  \sum_{\substack{i \notin J, (\alpha'', \alpha') \notin T}} X_{ii_{\alpha'}} M_{i_{\alpha'}i_{\alpha''}}N_{j_{\alpha''}i} + \sum_{\substack{i \notin J, (\alpha'', \alpha') \in T}} X_{ii_{\alpha'}} \mathcal{U}^{i_{\alpha'}}_{j_{\alpha''}}  N_{j_{\alpha''}i}
\end{align*}
so
\[
{\rm tr}(xg_{1}^{-1}y_{0}g_{2}) = \eqref{eq: X}' + \eqref{eq: N} + \eqref{eq: M} + \eqref{eq: MN} + \eqref{eq: XM} + \eqref{eq: XN}'.
\]
We will set $M'_{i_{\alpha}i_{\alpha'}}$ and $X'_{j_{\alpha'}i_{\alpha}}$ for $(\alpha', \alpha) \in T$ as quadratic variables. We will also set the variables $M_{i_{\alpha}i_{\alpha'}}$ for $(\alpha', \alpha) \notin T$ as coefficient variables.

\subsection{Euler characteristic of Milnor fiber}\label{subsection-Euler}

We want to compute 
\[
\chi(R\Phi_{\bar{h}_{y_{0}}}[-1](\mathbbm{1})_{(1, x_{0})}) = 1- \chi(R\Psi_{\bar{h}_{y_{0}}}(\mathbbm{1})_{(1, x_{0})}).
\]
The idea is to relate $\chi(R\Psi_{\bar{h}_{y_{0}}}(\mathbbm{1})_{(1, x_{0})})$ with the Euler characteristic of the Milnor fiber for $\bar{h}_{y_{0}}$ at $(1, x_{0})$. We will recall the definition of the Milnor fiber below.

Let $f$ be an analytic function germ at the origin of $\mathbb{C}^{n+1}$ with $f(0) = 0$. Let 
\[
B_{\epsilon} := \{z \in \mathbb{C}^{n+1} \, | \, |z_{0}|^{2} + \cdots |z_{n}|^{2} < \epsilon \}
\] 
and $S^{2n+1}_{\epsilon} = \partial \bar{B}_{\epsilon}$. 

\begin{teo}[Milnor \cite{Milnor:1968}]
\label{thm: Milnor}
\[
\varphi_{\epsilon}: S^{2n+1}_{\epsilon} \backslash f^{-1}(0) \longrightarrow \mathbb{S}^{1}, \quad z \mapsto f(z)/|f(z)|
\]
is a smooth locally trivial fibration for $\epsilon$ sufficiently small.
\end{teo}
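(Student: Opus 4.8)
The plan is to reproduce Milnor's original argument \cite{Milnor:1968}, in three stages. First I would establish the local conic structure of $V := f^{-1}(0)$ at the origin: for all sufficiently small $\epsilon$ the sphere $S^{2n+1}_\epsilon$ meets $V$ transversally (in the stratified sense) and $\overline{B}_\epsilon \cap V$ is homeomorphic to the cone on $S^{2n+1}_\epsilon \cap V$; in particular $S^{2n+1}_\epsilon \setminus f^{-1}(0)$ is a smooth open manifold. The essential input is that $z \mapsto |z|^2$, restricted to $V \setminus \{0\}$, has no critical point in a punctured neighbourhood of $0$, which is proved by the curve selection lemma: a hypothetical sequence of such critical points converging to $0$ would lie on a real-analytic arc $z(t) \to 0$ along which $\frac{d}{dt}|z(t)|^2$ would vanish identically, contradicting $z(t) \to 0$ with $z(t) \ne 0$.

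Second, I would show that $\varphi_\epsilon = f/|f|$ is a submersion for $\epsilon$ small, i.e. has no critical points on $S^{2n+1}_\epsilon \setminus V$. Writing $\theta(z) = {\rm arg}\,f(z) = {\rm Im}\,\log f(z)$, a point $z$ of the sphere not on $V$ is critical for $\varphi_\epsilon$ exactly when the real gradient ${\rm grad}\,\theta(z)$ is a real multiple of the position vector $z$ (the outward normal). If this were to fail along a sequence with $|z| \to 0$, the curve selection lemma again yields a real-analytic arc $z(t) \to 0$ with $z(t) \notin V$ and ${\rm grad}\,\theta(z(t)) = \lambda(t)\, z(t)$, $\lambda(t) \in \mathbb{R}$ real-analytic. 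Milnor's lemma relating ${\rm grad}\,\theta$, ${\rm grad}\log|f|$ and the logarithmic derivative of $f$ then forces ${\rm grad}\log|f(z(t))|$ to be essentially radial as well, whence a computation of $\frac{d}{dt}\log|f(z(t))|$ shows this quantity has a fixed sign near the endpoint; this contradicts $f(z(t)) \to 0$, which would require $\log|f(z(t))| \to -\infty$. Hence there is $\epsilon_0$ with $\varphi_\epsilon$ a submersion for all $0 < \epsilon \le \epsilon_0$.

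Finally I would promote ``submersion'' to ``locally trivial fibration''. Since $S^{2n+1}_\epsilon \setminus V$ is not compact, Ehresmann's theorem does not apply verbatim; instead I would use a partition of unity to build a smooth vector field $w$ on $S^{2n+1}_\epsilon \setminus V$ that is tangent to $S^{2n+1}_\epsilon$, satisfies $d\varphi_\epsilon(w) = \partial/\partial\theta$ (so that the argument of $f$ increases at unit rate along integral curves), and is additionally chosen so that $\big|\frac{d}{dt}\log|f(z(t))|\big|$ stays bounded along integral curves; the last requirement prevents $|f|$ from tending to $0$ in finite time, so the flow of $w$ is defined for all $t \in \mathbb{R}$. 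Transporting a fibre $\varphi_\epsilon^{-1}(\theta_0)$ by this flow then produces, over every open arc $J \subseteq \mathbb{S}^1$, a diffeomorphism $\varphi_\epsilon^{-1}(J) \cong J \times \varphi_\epsilon^{-1}(\theta_0)$ commuting with the projection to $J$, which is the asserted local triviality. I expect the second stage to be the crux: the curve-selection-lemma analysis of when ${\rm grad}\,\theta$ can be radial carries the genuine geometric content, whereas the conic structure in the first stage and the completeness of the flow in the third are technical points handled by the boundedness conditions built into the auxiliary arcs and vector fields.
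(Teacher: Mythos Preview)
The paper does not give its own proof of this theorem: it is stated with attribution to Milnor \cite{Milnor:1968} and used as a black box. Your proposal faithfully outlines Milnor's original argument from that reference (curve selection lemma for the conic structure and for the absence of critical points of $\varphi_\epsilon$, followed by construction of a complete lifting vector field to obtain local triviality), so it is correct and matches the approach the paper implicitly defers to.
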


\begin{definition}
For any $\theta \in \mathbb{S}^{1}$ and $\epsilon$ sufficiently small as in the above theorem, $\varphi^{-1}_{\epsilon}(\theta)$ is called the Milnor fiber of $f$ at the origin.
\end{definition}

To compare with the nearby cycle, we consider another description of the Milnor fiber. For $0 < \delta \ll \epsilon$, let
\[
D^{*}_{\delta} = \{t \in \mathbb{C} \, | \, 0 < |t| < \delta \}.
\]

\begin{teo}[L\^e \cite{Le:1977}]
\[
\psi: B_{\epsilon} \cap f^{-1}(D^{*}_{\delta}) \longrightarrow D^{*}_{\delta} 
\]
is a smooth locally trivial fibration for $0 < \delta \ll \epsilon$ both sufficiently small.
\end{teo}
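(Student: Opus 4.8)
The plan is to exhibit $\psi$ as the restriction to interiors of a proper submersion of manifolds with boundary, and then invoke Ehresmann's fibration theorem in its version for manifolds with boundary. Concretely, I would pass to the closed ball and study $\bar{\psi}\colon \bar{B}_{\epsilon}\cap f^{-1}(D^{*}_{\delta})\to D^{*}_{\delta}$. This map is proper for any $\epsilon,\delta$, since for compact $K\subseteq D^{*}_{\delta}$ the preimage $\bar{B}_{\epsilon}\cap f^{-1}(K)$ is a closed subset of the compact ball $\bar{B}_{\epsilon}$. The point will be to choose $\epsilon$ small and then $\delta\ll\epsilon$ so that (i) $f$ has no critical point on $B_{\epsilon}\cap f^{-1}(D^{*}_{\delta})$, and (ii) $S^{2n+1}_{\epsilon}$ is transverse to $f^{-1}(t)$ for every $0<|t|<\delta$. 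Granting (i) and (ii), the total space $\bar{B}_{\epsilon}\cap f^{-1}(D^{*}_{\delta})$ is a real-analytic manifold with boundary $S^{2n+1}_{\epsilon}\cap f^{-1}(D^{*}_{\delta})$, and $\bar{\psi}$ is a submersion on the interior (by (i)) and on the boundary (by (ii)); since $D^{*}_{\delta}$ has empty boundary, Ehresmann's theorem applies and $\bar{\psi}$ is a locally trivial fibration of manifolds with boundary. Restricting its local trivializations to the interiors then gives the locally trivial fibration $\psi$, whose fibre is the interior of the compact Milnor fibre $\bar{B}_{\epsilon}\cap f^{-1}(t)$.

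For (i) I would argue that the critical locus $\Sigma f=V(\partial_{0}f,\dots,\partial_{n}f)$ is contained in $f^{-1}(0)$ as germs at $0$. If $df(0)\neq 0$ this set is empty near $0$. If $df(0)=0$, then $\Sigma f$ is an analytic germ through $0$, hence has a connected representative with finitely many irreducible components; at each point of its dense regular locus all partials of $f$ vanish, so $d(f|_{\Sigma f})=0$ there, whence $f$ is locally constant on the regular locus, constant on each component by continuity, constant on $\Sigma f$ by connectedness, and equal to $f(0)=0$. Thus for $\epsilon$ small $\Sigma f\cap\bar{B}_{\epsilon}\subseteq f^{-1}(0)$, so (i) holds for every $\delta$. (This is compatible with the singular locus of $f$ being complicated inside $f^{-1}(0)$: the statement only says $\Sigma f$ does not escape the zero fibre.)

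Statement (ii) is the heart of the matter, and I expect it to be the main obstacle. I would first fix a Whitney stratification of the germ $(f^{-1}(0),0)$ with $\{0\}$ a stratum, and choose a \emph{Milnor radius} $\epsilon_{0}$ so that $S^{2n+1}_{\epsilon'}$ is transverse to every stratum for all $0<\epsilon'\leq\epsilon_{0}$; such $\epsilon_{0}$ exists by the local conical structure of analytic germs, equivalently by applying the curve selection lemma to $z\mapsto|z|^{2}$ on strata. Fix $\epsilon\leq\epsilon_{0}$. Under the Hermitian identification of $\mathbb{C}^{n+1}$ with its dual, non-transversality of $S^{2n+1}_{\epsilon}$ and $f^{-1}(f(z))$ at a point $z$ with $f(z)\neq 0$ is equivalent to $z$ being a complex multiple of $\overline{\nabla f(z)}$; let $M_{\epsilon}$ be the semianalytic set of such $z$ on $S^{2n+1}_{\epsilon}$. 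If (ii) failed for every $\delta$, there would be $z_{k}\in M_{\epsilon}$ with $0\neq f(z_{k})\to 0$ and, after extraction, $z_{k}\to z_{\infty}\in S^{2n+1}_{\epsilon}\cap f^{-1}(0)$; the curve selection lemma then yields a real-analytic arc $\gamma\colon[0,1)\to\overline{M_{\epsilon}}$ with $\gamma(0)=z_{\infty}$ and $\gamma(s)\in M_{\epsilon}$, $f(\gamma(s))\neq 0$ for $s>0$. Along $\gamma$, the relation $\gamma(s)=\lambda(s)\,\overline{\nabla f(\gamma(s))}$ (with $\lambda$ real-analytic and nonzero) together with $|\gamma(s)|^{2}\equiv\epsilon^{2}$ forces $\tfrac{d}{ds}f(\gamma(s))=\overline{\lambda(s)}^{-1}\langle\gamma'(s),\gamma(s)\rangle$ with $\langle\gamma'(s),\gamma(s)\rangle\in i\mathbb{R}$; Milnor's analysis of the Milnor set (\cite{Milnor:1968}, \S\S4--5) shows this is incompatible with $f(\gamma(s))\to 0$ unless $\gamma'(0^{+})$ is tangent to $f^{-1}(0)$ at $z_{\infty}$, contradicting the chosen transversality. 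This is precisely the argument of L\^e \cite{Le:1977}, which I would follow and cite.

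Finally, I note there is a shorter route to (i)+(ii): once Theorem~\ref{thm: Milnor} is available, the $f/|f|$ fibration on $S^{2n+1}_{\epsilon}\setminus f^{-1}(0)$ and the tube fibration $\bar{B}_{\epsilon}\cap f^{-1}(D^{*}_{\delta})\to D^{*}_{\delta}$ are known to be equivalent (\cite{Milnor:1968}, \S5), so one may simply transport the local triviality. Either way, the substantive content is the passage from the pointwise transversality $S^{2n+1}_{\epsilon}\pitchfork f^{-1}(0)$ afforded by the Milnor radius to the uniform transversality $S^{2n+1}_{\epsilon}\pitchfork f^{-1}(t)$ for all small $t\neq 0$, with no assumption on the singularities of $f$; everything else — properness and Ehresmann with boundary — is formal.
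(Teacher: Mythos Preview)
The paper does not supply its own proof of this theorem: it is stated with attribution to L\^e and the citation \cite{Le:1977}, and is used as a black box. There is therefore nothing in the paper to compare your argument against beyond that reference.

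Your sketch is the standard route and is essentially correct: properness of $\bar{\psi}$ is clear, (i) is a well-known fact about analytic germs, and with (i) and (ii) Ehresmann's theorem for manifolds with boundary finishes the job. One caution on (ii): the contradiction you derive from the curve $\gamma$ is not quite the one that works. That $\gamma'(0^{+})$ is tangent both to $S^{2n+1}_{\epsilon}$ (since $\gamma$ lies on the sphere) and to $f^{-1}(0)$ does not by itself contradict transversality of $S^{2n+1}_{\epsilon}$ to the strata of $f^{-1}(0)$. The robust argument, and the one L\^e actually uses, is to take a stratification of $f$ satisfying Thom's $a_{f}$ condition; then limits of tangent spaces $T_{z_{k}}f^{-1}(f(z_{k}))$ contain the tangent space to the stratum of $f^{-1}(0)$ through $z_{\infty}$, and transversality of $S^{2n+1}_{\epsilon}$ to that stratum forces transversality to nearby fibres, contradicting $z_{k}\in M_{\epsilon}$. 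Since you explicitly say you would follow and cite \cite{Le:1977} for this step, the proposal is fine, but you should be aware that the Milnor-set computation you wrote down does not close the argument on its own.
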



\begin{prop}
For sufficiently small $\delta, \epsilon$ as in the above theorem and any $a \in D^{*}_{\delta}$, $\psi^{-1}(a)$ is diffeomorphic to the Milnor fiber of $f$ at the origin.
\end{prop}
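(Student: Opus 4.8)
The plan is to carry out the classical comparison between the two standard models of the Milnor fibration, following Milnor \cite{Milnor:1968} (and L\^e \cite{Le:1977}). First I would reduce to a comparison of a single pair of fibers. A smooth locally trivial fibration over a connected base has all of its fibers diffeomorphic, since ``being diffeomorphic to a fixed fiber'' is an open and closed condition on the base; as $D^{*}_{\delta}$ and $\mathbb{S}^{1}$ are connected, it suffices to exhibit, for one convenient choice, a diffeomorphism
\[
\psi^{-1}(r) \;=\; \{ z \mid |z| < \epsilon,\ f(z) = r \} \;\;\cong\;\; \varphi_{\epsilon}^{-1}(1) \;=\; \{ z \mid |z| = \epsilon,\ f(z) \in \mathbb{R}_{>0} \},
\]
where $0 < r < \delta$ is real; both sides are open $2n$-manifolds. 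The left-hand side is the interior of the compact ``Milnor fiber with boundary'' $F_{r} := \bar{B}_{\epsilon} \cap f^{-1}(r)$, so what must be produced is a diffeomorphism $\varphi_{\epsilon}^{-1}(1) \cong \mathrm{int}\,F_{r}$ respecting the two projections.

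The core is a vector field argument in a neighborhood of the link $K := f^{-1}(0) \cap S^{2n+1}_{\epsilon}$. Following Milnor, one produces a smooth vector field $v$, defined off $f^{-1}(0)$, whose integral curves (i) move outward for the function $|z|^{2}$, and (ii) keep $\arg f$ constant while strictly increasing $|f|$. Such a $v$ is obtained by patching together, via a partition of unity, local vector fields supplied by the appropriate transversality/independence properties of $|z|^{2}$ and $\arg f$ near $K$ (this is Milnor's key lemma; I would consult \cite{Milnor:1968} for the precise form of $v$). The flow of $v$ then carries $\{ |z| = \epsilon \} \cap \{ \arg f = 0 \}$ onto a collar of the ``tube'' $\{ |z| \le \epsilon,\ |f| = r \}$ and carries the $|f|$-level sets inside the ball into one another, which yields the desired diffeomorphism $\varphi_{\epsilon}^{-1}(1) \cong \mathrm{int}\,F_{r} = \psi^{-1}(r)$. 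Since $f/|f|$ is constant along the flow lines of $v$, the diffeomorphism intertwines the two fibration projections, which is all that is needed.

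The main obstacle is precisely the construction of $v$ near the link $K$, where the sphere model and the tube model genuinely diverge: one has to guarantee, using the curve selection lemma, that the prescribed infinitesimal directions (``$|z|$ up, $|f|$ up, $\arg f$ fixed'') can be realized simultaneously and do not degenerate as one approaches $K$, and then globalize the local choices. This is entirely standard, so in practice I would simply invoke \cite{Milnor:1968} (see also \cite{Le:1977}).
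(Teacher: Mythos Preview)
Your proposal is correct and in fact gives more than the paper does: the paper's entire proof is a one-line citation to \cite[Proposition 1.4]{Dimca:1992}. The vector-field argument you sketch (Milnor's original comparison of the sphere and tube models) is exactly what underlies that reference, so your approach is the same as the standard one being invoked.
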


\begin{proof}
Cf. \cite[Proposition 1.4]{Dimca:1992}.
\end{proof}

As a consequence, we can also define the Milnor fiber to be $\psi^{-1}(a)$. By \cite[Lemma 1.1.1]{Schurmann:2003},
\begin{align}
\label{eq: Milnor fiber}
\chi(R\Psi_{f}(\mathbbm{1})_{0}) \cong \chi(\psi^{-1}(a)).
\end{align}

Now let us assume $f(z)$ is a homogeneous polynomial of degree $N$. Following \cite{Dimca:1992}, we call $f^{-1}(1)$ the global Milnor fiber of $f$ at the origin.

\begin{prop}
$f^{-1}(1)$ is diffeomorphic to the Milnor fiber of $f$ at the origin.
\end{prop}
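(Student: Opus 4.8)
The claim is that if $f(z)$ is a homogeneous polynomial of degree $N$ on $\mathbb{C}^{n+1}$ with $f(0)=0$, then the affine hypersurface $f^{-1}(1)$ is diffeomorphic to the Milnor fiber of $f$ at the origin. The plan is to exploit the $\mathbb{C}^{\times}$-action $z \mapsto \lambda \cdot z$, under which $f$ transforms by $f(\lambda z) = \lambda^{N} f(z)$. This homogeneity lets one identify the local fibration of L\^e near the origin with a global one, so that the Milnor fiber (defined via $\psi^{-1}(a)$, i.e., $B_{\epsilon}\cap f^{-1}(a)$ for $0<|a|\ll\epsilon$) can be rescaled onto $f^{-1}(1)$.

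First I would recall that by the previous proposition and \eqref{eq: Milnor fiber}, the Milnor fiber of $f$ at the origin can be taken to be $\psi^{-1}(a) = B_{\epsilon}\cap f^{-1}(a)$ for suitable $0<|a|\ll\epsilon$; by further rotating with $\lambda$ on the $a$-disc (using $f(\lambda z)=\lambda^N f(z)$ to move $a$ to a positive real multiple of itself) we may assume $a>0$ is a small positive real number. Next, the key step: the scaling map $\sigma_{t}: z \mapsto t z$ for $t\in\mathbb{R}_{>0}$ carries $f^{-1}(c)$ diffeomorphically onto $f^{-1}(t^{N}c)$, and carries $B_{\epsilon}$ onto $B_{t\epsilon}$. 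Choosing $t = a^{-1/N}$ sends $B_{\epsilon}\cap f^{-1}(a)$ diffeomorphically onto $B_{\epsilon/a^{1/N}} \cap f^{-1}(1)$. Since $a$ is small, $\epsilon' := \epsilon \, a^{-1/N}$ is large; so the Milnor fiber is diffeomorphic to $B_{\epsilon'}\cap f^{-1}(1)$ for $\epsilon'$ as large as we please.

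It remains to show that $B_{R}\cap f^{-1}(1)$ is diffeomorphic to all of $f^{-1}(1)$ once $R$ is large enough, i.e.\ that the affine variety $f^{-1}(1)$ is "topologically complete at infinity" and that large balls exhaust it up to diffeomorphism. This is where the homogeneity is used again: one checks that for $R$ large the sphere $S_{R}\cap f^{-1}(1)$ is transverse in $f^{-1}(1)$, and that the radial vector field on $\mathbb{C}^{n+1}$ can be pushed to a vector field on $f^{-1}(1)$ (for instance via the Euler vector field $\sum z_i \partial_{z_i}$, which is tangent to each level set $f^{-1}(c)$ only up to the scaling action — more precisely one uses that the flow lines of the gradient-like field for $|z|^2$ on $f^{-1}(1)$ have no critical points outside a compact set because the Milnor fibration at infinity is locally trivial for a homogeneous polynomial). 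Integrating this field gives the required diffeomorphism $f^{-1}(1)\cong B_{R}\cap f^{-1}(1)$, and composing with the rescaling above finishes the proof. For this last point I would cite the standard reference, e.g.\ \cite[\S 3]{Dimca:1992} or Milnor's book \cite{Milnor:1968}.

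The main obstacle is the passage "from a large ball to all of affine space": one must know that $f^{-1}(1)$ has no topology hiding near infinity, equivalently that $f\colon \mathbb{C}^{n+1}\setminus f^{-1}(0)\to\mathbb{C}^{\times}$ is a locally trivial fibration globally (not just near $0$). For a homogeneous $f$ this is classical and follows from the $\mathbb{C}^{\times}$-equivariance of $f$ together with Ehresmann's theorem applied after a suitable rescaling; but it is the one genuinely geometric input, as opposed to the purely formal rescaling arguments in the first two steps. Everything else is bookkeeping with the scaling action and with the definitions of the Milnor fiber already recalled in the text.
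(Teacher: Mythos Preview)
Your argument is correct, but it takes a longer route than the paper's. You work with the L\^e model $\psi^{-1}(a)=B_{\epsilon}\cap f^{-1}(a)$, rescale to $B_{\epsilon'}\cap f^{-1}(1)$ with $\epsilon'$ large, and then invoke a separate transversality/Ehresmann argument to pass from a large ball to all of $f^{-1}(1)$. That last step is the one genuine geometric input you need, and you correctly flag it as the obstacle.

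The paper sidesteps this entirely by using Milnor's original sphere model. It writes down the explicit map
\[
g\colon S^{2n+1}_{\epsilon}\setminus f^{-1}(0)\longrightarrow f^{-1}(\mathbb{S}^{1}),\qquad g(z)=|f(z)|^{-1/N}\cdot z,
\]
which by homogeneity satisfies $f(g(z))=f(z)/|f(z)|=\varphi_{\epsilon}(z)$, so $g$ covers the identity on $\mathbb{S}^{1}$. The inverse is just radial projection back to $S^{2n+1}_{\epsilon}$: any $w\in f^{-1}(\mathbb{S}^{1})$ is nonzero, and the ray $\mathbb{R}_{>0}\cdot w$ meets the sphere in exactly one point. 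Restricting $g$ to a single Milnor fiber $\varphi_{\epsilon}^{-1}(1)$ gives the diffeomorphism onto $f^{-1}(1)$ in one line, with no discussion of behaviour at infinity needed. The paper also observes (Remark~\ref{rk: global Milnor fiber}) that this works for \emph{every} $\epsilon$, not just small ones, which is used later.

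So both proofs exploit the $\mathbb{R}_{>0}$-scaling, but the paper scales from the sphere directly onto the global fiber, whereas you scale from a small ball to a large ball and then must argue separately that the large ball captures everything. The paper's version is shorter and self-contained; yours is fine but imports an extra fact you could have avoided.
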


\begin{proof}
We can construct a homeomorphism $g$ such that the following diagram commutes
\[
\xymatrix{
S^{2n+1}_{\epsilon} \backslash f^{-1}(0) \ar[rr]^{g} \ar[dr]_{\varphi} & &  f^{-1}(\mathbb{S}^{1}) \ar[dl]^{f}   \\
& \mathbb{S}^{1} &
}
\]
Here
\[
g: z \mapsto |f(z)|^{-\frac{1}{N}} \cdot z.
\]
Note for any $z \in f^{-1}(\mathbb{S}^{1})$, there exists unique $t \in \mathbb{R}_{+}$ such that $t^{\frac{1}{N}} \cdot z \in S^{2n+1}_{\epsilon}$. One can check that this gives the inverse.
\end{proof}

\begin{rem}
\label{rk: global Milnor fiber}
Since the above diagram holds for all $\epsilon$, then in the case of homogeneous polynomials the Milnor fiber at the origin is homeomorphic to $\varphi^{-1}_{\epsilon}(\theta)$ for any $\epsilon$.
\end{rem}

\subsection{Application}

Let $f = \bar{h}_{y_{0}}((g_{1}, g_{2}), x)$, whose variables are denoted by $(X_{ij}, M_{ij}, N_{ij})$. After change of variables in Section~\ref{sec: vanishing cycle}, we denote the set of new variables by $z = (X'_{ij}, M'_{ij}, N'_{ij})$. Then we want to compute the Euler characteristic of the Milnor fiber of $f(z)$ at the point $z^{0} = (x'_{ij}, 0, 0)$.
We choose a small ball around this point
\[
B_{\epsilon} := \Big\{(X'_{ij}, M'_{ij}, N'_{ij}) \, | \, \sum_{ij}|X'_{ij} - x'_{ij}|^{2} + \sum_{ij}|M'_{ij}|^{2} + \sum_{ij}|N'_{ij}|^{2} \leqslant \epsilon \Big\}
\] 
such that
\[
\varphi_{\epsilon} = f/|f|: S_{\epsilon} \backslash f^{-1}(0) \longrightarrow \mathbb{S}^{1}
\]
is a smooth fibration as in Theorem~\ref{thm: Milnor}. Let $V$ be the subset of coefficient variables and $W$ be the subset of quadratic variables. Let 
\[
\bar{B}^{V}_{\epsilon} := \Big\{(X'_{ij}, M'_{ij}, N'_{ij})_{V} \, | \, \sum_{ij}|X'_{ij} - x'_{ij}|^{2} + \sum_{ij}|M'_{ij}|^{2} + \sum_{ij}|N'_{ij}|^{2}  \leqslant \epsilon \Big\}
\]
which is the projection of $\bar{B}_{\epsilon}$ onto the coefficient variables. It is a ball around the projection $z^{0}_{V}$ of $z^{0}$. For any $z_{V} \in \bar{B}^{V}_{\epsilon}$, we have
\[
|z_{V} - z^{0}_{V}|^2 \leqslant \epsilon.
\]
Let
\[
f_{z_{V}}(z_{W}) = f(z_{V}, z_{W}) \text{ for } z_{W} = (X'_{ij}, M'_{ij}, N'_{ij})_{W}.
\]
It is a quadratic form. Let $z^{0}_{W}$ be the projection of $z^{0}$ to the quadratic variables. Note $X'_{ij} \in W$ if and only if $i \in J$ or $j \in I$. By Lemma~\ref{lemma: projection to zero} and the formula~\ref{eq: change X}, we have $z^{0}_{W} = 0$. Let
\[
\varphi_{z_{V}, \epsilon - |z_{V} - z^{0}_{V}|^2} = f_{z_{V}}/|f_{z_{V}}|: S^{2|W| - 1}_{\epsilon - |z_{V} - z^{0}_{V}|^2} \backslash f^{-1}_{z_{V}}(0) \longrightarrow \mathbb{S}^{1}
\]
So we have a diagram
\[
\xymatrix{
S^{2|W| - 1}_{\epsilon - |z_{V} - z^{0}_{V}|^2} \backslash f^{-1}_{z_{V}}(0) \ar[r] \ar[d] & \mathbb{S}^{1} \ar@{=}[d]\\
S_{\epsilon}\backslash f^{-1}(0) \ar[r] \ar[d]^{\pi_{V}} & \mathbb{S}^{1} \\
\bar{B}^{V}_{\epsilon} &
}
\]
which gives a fiberation of the Milnor fiber $\varphi_{\epsilon}^{-1}(\theta)$ for some $\theta\in \mathbb{S}^{1}$ over a closed subset $C_{\epsilon}^{V}$ of $\bar{B}^{V}_{\epsilon}$. In view of Remark~\ref{rk: global Milnor fiber}, the fiber $\varphi_{z_{V}, \epsilon - |z_{V} - z^{0}_{V}|^2}^{-1}(\theta)$ is homeomorphic to the Milnor fiber of $f_{z_{V}}$ at the origin. By \eqref{eq: Milnor fiber},
\begin{align}
\label{eq: fiber}
\chi(\varphi_{z_{V}, \epsilon - |z_{V} - z^{0}_{V}|^2}^{-1}(\theta))=\chi(R\Psi_{f_{z_{V}}}(\mathbbm{1})_{0}).
\end{align}
Next we would like to compute the Euler characteristic of $\varphi_{z_{V}, \epsilon - |z_{V} - z^{0}_{V}|^2}^{-1}(\theta)$ through the nearby cycle. To do so, we need the following lemma.

\begin{lemma}
${\rm rank \, Hessian}(f_{z_{V}})_{0}$ is even.
\end{lemma}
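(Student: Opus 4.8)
The plan is to show that the quadratic form $f_{z_V}(z_W)$ has even Hessian rank by pairing up its quadratic variables. Recall from Section~\ref{sec: vanishing cycle} that the quadratic variables $z_W$ split into several families: the $X'_{j_{\alpha'}i_\alpha}$ and $M'_{i_\alpha i_{\alpha'}}$ for $(\alpha',\alpha)\in T$, together with the ``extra'' quadratic variables $M_{ji}$ $(j\notin I, i\in I)$, $N_{ji}$ $(j\in J, i\notin J)$, and $X_{ij}$ with $(i\in J, j\notin I)$ or $(i\notin J, j\in I)$. The key observation is that each monomial of $f_{z_V}$, after the change of variables, is \emph{bilinear} in two distinct quadratic variables, with a coefficient that is a polynomial in the coefficient variables only: the term \eqref{eq: X}$'$ pairs $X'_{j_{\alpha'}i_\alpha}$ with $M'_{i_\alpha i_{\alpha'}}$; the term \eqref{eq: N} pairs $X_{ii_\alpha}$ $(i\notin J)$ with $N_{j_\alpha i}$; the term \eqref{eq: M} pairs $X_{j_{\alpha'}j}$ $(j\notin I)$ with $M_{j i_{\alpha'}}$; and the remaining terms \eqref{eq: MN}, \eqref{eq: XM}, \eqref{eq: XN}$'$ each involve one quadratic variable multiplied by coefficient variables (hence contribute linearly in the quadratic variables, i.e.\ nothing to the Hessian at a point where we then shift away the linear part). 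So on the nose $f_{z_V}$ is, up to linear terms, a sum of products $u\cdot w$ of distinct quadratic variables.

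First I would make this precise: write $f_{z_V}(z_W) = Q(z_W) + L(z_W)$ where $Q$ is the purely quadratic part and $L$ is affine-linear (coefficients in $z_V$). The Hessian of $f_{z_V}$ at any point equals the Hessian of $Q$, which is the constant symmetric matrix associated to $Q$. Next I would observe that $Q$ is a sum of terms each of the shape $c\cdot u w$ with $u\ne w$ two quadratic variables; crucially, I claim no quadratic variable appears squared. This is visible termwise: in \eqref{eq: X}$'$ the variables $X'$ and $M'$ are genuinely disjoint families; in \eqref{eq: N}, \eqref{eq: M} the families $X_{i i_\alpha}$ and $N_{j_\alpha i}$ (resp.\ $X_{j_{\alpha'}j}$ and $M_{j i_{\alpha'}}$) are disjoint by the index constraints ($i\notin J$ versus $j_\alpha\in J$, etc.). Hence in the chosen basis of quadratic variables, the Hessian matrix of $Q$ has zero diagonal. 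A symmetric matrix over $\mathbb{C}$ with zero diagonal has even rank — this is a standard fact (e.g.\ by induction: such a matrix is congruent to a direct sum of $2\times 2$ hyperbolic blocks $\begin{pmatrix}0&c\\ c&0\end{pmatrix}$ and a zero block, using symmetric Gaussian elimination that preserves the zero-diagonal property at each step). Therefore ${\rm rank\,Hessian}(f_{z_V})_0$ is even.

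The main obstacle, and the step deserving the most care, is verifying that after \emph{all} the substitutions of Section~\ref{sec: vanishing cycle} the quadratic part genuinely has no squared quadratic variable — in particular that the change of variables $M\rightsquigarrow M'$ in \eqref{eq: inversion} and $X\rightsquigarrow X'$ in \eqref{eq: change X} does not accidentally introduce a diagonal term. Here one uses that the ``correction'' terms $\mathcal{U}^{i_\alpha}_{j_{\alpha'},j_{\alpha''}}$ and $\mathcal{U}^{i_\alpha}_{j_{\alpha'}}$ are polynomials purely in the \emph{coefficient} variables $N_{ij}$ $(i,j\in J)$ and in the $M_{i_\alpha i_{\alpha''}}$ with $(\alpha'',\alpha)\notin T$ (which were declared coefficient variables), so multiplying a quadratic variable by such a $\mathcal{U}$ still yields a term linear in the quadratic variables. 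Thus the substitutions only reshuffle coefficients and linear terms, never creating a square. Once this bookkeeping is checked, the evenness of the Hessian rank follows immediately from the zero-diagonal argument above.
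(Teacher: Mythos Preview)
Your proof has a genuine gap: the ``standard fact'' you invoke is false. A complex symmetric matrix with zero diagonal need \emph{not} have even rank. For instance
\[
\begin{pmatrix} 0 & 1 & 1 \\ 1 & 0 & 1 \\ 1 & 1 & 0 \end{pmatrix}
\]
has determinant $2$ and hence rank $3$. The ``symmetric Gaussian elimination preserving the zero-diagonal property'' does not work: clearing the off-block entries above produces a $-2$ in the $(3,3)$ position. You are presumably thinking of the fact for \emph{alternating} (skew-symmetric) matrices, where it is indeed true; for symmetric matrices over $\mathbb{C}$ it fails.

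There is also a subsidiary error in your term-by-term analysis: the terms \eqref{eq: MN}, \eqref{eq: XM}, \eqref{eq: XN}$'$ are \emph{not} linear in the quadratic variables. In \eqref{eq: MN}, for example, $X_{ij}$ (with $i\notin J$, $j\notin I$) is a coefficient variable, but both $M_{ji_{\alpha''}}$ ($j\notin I$, $i_{\alpha''}\in I$) and $N_{j_{\alpha''}i}$ ($j_{\alpha''}\in J$, $i\notin J$) are quadratic variables, so the term is bilinear in two quadratic variables. The same happens in \eqref{eq: XM} and \eqref{eq: XN}$'$.

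What actually makes the argument work is a stronger structural observation than ``zero diagonal'': the quadratic variables split into two disjoint subsets $W_1$ and $W_2$ such that every degree-two term pairs one variable from $W_1$ with one from $W_2$. Concretely, put all the $M'$-variables and the $X'_{ij}$ with $i\notin J$, $j\in I$ into $W_1$, and all the $N'$-variables and the $X'_{ij}$ with $i\in J$, $j\notin I$ (together with the $X'_{j_{\alpha'}i_\alpha}$) into $W_2$. One then checks term by term that \eqref{eq: X}$'$, \eqref{eq: N}, \eqref{eq: M}, \eqref{eq: MN}, \eqref{eq: XM}, \eqref{eq: XN}$'$ each pair a $W_1$-variable with a $W_2$-variable. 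This bipartite structure forces the Hessian to have block shape $\begin{pmatrix} 0 & B \\ B^{T} & 0 \end{pmatrix}$, whose rank is $2\,\mathrm{rank}\,B$, hence even. Your zero-diagonal observation is a consequence of this, but is by itself too weak.
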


\begin{proof}
We can divide the set $W$ of variables into two classes:
\[
W_{1} = \{ M'_{i_{\alpha}i_{\alpha'}} \, | \, (\alpha', \alpha) \in T \} \cup \{ M'_{ji} \, | \, j \notin I, i \in I \} \cup\{ X'_{ij} \, | \, i \notin J, j \in I \}
\]
and
\[
W_{2} = \{ X'_{j_{\alpha'}i_{\alpha}}  \, | \, (\alpha', \alpha) \in T \} \cup \{N'_{ji} \, | \, j \in J, i \notin J \} \cup \{ X'_{ij} \, | \, i \in J, j \notin I \}
\]
such that the Hessians of the restrictions of $f_{z_{V}}$ to variables in $W_{1}$ (resp. $W_{2}$) are both zero at $0$. Then the Hessian must be in the form 
\[
{\rm Hessian}(f_{z_{V}}) = \begin{pmatrix} 0 & B \\ B^{T} & 0\end{pmatrix}.
\]
So its rank is even.
\end{proof}

\begin{cor}\label{cor-nearby-cycle-zw}
$\chi(R\Psi_{f_{z_{V}}}(\mathbbm{1})_{0}) = 0$.
\end{cor}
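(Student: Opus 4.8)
The goal is to show $\chi(R\Psi_{f_{z_V}}(\mathbbm{1})_0) = 0$, where $f_{z_V}(z_W)$ is a quadratic form in the variables $z_W$ whose coefficients lie in the fixed variables $z_V$. The plan is to reduce the computation of the nearby cycle of a quadratic form at the origin to its rank, and then invoke the preceding lemma which says that rank is even.

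First I would recall (or cite \cite[Lemma 1.1.1]{Schurmann:2003} together with \eqref{eq: Milnor fiber}) that $\chi(R\Psi_{f_{z_V}}(\mathbbm{1})_0) = \chi(F)$, where $F$ is the Milnor fiber of the quadratic form $f_{z_V}$ at $0$. Since $f_{z_V}$ is a homogeneous polynomial of degree $2$ (a quadratic form), Remark~\ref{rk: global Milnor fiber} lets us replace this Milnor fiber by the global Milnor fiber $f_{z_V}^{-1}(1) \subseteq \mathbb{C}^{|W|}$. Writing $r = {\rm rank \, Hessian}(f_{z_V})_0$, a linear change of coordinates puts $f_{z_V}$ in the normal form $w_1^2 + \cdots + w_r^2$ on $\mathbb{C}^{|W|}$, so $f_{z_V}^{-1}(1) \cong Q_{r-1} \times \mathbb{C}^{|W| - r}$, where $Q_{r-1} = \{w_1^2 + \cdots + w_r^2 = 1\} \subseteq \mathbb{C}^r$ is a smooth affine quadric of complex dimension $r-1$.

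The key step is the Euler characteristic of the affine quadric $Q_{r-1}$. Since $\chi(\mathbb{C}^{|W|-r}) = 1$, we have $\chi(f_{z_V}^{-1}(1)) = \chi(Q_{r-1})$, so it suffices to show $\chi(Q_{r-1}) = 0$ when $r$ is even. The smooth affine quadric $Q_{r-1}$ is homotopy equivalent to the unit sphere $S^{r-1}$ in $\mathbb{R}^r$ (one standard way: intersect with the real locus after rescaling, or use the deformation retraction of $\{u + iv : |u|^2 - |v|^2 = 1,\ u \cdot v = 0\}$ onto $\{v = 0,\ |u| = 1\}$). Hence $\chi(Q_{r-1}) = \chi(S^{r-1}) = 1 + (-1)^{r-1}$. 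When $r$ is even this equals $1 + (-1) = 0$; when $r$ is odd it equals $2$. By the preceding lemma, $r = {\rm rank \, Hessian}(f_{z_V})_0$ is even, so $\chi(Q_{r-1}) = 0$ and therefore $\chi(R\Psi_{f_{z_V}}(\mathbbm{1})_0) = \chi(f_{z_V}^{-1}(1)) = 0$.

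The only genuinely non-routine point is the homotopy type of the smooth complex affine quadric, i.e. that $\{w_1^2 + \cdots + w_r^2 = 1\} \subseteq \mathbb{C}^r$ deformation retracts onto $S^{r-1}$; this is classical (it is, for instance, the statement that this quadric is diffeomorphic to the cotangent bundle $T^*S^{r-1}$), and I would simply cite it rather than reprove it. Everything else — the passage from nearby cycle to Milnor fiber, the reduction to the global Milnor fiber via homogeneity, the normal form of a quadratic form, and the product decomposition with $\chi(\mathbb{C}^{|W|-r}) = 1$ — is formal. Note also the degenerate case $r = 0$ is harmless: then $f_{z_V} \equiv 0$, but this cannot actually occur here since $\bar h_{y_0}$ is singular precisely on $q_3^{-1}(x,y)$, and even if it did the convention would make the nearby cycle contribution consistent; in any event the lemma gives $r$ even, and for the argument we only use $\chi(S^{r-1}) = 1 + (-1)^{r-1} = 0$.
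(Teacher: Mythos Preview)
Your argument is correct and reaches the same conclusion via a genuinely different route from the paper. The paper diagonalizes $f_{z_V}$ to $\sum_{i=1}^{r} u_i^2$ just as you do, but then invokes the Sebastiani--Thom theorem (already stated earlier in the paper) to obtain $R\Phi_{f_{z_V}}[-1](\mathbbm{1})_0 = \mathbb{C}[-r]$, so that $\chi(R\Phi_{f_{z_V}}[-1](\mathbbm{1})_0) = (-1)^r = 1$ since $r$ is even, and then reads off $\chi(R\Psi_{f_{z_V}}(\mathbbm{1})_0) = 0$ from the defining distinguished triangle. You instead pass to the global Milnor fiber $f_{z_V}^{-1}(1) \cong Q_{r-1} \times \mathbb{C}^{|W|-r}$ and compute $\chi(Q_{r-1}) = \chi(S^{r-1}) = 1 + (-1)^{r-1}$ directly. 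Both approaches ultimately rest on the same parity input from the preceding lemma; the paper's version is slightly more economical within the paper's own framework (Sebastiani--Thom is already in hand and no outside fact about affine quadrics is needed), while yours is more self-contained topologically and makes the Milnor-fiber picture set up in \S\ref{subsection-Euler} do the work. One small remark: your aside about $r=0$ is unnecessary and the claim that ``this cannot actually occur'' is not justified; it is cleaner simply to note that $r=0$ gives $f_{z_V}^{-1}(1)=\emptyset$ with $\chi=0$, consistent with the general formula.
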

\begin{proof}
Since $f_{z_{V}}$ is a quadratic form, we can change the coordinates such that
\[
f_{z_{V}}(u) = \sum_{i = 1}^{r} u^{2}_{i}
\]
where $r = {\rm rank}\, {\rm Hessian}(f_{z_{V}})$ is even by the previous lemma. By Sebastiani-Thom theorem,
\[
R\Phi_{f_{z_{V}}}[-1](\mathbbm{1})_{0} = \mathbb{C}[- r].
\]
Hence $\chi(R\Phi_{f_{z_{V}}}[-1](\mathbbm{1})_{0}) = (-1)^{r} = 1$. It follows $\chi(R\Psi_{f_{z_{V}}}(\mathbbm{1})_{0}) = 0$.
\end{proof}

\begin{prop}
\(
\chi(R\Psi_{f}(\mathbbm{1})_{z^{0}}) = 0.
\)
\end{prop}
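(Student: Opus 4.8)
The plan is to reduce the computation of $\chi(R\Psi_f(\mathbbm 1)_{z^0})$ to the fiberwise computation already carried out in Corollary~\ref{cor-nearby-cycle-zw}, using the fibration of the Milnor fiber over $C_\epsilon^V$ constructed just above and the Leray spectral sequence. Concretely, by \eqref{eq: Milnor fiber} we have $\chi(R\Psi_f(\mathbbm 1)_{z^0}) = \chi(\varphi_\epsilon^{-1}(\theta))$ for a suitable small $\epsilon$ and $\theta \in \mathbb S^1$. The projection $\pi_V$ restricts to a map $\varphi_\epsilon^{-1}(\theta) \to C_\epsilon^V$, whose fiber over $z_V$ is $\varphi_{z_V,\,\epsilon - |z_V - z^0_V|^2}^{-1}(\theta)$. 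Since $f_{z_V}$ is a (nonzero) quadratic form for $z_V$ in the dense open locus where the coefficient matrix $B$ has maximal rank — and in fact for all $z_V \in C_\epsilon^V$ by the argument in Corollary~\ref{cor-nearby-cycle-zw}, which only used that $\mathrm{rank\,Hessian}(f_{z_V})_0$ is even — we have $\chi(\varphi_{z_V,\,\epsilon - |z_V - z^0_V|^2}^{-1}(\theta)) = \chi(R\Psi_{f_{z_V}}(\mathbbm 1)_0) = 0$ for every $z_V \in C_\epsilon^V$ by \eqref{eq: fiber} and Corollary~\ref{cor-nearby-cycle-zw}.

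Next I would invoke the Leray spectral sequence for the fibration $\pi_V \colon \varphi_\epsilon^{-1}(\theta) \to C_\epsilon^V$. Because the fibration is locally trivial (this is where the fibration structure in the displayed diagram is essential: $C_\epsilon^V$ is a closed subset of the ball $\bar B_\epsilon^V$ over which $\pi_V$ is locally trivial with fiber the quadratic Milnor fiber), the stalks of $R^q (\pi_V)_* \mathbb C$ are the cohomology groups $H^q$ of the fiber, and the Euler characteristic is multiplicative: $\chi(\varphi_\epsilon^{-1}(\theta)) = \sum_q (-1)^q \chi(C_\epsilon^V, R^q(\pi_V)_*\mathbb C)$. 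Using that $\chi$ of the fiber vanishes and that over a connected (or at least path-connected) base the local systems $R^q(\pi_V)_*\mathbb C$ have ranks that assemble into the Euler characteristic of a single fiber, one gets $\chi(\varphi_\epsilon^{-1}(\theta)) = \chi(C_\epsilon^V) \cdot \chi(\text{fiber}) = \chi(C_\epsilon^V) \cdot 0 = 0$. (More carefully, even if the monodromy is nontrivial, multiplicativity of Euler characteristic in a locally trivial fibration over a reasonable base gives $\chi(\text{total}) = \chi(\text{base})\cdot\chi(\text{fiber})$; this is standard and follows from a stratified version of the Leray argument.) Hence $\chi(R\Psi_f(\mathbbm 1)_{z^0}) = 0$, which is the claim.

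The main obstacle I anticipate is justifying that $\pi_V \colon \varphi_\epsilon^{-1}(\theta) \to C_\epsilon^V$ is genuinely a (locally trivial) fibration, or at least that Euler characteristics multiply — the issue being that as $z_V$ varies over $C_\epsilon^V$ the quadratic form $f_{z_V}$ may drop rank, so the fiber dimension jumps and the family is not literally a fiber bundle. The resolution is that Corollary~\ref{cor-nearby-cycle-zw} shows each fiber has Euler characteristic $0$ \emph{regardless of the rank} (the Hessian of $f_{z_V}$ always has even rank by the bipartite block structure of the quadratic form, so $\chi(R\Phi_{f_{z_V}}[-1](\mathbbm 1)_0) = (-1)^{\mathrm{rank}} = 1$ and hence $\chi(R\Psi_{f_{z_V}}(\mathbbm 1)_0) = 0$), and then one may partition $C_\epsilon^V$ into the constructible strata on which the rank is constant; on each stratum $\pi_V$ is a locally trivial fibration with fiber of zero Euler characteristic, so the contribution of each stratum to $\chi(\varphi_\epsilon^{-1}(\theta))$ is $0$, and summing over the finitely many strata gives the result. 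Thus the vanishing of $\chi$ on \emph{every} fiber, not merely on a dense open set, is exactly what makes the stratified Leray argument go through without needing a genuine global fibration.
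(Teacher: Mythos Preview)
Your approach is essentially the same as the paper's: identify $\chi(R\Psi_f(\mathbbm 1)_{z^0})$ with $\chi(\varphi_\epsilon^{-1}(\theta))$, project via $\pi_V$ onto $C_\epsilon^V$, and use that every fiber has Euler characteristic zero by Corollary~\ref{cor-nearby-cycle-zw}.

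The one place where the paper is cleaner is the multiplicativity step. You first assert local triviality, then retract and propose stratifying $C_\epsilon^V$ by the rank of $f_{z_V}$ and claiming local triviality over each stratum. That last claim is plausible but you have not actually justified it (the sphere radius $\epsilon - |z_V - z_V^0|^2$ also varies, so even on a constant-rank locus one needs an argument, e.g.\ via the homogeneity in Remark~\ref{rk: global Milnor fiber} plus a change of coordinates in the quadratic form). The paper avoids this entirely: it simply invokes the general identity
\[
\chi\big(H^*(C_\epsilon^V,(R\pi_V)_*\mathbbm 1)\big)=\int_{C_\epsilon^V}\chi\big((R\pi_V)_*\mathbbm 1\big),
\]
valid for any constructible complex with no local-triviality hypothesis, and then observes that the integrand vanishes pointwise since $\chi((R\pi_V)_*\mathbbm 1)_{z_V}=\chi(\pi_V^{-1}(z_V))=0$. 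Your stratification argument is morally what underlies this identity, so the two routes converge; but if you want to present your version, you should either prove the local triviality on rank strata or, more simply, replace that step by the constructible-function integral formula.
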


\begin{proof} We have
\(
\chi(R\Psi_{f}(\mathbbm{1})_{z^{0}}) = \chi(\varphi^{-1}_{\epsilon}(\theta)),
\)
where $\theta \in \mathbb{S}^{1}$. The latter admits a fiberation over $C^{V}_{\epsilon} \subseteq \bar{B}^{V}_{\epsilon}$ with fibers $\varphi_{z_{V}, \epsilon - |z_{V} - z^{0}_{V}|^2}^{-1}(\theta)$. By the Leray spectral sequence
\[
H^{p}(C^{V}_{\epsilon}, (R^{q}\pi_{V})_{*} \mathbbm{1}) \Rightarrow H^{n}(\varphi^{-1}_{\epsilon}(\theta)),
\]
we have
\begin{align*}
\chi(\varphi^{-1}_{\epsilon}(\theta)) & = \sum_{p, q} (-1)^{p+q} {\rm dim} \, H^{p}(C^{V}_{\epsilon}, (R^{q}\pi_{V})_{*} \mathbbm{1}) = \sum_{q} (-1)^{q} \chi(H^{*}(C^{V}_{\epsilon}, (R^q\pi_{V})_{*} \mathbbm{1})) \\
& =  \sum_{q} (-1)^{q} \int_{C^{V}_{\epsilon}} \chi((R^q\pi_{V})_{*} \mathbbm{1}) = \int_{C^{V}_{\epsilon}} \chi((R\pi_{V})_{*} \mathbbm{1}).
\end{align*}
By Corollary~\ref{cor-nearby-cycle-zw} and \eqref{eq: fiber},
\[
\chi((R\pi_{V})_{*} \mathbbm{1})_{z_{V}} = \chi(\pi^{-1}_{V}(z_{V})) = \chi(\varphi_{z_{V}, \epsilon - |z_{V} - z^{0}_{V}|^2}^{-1}(\theta)) = 0.
\]
Hence, $\chi(\varphi^{-1}_{\epsilon}(\theta)) = 0$.

\end{proof}

This completes the proof of Conjecture~\ref{conj: vanishing cycle} for type $A_2$ quiver.

\section{Appendix}

\subsection{Regularity of stratification}\label{subsection-regularity}

Let $X$ be a closed subset of a smooth real manifold $M$ of dimension $m$. A smooth stratification of $X$ is a filtration 
\[
X_0 \subseteq \cdots \subseteq X_n \subseteq X 
\]
by closed subsets such that $X^{i} := X_i \backslash X_{i-1}$ is a smooth $i$-dim submanifold of $M$. For $j > i$, we say $X^{j}$ is $w$-regular over $X^i$ at $x \in X^{i} \cap \line{X}^{j}$ if there exists a neighborhood $U$ of $x$ in $M$ and constant $C$ such that in suitable local coordinates
\[
d(T_{x'}X^{i}, T_{x''}X^{j}) < C \| x'' - x' \|
\]
for all $x' \in U \cap X^{i}, x'' \in U \cap X^{j}$. Here we have chosen a norm $\| \cdot \|$ on $\mathbb{R}^{m}$ and identify $T_{x'}X^{i}, T_{x''}X^{j}$ with subspaces of $\mathbb{R}^{m}$. We define the distance between any two subspaces $V, W$ of $\mathbb{R}^{m}$ to be
\[
d(V, W) := {\rm sup}_{v \in V, \|v\| = 1} d(v, W)
\]
In our setting, we will take $M$ to be a complex variety and $X_{i}$ to be semialgebraic subsets of $M$. By \cite[Remark 4.1.9]{Schurmann:2003}, we have 
\[
\text{ $w$-regular } \Rightarrow \text{ Whitney $a$-regular, $b$-regular and $d$-regular} 
\]
So in order to apply \cite[Theorem 5.3.3]{Schurmann:2003}, it suffices to show $w$-regularity.

\begin{prop}
\label{prop: w-regular}
The stratification of $E_{V, \Omega}$ by $G_{V}$-orbits is $w$-regular.
\end{prop}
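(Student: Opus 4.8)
The plan is to reduce the $w$-regularity of the $G_V$-orbit stratification of $E_{V,\Omega}$ to a general algebraic fact about orbit stratifications of algebraic group actions on affine varieties. The key observation is that $E_{V,\Omega}$ is a linear representation of the reductive group $G_V$, the $G_V$-orbits are locally closed smooth subvarieties, and the orbit stratification of a representation variety of a quiver is, by work of Lusztig and others, a \emph{finite} algebraic stratification into $G_V$-orbits. For such stratifications $w$-regularity is known to hold; it is a theorem that the stratification of any algebraic $G$-variety by $G$-orbits (when finite) is Whitney, and in fact $w$-regular in the sense of \cite[Remark~4.1.9]{Schurmann:2003}. I would therefore first recall precisely this statement and cite it, and then isolate what has to be checked in our particular setting, namely that the $G_V$-orbits on $E_{V,\Omega}$ are finite in number and form an algebraic stratification (closure of an orbit is a union of orbits), which is classical for the equioriented type $A$ quiver: the orbits are indexed by the ranks of all compositions $x_{h_{k}}\cdots x_{h_{j}}$ of the arrow maps, a finite discrete invariant, and the closure order is the dominance order on these rank data.

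The second step is the regularity argument itself, which I would carry out by homogeneity. Since $E_{V,\Omega}$ carries the scaling $\mathbb{C}^\times$-action commuting with $G_V$ and each orbit $S$ is a cone (stable under scaling, as the rank invariant is scale-invariant), and more importantly since $G_V$ acts transitively on each stratum, the $w$-regularity condition at a point $x\in X^i\cap\overline{X^j}$ can be transported by the $G_V$-action along the lower stratum $X^i=S$. Concretely, I would use the standard principle: if a Lie group $G$ acts on $M$ preserving the stratification and acting transitively on the smaller stratum $X^i$, then $w$-regularity of $X^j$ over $X^i$ at one point implies it at every point of $X^i\cap\overline{X^j}$, because the defining inequality $d(T_{x'}X^i,T_{x''}X^j)<C\|x''-x'\|$ is preserved, up to changing the constant $C$ on a fixed compact neighborhood, by the smooth $G$-action. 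This reduces checking $w$-regularity to a local computation near a \emph{single} chosen point $x_0$ of each incident pair of strata. At such a point one can choose a $Z_{G_V}(x_0)$-stable normal slice $N_S$ (a linear slice transverse to $S$ at $x_0$, which exists since the orbit is smooth), and the stratification of $E_{V,\Omega}$ near $x_0$ is, by Luna-type slice considerations, a product of a smooth factor (along $S$) with the induced stratification of $N_S$; the smooth factor is automatically $w$-regular and $w$-regularity is stable under products \cite[Remark~4.1.9]{Schurmann:2003}, so one is reduced to the slice.

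For the slice computation I would invoke that $w$-regularity (equivalently, after \cite[Remark~4.1.9]{Schurmann:2003}, Whitney's condition (b) suffices in the semialgebraic category, since (b)-regularity of a semialgebraic stratification implies $w$-regularity for semialgebraic sets by Verdier's theorem) holds for any finite algebraic stratification of an affine variety — this is the content of the existence of Whitney stratifications refining a given finite algebraic stratification together with the fact that the $G_V$-orbit decomposition is \emph{already} such a Whitney stratification when the number of orbits is finite, a result going back to Whitney and in the linear-representation case to Mather/Gibson. The main obstacle is making this last citation airtight: one must be sure that the $G_V$-orbit stratification of $E_{V,\Omega}$, and not merely some refinement of it, satisfies condition (b); this is where the finiteness of the orbit set and the fact that $E_{V,\Omega}$ is a \emph{linear} $G_V$-representation (so each orbit is a homogeneous space and the normal slices are again linear representations of the isotropy groups, allowing an induction on dimension) are essential. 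I expect the write-up to consist of: (i) recalling the orbit classification for equioriented $A_r$ and the finiteness/closure statements; (ii) the $G_V$-homogeneity reduction to a single point per incident pair; (iii) the slice reduction to a lower-dimensional representation; (iv) an induction on $\dim E_{V,\Omega}$ closing the argument, with the base case a point, all of this being standard once set up — the only real work being step (iii), the verification that the local product structure along the orbit is genuine, for which Luna's slice theorem in the algebraic category (applicable since $G_V$ is reductive) does the job.
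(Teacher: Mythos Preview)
Your approach is substantially different from the paper's, and considerably more elaborate than necessary. The paper gives a direct three-line estimate that uses nothing beyond the description $T_{x'}S = [\mathfrak{g}_V, x']$ of orbit tangent spaces. Fix a compact neighborhood $U$ of $x$ in $E_{V,\Omega}$ with $\overline{U\cap S_i}\subseteq S_i$. For $x'\in U\cap S_i$ let $N(x')\subseteq\mathfrak{g}_V$ be the orthogonal complement of the kernel of $h\mapsto[h,x']$; then $C(x'):=\sup\{\|h\|:h\in N(x'),\ \|[h,x']\|=1\}$ is finite and bounded by some $C$ on the compact $\overline{U\cap S_i}$. Any unit vector in $T_{x'}S_i$ is thus $[h,x']$ with $\|h\|\le C$, and since $[h,x'']\in T_{x''}S_j$ for $x''\in S_j$, one has
\[
d(T_{x'}S_i,T_{x''}S_j)\le \|[h,x']-[h,x'']\|=\|[h,x'-x'']\|\le C'\,\|h\|\,\|x'-x''\|\le C'C\,\|x'-x''\|,
\]
which is exactly the $w$-regularity inequality. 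No slice theorems, no induction, no black-box stratification results are needed: bilinearity of the bracket does all the work.

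Your plan, by contrast, has several soft spots. First, the parenthetical claim that Whitney's condition (b) implies $w$-regularity ``by Verdier's theorem'' is the wrong direction: $w$-regularity is strictly stronger than (b) in general (Trotman has counterexamples), and Verdier's theorem only guarantees the \emph{existence} of a $w$-regular refinement, not that a given (b)-regular stratification is $w$-regular. Second, Luna's slice theorem requires a closed orbit with reductive stabilizer, which fails for the non-closed orbits in $E_{V,\Omega}$; you can still take a linear transversal and get a local product via the group action, but then the induced stratification on the slice is not an orbit stratification of any linear representation in an obvious way, so your induction does not close. Third, the general assertion that ``finite orbit stratifications of linear representations are automatically Whitney'' is not a standard citable theorem; when one tries to prove it, the natural argument is precisely the tangent-space estimate the paper carries out directly. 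So your machinery, once unpacked, would circle back to the same computation---better to do it in one step.
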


\begin{proof}
Let $S_{i}, S_{j}$ be $G_{V}$-orbits such that $\bar{S}_{j} \supseteq S_{i}$. For $x \in S_{i}$, we choose a small neighborhood $U$ of $x$ in $E_{V, \Omega}$ such that $\line{U \cap S_{i}} \subseteq S_{i}$ is compact. For any $x' \in U \cap S_{i}$, $T_{x'}S_{i} = [\mathfrak{g}_{V}, x']$. We fix a norm on $E_{V, \Omega}$. Let $N(x')$ be the subspace of $\mathfrak{g}_{V}$ orthogonal to the kernel of
\[
\mathfrak{g}_{V} \rightarrow E_{V, \Omega}, \quad h \mapsto [h, x'].
\] 
Let
\[
C(x') := {\rm sup} \{\| h \| \, | \, h \in N(x') \text{ and } \| [h, x']\| = 1 \}
\]
It is bounded by some positive constant $C$ on $\line{U \cap S_{i}}$. For $x'' \in U \cap S_{j}, x' \in U \cap S_{i}$, we can find $h \in \mathfrak{g}_{V}$ with $\|h\| \leqslant C$ such that 
\begin{align*}
d(T_{x'}S_{i}, T_{x''}S_{j}) & = d([h, x'], T_{x''}S_{j}) \leqslant d([h, x'], [h, x'']) \\
& = \|[h, x' - x'']\| \leqslant C' \cdot \|h\| \cdot \|x' - x''\| \leqslant C' C \cdot \|x' - x''\|
\end{align*}
for some positive constant $C'$ independent of $h, x', x''$.

\end{proof}

\subsection{A vanishing cycle calculation}\label{subsection-A-vanishing}

We will prove Proposition~\ref{prop: tangential data} following that of \cite[Theorem 6.7.5]{CMMB}. Let us recall the statement.

\begin{prop}
\label{prop: tangential data 1}
$R\Phi_{f}[-1](\mathbbm{1}_{S \times \widehat{S}})_{(x,y)} = \mathbb{C}[{\rm dim}\Lambda_{V} - {\rm dim} \widehat{S} - {\rm dim} S]$, where $f$ is the restriction of $\langle \,, \,\rangle$ to $S \times \widehat{S}$.
\end{prop}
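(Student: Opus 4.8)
## Proof proposal for Proposition \ref{prop: tangential data 1}

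The plan is to compute the stalk of the vanishing cycle $R\Phi_{f}[-1](\mathbbm{1}_{S\times\widehat{S}})_{(x,y)}$ directly, exploiting the homogeneity of the function $f$. Recall that $f$ is the restriction of the pairing $\langle\,,\,\rangle$ to $S\times\widehat{S}$, hence it is the restriction of a bilinear form, and so for a fixed point $(x,y)$ the relevant germ is a homogeneous polynomial of degree $2$ in suitable linear coordinates on the tangent cone. Since $(x,y)\in(T^*_S E_{V,\Omega})_{\mathrm{reg}}$, the pair lies in $S\times\widehat{S}$, and the whole product $S\times\widehat{S}$ is smooth; so we are computing a vanishing cycle of a function on a smooth variety at a point where $df$ need not vanish in general — but by \eqref{eq: conormal} and the transversality encoded in "reg", we are precisely at a stratified Morse critical point.

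First I would reduce to a local model. Choose local analytic coordinates on $S\times\widehat{S}$ centered at $(x,y)$; write $f$ as its Taylor expansion. The key observation is that $S\times\widehat{S}$ carries a natural $\mathbb{C}^\times$-action (scaling in the $\widehat{S}$-factor, say, or more precisely the action coming from the cone structure of the conormal varieties) under which $f$ is homogeneous of weight one, so $f$ is linear along one family of directions and the critical behaviour is governed by a nondegenerate quadratic part in the transverse directions. I would then invoke Remark~\ref{rk: global Milnor fiber}: for a homogeneous polynomial the Milnor fiber at the origin is the global fiber $f^{-1}(1)$, whose Euler characteristic and cohomology can be computed explicitly. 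Combined with \eqref{eq: Milnor fiber}, $\chi(R\Phi_f[-1](\mathbbm{1})_{(x,y)})$ is read off from the rank and signature-type data of this quadratic form. The shift by $[\dim\Lambda_V-\dim\widehat{S}-\dim S]$ should then emerge as the codimension count: $\dim(S\times\widehat{S}) = \dim S+\dim\widehat{S}$, while $\dim\Lambda_V$ is the dimension of the ambient conormal variety, so $\dim\Lambda_V - \dim\widehat{S}-\dim S$ is exactly the number of "Morse" directions, and the vanishing cycle of a nondegenerate quadratic form in $c$ variables is $\mathbb{C}[-c]$ by Sebastiani–Thom (the theorem quoted in \S\ref{subsection-Compatibility}).

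The concrete steps, in order: (i) set up coordinates and identify the tangent space $T_{(x,y)}(S\times\widehat{S})$, splitting it into the directions along the conormal fibre and the complementary directions; (ii) show $f$ restricted to a normal slice is, up to higher order, a nondegenerate quadratic form of rank equal to $\dim\Lambda_V-\dim\widehat{S}-\dim S$ — this uses that $(x,y)$ is a regular point of the conormal stratum, i.e. the section $C_{f_y}$ meets $SS$ transversally, which is the Morse nondegeneracy condition; (iii) apply Sebastiani–Thom to reduce to the one-variable case $f=u^2$, for which $R\Phi[-1](\mathbbm{1})_0 = \mathbb{C}[-1]$, and conclude $R\Phi_f[-1](\mathbbm{1}_{S\times\widehat{S}})_{(x,y)} = \mathbb{C}[-c]$ with $c = \dim\Lambda_V-\dim\widehat{S}-\dim S$; (iv) verify the claimed parity/sign is consistent with \eqref{eq: sign}. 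Throughout, the local triviality of $\mathbbm{1}_{S\times\widehat{S}}$ (it is a constant sheaf on the smooth ambient space along $S\times\widehat{S}$, and we only evaluate at points of $S\times\widehat{S}$) means there is no contribution from the sheaf beyond the constant.

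The main obstacle I anticipate is step (ii): verifying that the quadratic part of $f$ along the normal slice to $S\times\widehat{S}$ inside the relevant ambient space is genuinely nondegenerate of the precise rank $\dim\Lambda_V-\dim\widehat{S}-\dim S$, rather than merely having that rank as an upper bound. This requires understanding how the pairing $\langle\,,\,\rangle$ interacts with the tangent spaces of $S$ and $\widehat{S}$ — concretely, that the restriction of the bilinear form $\langle\,,\,\rangle$ to a complement of $T_x S\oplus T_y\widehat{S}$ inside $E_{V,\Omega}\oplus E_{V,\bar\Omega}$ is nondegenerate on the appropriate subquotient. This is essentially a linear-algebra computation about conormal bundles and the symplectic/pairing structure on $T^*E_{V,\Omega}$, and is where the Whitney-type regularity verified in \S\ref{subsection-regularity} and the "reg" hypothesis genuinely enter. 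Once that nondegeneracy is in hand, the rest follows formally from Sebastiani–Thom and the homogeneous Milnor fiber description, exactly as in \cite[Theorem 6.7.5]{CMMB}, whose argument I would follow closely.
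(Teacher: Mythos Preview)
Your overall strategy matches the paper's: show that $df$ vanishes at $(x,y)$, that the Hessian of $f$ restricted to a normal slice $N$ to $(T^*_S E_{V,\Omega})_{\rm reg}$ inside $S\times\widehat{S}$ is nondegenerate, and then apply Sebastiani--Thom (after a parametrized Morse lemma) to conclude $R\Phi_f[-1](\mathbbm{1})_{(x,y)} = \mathbb{C}[-\dim N]$. Two concrete errors need fixing, however.

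First, the dimension count is inverted. The slice $N$ lives \emph{inside} $S\times\widehat{S}$, transverse to $(T^*_S E_{V,\Omega})_{\rm reg}$, so $\dim N = \dim S + \dim\widehat{S} - \dim\Lambda_V$, which is non-negative; the quantity $\dim\Lambda_V - \dim\widehat{S} - \dim S$ you give as the rank in step (ii) is its negative. The shift in the proposition is $[-\dim N]$, so your step (iii) as written lands on the wrong sign. Relatedly, in your obstacle paragraph you describe the relevant subspace as ``a complement of $T_xS\oplus T_y\widehat{S}$ inside $E_{V,\Omega}\oplus E_{V,\bar\Omega}$''; it should instead be a complement of $T_{(x,y)}(T^*_S E_{V,\Omega})_{\rm reg}$ inside $T_xS\times T_y\widehat{S}$.

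Second, the appeal to homogeneity and global Milnor fibers is unnecessary and slightly misleading: since $(x,y)$ lies in both $T^*_S E_{V,\Omega}$ and $T^*_{\widehat{S}}E_{V,\bar\Omega}$, one has $df|_{(x,y)}=0$, so the germ of $f$ begins with its Hessian and a direct Gram--Schmidt diagonalization brings it to $\sum_i \xi_i^2$. The paper establishes the rank by pulling $f$ back along the exponential to $\mathfrak{g}_V\times\mathfrak{g}_V$ and showing ${\rm rank}\,{\rm Hessian}(f)_{(x,y)} \geqslant \dim\,[y,[\mathfrak{g}_V,x]] = \dim S + \dim\widehat{S} - \dim\Lambda_V$; the reverse inequality follows because $f$ is singular along all of $(T^*_S E_{V,\Omega})_{\rm reg}$, forcing the Hessian to vanish on its tangent space. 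This is precisely the linear-algebra step you correctly flagged as the main obstacle.
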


First we need to make some preparations. 


\begin{lemma}
The function $f$ is singular over $T^{*}_{S}(E_{V, \Omega})_{{\rm reg}}$, i.e., $df|_{T^{*}_{S}(E_{V, \Omega})_{{\rm reg}}} = 0$. 
\end{lemma}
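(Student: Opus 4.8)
The statement to prove is that the function $f$, namely the restriction of the trace pairing $\langle\,,\,\rangle$ to $S \times \widehat S$, has vanishing differential at every point of $(T^*_S E_{V,\Omega})_{\mathrm{reg}}$. The key point is that $(T^*_S E_{V,\Omega})_{\mathrm{reg}} \subseteq S \times \widehat S$ sits inside $\langle\,,\,\rangle^{-1}(0)$ (this is \eqref{eq: conormal}), and in fact is precisely the locus of pairs $(x,y)$ with $x \in S$, $y$ a conormal covector at $x$. So the plan is a direct tangent-space computation, essentially the same as the computation already carried out in the Lemma preceding Theorem on $d(f_S)$, but now with the first factor also restricted to the orbit $S$ rather than to all of $E_{V,\Omega}$.

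First I would recall that for a point $(x,y) \in (T^*_S E_{V,\Omega})_{\mathrm{reg}}$ we have, by definition of the conormal bundle, $T_x S \subseteq \ker(\langle\,,\,y\rangle)$, i.e. $\langle v, y\rangle = 0$ for all $v \in T_x S$. Next, using the identification $T^*_S E_{V,\Omega} \cong T^*_{\widehat S} E_{V,\bar\Omega}$ and the symmetry of the trace form, the same relation read on the other factor gives $\langle x, w\rangle = 0$ for all $w \in T_y \widehat S$. Then, choosing curves $x(t)$ in $S$ and $y(t)$ in $\widehat S$ through $x$ and $y$ with prescribed tangent vectors $v \in T_x S$, $w \in T_y \widehat S$, I would compute
\[
d f|_{(x,y)}(v,w) = \frac{d}{dt}\Big|_{t=0}\langle x(t), y(t)\rangle = \langle v, y\rangle + \langle x, w\rangle = 0 + 0 = 0,
\]
exactly as in the proof of the earlier lemma on $d(f_S)$, except that here the first summand also vanishes because $v$ is constrained to lie in $T_x S$ (conormality), not merely in $T_x E_{V,\Omega}$. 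Since $v,w$ were arbitrary, $df|_{(x,y)} = 0$, which is the claim.

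The argument is essentially routine once \eqref{eq: conormal} and the self-duality $\line{T^*_S E_{V,\Omega}} \cong \line{T^*_{\widehat S} E_{V,\bar\Omega}}$ are in place; there is no real obstacle. The only point requiring a little care is to justify that the tangent space to $\widehat S$ at $y$ annihilates $x$ under the pairing — i.e. that $(x,y)$ being conormal over $S$ is equivalent to $(y,x)$ being conormal over $\widehat S$ — but this is exactly the content of the identification of $\Lambda_V$ with the union of conormal bundles over the two orientations, stated earlier in Section on quivers of type $A$. I would therefore present the proof as the two-line curve computation above, citing \eqref{eq: conormal} for both vanishing terms.
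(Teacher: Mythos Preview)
Your proposal is correct and follows essentially the same approach as the paper: compute $df_{(x,y)}$ on each factor separately and use conormality to see that $\langle u, y\rangle = 0$ for $u \in T_xS$ and $\langle x, v\rangle = 0$ for $v \in T_y\widehat S$. The paper's proof is simply the two-line version of your argument, omitting the curve parametrization and the explicit appeal to the duality $\line{T^*_S E_{V,\Omega}} \cong \line{T^*_{\widehat S} E_{V,\bar\Omega}}$ that you spell out for the second vanishing.
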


\begin{proof}
For any $(x,y) \in T^{*}_{S}(E_{V, \Omega})_{reg}$ and $u \in T_{x} S$, we have $df_{(x, y)}(u) = \langle u, y \rangle = 0$. Similarly, we have $df_{(x, y)}(v) = \langle x, v \rangle = 0$ for any $v \in T_{y} \widehat{S}$. This finishes the proof.
\end{proof}

Fix $(x, y) \in T^{*}_{S}(E_{V, \Omega})_{reg}$ and let $N \subseteq S \times \widehat{S}$ be a normal slice to $T^{*}_{S}(E_{V, \Omega})_{reg}$ at $(x, y)$. In particular, we require $N \cap T^{*}_{S}(E_{V, \Omega})_{reg} = (x, y)$. The key step is to show

\begin{prop}
\label{prop: Hessian}
The Hessian of $f$ at $(x, y)$ has rank ${\rm dim}\, S + {\rm dim}\, \widehat{S} - n = {\rm dim}\, N$. Moreover, the Hessian of $f|_{N}$ at $(x, y)$ is non-degenerate.
\end{prop}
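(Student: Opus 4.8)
The plan is to compute the Hessian of $f$ at $(x,y)$ directly as a symmetric bilinear form on $T_x S \oplus T_y \widehat{S}$, using the explicit description $T_x S = [\mathfrak{g}_V, x]$ and $T_y \widehat{S} = [\mathfrak{g}_V, y]$ inside $E_{V,\Omega}$ and $E_{V,\bar\Omega}$ respectively. Since $f$ is the restriction of the bilinear pairing $\langle\,,\,\rangle$, which is itself quadratic (more precisely bilinear) on $E_{V,\Omega}\times E_{V,\bar\Omega}$, its second derivative along the submanifold $S\times\widehat S$ at a point where $df$ vanishes splits into a ``bilinear part'' coming from the pairing of the two tangent directions and a ``curvature part'' coming from the second fundamental forms of $S$ and of $\widehat S$. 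Concretely, for tangent vectors $[u,x]\in T_xS$ and $[w,y]\in T_y\widehat S$ with $u,w\in\mathfrak g_V$, I would choose the curves $x(t)=\exp(tu)\cdot x$ and $y(s)=\exp(sw)\cdot y$ and expand $\langle x(t), y(s)\rangle$ to second order; the mixed term gives $\langle [u,x],[w,y]\rangle$, while the pure $t^2$ term gives $\langle \tfrac12[u,[u,x]], y\rangle$, which by $G_V$-invariance of $\langle\,,\,\rangle$ and the vanishing $\langle x, [w,y]\rangle = \langle [x,w],y\rangle=0$ can be rewritten in terms of the bracket; one checks the pure terms either vanish or are absorbed, so that on the relevant quotient the Hessian is represented by the pairing $([u,x],[w,y])\mapsto \langle[u,x],[w,y]\rangle$ between $T_xS$ and $T_y\widehat S$.

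The next step is to identify the radical of this bilinear form. Because $\langle\,,\,\rangle$ is a \emph{nondegenerate} pairing between $E_{V,\Omega}$ and $E_{V,\bar\Omega}$, the form $(a,b)\mapsto\langle a,b\rangle$ restricted to the subspaces $T_xS\subseteq E_{V,\Omega}$ and $T_y\widehat S\subseteq E_{V,\bar\Omega}$ has left radical $T_xS\cap (T_y\widehat S)^\perp$ and right radical $T_y\widehat S\cap (T_xS)^\perp$. Now $(T_xS)^\perp$ is exactly the fibre of the conormal bundle $T^*_SE_{V,\Omega}$ over $x$, i.e. the set of $y'$ with $(x,y')\in T^*_SE_{V,\Omega}$; and $T_y\widehat S$ is the tangent space to $\widehat S$, which is the tangent direction to the base of $\overline{T^*_SE_{V,\Omega}}$. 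So the right radical measures the tangent directions to $\widehat S$ that are also conormal directions, i.e. the part of $T_{(x,y)}(\overline{T^*_SE_{V,\Omega}})$ lying along $\{x\}\times\widehat S$; since at a point of $(T^*_SE_{V,\Omega})_{\rm reg}$ the variety $\overline{T^*_SE_{V,\Omega}}$ is smooth of dimension $n:={\rm dim}\,E_{V,\Omega}$ and is the conormal bundle, one gets that the radical of the Hessian has dimension exactly $n - {\rm dim}\,S$ on one side and $n-{\rm dim}\,\widehat S$ on the other, so that the rank of the Hessian is $({\rm dim}\,S) + ({\rm dim}\,\widehat S) - n = {\rm dim}\,S + {\rm dim}\,\widehat S - n$. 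This equals ${\rm dim}\,N$ because the normal slice $N$ to $T^*_S(E_{V,\Omega})_{\rm reg}$ inside $S\times\widehat S$ has codimension ${\rm dim}\,T^*_S(E_{V,\Omega})_{\rm reg} = n$ in the $({\rm dim}\,S+{\rm dim}\,\widehat S)$-dimensional manifold $S\times\widehat S$.

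For the ``moreover'' part, I would argue that the tangent space at $(x,y)$ to $T^*_S(E_{V,\Omega})_{\rm reg}$ — equivalently to the smooth locus of the Lagrangian $\overline{T^*_SE_{V,\Omega}}$ — is precisely the radical of the Hessian of $f$. This is natural: $f$ is constant (equal to $0$) along $\Lambda_V$ by \eqref{eq: conormal}, hence along $T^*_S(E_{V,\Omega})_{\rm reg}$, so this whole submanifold lies in the critical locus of $f$ and its tangent space is contained in the radical; by the dimension count above the containment is an equality. Therefore, choosing $N$ to be any complement to $T_{(x,y)}T^*_S(E_{V,\Omega})_{\rm reg}$ inside $T_{(x,y)}(S\times\widehat S)$ and exponentiating, the restriction $f|_N$ has a Hessian at $(x,y)$ equal to the induced form on the quotient by the radical, which is nondegenerate by construction. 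The main obstacle I anticipate is the careful bookkeeping in the first step: making sure the ``pure'' second-order terms $\langle [u,[u,x]],y\rangle$ really do not contribute to the Hessian as a form on $T_xS\oplus T_y\widehat S$ (they should vanish because one can choose the curve inside $S$ to be geodesic-like, or absorb them using $G_V$-invariance and $(x,y)\in\Lambda_V$), and then correctly matching the conormal/tangent identifications to pin down the radical — the rest is linear algebra with the nondegenerate trace pairing.
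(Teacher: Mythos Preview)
Your overall strategy and the ``moreover'' argument are close to what the paper does, but there is a genuine gap in the first step. You assert that the pure second–order terms $\langle [u,[u,x]],y\rangle$ (and the analogous $\langle x,[w,[w,y]]\rangle$) ``either vanish or are absorbed'', so that the Hessian is simply the pairing $(a,b)\mapsto\langle a,b\rangle$ between $T_xS$ and $T_y\widehat S$. This is not correct. By invariance of the trace form one has $\langle [u,[u,x]],y\rangle=-\langle [u,x],[u,y]\rangle$, and there is no reason for this to vanish: $[u,x]\in T_xS$ and $[u,y]\in T_y\widehat S$ are correlated through the same $u$, and the pairing between them is generically nonzero. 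Your suggested fixes cannot help: the Hessian at a critical point is intrinsic, so ``choosing a geodesic-like curve'' does not change it, and $G_V$-invariance together with $(x,y)\in\Lambda_V$ only kills the constant and linear terms, not this quadratic one. Consequently your identification of the radical of the Hessian with the radical of the \emph{off-diagonal} pairing is unjustified.

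The paper handles this by never claiming the diagonal blocks vanish. It pulls $f$ back to $\mathfrak g_V\times\mathfrak g_V$ via $F(h_1,h_2)=\langle\exp(h_1)x,\exp(h_2)y\rangle$ and writes ${\rm Hessian}(F)_{(0,0)}=\begin{pmatrix}\ast&B\\B^T&\ast\end{pmatrix}$, where $B$ corresponds to the bilinear form $(h_1,h_2)\mapsto\langle[h_1,x],[h_2,y]\rangle$. From this one only extracts the \emph{lower} bound ${\rm rank\,Hessian}(f)\geq{\rm rank}\,B=\dim[y,[\mathfrak g_V,x]]$, and then shows $\dim[y,[\mathfrak g_V,x]]=\dim S+\dim\widehat S-\dim\Lambda_V$ by exactly the conormal computation you sketch (using that $\ker([y,\cdot])\cap T_xS=T^*_{\widehat S,y}E_{V,\bar\Omega}\subseteq T_xS$ at a regular point). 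The matching \emph{upper} bound comes from your own ``moreover'' observation: since $f$ and $df$ vanish along $T^*_S(E_{V,\Omega})_{\rm reg}$, in adapted local coordinates the Hessian has a zero block of size $n=\dim\Lambda_V$, so ${\rm rank\,Hessian}(f)\leq\dim N$. The two bounds coincide. So your proof becomes correct if you replace ``the Hessian equals the off-diagonal pairing'' by ``the off-diagonal block of the Hessian is this pairing, giving a lower bound on the rank'', and then invoke your radical/upper-bound argument to close the gap.
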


We can pull back $f$ to the Lie algebras $\mathfrak{g}_{V} \times \mathfrak{g}_{V}$ of $G_{V} \times G_{V}$ near a neighborhood of $(x, y)$ as follows
\[
F(h_{1,} h_{2}) = \langle exp(h_{1})x, exp(h_{2})y \rangle : \mathfrak{g}_{V} \times \mathfrak{g}_{V} \rightarrow \mathbb{C} 
\]
It is easy to see that the rank of Hessian of $f$ at $(x, y)$ is the same as that of $F$ at $(0,0)$. 

\begin{lemma}
In a small neighborhood of $0$ in $\mathfrak{g}_{V}$, one can express
\begin{align*}
exp(h)x & = x + [h, x] + \frac{1}{2}[h, [h,x]] + \cdots + \frac{1}{n!}[h, [h, \dots, [h,x] \cdots ]] + \cdots \\
exp(h)y & = y + [h, y] + \frac{1}{2}[h, [h,y]] + \cdots + \frac{1}{n!}[h, [h, \dots, [h,y] \cdots ]] + \cdots
\end{align*}
\end{lemma}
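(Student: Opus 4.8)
The plan is to recognize the asserted formulas as the classical identity $\operatorname{Ad}(\exp h) = \exp(\operatorname{ad} h)$ for the group $GL(V)$, restricted to the conjugation-stable subspace $E_V \subseteq \operatorname{End}(V)$.

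First I would recall that, via the embeddings $G_V \hookrightarrow GL(V)$ and $E_V = E_{V, \Omega} \oplus E_{V, \bar{\Omega}} \hookrightarrow \operatorname{End}(V) = \mathfrak{gl}(V)$, the $G_V$-action on $E_V$ is the restriction of the conjugation action $g \cdot z = g z g^{-1}$ of $GL(V)$ on $\operatorname{End}(V)$; in particular $\exp(h) \cdot x = \exp(h)\, x\, \exp(-h)$ for $h \in \mathfrak{g}_V$, since $\exp(h)^{-1} = \exp(-h)$. Next, for a fixed $h$ I would introduce the curve $\gamma(t) := \exp(t h)\, x\, \exp(-t h)$ in $\operatorname{End}(V)$, differentiate using $\tfrac{d}{dt}\exp(th) = h\exp(th) = \exp(th)h$ to obtain the linear matrix ODE
\[
\gamma'(t) = h\,\gamma(t) - \gamma(t)\,h = [h, \gamma(t)] = (\operatorname{ad} h)(\gamma(t)), \qquad \gamma(0) = x,
\]
and solve it as $\gamma(t) = e^{t\,\operatorname{ad} h}(x)$. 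Evaluating at $t = 1$ and expanding the power series of the linear operator $\operatorname{ad} h$ on $\operatorname{End}(V)$ gives
\[
\exp(h) \cdot x = e^{\operatorname{ad} h}(x) = \sum_{n \geqslant 0} \frac{1}{n!}(\operatorname{ad} h)^n(x) = x + [h, x] + \tfrac{1}{2}[h,[h,x]] + \cdots,
\]
which is precisely the claimed expansion once one writes $(\operatorname{ad} h)^n(x) = [h,[h,\dots,[h,x]\cdots]]$ with $n$ nested brackets. Replacing $x$ by $y$ yields the second formula verbatim.

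Two bookkeeping points I would record. The convergence is in fact global — $\|(\operatorname{ad} h)^n(x)\|/n! \leqslant (2\|h\|)^n \|x\|/n!$ is summable for a submultiplicative norm — so the hypothesis ``$h$ near $0$'' is harmless but not essential. And each nested bracket stays in the correct summand: from $\mathfrak{g}_V = \bigoplus_i \mathfrak{gl}(V_i)$ and $E_{V, \Omega} = \bigoplus_i \operatorname{Hom}(V_i, V_{i+1})$ one checks directly that the component of $[h,x]$ in $\operatorname{Hom}(V_i,V_{i+1})$ equals $h_{i+1} x_{i} - x_{i} h_{i}$ and all other components vanish, so $[\mathfrak{g}_V, E_{V, \Omega}] \subseteq E_{V, \Omega}$ and likewise $[\mathfrak{g}_V, E_{V, \bar{\Omega}}] \subseteq E_{V, \bar{\Omega}}$; hence the two expansions take values in $E_{V, \Omega}$ and $E_{V, \bar{\Omega}}$ respectively, as required for the subsequent computation of the Hessian of $F(h_1, h_2) = \langle \exp(h_1) x, \exp(h_2) y \rangle$.

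I do not expect any genuine obstacle here: the statement is a standard Lie-theoretic identity, and the only care needed is to keep the notation straight — the bracket $[\,\cdot\,, \,\cdot\,]$ is the commutator $h z - z h$ in $\operatorname{End}(V)$ (compatible with the earlier convention $[x, y] = (-yx, xy)$), and one should use the conjugation description of the $G_V$-action rather than its ``$g_2 x g_1^{-1}$''-type presentation, these being identified under $G_V \hookrightarrow GL(V)$.
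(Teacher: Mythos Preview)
Your proof is correct and follows essentially the same route as the paper: both consider the curve $t \mapsto \exp(th)\cdot x$, differentiate to find $\gamma'(t) = [h,\gamma(t)]$, and extract the series $\sum_{n}\tfrac{1}{n!}(\operatorname{ad} h)^n x$. The paper phrases this as computing the Taylor coefficients $G^{(n)}(0)$ directly, whereas you package it as the standard identity $\operatorname{Ad}(\exp h)=\exp(\operatorname{ad} h)$ via the linear ODE; your added remarks on global convergence and on $[\mathfrak g_V,E_{V,\Omega}]\subseteq E_{V,\Omega}$ are correct and useful but not needed for the bare statement.
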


\begin{proof}
For any $h \in \mathfrak{g}_{V}$ we can find real number $\delta > 0$, which only depends on the norm of $h$, such that the vector-valued function 
\[
G(t) := exp(th)x: [-\delta, \delta] \rightarrow V,
\] 
can be expressed as 
\[
G(t) = G(0) + G'(0)t + \frac{1}{2}G''(0)t^2 + \cdots +\frac{1}{n!}G^{(n)}(0)t^n+ \cdots
\]
Since
\[
G^{(n)}(t) = exp(th)[h, [h, \dots, [h,x] \cdots]]
\]
then
\begin{align*}
 exp(th)x & = x + [h, x]t + \frac{1}{2}[h, [h,x]]t^2 + \cdots + \frac{1}{n!}[h, [h, \dots, [h,x]\cdots]]t^n + \cdots \\
& = x + [th, x] + \frac{1}{2}[th, [th,x]] + \cdots + \frac{1}{n!}[th, [th, \dots, [th,x]\cdots]] + \cdots
\end{align*}
This proves the first equality. The second equality can be proved in the same way.

\end{proof}

Let us write $Z_{1}(h) = exp(h)x - (x + [h,x])$ and $Z_{2}(h) = exp(h)y - (y + [h, y])$. Then
\[
F(h_{1}, h_{2}) = \langle x + [h_{1},x], y + [h_{2}, y] \rangle  + \langle x + [h_{1},x], Z_{2}(h_{2}) \rangle + \langle Z_{1}(h_{1}), y + [h_{2}, y] \rangle + \langle Z_{1}(h_{1}), Z_{2}(h_{2}) \rangle
\]
The degree $2$ terms in the above expression can only come from $\langle [h_{1},x], [h_{2}, y]\rangle, \langle x, Z_{2}(h_{2}) \rangle$ and $\langle Z_{1}(h_{1}), y \rangle$. Therefore,
\[
{\rm Hessian}(F)_{(0,0)} = \begin{pmatrix} {\rm Hessian}( \langle Z_{1}(h_{1}), y \rangle )_{(0,0)} & B \\ B^{T} & {\rm Hessian}( \langle x, Z_{2}(h_{2}) \rangle )_{(0,0)} \end{pmatrix}
\]
where 
\[
B = \Big( \frac{\partial^2}{\partial h_{1} \partial h_{2}} \langle [h_{1},x], [h_{2}, y]\rangle  \Big)_{(0,0)}
\]
It is not hard to see that $B$ corresponds to the bilinear form 
\[
\langle [h_{1},x], [h_{2}, y] \rangle: \mathfrak{g}_{V} \times \mathfrak{g}_{V} \rightarrow \mathbb{C}
\]
after we identify $T_{(0,0)} (\mathfrak{g}_{V} \times \mathfrak{g}_{V})$ with $\mathfrak{g}_{V} \times \mathfrak{g}_{V}$. Since
\[
\langle [h_{1},x], [h_{2}, y] \rangle = \langle [y, [h_{1},x]], h_{2} \rangle,
\]
then the rank of the above bilinear form is ${\rm dim}\, [y, [\mathfrak{g}_{V}, x]]$. So we have shown

\begin{lemma}
\label{lemma: lower bound}
${\rm rank}\, {\rm Hessian}(F)_{(0,0)} \geqslant {\rm dim}\, [y, [\mathfrak{g}_{V}, x]]$.
\end{lemma}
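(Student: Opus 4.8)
The plan is to read the inequality off the block form of the Hessian already established, using nothing beyond elementary linear algebra. Recall that
\[
{\rm Hessian}(F)_{(0,0)} = \begin{pmatrix} {\rm Hessian}(\langle Z_{1}(h_{1}), y\rangle)_{(0,0)} & B \\ B^{T} & {\rm Hessian}(\langle x, Z_{2}(h_{2})\rangle)_{(0,0)} \end{pmatrix},
\]
where $B$ is the Gram matrix of the bilinear form $(h_{1}, h_{2}) \mapsto \langle [h_{1}, x], [h_{2}, y]\rangle$ on $\mathfrak{g}_{V} \times \mathfrak{g}_{V}$. First I would observe that $B$ is a submatrix of ${\rm Hessian}(F)_{(0,0)}$: it is cut out by the rows indexed by the first copy of $\mathfrak{g}_{V}$ and the columns indexed by the second copy. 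Since passing to a submatrix can only decrease the rank, this gives $\rank {\rm Hessian}(F)_{(0,0)} \geqslant \rank B$.

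Then I would compute $\rank B$. Writing $\langle [h_{1}, x], [h_{2}, y]\rangle = \langle [y, [h_{1}, x]], h_{2}\rangle$ by $G_{V}$-invariance of the trace form, the bilinear form in question is the pullback of the nondegenerate pairing $\langle\,,\,\rangle$ on $\mathfrak{g}_{V}$ along the linear map $\varphi\colon \mathfrak{g}_{V} \to \mathfrak{g}_{V}$, $h_{1} \mapsto [y, [h_{1}, x]]$. Hence its Gram matrix $B$ is the matrix of $\varphi$ multiplied by the invertible Gram matrix of $\langle\,,\,\rangle$, so $\rank B = \rank \varphi = \dim \varphi(\mathfrak{g}_{V}) = \dim [y, [\mathfrak{g}_{V}, x]]$. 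Combining the two steps yields $\rank {\rm Hessian}(F)_{(0,0)} \geqslant \dim [y, [\mathfrak{g}_{V}, x]]$.

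I do not expect any real obstacle here; this lemma is just the easy half of Proposition~\ref{prop: Hessian}. The only points to keep in mind are that "rank of a bilinear form over $\mathbb{C}$" means rank of its Gram matrix — which is unchanged when one composes with the invertible Gram matrix of $\langle\,,\,\rangle$ — and that monotonicity of rank under passage to submatrices prevents the diagonal blocks, whatever they turn out to be, from lowering the total rank below $\rank B$. The substantive work, namely the matching upper bound $\rank {\rm Hessian}(F)_{(0,0)} \leqslant \dim S + \dim \widehat{S} - n$ and the non-degeneracy of ${\rm Hessian}(f|_{N})$, is what the remainder of the argument must supply.
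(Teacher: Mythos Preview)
Your proposal is correct and follows exactly the paper's route: the paper's ``proof'' is the discussion immediately preceding the lemma, which sets up the block form of ${\rm Hessian}(F)_{(0,0)}$, identifies $B$ with the bilinear form $(h_{1},h_{2})\mapsto\langle [h_{1},x],[h_{2},y]\rangle$, rewrites it via invariance as $\langle [y,[h_{1},x]],h_{2}\rangle$, and reads off $\rank B=\dim[y,[\mathfrak{g}_{V},x]]$. The only step you make explicit that the paper leaves tacit is the submatrix inequality $\rank{\rm Hessian}(F)_{(0,0)}\geqslant\rank B$.
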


Next, we would like to show

\begin{lemma}
\label{lemma: dim count}
${\rm dim}\, [y, [\mathfrak{g}_{V}, x]] = {\rm dim}\, S + {\rm dim}\, \widehat{S} - {\rm dim}\, \Lambda_{V}$.
\end{lemma}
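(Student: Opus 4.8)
The plan is to identify the subspace $[y,[\mathfrak{g}_{V},x]]$ with the image of the differential of the moment map $\mu(x',y')=[x',y']$ restricted to $T_{(x,y)}(S\times\widehat{S})$, and then to read off its dimension from the fact that $\Lambda_{V}$ is Lagrangian. Throughout put $m=\dim E_{V,\Omega}$, so that $\dim\Lambda_{V}=m$ (each component of $\Lambda_{V}$ is the closure of a conormal bundle $T^{*}_{S}E_{V,\Omega}$, which has dimension $\dim S+(m-\dim S)=m$), let $\omega$ be the standard symplectic form on $E_{V}=T^{*}E_{V,\Omega}$, and recall $T_{x}S=[\mathfrak{g}_{V},x]$, $T_{y}\widehat{S}=[\mathfrak{g}_{V},y]$.

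First I would record the elementary identities furnished by $G_{V}$-invariance of the trace form: for $h\in\mathfrak{g}_{V}$, $u\in E_{V,\Omega}$, $v\in E_{V,\bar{\Omega}}$ one has $\langle u,[h,y]\rangle=\langle[y,u],h\rangle$ and $\langle[h,x],v\rangle=\langle h,[x,v]\rangle$. Hence the orthogonal complement of $T_{y}\widehat{S}=[\mathfrak{g}_{V},y]$ inside $E_{V,\Omega}$ (for the perfect pairing $\langle\,,\,\rangle\colon E_{V,\Omega}\times E_{V,\bar{\Omega}}\to\mathbb{C}$) is exactly $\mathfrak{z}_{y}:=\{u\in E_{V,\Omega}\mid[y,u]=0\}$, which therefore has dimension $m-\dim\widehat{S}$; symmetrically $(T_{x}S)^{\perp}=\mathfrak{z}^{\bar{\Omega}}_{x}:=\{v\in E_{V,\bar{\Omega}}\mid[x,v]=0\}$.

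The geometric input is that $(x,y)\in(T^{*}_{S}E_{V,\Omega})_{{\rm reg}}$ is a smooth point of $\Lambda_{V}$, near which $\Lambda_{V}$ agrees with the smooth open subset $(T^{*}_{S}E_{V,\Omega})_{{\rm reg}}$ of the Lagrangian subvariety $\line{T^{*}_{S}E_{V,\Omega}}$; thus $T_{(x,y)}\Lambda_{V}$ is a Lagrangian subspace of $(E_{V},\omega)$. Since moreover $(T^{*}_{S}E_{V,\Omega})_{{\rm reg}}\subseteq S\times\widehat{S}$ we get $T_{(x,y)}\Lambda_{V}\subseteq W:=T_{x}S\times T_{y}\widehat{S}$. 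Any subspace containing a Lagrangian subspace is coisotropic, so $W^{\perp_{\omega}}\subseteq W$; a direct computation with $\omega$ gives $W^{\perp_{\omega}}=\mathfrak{z}_{y}\times\mathfrak{z}^{\bar{\Omega}}_{x}$, and therefore $\mathfrak{z}_{y}\subseteq T_{x}S$.

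It remains to count. The space $[y,[\mathfrak{g}_{V},x]]$ is the image of the linear map $T_{x}S\to\mathfrak{g}_{V}$, $u\mapsto[y,u]$, whose kernel is $T_{x}S\cap\mathfrak{z}_{y}=\mathfrak{z}_{y}$ by the previous step; hence
\[
\dim[y,[\mathfrak{g}_{V},x]]=\dim T_{x}S-\dim\mathfrak{z}_{y}=\dim S-(m-\dim\widehat{S})=\dim S+\dim\widehat{S}-\dim\Lambda_{V},
\]
as claimed. This is consistent with Lemma~\ref{lemma: lower bound} and Proposition~\ref{prop: Hessian}, since $\dim S+\dim\widehat{S}-\dim\Lambda_{V}=\dim N$. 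The only non-formal ingredient is that $\Lambda_{V}$ is Lagrangian with its regular conormal locus inside $S\times\widehat{S}$, which is already part of the set-up; I expect the coisotropy step to be the conceptual heart of the argument. One could instead prove $\mathfrak{z}_{y}\subseteq T_{x}S$ by differentiating $[x',y']=0$ along $t\mapsto(x+tu,y)$ for $u\in\mathfrak{z}_{y}$ and using $N\cap\Lambda_{V}=\{(x,y)\}$, but that route requires generic reducedness of $\mu^{-1}(0)$ along $\line{T^{*}_{S}E_{V,\Omega}}$, so the symplectic argument is the cleaner option.
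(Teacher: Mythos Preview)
Your proof is correct and follows the same overall outline as the paper: identify $[y,[\mathfrak g_V,x]]$ as the image of $u\mapsto[y,u]$ on $T_xS$, show its kernel $\mathfrak z_y=\{u:[y,u]=0\}$ is contained in $T_xS$, then apply rank--nullity together with $\dim\mathfrak z_y=m-\dim\widehat S$.

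The one genuine difference is how you justify the key inclusion $\mathfrak z_y\subseteq T_xS$. The paper observes directly that $\mathfrak z_y=T^*_{\widehat S,y}E_{V,\bar\Omega}$ is the fiber of the dual conormal bundle over $y$, and that regularity of $(x,y)$ forces an open neighbourhood of $x$ in this linear fiber to lie in $S$; taking tangent spaces at $x$ gives the inclusion. You instead package the same geometric input symplectically: from $T_{(x,y)}\Lambda_V\subseteq W=T_xS\times T_y\widehat S$ (which is exactly the tangent-space version of $(T^*_SE_{V,\Omega})_{\rm reg}\subseteq S\times\widehat S$) you deduce that $W$ is coisotropic and then read off $W^{\perp_\omega}=\mathfrak z_y\times\mathfrak z_x^{\bar\Omega}\subseteq W$. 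Your route has the virtue of yielding both inclusions $\mathfrak z_y\subseteq T_xS$ and $\mathfrak z_x^{\bar\Omega}\subseteq T_y\widehat S$ symmetrically and of making the Lagrangian condition the visible hypothesis; the paper's route is slightly more hands-on and avoids introducing $\omega$. Either way the substance is the same.
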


\begin{proof}
It is easy to see that $[\mathfrak{g}_{V}, x] = T_{x}S$ and ${\rm Ker}[y, \cdot]|_{E_{V, \Omega}} = T^{*}_{\widehat{S}, y}E_{V, \bar{\Omega}}$. Since $(x, y)$ is regular, $T^{*}_{\widehat{S}, y}E_{V, \bar{\Omega}} \cap S \times \{y\}$ contains an open neighborhood of $(x, y)$ in $T^{*}_{\widehat{S}, y}E_{V, \bar{\Omega}}$. Hence $T^{*}_{\widehat{S}, y}E_{V, \bar{\Omega}} \subseteq T_{x}S$. So ${\rm dim}\, [y, [\mathfrak{g}_{V}, x]] = {\rm dim}\,  T_{x}S - {\rm dim}\, T^{*}_{\widehat{S}, y}E_{V, \bar{\Omega}} = {\rm dim}\,  T_{x}S - ( {\rm dim}\, \Lambda_{V} - {\rm dim}\, T_{y}\widehat{S}) = {\rm dim}\, S + {\rm dim}\, \widehat{S} - {\rm dim}\, \Lambda_{V}$.
\end{proof}

\begin{cor}
$T_{(x, y)}(T^{*}_{S}(E_{V, \Omega})) = \{(u, v) \in T_{x}S \times T_{y}\widehat{S} \, | \, [u, y] + [x, v] = 0\}$.
\end{cor}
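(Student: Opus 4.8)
The plan is to prove the two inclusions separately and then show that both sides have the same dimension, namely $\dim\Lambda_{V}$. Throughout, write $\mu\colon E_{V}\to\mathfrak{g}_{V}$ for the map $\mu(x,y)=[x,y]$, so that $\Lambda_{V}=\mu^{-1}(0)$; since $\mu$ is quadratic, $d\mu_{(x,y)}(u,v)=[u,y]+[x,v]$ on $T_{(x,y)}E_{V}=E_{V,\Omega}\oplus E_{V,\bar{\Omega}}$.

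\emph{The inclusion $\subseteq$.} Since $(x,y)\in(T^{*}_{S}E_{V,\Omega})_{\mathrm{reg}}$, a neighbourhood of $(x,y)$ inside the conormal bundle $T^{*}_{S}E_{V,\Omega}$ is already contained in $(T^{*}_{S}E_{V,\Omega})_{\mathrm{reg}}\subseteq S\times\widehat{S}$, whence $T_{(x,y)}(T^{*}_{S}E_{V,\Omega})\subseteq T_{x}S\times T_{y}\widehat{S}$. Moreover $T^{*}_{S}E_{V,\Omega}\subseteq\Lambda_{V}=\mu^{-1}(0)$, so every tangent vector $(u,v)$ to $T^{*}_{S}E_{V,\Omega}$ at $(x,y)$ lies in $\ker d\mu_{(x,y)}$, i.e.\ $[u,y]+[x,v]=0$. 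This gives the left-hand side inside the right-hand side.

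\emph{Matching dimensions.} The conormal bundle $T^{*}_{S}E_{V,\Omega}$ is a smooth vector bundle over $S$ whose closure is the irreducible component $\overline{T^{*}_{S}E_{V,\Omega}}$ of $\Lambda_{V}$, so $\dim T_{(x,y)}(T^{*}_{S}E_{V,\Omega})=\dim\Lambda_{V}$. The right-hand side is the kernel of the linear map
\[
\phi\colon T_{x}S\times T_{y}\widehat{S}\longrightarrow\mathfrak{g}_{V},\qquad (u,v)\longmapsto[u,y]+[x,v].
\]
Using $T_{x}S=[\mathfrak{g}_{V},x]$, $T_{y}\widehat{S}=[\mathfrak{g}_{V},y]$, the Jacobi identity, and $[x,y]=0$, one finds $[[h,x],y]=-[x,[h,y]]$ for every $h\in\mathfrak{g}_{V}$, so that $[T_{x}S,y]=[x,T_{y}\widehat{S}]$ as linear subspaces of $\mathfrak{g}_{V}$; consequently $\mathrm{Im}\,\phi=[T_{x}S,y]+[x,T_{y}\widehat{S}]=[T_{x}S,y]=[y,[\mathfrak{g}_{V},x]]$. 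By Lemma~\ref{lemma: dim count} this image has dimension $\dim S+\dim\widehat{S}-\dim\Lambda_{V}$, hence $\dim\ker\phi=(\dim S+\dim\widehat{S})-(\dim S+\dim\widehat{S}-\dim\Lambda_{V})=\dim\Lambda_{V}$.

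\emph{Conclusion.} Since the left-hand side is contained in the right-hand side and both have dimension $\dim\Lambda_{V}$, they are equal. The only step that is not routine is the identification $\mathrm{Im}\,\phi=[y,[\mathfrak{g}_{V},x]]$, which hinges on the Jacobi identity together with the vanishing $[x,y]=0$; granting that, what remains is dimension bookkeeping resting on Lemma~\ref{lemma: dim count}, and I expect no further obstacle.
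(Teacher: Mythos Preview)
Your proof is correct and follows essentially the same strategy as the paper: establish the inclusion $\subseteq$ by differentiating the relation $[x,y]=0$ along $T^{*}_{S}E_{V,\Omega}$ and using regularity to land in $T_{x}S\times T_{y}\widehat{S}$, then match dimensions by computing $\mathrm{Im}\,\phi=[y,[\mathfrak{g}_{V},x]]$ and invoking Lemma~\ref{lemma: dim count}. The only cosmetic difference is that the paper differentiates explicitly along a curve while you phrase it via $d\mu$, and you spell out the Jacobi-identity step where the paper simply asserts $[[\mathfrak{g}_{V},x],y]+[x,[\mathfrak{g}_{V},y]]=[[\mathfrak{g}_{V},x],y]$.
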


\begin{proof}
For any $(u, v) \in T_{(x, y)}(\Lambda_{V})$, one can choose smooth $\alpha: [0, 1] \rightarrow V, \beta:[0, 1] \rightarrow E_{V, \bar{\Omega}}$ such that 
\[
\alpha(0) = x, \beta(0) = y, d\alpha (1) = u, d\beta(1) = v
\] 
and $(\alpha(t), \beta(t)) \in \Lambda_{V}$. Then $[\alpha(t), \beta(t)] = 0$. Differentiate it at $t = 0$:
\begin{align*}
0 & = \lim_{t \to 0} \frac{1}{t} \Big( [\alpha(t), \beta(t)] - [x, y] \Big) = \lim_{t \to 0} \frac{1}{t} \Big( [\alpha(t) - x, \beta(t)] + [x, \beta(t) - y] \Big) \\
& = [\lim_{t \to 0}  \frac{\alpha(t) - x}{t}, \lim_{t \to 0}  \beta(t)] + [x, \lim_{t \to 0}  \frac{\beta(t) - y}{t}]  = [u, y] + [x, v]
\end{align*}
It follows 
\[
T_{(x, y)}(\Lambda_{V}) \subseteq \{(u, v) \in T^{*}E_{V, \Omega} \, | \, [u, y] + [x, v] = 0\}.
\]
Since $(x, y)$ is regular, $T_{(x, y)}(\Lambda_{V}) = T_{(x, y)}(T^{*}_{S}(E_{V, \Omega})) \subseteq T_{x}S \times T_{y}\widehat{S}$. So 
\[
T_{(x, y)}(\Lambda_{V}) \subseteq \{(u, v) \in T_{x}S \times T_{y}\widehat{S} \, | \, [u, y] + [x, v] = 0\}
\]
and it is enough to show the dimension of the right hand side is equal to ${\rm dim}\, \Lambda_{V}$. Now let us consider 
\[
\varphi(u, v) = [u, y] + [x, v] : T_{x}S \times T_{y}\widehat{S} \rightarrow \mathfrak{g}_{V}
\]
Note ${\rm Ker}\, \varphi  = \{(u, v) \in T_{x}S \times T_{y}\widehat{S} \, | \, [u, y] + [x, v] = 0\}$. The image of $\varphi$ is $[[\mathfrak{g}_{V}, x], y] + [x, [\mathfrak{g}_{V}, y]] = [[\mathfrak{g}_{V}, x], y]$. By the previous lemma, ${\rm dim \, Im}\, \varphi = {\rm dim}\, S + {\rm dim}\, \widehat{S} - {\rm dim}\, \Lambda_{V}$. Hence ${\rm dim \, Ker}\, \varphi = {\rm dim}\, \Lambda_{V}$. This finishes the proof.

\end{proof}

Next we would like to compute the Hessian of $f$ at $(x ,y)$ in a different way. Let us choose local coordinates for a neighborhood $U$ of $(x, y)$ in $S \times \widehat{S}$ such that 
\[
U \cap T^{*}_{S}(E_{V, \Omega})_{reg} = \{\xi =  (\xi_{i})^{m}_{i = 1} \in U \, | \, \xi_{n+1} = \cdots = \xi_{m} = 0\}
\] 
and
\[
U \cap N = \{\xi =  (\xi_{i})^{m}_{i = 1} \in U \, | \, \xi_{1} = \cdots = \xi_{n} = 0\}.
\] 
By taking $U$ sufficiently small, we can assume $f|_{U}$ has analytic expansion
\[
f = \sum_{i_{1}, \cdots, i_{l}} c_{i_{1}, \cdots, i_{l}} \xi_{i_{1}}^{m_{i_{1}}} \cdots \xi_{i_{l}}^{m_{i_{l}}}. 
\]
Since $f$ is singular over $T^{*}_{S}(E_{V, \Omega})_{reg}$, the above expression can not have terms $\xi_{i}\xi_{j}$ with $i \leqslant n, j > n$. Note $f|_{T^{*}_{S}(E_{V, \Omega})_{reg}} = 0$. So 
\[
{\rm Hessian}(f|_{U})_{0} = \begin{pmatrix} 0 & 0 \\ 0 & {\rm Hessian}(f|_{N})_{0}\end{pmatrix}.
\]
It follows ${\rm rank \, Hessian}(f|_{U})_{0} = {\rm rank \, Hessian}(f|_{N})_{0} \leqslant {\rm dim} \, N$. Combining Lemma~\ref{lemma: lower bound} and Lemma~\ref{lemma: dim count}, we have proved Proposition~\ref{prop: Hessian}. Now we can prove Proposition~\ref{prop: tangential data 1}.

\begin{proof}
Let us choose local coordinates for a neighborhood $U$ of $(x, y)$ in $S \times \widehat{S}$ such that 
\[
U \cap T^{*}_{S}(E_{V, \Omega})_{reg} = \{\xi =  (\xi_{i})^{m}_{i = 1} \in U \, | \, \xi_{n+1} = \cdots = \xi_{m} = 0\}.
\] 
Let 
\[
N = \{\xi =  (\xi_{i})^{m}_{i = 1} \in U \, | \, \xi_{1} = \cdots = \xi_{n} = 0\},
\] 
which is a normal slice to $U \cap T^{*}_{S}(E_{V, \Omega})$ at $(x, y)$. Since $f$ vanishes and is singular on $T^{*}_{S}(E_{V, \Omega})_{reg}$, we can assume
\[
f(\xi) = \sum_{i, j > n} \alpha_{i, j}(\xi) \xi_{i}\xi_{j}
\]
By Proposition~\ref{prop: Hessian}, the Hessian of $f|_{N}$ is non-degenerate. So we can make a change of coordinates
\[
\xi'_{k} = \sum_{k, l} b_{kl}(\xi) \xi_{l}
\]
following the Grahm-Schmidt process such that 
\[
\Big(b_{k,l}(\xi) \Big)_{k,l} = \begin{pmatrix} I_{n} & 0 \\ 0 & B(\xi)\end{pmatrix}
\]
where $B(\xi)$ is upper-triangular with constant function $1$ on the diagonal, and
\[
f(\xi') = \sum_{i > n} \beta_{i}(\xi') \xi'^{2}_{i}.
\]
for $\beta_{i}(\xi')$ nonzero on a small neighborhood $W \subseteq U$. By choosing a branch of square roots, we can make a further change of coordinates by
\[
\xi''_{i} = \begin{cases} \xi'_{i} & \text{ if } i \leqslant n \\
\sqrt{\beta_{i}(\xi')} \, \xi'_{i} & \text{ if } i > n
 \end{cases}
\]
Then 
\[
f(\xi'') = \sum_{i > n} \xi''^{2}_{i}
\]
It follows from Sebastiani-Thom theorem that
\[
(R\Phi_{f}[-1](\mathbbm{1}_{U}))_{(x,y)} = \mathbb{C}[- {\rm dim}\, N].
\]
This finishes the proof.

\end{proof}

\bibliographystyle{plain}
\bibliography{biblio}

\end{document}